\numberwithin{equation}{section}
\newtheorem{theorem}{Theorem} \newtheorem{proposition}{Proposition}[section] 
\newtheorem{corollary}{Corollary} \newtheorem{definition}{Definition} 
\newtheorem{lemma}[proposition]{Lemma} 
{\theoremstyle{remark} \newtheorem{remark}{Remark} 
\newtheorem*{remark*}{Remark} }
\newcommand{\dd}{\, \mathrm{d}}
\renewcommand{\div}{\operatorname{div}}
\newcommand{\ep}{\varepsilon}
\newcommand{\id}{\mathbf{id}}
\newcommand{\N}{\mathbb{N}}
\newcommand{\om}{\Omega}
\newcommand{\mr}{\mathbb{R}}
\newcommand{\p}{\partial}
\newcommand{\R}{\mathbb{R}}
\newcommand{\res}{\mathop{\hbox{\vrule height 7pt width .5pt depth 0pt \vrule height .5pt width 6pt depth 0pt}}\nolimits\,}
\newcommand{\ssubset}{\subset \! \subset}
\renewcommand{\S}{\mathbb{S}}
\newcommand{\vacio}{\varnothing} 
\renewcommand{\vec}{\mathbf}
\newcommand{\vecg}{\boldsymbol}
\newcommand{\vu}{\vec u}
\newcommand{\weakc}{\rightharpoonup}
\newcommand{\weakcs}{\overset{*}{\rightharpoonup}}
\DeclareMathOperator{\adj}{adj}   
\DeclareMathOperator{\cof}{cof}   
\DeclareMathOperator{\diam}{diam} \DeclareMathOperator{\dist}{dist} \DeclareMathOperator{\Det}{Det} 
\DeclareMathOperator{\Div}{Div}  \DeclareMathOperator{\imT}{im_T} \DeclareMathOperator{\Int}{Int} \DeclareMathOperator{\loc}{loc} \DeclareMathOperator{\Per}{Per} \DeclareMathOperator{\sgn}{sgn} \DeclareMathOperator{\spt}{spt} 
\def\XXint#1#2#3{{\setbox0=\hbox{$#1{#2#3}{\int}$}
\vcenter{\hbox{$#2#3$}}\kern-.5\wd0}}
\title{Energy estimates and cavity interaction
for a critical-exponent cavitation model}
\author{Duvan Henao \and Sylvia Serfaty}
\begin{document}
\maketitle

\begin{abstract}
 We consider the minimization of $\int_{\Omega_{\ep}} |D\vec u|^p \dd\vec x$
in a perforated domain $\Omega_{\ep}:= \Omega \setminus \bigcup_{i=1}^M B_{\ep}(\vec a_i)$ of $\R^n$,
among maps $\vec u \in W^{1,p}(\Omega_{\ep}, \R^n)$ that are incompressible ($\det D\vec u\equiv 1$),
invertible, and satisfy a Dirichlet boundary condition $\vec u= \vec g$ on $\partial \Omega$.
If the volume enclosed by $\vec g (\partial \Omega)$ is greater than $|\Omega|$,
any such deformation $\vec u$ is forced to map the small holes $B_{\ep}(\vec a_i)$ onto
macroscopically visible cavities (which do not disappear as $\ep\to 0$).
We restrict our attention to the critical exponent $p=n$,
where the energy required for cavitation is of the order of $\sum_{i=1}^M v_i |\log \ep|$
and the model is suited, therefore, for an asymptotic analysis
($v_1, \ldots, v_M$ denote the volumes of the cavities).
In the spirit of the analysis of vortices in Ginzburg-Landau theory, 
we obtain estimates for the ``renormalized" energy
$\frac{1}{n}\int_{\Omega_{\ep}} \left |\frac{D\vec u}{\sqrt{n-1}}\right |^p \dd\vec x - \sum_i v_i |\log \ep|$,
showing its dependence on the size and the shape of the cavities,
on the initial distance between the cavitation points $\vec a_1, \ldots, \vec a_M$,
and on the distance from these points to the outer boundary $\partial \Omega$.
Based on those estimates we conclude, for the case of two cavities, that either the cavities
prefer to be spherical in shape and well separated, 
or to be very close to each other and appear as a single equivalent round cavity.
This is in agreement with existing numerical simulations, and is reminiscent of the interaction between cavities
in the mechanism of ductile fracture by void growth and coalescence.
\end{abstract}

\section{Introduction}

\subsection{Motivation}
In nonlinear elasticity, cavitation is the name given to the sudden formation of cavities in
an initially perfect material, due to its incompressibility
(or near-incompressibility), in response to a sufficiently large and triaxial tension.
It plays a central role in the initiation of fracture
in metals \cite{Gurson77,RiTr69,GoBr79,Tvergaard90,PeCuSiEl06}
and in elastomers \cite{Gent91,WiSc65,GeWa91,Dorfmann03,CrMaLoHuStCr10} (especially in
reinforced elastomers \cite{ObBr65,GePa84,ChGeLa87,BaBeBa08,MiLoPoTr10}),
via the mechanism of void growth and coalescence.
It has important applications such as the indirect
measurement of mechanical properties \cite{KuCr09}
or the rubber-toughening of brittle polymers \cite{LaBu95,ChHiBaSoMy95,StVDGi99,LiLi00}.
Mathematically, it constitutes a realistic example of a regular variational problem with singular minimizers,
and corresponds to the case when the stored-energy function of the material is not $W^{1,p}$-quasiconvex
 \cite{Ball77,Ball84,BaMu84}, the Jacobian determinant is not weakly continuous \cite{BaMu84},
and important properties such as the invertibility of the deformation may not pass to the weak limit
\cite[Sect. 11]{MuSp95}.
The problem has been studied by many authors, beginning with Gent-Lindley \cite{GeLi59}
and Ball \cite{Ball82}; see the review papers \cite{Gent91,HoPo95,Fond01},
the variational models of M\"uller-Spector \cite{MuSp95} and Sivaloganathan-Spector \cite{SiSp00},
and the recent works \cite{HeMo10,LoIdNa11} for further motivation and references.

The standard model in the variational approach to cavitation considers functionals of the form
\begin{equation}\label{standardpmod}
\int_\om |D \vu|^p\, \dd\vec x,
\end{equation}
where the deformation $\vu: \Omega \subset \R^n \to \R^n$ is constrained to be incompressible (i.e. $\det D\vu=1$) and globally invertible,
and either a Dirichlet condition $\vec u=\vec g$ or a force boundary condition is applied.
Unless the boundary condition is exactly compatible with the volume, cavities have to be formed.
If $p<n$  this can happen while still keeping a finite energy.
 A typical deformation creating a cavity
of volume $\omega_n A^n$
 at the origin
($\omega_n$ being the volume of the unit ball in $\R^n$) is given by
\begin{equation}\label{cavitybehavior}
\vu(\vec x)= \sqrt[n]{ A^n + |\vec x|^n} \frac{\vec x}{|\vec x|}.
\end{equation}
We can easily compute that \begin{equation}\label{cavbehaviorDu}
|D\vu|^2\mathop{\sim}_{\vec x=\vec 0} \frac{(n-1) A^2}{|\vec x|^2}  .\end{equation}
In that situation the origin  is called a {\it cavitation point}, which belongs to the domain space, and its image by $\vu$ is
the {\it cavity}, belonging to the target space.
Contrarily to the usual, we study the critical  case $p=n$  where the cavity behaviour \eqref{cavitybehavior} just fails to be of finite energy.

This fact is analogous to what happens for $\mathbb{S}^1$-valued harmonic maps in dimension 2,
which were  particularly studied in the context of the Ginzburg-Landau model, see Bethuel-Brezis-H\'elein \cite{BrBeHe94}.
For $\mathbb{S}^1$-valued maps $\vec u$ from $\om \subset \mathbb{R}^2$, the topological degree of $\vec u$
around a point $\vec a$ is defined by the following integer
$$d= \frac{1}{2\pi} \int_{\partial B(\vec a, r)}  \frac{\p \vec u}{\p \tau}\times \vec u .$$
Points around which  this is not zero are called vortices.
 Typical vortices   of degree $d$ look like  $\vec u=e^{id\theta}$ (in polar coordinates).
If $d\neq 0$ again $|D\vec u|^2$ just fails to be integrable since
for the typical vortex $|D\vec u|^2\sim_{\vec x=\vec 0}\frac{|d|^2}{|\vec x|^2}$, just as above \eqref{cavbehaviorDu},
up to a constant factor.
So there is an analogy in that sense between maps from $\om $ to $\mathbb{C}$ which are constrained to satisfy $|\vec u|=1$,
 and maps from $\om $ to $\mr^2$  which satisfy the incompressibility constraint $\det D\vu=1$.
We see that in this analogy (in dimension 2) the volume of the cavity divided by $\pi$ plays the role of
the absolute value of the degree for $\mathbb{S}^1$-valued maps.
In this correspondence two important differences appear:
the degree is
 quantized while the cavity volume is not; on the other hand the degree has a sign, which can lead to  ``cancellations" between vortices, while the cavity volume is always positive.

In the context of $\mathbb{S}^1$-valued maps, two possible ways of giving a meaning to
$\int_{\om} |D\vec u|^2$ are the following.
The first is to  relax the constraint $|\vec u|=1$ and replace it by a penalization, and study instead
\begin{equation}
\label{gl}
\int_{\om} |D\vec u|^2 + \frac{1}{\ep^2} (1-|\vec u|^2)^2\end{equation}
in the limit $\ep \to 0 $; this is the Ginzburg-Landau approximation.
The second is to study the energy with the constraint $|\vec u|=1$ but in  a punctured domain $\om_\ep:=\om\backslash \cup_i B({\vec a}_i, \ep)$ where ${\vec a}_i$'s stand for the vortex locations:
\begin{equation}\label{harmmap}
\min_{|\vec u|=1}  \int_{\om_\ep} |D\vec u|^2 \end{equation}
again in the limit $\ep\to 0$; this can be called the ``renormalized energy approach".
 Both of these approaches were followed in
\cite{BrBeHe94}, where it is proven  that the Ginzburg-Landau approach essentially reduces to the renormalized energy approach.
 More specifically, when there are vortices at ${\vec a}_i$,
 $|D\vec u|$ will behave like $|d_i|/|\vec x-{\vec a}_i|$
near each vortex (where $d_i$ is the degree of the vortex)  and both energies \eqref{gl} and \eqref{harmmap} will blow up
like $\pi \sum_i d_i^2 \log \frac{1}{\ep}$ as $\ep \to 0$. It is shown in
\cite{BrBeHe94} that when this divergent term is subtracted off
 (this is the ``renormalization" procedure), what remains is a nondivergent term
depending on the positions of the vortices ${\vec a}_i$ and their degrees $d_i$ (and the domain), called precisely
 the renormalized energy. That energy is essentially a Coulombian interaction between the points ${\vec a}_i$ behaving like
 charged particles (vortices of same degree repel, those of opposite degrees attract) and it can be
written down quite explicitly.

  Our goal here is to study cavitation in the same spirit.  A first attempt, which would be the analogue of \eqref{gl},
 would be to relax the incompressibility constraint and  study for example
  \begin{equation}\label{glcav}
  \int_{\om} |D\vu|^2 + \frac{(1-\det D\vu)^2}{\ep}.\end{equation}
  We do not however follow this route which seems to present many difficulties
(one of them is that this energy in two dimensions is scale invariant, and that contrarily to \eqref{gl}
the nonlinearity contains as many order of derivatives as the other term),
but it remains a seemingly interesting open problem, which would have good physical sense. Rather we follow the second approach, i.e.  that of working  in punctured domains while  keeping the incompressibility constraint.

  For the sake of generality we consider holes which can be of different radii $\ep_1, \cdots, \ep_m$,
define $\om_\ep:=\om \backslash \cup_{i=1}^m \overline{B} (\vec a_i, \ep_i)$ and look at
  \begin{equation}\label{elas}
  \min_{\det D\vu=1} \int_{\om_\ep} |D\vu|^2
\end{equation}
(or $\min_{\det D\vec u =1} \int_{\Omega_{\ep}} |D\vec u|^n$ in dimension $n$),
in the limit $\ep \to 0$.
  This also has a reasonable physical interpretation: it corresponds to studying the incompressible deformation
of a body that contains micro-voids which expand under the applied boundary deformation.
One may think of the points ${\vec a}_i$ as fixed, then they correspond to defects that pre-exist, just as above.
Or the model can be seen as a fracture model where we postulate that the body will first break around
the most energetically favorable  points ${\vec a}_i$
(see, e.g., the discussion in
\cite{Ball82,HoAb86,Sivaloganathan86,Gent91,Horgan92,MuSp95,Ball96,SiSp02,SiSp06,LoIdNa11,LoNaId11}).
It can also be compared to the core-radius approach in dislocation models \cite{CeLe05,Ponsiglione07,GaLePo10}.

Following the analogy above, we would like to  be able to subtract from \eqref{elas} a   leading order  term proportional to $\log \frac{1}{\ep}$, in order to extract at the next order a ``renormalized" term which will tell us how cavities ``interact" (attract or repel each other), according to their positions and shapes.
  This is more difficult than the problem  \eqref{harmmap} because the condition
   $\det D\vu=1$ is much less constraining than $|\vec u|=1$. While
the maps with $|\vec u|=1$ can be parametrized by lifting in the form  $\vec u=e^{i\varphi}$,
to our knowledge no parametrization of that sort exists for incompressible maps.
In addition while the only characteristic of a vortex is an integer --its degree--, for incompressible maps,
the characteristics of a cavity are more complex --they comprise the volume of the cavity and its shape--,
and there is no quantization.
For these reasons we cannot really hope for something as nice and explicit as a complete ``renormalized energy" for this toy cavitation model. However we will show that we can obtain, in particular in the case of two cavities, some quantitative information about the cavities interaction that is reminiscent of the renormalized energy.

\subsection{Method and main results : energy lower bounds}\label{sec:1.2}

Our method relies on obtaining general and ansatz-free lower bounds for the energy on the one hand, and on the other hand upper bounds via explicit constructions, which match as much as possible the lower bounds. This is in the spirit of $\Gamma$-convergence (however we will not prove a complete $\Gamma$-convergence result).
For simplicity in this section we present the results in dimension 2, but they carry over in higher dimension.

To obtain lower bounds we use the ``ball construction method",
which was introduced in the context of Ginzburg-Landau by Jerrard \cite{Jerrard99} and Sandier \cite{Sandier98,Sandier00}.
 The crucial estimate for Ginzburg-Landau, or more simply $\mathbb{S}^1$-valued harmonic maps,
 is the following simple relation, corollary of Cauchy-Schwarz:
\begin{equation}\label{csgl}
\int_{\p B(\vec a, r)} |D\vec u|^2
\ge \frac{1}{2\pi r} \left(\int_{\p B(\vec a,r)} \frac{\p \vec u}{\p \tau} \times \vec u\right)^2= 2 \pi\frac{ d^2}{r}
\end{equation}
if $d$ is the degree of the map on $\p B(\vec a,r)$. Integrating this relation over $r$  ranging from $\ep$ to $1$ yields  a lower bound for the energy on annuli, with the logarithmic behavior stated above.
One sees that the equality case in \eqref{csgl} is achieved when $\vec u$ is exactly radial
(which corresponds to $\vec u=e^{id\theta}$ in polar coordinates), so the least energetically costly vortices are the radial ones. For an arbitrary number of vortices
 the ``ball construction" \`a la Jerrard and Sandier allows to paste together the lower bounds obtained on disjoint annuli.
Previous constructions for bounded numbers of vortices include those of Bethuel-Brezis-H\'elein \cite{BrBeHe94}
and Han-Shafrir \cite{HaSh95}.
The ball construction method will be further described in Section \ref{ballconstruction}.

For the  cavitation model,  there is an analogue to the above  calculation, which is also our starting point.
Assume that $\vu$ develops  a cavity of volume $v$ around a cavitation point $\vec a $ in the domain space.
 By $v$ we really denote the excess of volume created by the cavity (we still refer to it as cavity volume),
this way  the image of the ball $B(\vec a, \ep)$ contains a volume $\pi\ep^2 +v$.
 Using  the Cauchy-Schwarz inequality,  we may write
\begin{equation}\label{cs1}\int_{\p B(\vec a,r)} |D\vu|^2 \ge \frac{1}{2\pi r} \left( \int_{\p B(\vec a,r)} |D\vu\cdot \tau |\right)^2
.\end{equation}
But then one can observe that $\int_{\p B(\vec a,r)}|D\vu \cdot \tau|$ is exactly  the length of the image curve of the circle  $\partial  B(\vec a,r)$.
 We may then use the classical isoperimetric inequality
 \begin{equation}\label{isoper}
 (\Per E(\vec a,r))^2 \ge 4\pi |E(\vec a,r)|
 \end{equation}
 where $|\cdot |$ denotes the volume, and $E(\vec a, r)$ is the region enclosed by this image curve, which contains the cavity, and has volume $\pi r^2 + v$ by incompressibility.
 Inserting this into \eqref{cs1}, we are led to
 \begin{equation}\label{cs2}
 \int_{\p B(\vec a,r)} \frac{|D\vu|^2}{2} \ge   \frac{\Per^2 (E(\vec a, r)  )}{4\pi r} \ge
\frac{|E(\vec a,r)|}{r}  \ge \frac{v}{r} + \pi r .\end{equation}
 This is the building block that we will integrate over $r$ and insert into the ball construction,
to obtain  our  first lower bound, which is proved in Section \ref{ballconstruction}. To state it, we will use the notion of weak determinant:
\begin{equation*}
\langle \Det D\vec u, \phi \rangle :=
  -\frac{1}{n} \int_\Omega \vec u(\vec x)
\cdot
  (\cof D\vec u(\vec x)) D\phi(\vec x) \dd\vec x, \qquad \forall\, \phi \in C_c^\infty (\Omega)
\end{equation*}
whose essential features we recall in Section \ref{se:DetDu};
as well as M\"uller and Spector's invertibility ``condition INV" \cite{MuSp95}
which is defined  in Section \ref{sec:degree} (Definition \ref{df:INV}) and which essentially means that the deformations of the material, in addition to being one-to-one, cannot create cavities which would at the same time be filled by material coming from elsewhere. Even though we have discussed dimension 2, we directly state the result in dimension $n$.
\begin{proposition}\label{pro1}
 Let $\om$ be an open and bounded set in $\R^n$, and $\om_\ep= \om \backslash \cup_{i=1}^m \overline{B}({\vec a}_i, \ep_i)$
where ${\vec a}_1,  \cdots , {\vec a}_m\in \om$ and the $ \overline{B}({\vec a}_i, \ep_i)$ are disjoint.
  Suppose that $\vec u \in W^{1,n}(\Omega_{\ep}, \R^n)$ 
and that
 condition INV  is satisfied. Suppose,
further, that $\Det D\vec u = \mathcal L^n$ in $\Omega_{\ep}$
(where $\mathcal L^n$ is the Lebesgue measure), and let $v_i:=|E({\vec a}_i, \ep_i)|- \omega_n \ep_i^n$
(with $E(\vec a_i, \ep_i)$ as in \eqref{isoper}). Then
for any $R>0$
$$\frac{1}{n} \int_{\om_\ep} \left (\left |\frac{D\vu}{\sqrt{n-1}}\right |^n -1\right ) \dd\vec x\ge  \left( \sum_{i, B(\vec a_i, R) \subset \om}^m v_i \right) \log \frac{R}{ 2\sum_{i=1}^m \ep_i}
.$$
\end{proposition}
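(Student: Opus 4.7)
The plan is to reduce to a sphere-by-sphere lower bound that extends \eqref{cs2} to $n$ dimensions, then integrate radially via a Jerrard--Sandier ball construction to handle several cavities simultaneously. For the first step, fix a ball $B(\vec a,r)\subset\Omega$ which meets only the cavities $\vec a_{i_1},\dots,\vec a_{i_k}$ among the $\vec a_j$'s, and restrict $\vec u$ to $\partial B(\vec a,r)$. Hadamard's inequality combined with AM--GM on the $n-1$ singular values tangent to the sphere gives the pointwise bound $J_\tau \vec u \le |D\vec u|^{n-1}/(n-1)^{(n-1)/2}$ for the tangential Jacobian. Condition INV together with $\Det D\vec u = \mathcal L^n$ identifies the topological image $E(\vec a,r)$ as a set of finite perimeter with $\Per(E(\vec a,r))\le\int_{\partial B(\vec a,r)} J_\tau\vec u$, and incompressibility together with the identification of the enclosed cavities gives $|E(\vec a,r)|=\omega_n r^n+\sum_j v_{i_j}$. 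Chaining these with the isoperimetric inequality $\Per(E)\ge n\omega_n^{1/n}|E|^{(n-1)/n}$ and with H\"older's inequality $\int_{\partial B}|D\vec u|^{n-1}\le |\partial B|^{1/n}(\int_{\partial B}|D\vec u|^n)^{(n-1)/n}$, and simplifying constants, yields the $n$-dimensional building block
$$
\frac{1}{n}\int_{\partial B(\vec a, r)}\left(\left|\frac{D\vec u}{\sqrt{n-1}}\right|^n -1\right)\;\ge\;\frac{\sum_j v_{i_j}}{r}.
$$

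For the second step, I would feed this bound into the Jerrard--Sandier ball construction (to be described in Section \ref{ballconstruction}). Starting from the disjoint initial family $\overline B(\vec a_i,\ep_i)$, grow all balls geometrically in a time parameter $t$, merging any two that touch into a single ball whose radius is the sum of its parents' radii, so that $\sum r_B$ evolves monotonically. On each annular increment $B(t_2)\setminus B(t_1)$ swept by a surviving ball, the sphere-wise bound integrates radially to
$$
\frac{1}{n}\int_{B(t_2)\setminus B(t_1)}\left(\left|\frac{D\vec u}{\sqrt{n-1}}\right|^n -1\right)\;\ge\;\bigg(\sum_{\vec a_j\in B}v_j\bigg)\log\frac{r_B(t_2)}{r_B(t_1)}.
$$
Summing over all annular increments and over all terminal balls, and using an initial step that enlarges the starting radii sum from $\sum\ep_i$ to $2\sum\ep_i$ to absorb the case of touching initial balls, produces a lower bound of the form $(\sum_i v_i)\log(r_{\mathrm{final}}/(2\sum\ep_i))$. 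Stopping the construction for each ball as soon as it is about to leave $\Omega$, one reaches $r_B\ge R$ for exactly those surviving balls $B$ corresponding to cavitation points with $B(\vec a_i,R)\subset\Omega$, which yields the claimed inequality.

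The main obstacle lies in the first step, specifically in rigorously extracting both the perimeter upper bound $\Per(E(\vec a,r))\le\int_{\partial B} J_\tau\vec u$ and the volume identification $|E(\vec a,r)|=\omega_n r^n+\sum_j v_{i_j}$ from the hypotheses INV and $\Det D\vec u=\mathcal L^n$. These rely on the degree-theoretic and weak-determinant machinery of M\"uller--Spector recalled in Sections \ref{se:DetDu}--\ref{sec:degree}, and they are what allows the purely geometric Cauchy--Schwarz/isoperimetric chain to be phrased in terms of the intrinsic cavity volumes $v_i$. Once this is available, the ball-construction argument of the second step is standard, but the bookkeeping of merges and of the stopping criterion must be done with care to pin down the precise factor $2\sum\ep_i$ inside the logarithm and to avoid double-counting the energy near coalescence events.
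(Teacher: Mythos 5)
Your first step is sound and is essentially the paper's: the sphere-wise bound you state is exactly Lemma \ref{le:Basic} with $p=n$ (Hadamard plus AM--GM on the tangential derivatives, the identification $\Per(E(\vec a,r))=\int_{\partial B}|(\cof D\vec u)\vecg\nu|$ and $|E(\vec a,r)|=\omega_n r^n+\sum_j v_{i_j}$ from Propositions \ref{pr:cof} and \ref{pr:Det}, then the isoperimetric inequality and Jensen/H\"older), and feeding it into a Jerrard--Sandier construction is also what the paper does (Propositions \ref{pr:ballconstruction}--\ref{pr:bc}). The genuine gap is in your endgame, precisely the bookkeeping you yourself flag: (a) your claim that, stopping each ball as it is about to leave $\Omega$, every surviving ball containing a point $\vec a_i$ with $B(\vec a_i,R)\subset\Omega$ reaches radius $r_B\geq R$ is false, and (b) the factor $2$ in $\log\frac{R}{2\sum\ep_i}$ does not come from inflating the initial radii (the closed balls $\overline B(\vec a_i,\ep_i)$ are disjoint by hypothesis, so there is nothing to ``absorb''; moreover, across a merge the per-cavity logarithms already telescope to a denominator at most $\sum_i\ep_i$, since the child ball starts from the sum of the parents' radii). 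For (a): the abstract construction gives no control on the centers of merged balls beyond containment of their parents, so after a merge the center can be displaced towards $\partial\Omega$; all one can guarantee, when a ball $B(\vec a,r)$ first touches $\partial\Omega$ while containing some $\vec a_i$ with $\dist(\vec a_i,\partial\Omega)>R$, is $R<|\vec a_i-\vec a|+\dist(\vec a,\partial\Omega)<2r$, i.e.\ $r>R/2$, not $r\geq R$ (in non-convex domains the ball can genuinely stop with $r$ barely above $R/2$). In addition, ``stopping balls individually'' is not compatible with the construction: a stopped ball near the boundary will later be hit by a growing one, and merging with it drags the energy accounting to a scale unrelated to $R$.

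The correct repair --- which is the paper's proof --- is to discard from the start all balls with $B(\vec a_i,R)\not\subset\Omega$ (their $v_i$ are simply dropped, and the integrand is pointwise nonnegative since $\det D\vec u=1$ a.e.\ forces $|D\vec u/\sqrt{n-1}|^n\geq(\frac{n}{n-1})^{n/2}>1$, so restricting the region is harmless), to run one construction parametrized by the \emph{total} radius $t$, so that by homogeneous growth every annular increment contributes $\log\frac{t_2}{t_1}$ independently of merges and the whole bound is $\big(\sum_{i\in A}v_i\big)\log\frac{s_0}{\sum_{i\in A}\ep_i}$, and then to define $s_0=\sup\{s\leq R:\bigcup\mathcal B(s)\subset\Omega\}$; if $s_0<R$ the boundary-touching ball has radius $r\leq s_0$ and contains some $\vec a_i$ with $i\in A$, and the triangle inequality above gives $s_0>R/2$. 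That single triangle-inequality stopping lemma is the missing ingredient that pins down the constant: with what your argument actually proves ($r_B>R/2$) combined with your doubling of the initial radii, you would only obtain $\log\frac{R}{4\sum\ep_i}$, weaker than the statement; your claimed $\log\frac{R}{2\sum\ep_i}$ rests on the unjustified $r_B\geq R$.
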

Note that  $\sum_i v_i =V$ is the total cavity volume, which due to incompressibility
is completely determined by the Dirichlet data, in the case of a displacement boundary value problem.

Examining the equality cases in the chain of inequalities  \eqref{cs1}--\eqref{cs2} already tells us
that the minimal energy is obtained when ``during the ball construction" all circles (at least for $r$ small)
are mapped into circles  and  the cavities  are spherical.
A more careful examination of \eqref{cs1} indicates that the map should at least locally
follow the model cavity map \eqref{cavitybehavior}.
It is the same argument that has been used by Sigalovanathan and Spector \cite{SiSp10a,SiSp10b}
to prove the radial symmetry of minimizers for the model with power $p<n$.

When there is more than one cavity,  and two cavities are close together,
we can observe that there is a geometric obstruction  to all circles ``of the ball construction" being mapped into circles.
 This is true for any number of cavities larger than 1;
to quantify it is in principle possible but a bit inextricable for more than 2.
 For that reason and for simplicity, we restrict to the case of two cavities, and now explain the quantitative point.

\begin{figure}[hbt!]
\centering
  \subfigure{\quad \input{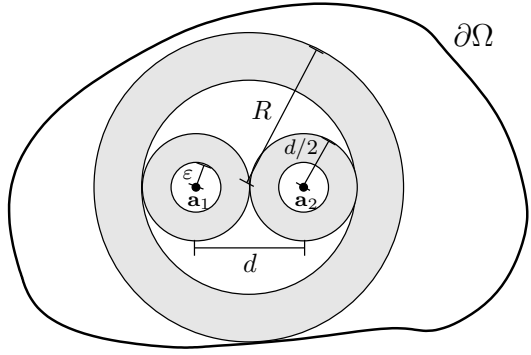}\quad} \qquad
  \caption{\label{fig:bcnst}
Ball construction in the reference configuration
}
\end{figure}

Let  $\vec a_1$ and $\vec a_2$ be the two cavitation points
  with $|\vec a_1-\vec a_2|=d$, small compared to $1$. For simplicity of the presentation let us also assume that $\ep_1=\ep_2=\ep$.
  The ball construction is very simple in such a situation:
three disjoint annuli are constructed, $B({\vec a}_1, d/2)\backslash B({\vec a}_1, \ep) $,
$ B({\vec a}_2, d/2)\backslash B({\vec a}_2,\ep)$ and $ B(\vec a,  R)\backslash B(\vec a, d)$,
 where $\vec a$ is the midpoint of ${\vec a}_1$ and ${\vec a}_2$
 (see Figure \ref{fig:bcnst}). These annuli can be seen as a union of concentric circles centred at ${\vec a}_1$, ${\vec a}_2$,
 $\vec a$ respectively.  To achieve the optimality condition above, each of these circles would have to be mapped  by $\vu$ into a circle.
  If this were true,  the images of $ B({\vec a}_1, d/2)$ and $ B({\vec a}_2, d/2)$ would be two disjoint balls containing the two cavities, call them $E_1$ and $E_2$. By incompressibility, $|E_1|= v_1 + \pi (d/2)^2 $ and $|E_2|= v_2 + \pi (d/2)^2 $. Then the image of $B (\vec a, d)$ would also have to be a ball, call it $E$, which contains the disjoint union $E_1\cup E_2$, and by incompressibility
\begin{equation}\label{vole}
|E|=v_1+ v_2 + \pi d^2.\end{equation}
    If $d$ is small compared to $v_1 $ and $v_2$  it is easy to check this is geometrically impossible:
the radius  of the ball $E_1$ is certainly bigger than $\sqrt{v_1/\pi}$,
that of $E_2$ than  $\sqrt{v_2/\pi}$ and since $E$ is a ball that contains their disjoint union, its radius
is at least the sum of the two, hence $|E|\ge (\sqrt{v_1}+ \sqrt{v_2})^2$. This is incompatible with \eqref{vole}
unless $\pi d^2 \ge 2\sqrt{v_1v_2}$.\\

\begin{figure}[hbt!]
\centering
  \subfigure[$\mu=0.5$] {\quad\ \;\includegraphics[width=0.25\textwidth]{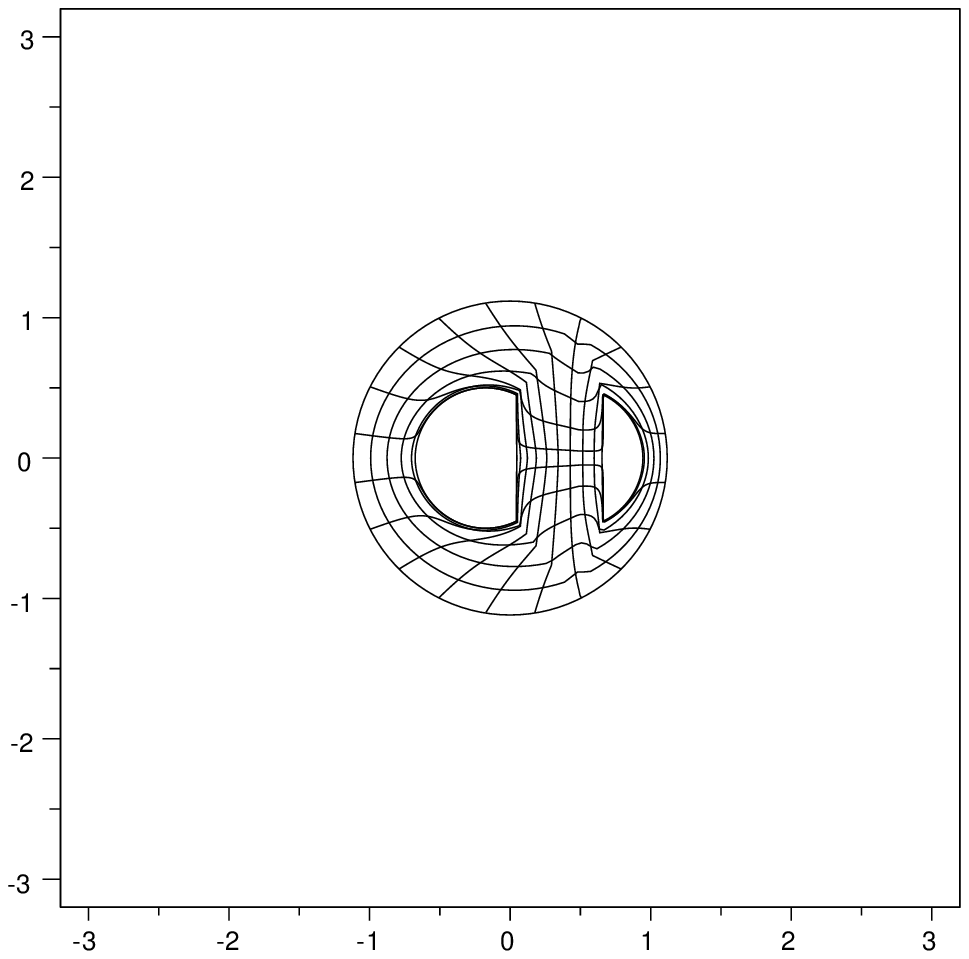}\quad\ \;}
  \subfigure[$\mu=1.5$] {\quad\ \;\includegraphics[width=0.25\textwidth]{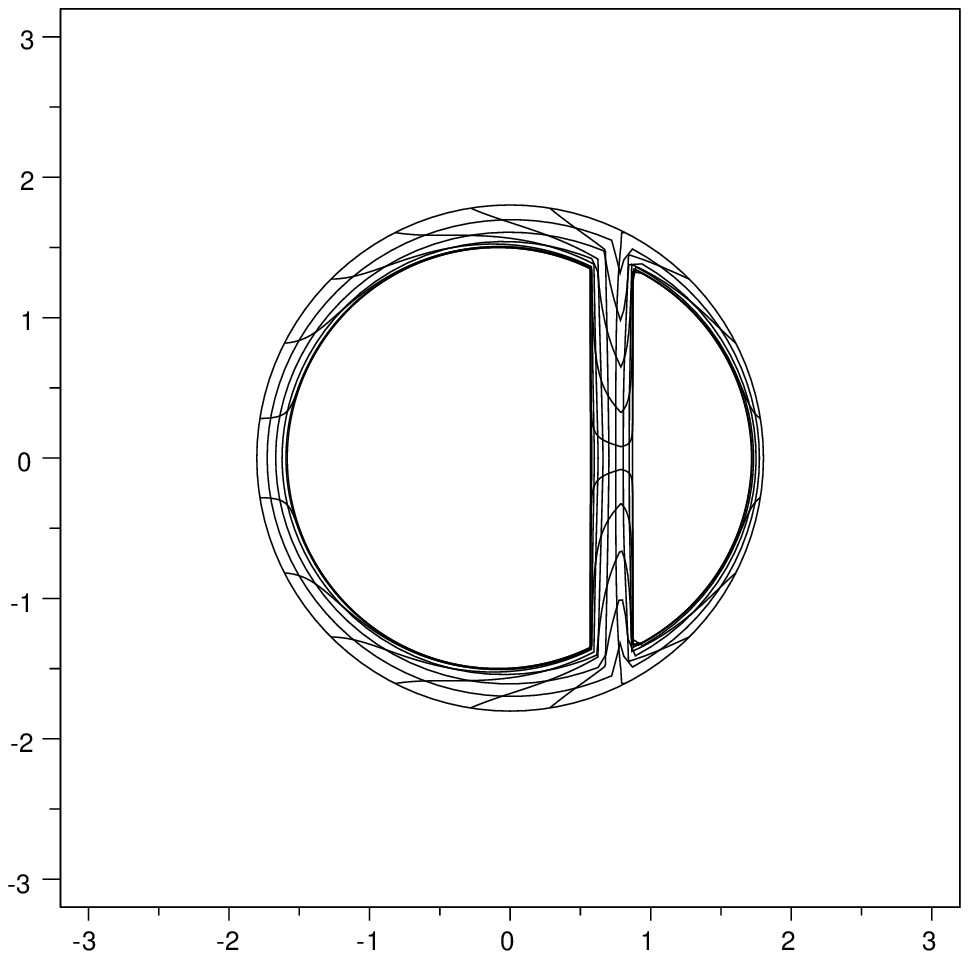}\quad\ \;}
  \subfigure[$\mu=3$] {\quad\ \;\includegraphics[width=0.25\textwidth]{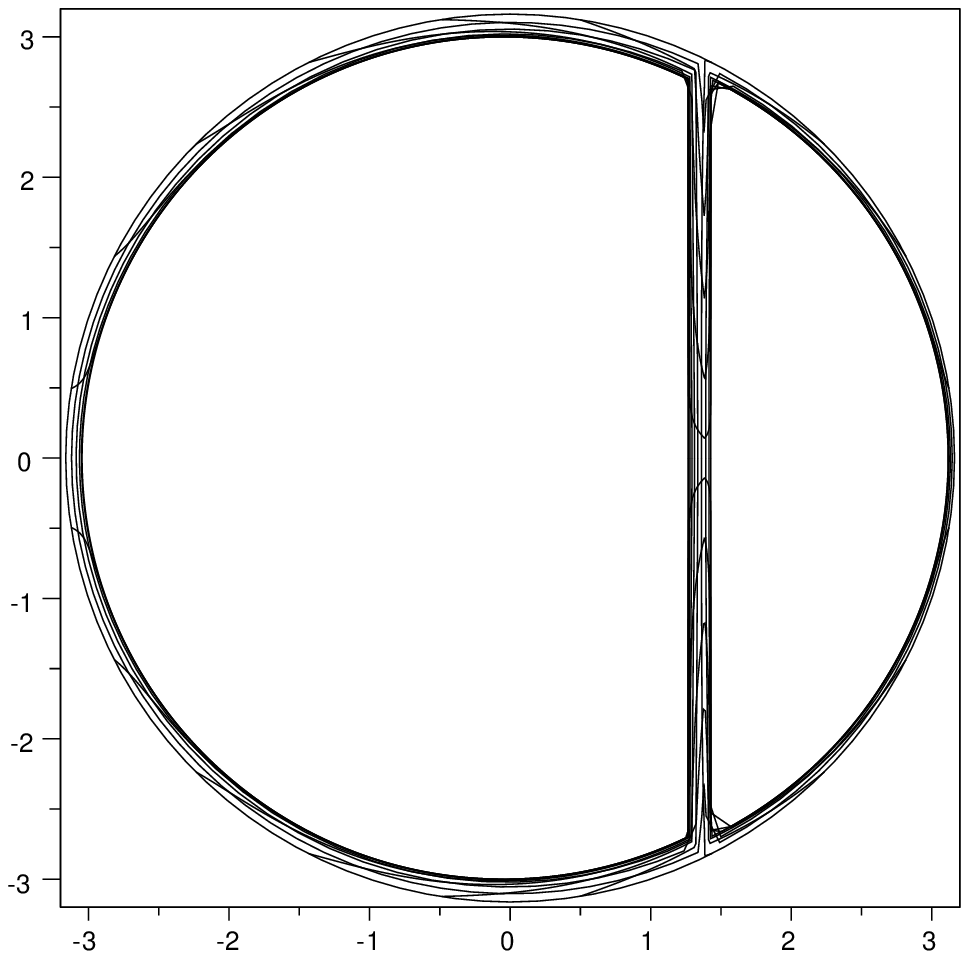}\quad\ \;}
  \caption{\label{fig:distmu}Incompressible deformation $\vec u:
B(\vec 0, d)\setminus \{\vec{a}_1,\vec a_2\}
\to \R^2$, $d=|\vec a_2 - \vec a_1|$,
opening distorted
cavities of
volumes $v_1+\pi \ep_1^2$,
 $v_2+\pi\ep_2^2$; deformed configuration for increasing values of
the displacement load ($\mu:=\sqrt{\frac{v_1+v_2}{\pi d^2}}$). Choice of parameters: $d=1$, $\frac{v_2}{v_1}=0.3$.}
\end{figure}

So in practice, if $d$ is small compared to the volumes,
the circles are not all mapped to exact circles, the inclusion and disjointness are preserved,
but some distortion in the shape of the images has to be created
{\it either} for the ``balls  before merging" i.e. $E_1$ and $E_2$ -- this corresponds to what is
sketched on Figure \ref{fig:distmu} --
{\it or} for the ``balls after merging" i.e.\ $E$ -- this corresponds
to what is sketched in Figure \ref{fig:sphmu}
(the situations of Figures \ref{fig:distmu} and \ref{fig:sphmu} correspond to the test-maps we will use
to get energy upper bounds, see below Section \ref{sec:upperbd}).

\begin{figure}[hbt!] \vspace{0.11cm}
  \begin{minipage}{0.20\textwidth} \includegraphics[width=1\textwidth]{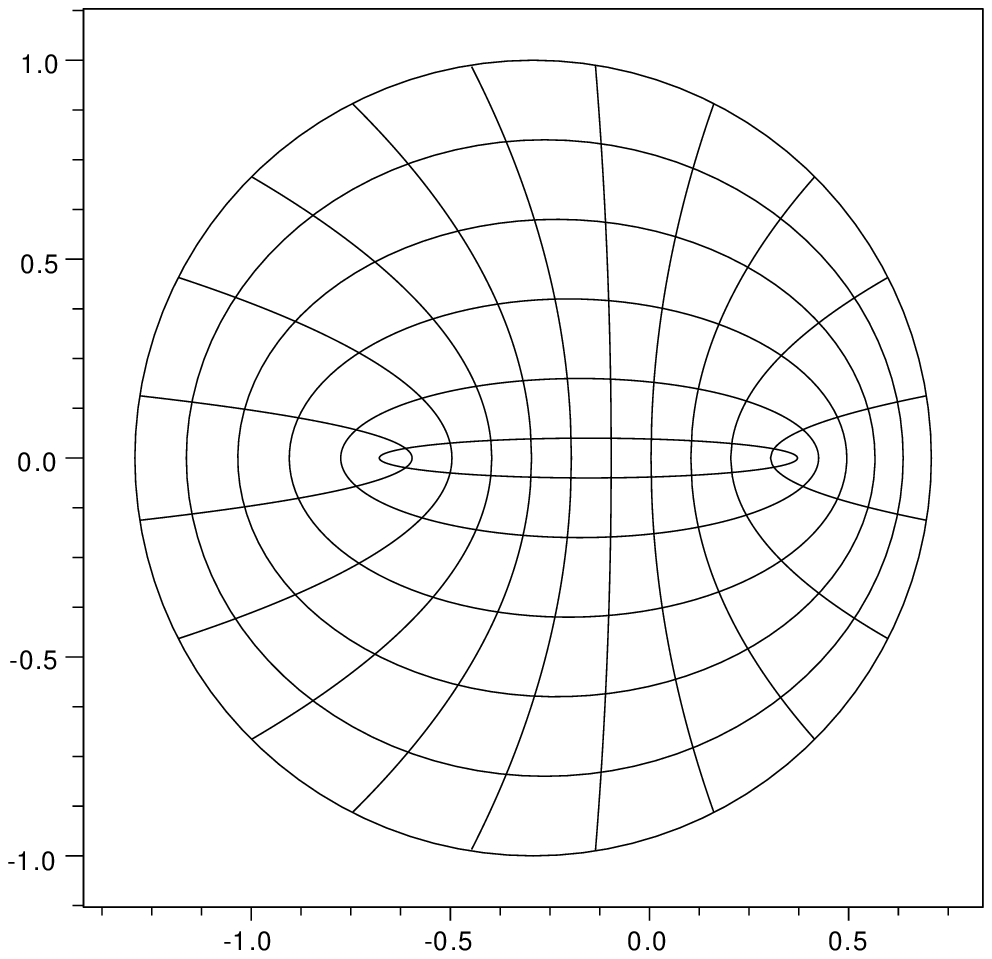} \end{minipage} \qquad\
  \begin{minipage}{0.20\textwidth} \includegraphics[width=1\textwidth]{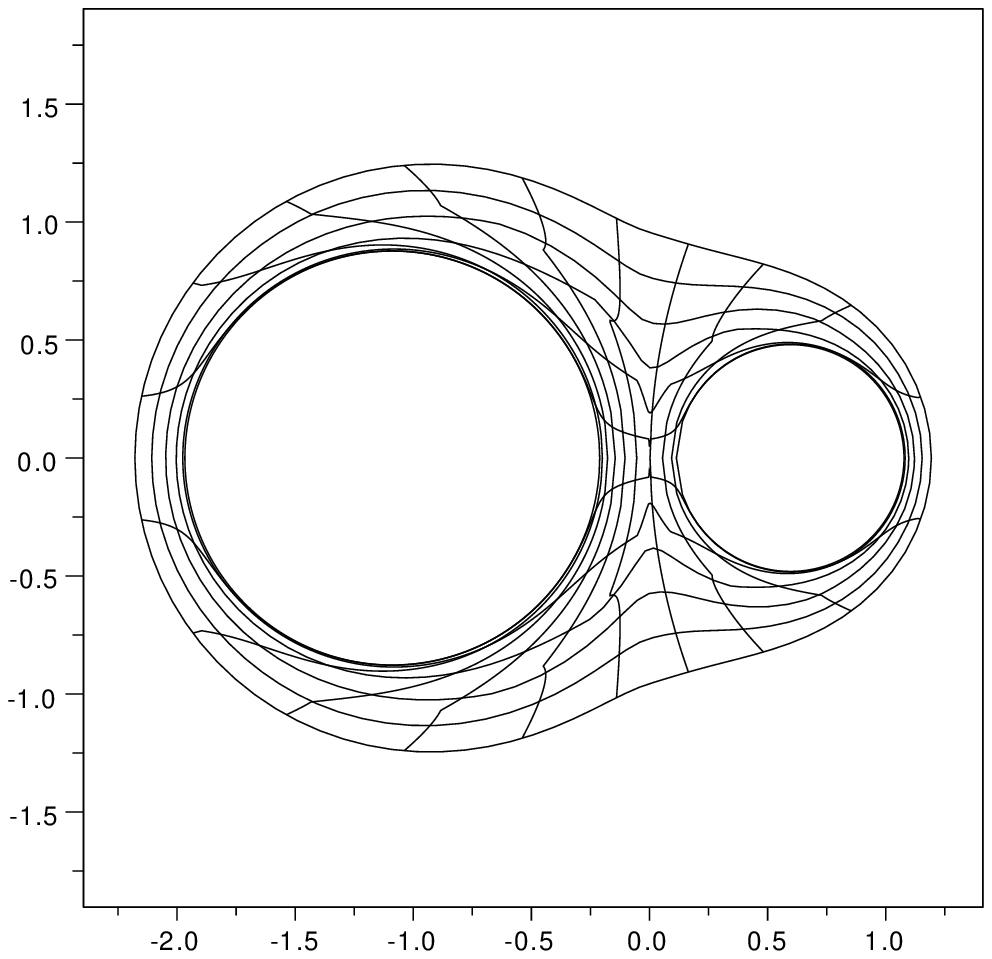} \end{minipage} \qquad\
  \begin{minipage}{0.20\textwidth} \includegraphics[width=1\textwidth]{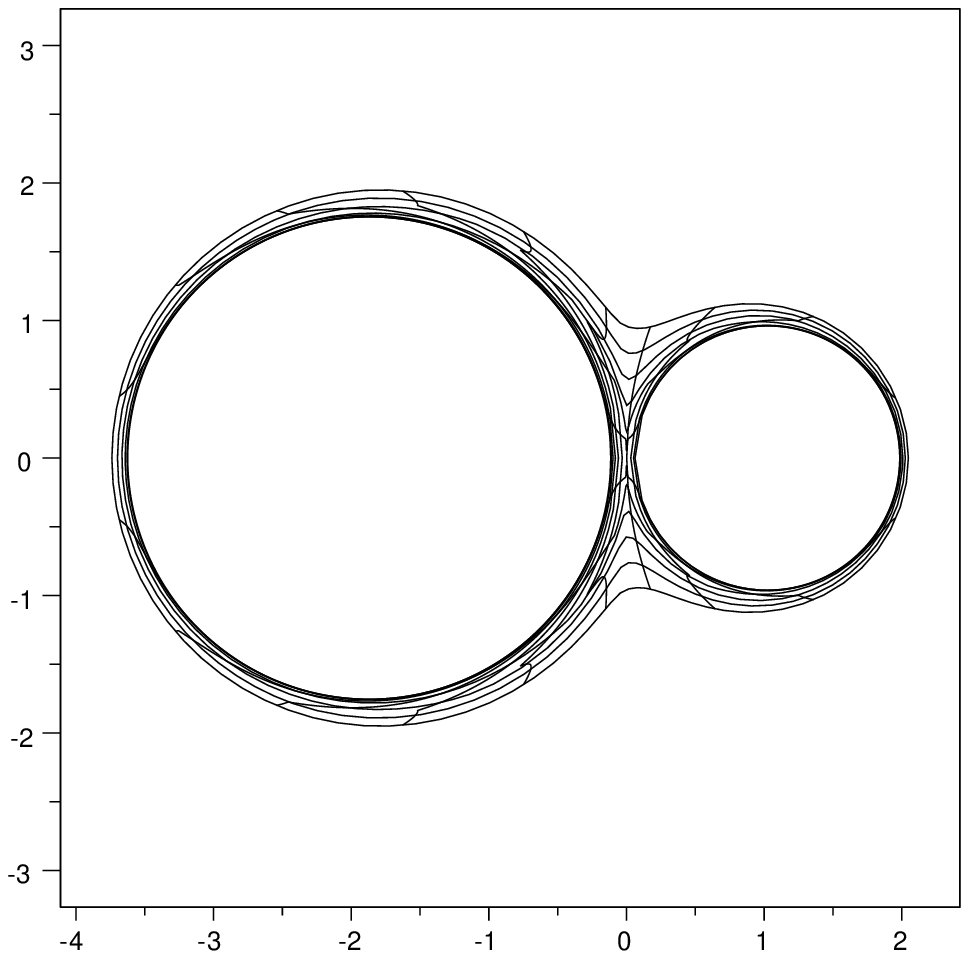} \end{minipage} \qquad\
  \begin{minipage}{0.20\textwidth} \includegraphics[width=1\textwidth]{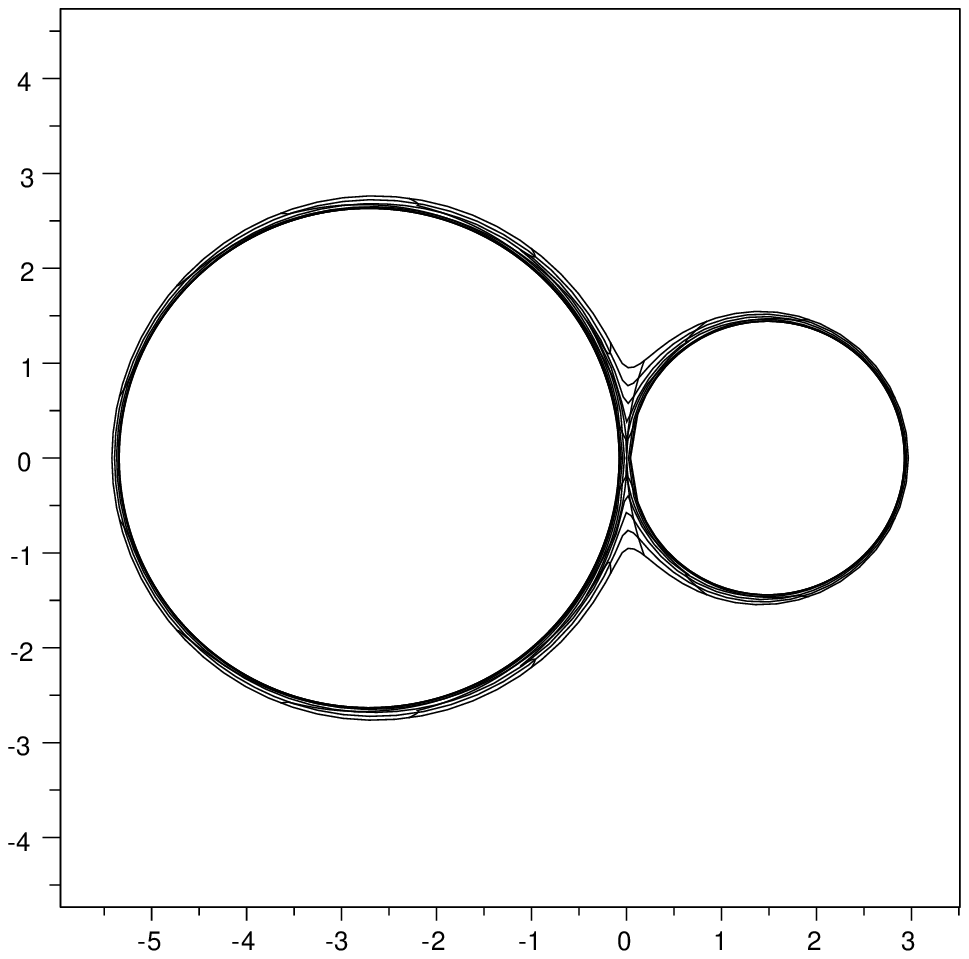} \end{minipage} \vspace{0.1cm} \\

 \begin{minipage}{0.20\textwidth} \footnotesize \centering  (a)\ Reference configuration  \end{minipage} \qquad\
  \begin{minipage}{0.20\textwidth} \footnotesize \centering  (b)\  Deformed configuration, $\mu=1$ \end{minipage} \qquad\
  \begin{minipage}{0.20\textwidth} \footnotesize \centering  (c)\ Deformed configuration, $\mu=2$ \end{minipage} \qquad\
  \begin{minipage}{0.20\textwidth} \footnotesize \centering  (d)\ Deformed configuration, $\mu=3$ \end{minipage}
  \caption{\label{fig:sphmu}Incompressible deformation of $B(\mathbf 0, d)$, $d:=|\vec a_2-\vec a_1|$,
 for increasing values of $\mu:=\sqrt\frac{v_1+v_2}{\pi d^2}$.
Final cavity volumes $v_1$ and $v_2$ given by $d=1$, $\frac{v_2}{v_1}=0.3$.}
\end{figure}

A convenient tool to quantify how much these sets differ from balls, which is what we exactly mean by ``distortion",  is the following
\begin{definition}
 The Fraenkel asymmetry of a measurable set $E\subset \R^n$
is defined as
\begin{align*}
D(E):=\min_{\vec x\in \R^n} \frac{|E\triangle B(\vec x, r_{E})|}{|E|},
\quad \text{with}\ r_E\  \text{such that}\ |B(\vec x, r_{E})|=|E|
\end{align*} where $\triangle$ denotes the symmetric difference between sets.
\end{definition}
Note that $D(E)$ is a scale-free quantity which depends 
not on the size of $E$,
but on its shape.

The following proposition, which we shall prove in Section \ref{se:distortions},
allows to make the observations above quantitative in terms of the distortions.
\begin{proposition} \label{pr:distortions}
Let $E$, $E_1$, and $E_2$ be sets of positive measure in $\R^n$, $n\geq 2$ such that
$E\supset E_1 \cup E_2$ and $E_1\cap E_2 =\varnothing$, and assume without loss of generality that $|E_1|\ge |E_2|$. Then
\begin{multline*} 
 \frac{|E|D(E)^\frac{n}{n-1} + |E_1|D(E_1)^\frac{n}{n-1} + |E_2|D(E_2)^\frac{n}{n-1}}{|E| + |E_1 \cup E_2|} \\
\geq
C_n \left(  \frac{|E_2|}{|E_1|+|E_2|}\right)^{\frac{n}{n-1}}
\left (\frac{(|E_1|^\frac{1}{n}+|E_2|^\frac{1}{n})^n
 -|E|}{(|E_1|^\frac{1}{n}+|E_2|^\frac{1}{n})^n
 - |E_1\cup E_2|}\right )^\frac{n(n+1)}{2(n-1)}\end{multline*}
for some constant $C_n>0$ depending only on $n$.
\end{proposition}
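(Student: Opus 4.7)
The plan is to reduce Proposition \ref{pr:distortions} to a purely geometric inequality about three balls, combined with a set-theoretic estimate and a convexity argument. Let $B, B_1, B_2 \subset \R^n$ denote the balls of the same volumes as $E, E_1, E_2$ realizing the Fraenkel asymmetries, so that $|E \triangle B| = |E| D(E)$ and $|E_i \triangle B_i| = |E_i| D(E_i)$ for $i=1,2$. Introduce
\[
\mathcal F := |B_1 \cap B_2| + |(B_1 \cup B_2) \setminus B|,
\]
which would vanish precisely when the approximating balls are in the ideal ``$B_1, B_2$ disjoint inside $B$'' configuration. The first, elementary, step bounds $\mathcal F$ from above in terms of the asymmetries. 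Splitting $B_1 \cap B_2 = (B_1 \cap B_2 \cap E_1) \cup (B_1 \cap B_2 \setminus E_1)$, the disjointness $E_1 \cap E_2 = \varnothing$ puts the first piece inside $B_2 \setminus E_2$ and the second inside $B_1 \setminus E_1$, so $|B_1 \cap B_2| \leq \tfrac12 (|E_1| D(E_1) + |E_2| D(E_2))$. A parallel decomposition, using $E_1 \cup E_2 \subset E$, gives $|(B_1 \cup B_2) \setminus B| \leq \tfrac12 (|E| D(E) + |E_1| D(E_1) + |E_2| D(E_2))$, whence
\[
\mathcal F \leq \tfrac12 |E| D(E) + |E_1| D(E_1) + |E_2| D(E_2).
\]

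The crucial step is the matching geometric lower bound. Writing $V_0 := |E|$, $V_i := |E_i|$, $r_i := (V_i/\omega_n)^{1/n}$, and
\[
\rho := \frac{(V_1^{1/n}+V_2^{1/n})^n - V_0}{(V_1^{1/n}+V_2^{1/n})^n - V_1 - V_2}
\]
(which is nonnegative exactly in the ``obstruction regime'' $V_0 \leq (V_1^{1/n}+V_2^{1/n})^n$, the only case where the proposition is nontrivial), I would prove the \emph{geometric lemma}: for any three balls in $\R^n$ with volumes $V_0 \geq V_1 \geq V_2$,
\[
\mathcal F \geq c_n V_2 \rho^{(n+1)/2}.
\]
By rotational symmetry of $B$ one may take the three centers collinear and parametrize configurations by the outward shifts $\Delta_1, \Delta_2 \geq 0$ of $B_1, B_2$ away from the ``tangent-inside'' placement. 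The resulting pieces $B_i \setminus B$ are spherical caps of volume $\sim r_i^{(n-1)/2} \Delta_i^{(n+1)/2}$, while the lens $B_1 \cap B_2$ has volume $\sim (r_1 r_2/(r_1+r_2))^{(n-1)/2} (2\delta - \Delta_1 - \Delta_2)^{(n+1)/2}$, with $\delta := r_1+r_2-r_0 > 0$. A two-variable calculus minimization of this homogeneous expression in $(\Delta_1, \Delta_2)$ yields an infimum of order $(r_1 r_2/(r_1+r_2))^{(n-1)/2}\delta^{(n+1)/2}$, and the Taylor expansion $(V_1^{1/n}+V_2^{1/n})^n - V_0 \sim n\omega_n (r_1+r_2)^{n-1}\delta$ converts this to the desired form $c_n V_2 \rho^{(n+1)/2}$. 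Non-perturbative configurations ($\delta$ comparable to $r_{\min}$, or $B_2 \subset B_1$) are covered by the coarser bound $\mathcal F \geq V_1 + V_2 - V_0$ combined with $\rho \leq 1$. This geometric step is the main obstacle, since the minimum of $\mathcal F$ over placements must be controlled uniformly in $n$, in the volume ratio $V_2/V_1 \in (0,1]$, and across all ``exotic'' placements in which both $B_1, B_2$ extend outside $B$.

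Once the two bounds on $\mathcal F$ are combined one has $\sum_{i \in \{0,1,2\}} |E_i| D(E_i) \gtrsim V_2 \rho^{(n+1)/2}$, with the convention $E_0 := E$. Applying Jensen's inequality to the convex function $x \mapsto x^{n/(n-1)}$ with the probability weights $p_i := |E_i|/\sum_j |E_j|$,
\[
\frac{\sum_i |E_i| D(E_i)^{n/(n-1)}}{\sum_i |E_i|}
\geq \left(\frac{\sum_i |E_i| D(E_i)}{\sum_i |E_i|}\right)^{\!n/(n-1)}
\geq c_n' \left(\frac{V_2}{\sum_i |E_i|}\right)^{\!n/(n-1)} \rho^{\,n(n+1)/[2(n-1)]}.
\]
To conclude, the convexity of $t \mapsto t^n$ yields $(V_1^{1/n}+V_2^{1/n})^n \leq 2^{n-1}(V_1+V_2)$, so in the obstruction regime $V_0 \leq 2^{n-1}(V_1+V_2)$, hence $\sum_i|E_i| \leq (2^{n-1}+1)(V_1+V_2)$. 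This replaces $V_2/\sum_i|E_i|$ by a dimensional constant times $V_2/(V_1+V_2)$ and recovers the inequality of Proposition \ref{pr:distortions}.
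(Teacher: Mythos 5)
Your overall architecture is essentially the paper's. Your deficit functional $\mathcal F=|B_1\cap B_2|+|(B_1\cup B_2)\setminus B|$ equals $|B_1\setminus B|+|B_2\setminus B|+|B\cap B_1\cap B_2|=|B_1|+|B_2|-|B\cap(B_1\cup B_2)|$, i.e.\ one half of the quantity $\|\chi_B-\chi_{B_1}-\chi_{B_2}\|_{L^1}-(|B|-|B_1|-|B_2|)$ that the paper isolates in Lemmas \ref{le:dist1} and \ref{le:twoways}; your upper bound $\mathcal F\le \tfrac12|E|D(E)+|E_1|D(E_1)+|E_2|D(E_2)$ is a correct variant of Lemma \ref{le:dist1} (set inclusions instead of the $L^1$ triangle inequality); and the concluding Jensen step with weights $|E_i|/\sum_j|E_j|$ together with $(V_1^{1/n}+V_2^{1/n})^n\le 2^{n-1}(V_1+V_2)$ is exactly the paper's. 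Your ``geometric lemma'' $\mathcal F\ge c_nV_2\rho^{(n+1)/2}$ is also a true statement, equivalent (after the mean-value-theorem and binomial manipulations the paper performs at the end of Section \ref{se:distortions}) to the radius-form estimate \eqref{eq:optInt} of Lemma \ref{le:auxprblm}.

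The gap is precisely in that geometric lemma, which is the heart of Proposition \ref{pr:distortions}. Minimizing $\mathcal F$ over placements is exactly the maximization problem \eqref{eq:max_int2} over \emph{all} positions of the centres of $B_1,B_2$ relative to $B$, a $2n$-parameter family; you replace it, without justification, by the two-parameter family of opposite outward shifts from the internally tangent position. ``Rotational symmetry of $B$'' does not yield collinearity of the three centres: in the paper this is an optimality argument (Steps 1--2 of Lemma \ref{le:auxprblm}), which needs the differentiation formula of Lemma \ref{le:areadiff} and a preliminary localization of $B_1\cap B_2$ relative to $\partial B$ so that the triple-intersection term has vanishing first variation; and even among collinear configurations one must rule out, e.g., both balls shifted to the same side, a ball pushed further inward, one ball disjoint from $B$, or $B_2\subset B_1$, before the two-caps-plus-lens model applies, which is the content of Step 3 there. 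Your proposed fallback for the remaining cases, $\mathcal F\ge V_1+V_2-V_0$, is vacuous here: since $E\supset E_1\cup E_2$ with $E_1\cap E_2=\varnothing$, one always has $V_0\ge V_1+V_2$, so this bound cannot cover, for instance, the regime $\delta\sim r_2$ (where $\rho$ may be close to $1$ and one genuinely needs $\mathcal F\gtrsim V_2$, which does follow from the full geometric estimate but not from your coarse bound). In short, the perturbative two-variable minimization within your family does reproduce the correct order $(r_1r_2/(r_1+r_2))^{(n-1)/2}\delta^{(n+1)/2}$, but the reduction of the infimum over all placements to that family --- which is where the paper spends essentially all of Section \ref{se:distortions} --- is missing, and it is not supplied by symmetry considerations alone.
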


The fact that $E_1 , E_2, E$ cannot simultaneously be balls is made explicit by the fact that  $D(E_1)$, $D(E_2)$, $D(E)$ cannot all vanish unless the right-hand side is negative, which can happen only if $|E|$ is large relative to $|E_1|$ and $|E_2|$.
The first factor in the estimate degenerates only when one of the sets is very small compared to the other.

Note that such a  geometric constraint  is also true for more than two merging balls, so in principle we could treat (with more effort) the case of more than two cavities, however the estimates would degenerate as the number of cavities gets large.

These estimates on the distortions are useful for us thanks to the
following improved isoperimetric inequality, precisely expressed in terms of the Fraenkel asymmetry:
\begin{proposition}[Fusco-Maggi-Pratelli \cite{FuMaPr08}] \label{pr:iso}
For every Borel set $E\subset \R^n$
\begin{equation*}
\Per E \geq n\omega_n^\frac{1}{n}|E|^\frac{n-1}{n}(1+C D(E)),
\end{equation*}
where $C$ is a universal constant. 
\end{proposition}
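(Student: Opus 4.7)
Since this is the sharp quantitative isoperimetric inequality of Fusco--Maggi--Pratelli, the plan is to follow the now-standard two-step strategy: establish it directly for sets close to a ball, then reduce the general case to this. For the first step, following Fuglede, I would parametrize $\partial E$ as a normal graph $\{(1+u(\omega))\omega : \omega \in S^{n-1}\}$ with $\|u\|_{C^1}$ small. After normalizing so that $|E|=\omega_n$ and placing the barycentre of $E$ at the origin (which kills the zeroth and first spherical-harmonic modes of $u$), Taylor-expanding the area element of $\partial E$ to second order in $u$ yields
\[
\Per E - \Per B_1 \;=\; \tfrac{1}{2} \int_{S^{n-1}} \bigl(|\nabla_T u|^2 - (n-1) u^2\bigr) \dd \mathcal{H}^{n-1} \;+\; O(\|u\|_{C^1}^3).
\]
The spectral gap of $-\Delta_{S^{n-1}}$ on the orthogonal complement of its first two eigenspaces gives $\int|\nabla_T u|^2 \geq \lambda \int u^2$ with $\lambda > n-1$, so $\Per E - \Per B_1 \gtrsim \|u\|_{L^2}^2$. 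Since $D(E) \lesssim \|u\|_{L^1} \lesssim \|u\|_{L^2}$, this proves the estimate in the nearly-spherical regime (in fact with the sharp exponent $2$ on $D(E)$, a fortiori in the form stated here).

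To pass to general Borel sets, the cleanest modern route is the selection principle of Cicalese--Leonardi: argue by contradiction and assume the inequality fails along a sequence $E_k$ normalized by $|E_k|=\omega_n$; then use Ekeland's variational principle to replace each $E_k$ by a competitor $\tilde E_k$ which is simultaneously $L^1$-close to $E_k$, close to a ball, and a $\Lambda$-quasiminimizer of perimeter under the volume constraint. The De~Giorgi--Almgren regularity theory for such quasiminimizers then forces $\partial \tilde E_k \to \partial B_1$ in $C^{1,\alpha}$, placing $\tilde E_k$ in Fuglede's regime; Step~1 produces the sought contradiction, provided one checks that Ekeland's perturbation does not spoil the ratio between isoperimetric deficit and Fraenkel asymmetry.

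The principal obstacle is precisely this second step: one must upgrade the $L^1$ convergence of sets supplied by compactness of sets of finite perimeter to $C^{1,\alpha}$ convergence of their boundaries, which draws on the full De~Giorgi--Almgren regularity theory for $\Lambda$-minimizers. The original proof of Fusco--Maggi--Pratelli avoids this machinery at the cost of an intricate iterated Steiner symmetrization along $n$ orthogonal directions, chosen so as to keep quantitative control of both the perimeter deficit and $D(E)$ through each step; one thereby reduces to a set symmetric enough that a variant of Fuglede's calculation applies. In either approach, the core difficulty is to reduce, quantitatively, an arbitrary set of finite perimeter to the nearly-spherical setting without losing the correct dependence on $D(E)$.
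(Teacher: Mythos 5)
A preliminary remark on the comparison you were asked for: the paper does not prove Proposition \ref{pr:iso} at all; it is imported as a black box from Fusco--Maggi--Pratelli \cite{FuMaPr08}. So your sketch can only be measured against the literature, and as a reconstruction of that literature its architecture is right (Fuglede's second-order expansion for nearly spherical sets, then reduction to that regime either by the Cicalese--Leonardi selection principle with $\Lambda$-minimizer regularity, or by the original Steiner-symmetrization scheme). As written, though, it is a road map rather than a proof: the whole second step, which you yourself single out as the principal obstacle, is described but not carried out, so nothing beyond the nearly-spherical case is actually established.

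The genuine mathematical gap is the parenthetical closing your first step: obtaining the inequality with the sharp exponent $2$ is \emph{not} ``a fortiori'' the form stated here --- the implication runs the other way. Since $D(E)\le 2$ and the relevant regime is $D(E)$ small, $1+CD(E)^{2}$ is \emph{weaker} than $1+CD(E)$; a linear-in-$D(E)$ lower bound is strictly stronger than the quadratic one. In fact the statement as printed, read literally, is false for every universal constant $C$: for nearly round ellipsoids of fixed volume the isoperimetric deficit is of order $\ep^{2}$ while the Fraenkel asymmetry is of order $\ep$ (this is precisely the example showing that the exponent $2$ in \cite{FuMaPr08} is optimal), so $\Per E\ge n\omega_n^{1/n}|E|^{(n-1)/n}(1+CD(E))$ fails as $\ep\to 0$. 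Consequently no completion of your argument --- or of any argument --- can reach the linear form; what Fuglede, Fusco--Maggi--Pratelli, and Cicalese--Leonardi give is $\Per E\ge n\omega_n^{1/n}|E|^{(n-1)/n}\big(1+CD(E)^{2}\big)$, which is also the version the paper actually uses in dimension two in \eqref{improvedisoper}. Your proof should therefore end with the quadratic statement, and the proposition should be read (or corrected) with $D(E)^{2}$; note that this correction propagates, e.g.\ the exponent $\frac{p}{n-1}$ on $D$ in Lemma \ref{le:Basic} should then be $\max\{2,\frac{p}{n-1}\}$, a distinction that is immaterial precisely when $p=n=2$.
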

In dimension 2,  we thus have the improved isoperimetric inequality
\begin{equation}\label{improvedisoper}
(\Per E)^2 \ge 4\pi   |E| + C |E| D^2(E),
\end{equation} for some universal $C>0$.
 Inserting \eqref{improvedisoper} instead of \eqref{isoper} into the basic estimate \eqref{cs2} gives us
 \begin{equation}\label{isoperbis}
  \int_{\p B(\vec a,r)} \frac{|D\vu|^2}{2} \ge
\frac{|E(\vec a,r)|}{r}  + \frac{C}{r}   |E(\vec a, r)| D^2(E(\vec a, r))
\ge \frac{v}{r} + \pi r + \frac{C}{r} |E(\vec a, r)| D^2(E(\vec a, r)).
  \end{equation}
 This   then allows us to get improved estimates when integrating over $r$ (in a ball construction procedure),
 keeping track of the fact that to achieve equality, all level curves $E(\vec a,r)$
   which are images of circles during the ball construction would have  to be circles.
This way, after subtracting off  the leading order term $\sum_i v_i \log \frac{1}{\sum_{i}\ep_i}$ we can retrieve a next order ``renormalized" term that will account for the cavity interaction.
 This is expressed in the following main result.

\begin{theorem}[Lower bound] \label{th:LB} \label{th:LBn}
Given $\Omega\subset \R^n$ a bounded open set, let
$\Omega_\ep:=\Omega \setminus (\overline B_{\ep_1}(\vec a_1) \cup \overline B_{\ep_2} (\vec a_2))$,
where $\vec a_1, \vec a_2 \in \Omega$, $\ep_1, \ep_2>0$,
and assume that $B_{\ep_1}(\vec a_1)$ and $B_{\ep_2}(\vec a_2)$ are disjoint and contained in $\Omega$.
Suppose that $\vec u \in W^{1,n}(\Omega_{\ep}, \R^n)$ 
satisfies condition INV and $\Det D\vec u = \mathcal L^n$ in $\Omega_{\ep}$.
Set
\begin{align*}
  \vec a:= \frac{\vec a_1+\vec a_2}{2},
\qquad
  d:=|\vec a_1-\vec a_2|,
\qquad
  v_1:= |E(\vec a_1, \ep_1)| - \omega_n \ep_1^n,
\qquad
  v_2:= |E(\vec a_2, \ep_2)| - \omega_n \ep_2^n.
\end{align*}
Then, for all $R$ such that $B(\vec a, R)\subset \Omega$,
\begin{multline*} 
\frac{1}{n} \int_{\Omega_{\ep}\cap B(\vec a, R)}
   \left ( \left |\frac{D\vec u(\vec x)}{\sqrt{n-1}}\right |^n - 1 \right )\dd \vec x
\geq
  v_1 \log \frac{R}{2\ep_1} + v_2 \log \frac{R}{2\ep_2}
\\   \quad \nonumber
  + C (v_1+v_2) \left ( \left ( \frac{\min\{v_1,v_2\}}{v_1+v_2} \right )^\frac{n}{n-1}
      - \frac{\omega_n d^n}{v_1+v_2} \right )_+
  \log \min \left \{ \left ( \frac{v_1+v_2}{2^n \omega_n d^n} \right )^\frac{1}{n^2} ,
	\frac{R}{d},
	\frac{d}{\max\{\ep_1, \ep_2\}} \right \}
\end{multline*}
for some constant $C$ independent of $\Omega$, $\vec a_1$, $\vec a_2$, $d$, $v_1$, $v_2$, $\ep_1$, and $\ep_2$
($t_+$ stands for $\max\{0, t\}$).
\end{theorem}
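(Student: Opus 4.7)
The strategy is to sharpen the ball-construction argument of Proposition~\ref{pro1} using the improved Fusco--Maggi--Pratelli isoperimetric inequality (Proposition~\ref{pr:iso}), and then to exploit the geometric obstruction of Proposition~\ref{pr:distortions} at suitably coupled scales. As a preliminary, I would generalize the two-dimensional estimate \eqref{isoperbis} to dimension~$n$. Starting from Hadamard's inequality $J_{n-1}(D^\tau\vu)\le (|D^\tau\vu|/\sqrt{n-1})^{n-1}$ for the tangential $(n-1)$-Jacobian on $\p B(\vec a,r)$ (whose integral dominates $\Per E(\vec a,r)$), then Hölder's inequality on the sphere to pass from $L^{n-1}$ to $L^n$, and finally Proposition~\ref{pr:iso} raised to the $n/(n-1)$ power, I obtain
\[
\frac{1}{n}\int_{\p B(\vec a,r)}\Bigl|\frac{D\vu}{\sqrt{n-1}}\Bigr|^n\dd\mathcal{H}^{n-1}\ \ge\ \frac{v}{r}+\omega_n r^{n-1}+\frac{C}{r}\,|E(\vec a,r)|\,D(E(\vec a,r))^{n/(n-1)},
\]
with $v=v_i$ for spheres around $\vec a_i$ and $v=v_1+v_2$ for spheres around $\vec a$ enclosing both cavities (by incompressibility together with INV).

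Next, since $B(\vec a_i,d/2)$ are disjoint and tangentially contained in $\overline B(\vec a,d)$, the annuli $A_i:=B(\vec a_i,d/2)\setminus \overline B(\vec a_i,\ep_i)$ and $A:=B(\vec a,R)\setminus\overline B(\vec a,d)$ form a pairwise disjoint subfamily of $\om_\ep\cap B(\vec a,R)$. Integrating the above radially over each annulus, the $\omega_n r^{n-1}$ contributions exactly cancel $|A_i|/n,\ |A|/n$ from the subtracted $-1$, the $v/r$ terms give logs that telescope (as in Proposition~\ref{pro1}) to $v_1\log(R/(2\ep_1))+v_2\log(R/(2\ep_2))$, and the leftover region in $B(\vec a,d)$ contributes nonnegatively (since $|D\vu|^n\ge n^{n/2}$ by Hadamard and incompressibility) and is discarded. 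The theorem then reduces to a lower bound on
\[
I_1+I_2+I\ :=\ \sum_{i=1}^2\int_{\ep_i}^{d/2}\frac{|E(\vec a_i,r)|\,D(E(\vec a_i,r))^{n/(n-1)}}{r}\dd r\ +\ \int_d^R\frac{|E(\vec a,s)|\,D(E(\vec a,s))^{n/(n-1)}}{s}\dd s.
\]

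For each $\sigma\ge 1$, set $r(\sigma):=d/(2\sigma)$ and $s(\sigma):=d\sigma$; by condition INV and $B(\vec a_i,r(\sigma))\subset B(\vec a,s(\sigma))$, the sets $E(\vec a_1,r(\sigma))$, $E(\vec a_2,r(\sigma))$, $E(\vec a,s(\sigma))$ satisfy the hypotheses of Proposition~\ref{pr:distortions}. Under these scalings the measure $\dd\sigma/\sigma$ pulls back to $\dd r/r$ (with reversed orientation) and to $\dd s/s$ respectively, whence
\[
I_1+I_2+I\ \ge\ \int_1^{\sigma^*}B(\sigma)\,\frac{\dd\sigma}{\sigma}\qquad\text{for any }\sigma^*\le\min\{R/d,\ d/(2\max\{\ep_1,\ep_2\})\},
\]
where $B(\sigma)$ denotes the right-hand side of Proposition~\ref{pr:distortions} at those three radii. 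At $\sigma=1$ the geometric ratio inside Proposition~\ref{pr:distortions} is $\Theta(1)$ and a direct computation yields $B(1)\gtrsim (v_1+v_2)\bigl[(\min\{v_1,v_2\}/(v_1+v_2))^{n/(n-1)}-\omega_n d^n/(v_1+v_2)\bigr]_+$, furnishing the exact prefactor of the renormalized energy. The main obstacle is then to choose $\sigma^*$ as large as possible while keeping $B(\sigma)$ comparable to $B(1)$: the Proposition's numerator decays like $K-\omega_n (d\sigma)^n$ (with $K\sim (v_1+v_2)(\min/\text{sum})^{n/(n-1)}$ quantifying the volumetric obstruction), so raising to the exponent $n(n+1)/(2(n-1))$ and balancing against the scale-dependent factor $|E|+|E_1|+|E_2|$ yields an intrinsic constraint $\sigma^*\lesssim ((v_1+v_2)/(2^n\omega_n d^n))^{1/n^2}$, the additional $1/n$ in the exponent arising from the $\sigma^n$ decay inside the power. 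Taking the minimum of the three constraints on $\sigma^*$ produces the $\log\min$ factor of the theorem.
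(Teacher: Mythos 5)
Your proposal follows the same overall architecture as the paper's proof: the per-sphere improved isoperimetric estimate (your Hadamard/H\"older/Fusco--Maggi--Pratelli computation is exactly Lemma \ref{le:Basic} combined with Proposition \ref{pr:Det}), the decomposition into the three annuli around $\vec a_1$, $\vec a_2$, $\vec a$ producing the sharp terms $v_1\log\frac{R}{2\ep_1}+v_2\log\frac{R}{2\ep_2}$ as in \eqref{eq:eq1proof}, and Proposition \ref{pr:distortions} to force a lower bound on the sum of the three weighted distortion integrals. Where you genuinely deviate is the coupling step. The paper freezes the inner sets at scale $d/2$, applies Proposition \ref{pr:distortions} to $E(\vec a_1,d/2)$, $E(\vec a_2,d/2)$, $E(\vec a,r)$ for $r\in(d,\min\{\rho,R\})$, and then uses the modulus-of-continuity Lemma \ref{le:mod} to transfer the distortions to the radii $s_i\in(\ep_i,d/2)$ that actually appear in the inner integrals; the $O(\omega_n d^n)$ transfer error is precisely the origin of the subtraction $-\frac{\omega_n d^n}{v_1+v_2}$ in the prefactor. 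You instead couple the scales multiplicatively ($r_i=d/(2\sigma)$, $s=d\sigma$) and apply Proposition \ref{pr:distortions} at each $\sigma$, which dispenses with Lemma \ref{le:mod} altogether; the coupled triples are admissible by INV and the change of variables to $\dd\sigma/\sigma$ is as you say. The trade-off is that at large $\sigma$ the ``cushion'' $\omega_n(d/(2\sigma))^n$ in $|E_i|$ disappears, so the denominator offset degenerates to $g(v_1,v_2):=(v_1^{1/n}+v_2^{1/n})^n-(v_1+v_2)$, which can be tiny when $v_2\ll v_1$; your route therefore only works after first discarding the regimes where the claimed interaction term is $\le 0$ (prefactor with $(\cdot)_+$ equal to zero, or negative logarithm), whereas the paper's version retains the cushion and needs only the reduction $\omega_n d^n<v_1+v_2$.

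The one step that is asserted rather than proved is the admissible range of $\sigma$. The explanation that ``raising to the exponent $\frac{n(n+1)}{2(n-1)}$ and balancing against the scale-dependent factor $|E|+|E_1|+|E_2|$'' yields $\sigma^*\lesssim\big(\frac{v_1+v_2}{2^n\omega_n d^n}\big)^{1/n^2}$ is not the actual mechanism (neither that exponent nor the weight produces the $1/n^2$), and the claim that the ratio in Proposition \ref{pr:distortions} is $\Theta(1)$ at $\sigma=1$ is false in general: if $v_1+v_2\lesssim\omega_n d^n$ the numerator can vanish. What saves both points is the prior reduction to the regime where the prefactor is positive, i.e.\ $\omega_n d^n<v_2^{\frac{n}{n-1}}(v_1+v_2)^{-\frac{1}{n-1}}$. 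There, $g(v_1,v_2)\ge n\,v_1^{\frac{n-1}{n}}v_2^{\frac1n}\ge\frac n2 v_2>\frac n2(\omega_n d^n)^{\frac{n-1}{n}}(v_1+v_2)^{\frac1n}$, and the ratio along your coupled family stays bounded below by a constant exactly as long as $\omega_n d^n\sigma^n\lesssim g(v_1,v_2)$, which gives $\sigma^{*}\lesssim\big(\frac{v_1+v_2}{\omega_n d^n}\big)^{1/n^2}$ --- this is the true source of the $1/n^2$ and is the analogue of the paper's computation \eqref{eq:defrho}--\eqref{eq:estg} showing $\rho/d\gtrsim\big(\frac{v_1+v_2}{\omega_n d^n}\big)^{1/n^2}$. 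With these two points made precise your scheme does deliver the stated bound (up to replacing $d/\max\{\ep_1,\ep_2\}$ by $d/(2\max\{\ep_1,\ep_2\})$ inside the logarithm, a constant-size discrepancy that the paper's own \eqref{eq:prevLB} also absorbs into $C$).
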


Two main differences appear in this lower bound compared to Proposition \ref{pro1}.
First, the leading order term $(v_1+v_2)\log \frac{1}{\ep_1+\ep_2}$
has been improved to $v_1\log \frac{1}{\ep_1} + v_2 \log \frac{1}{\ep_2}$,
which shows that the energy goes to infinity as $\ep_1\to 0$ or $\ep_2\to 0$,
even if $\ep_1 + \ep_2 \not \to 0$. This term is optimal since it coincides
with the leading order term in the upper bound of Theorem \ref{th:UB} below, and in fact
it should be possible to replace $\sum_i v_i \log \frac{1}{\sum_i \ep_i}$ with
$\sum_i (v_i \log \frac{1}{\ep_i})$ in Proposition \ref{pro1} (however, this would require
a more sophisticated ball construction, and it is not immediately clear how to obtain a general result
for the case of more than two cavities).
Second,
and returning to the discusion in dimension two and choosing $\ep_1=\ep_2=\ep$,
compared to Proposition \ref{pro1} we have gained the new term
\begin{equation*}
C(v_1+v_2) \left (  \left (\frac{\min\{v_1,v_2\}}{v_1+v_2}\right)^2 - \frac{\pi d^2}{v_1+v_2} \right )
  \min\left \{\log \sqrt[4]{\frac{v_1+v_2}{4\pi d^2}}, \log \frac{R}{d}, \log \frac{d}{\ep} \right\},
\end{equation*}
This term is  of course worthless  unless $\frac{\pi d^2}{v_1+v_2} < \left (\frac{\min\{v_1,v_2\}}{v_1+v_2}\right)^2$ i.e.
$ \pi d^2 \le \frac{\min\{v_1^2,v_2^2\}}{ v_1+v_2}$.
Under that condition, it expresses an interaction between  the two cavities in terms of the distance of the cavitation points
relative to the data of $v_1, v_2 $ and $\ep$.
  As $\frac{\pi d^2}{v_1+v_2}\to 0$ the interaction  tends logarithmically to $+\infty$;
 this expresses a {\it logarithmic repulsion} between the cavities, unless the term
$\log \frac{d}{\ep}$ is the one that achieves the min above, which can only happen if $\log d$ is comparable to $\log \ep$. This expresses an {\it attraction of the cavities when they are close compared to the puncture scale}, which we believe means that two cavities thus close would energetically prefer to be merged into one.
%
This  suggests that three scenarii are energetically possible:
  \begin{description}
  \item[Scenario (i)] \label{it:scn1}
  the cavities are spherical and the cavitation points are well separated (but not necessarily the cavities themselves),
this is  the situation of
 Figure \ref{fig:sphmu}
  \item[Scenario (ii)]
 the cavitation points
are at distance $\ll 1$  but all but  one cavity are of very small volume and  hence  ``close up" in the limit $\ep \to 0$
 \item[Scenario (iii)]
 ``outer circles" (in the ball construction) are mapped into circles and cavities  (as well as cavitation points)
are  pushed together to form one equivalent round cavity, this is the situation of Figure \ref{fig:distmu}.
This seems to correspond to void coalescence (c.f.\ \cite{XuHe11,LiLi11}).
\end{description}

\subsection{Method and main results: upper bound }\label{sec:upperbd}
After obtaining this lower bound,
we show that it is close to being optimal
(at least in scale).
To do so we need to construct explicit test maps and evaluate their energy
(in terms of the parameters of the problem). The main difficulty is that these test maps have to
satisfy the incompressibility condition outside of the cavitation points, and as we mentioned previously,
there is no simple parametrization of such incompressible maps.
The main known result in that area is the celebrated result of Dacorogna and Moser \cite{DaMo90}
which provides an existence result for incompressible maps with compatible boundary conditions.
Two methods are proposed in their work, one of them constructive,
however they are not explicit enough to
evaluate the Dirichlet energy of the map.


The question we address can be phrased in the following way: given a domain with a certain  number of ``round holes" at certain distances from each other,  and another domain of same volume, with the same number of holes whose volumes are prescribed but whose positions and shapes are free; can we find an incompressible map that maps one to the other, and can we estimate its energy $\int |D\vu|^n$ in terms of the distance of the holes and the cavity volumes?

We  answer positively this question, still in the case of two holes, by using two tools:
\begin{itemize}
\item[(a)]
 a family of  explicitly defined incompressible deformations preserving angles, that we introduce
 \item[(b)] the construction of incompressible maps of Rivi\`ere and Ye \cite{RiYe94,RiYe96},
which is more tractable than Dacorogna and Moser to obtain energy estimates.\end{itemize}

We believe it would be of interest to tackle that question in a more general setting: compute the minimal Dirichlet
energy of an incompressible map between two domains with same volume, and  the same number of holes,
the holes having arbitrary shapes and sizes; and find appropriate geometric parameters to evaluate it as a function
of the domains. This question is beyond the scope of our paper however and we do not attempt to treat it in that
much generality.\\

\begin{figure}[hbtp!]
 \centering
  \subfigure[$\delta=0.1$]{\includegraphics[height=0.24\textwidth]{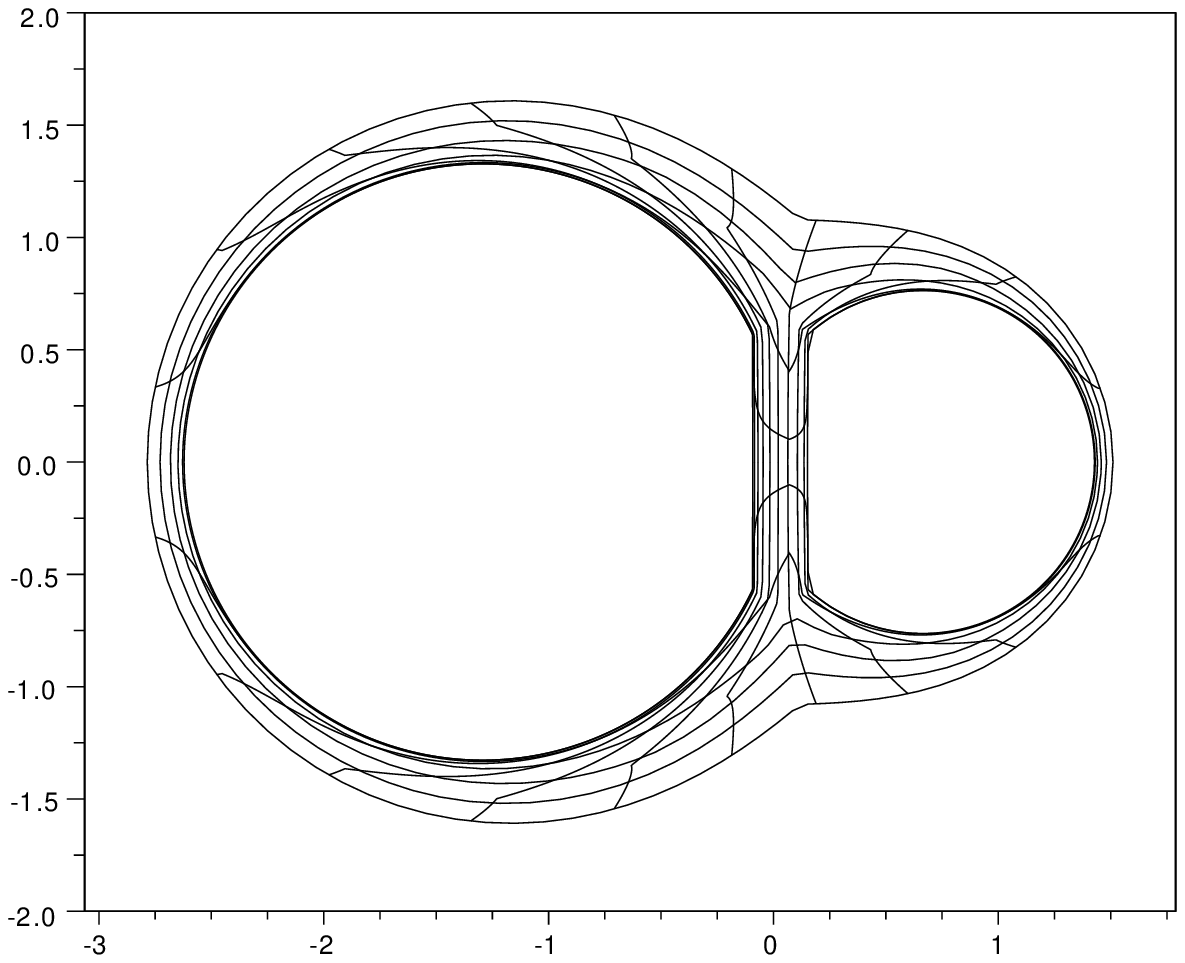}}
\qquad\quad
  \subfigure[$\delta=0.4$]{\includegraphics[height=0.24\textwidth]{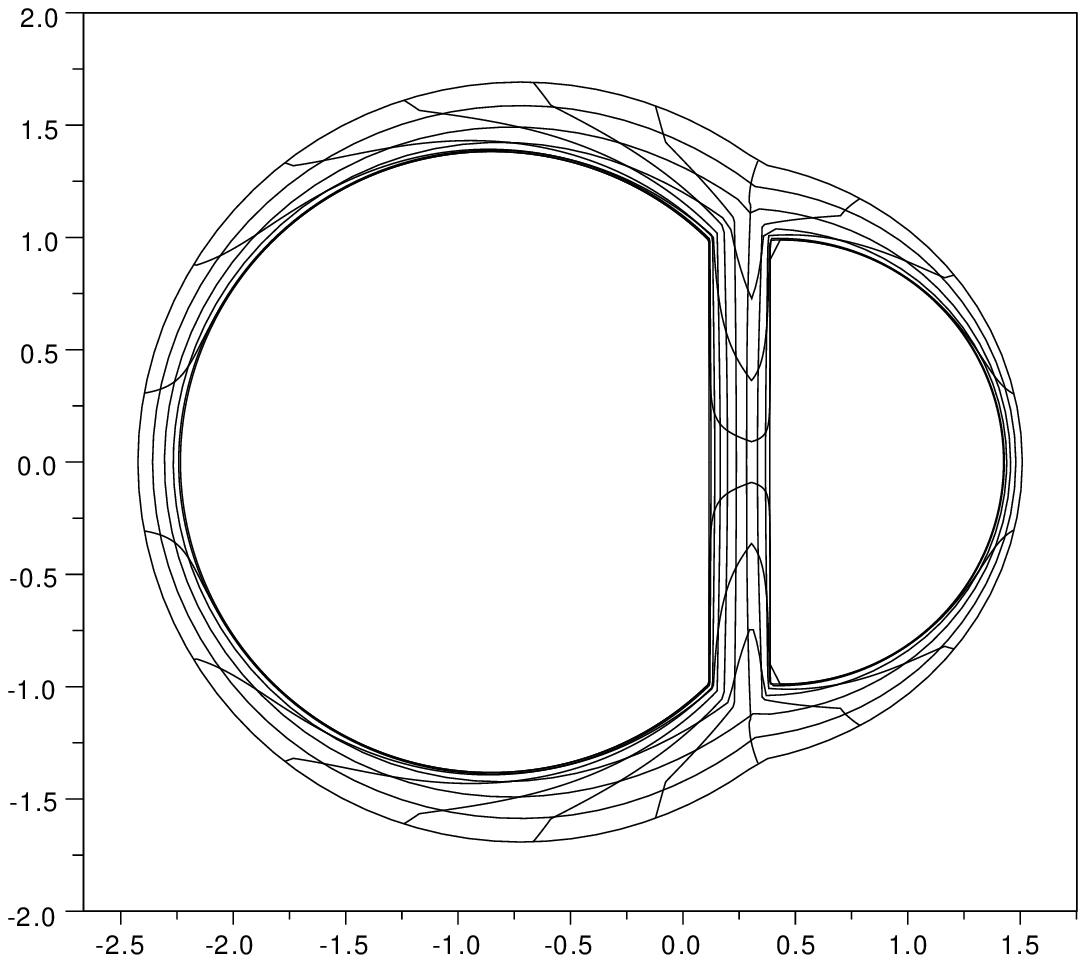}}
\qquad\quad
  \subfigure[$\delta=0.9$]{\includegraphics[height=0.24\textwidth]{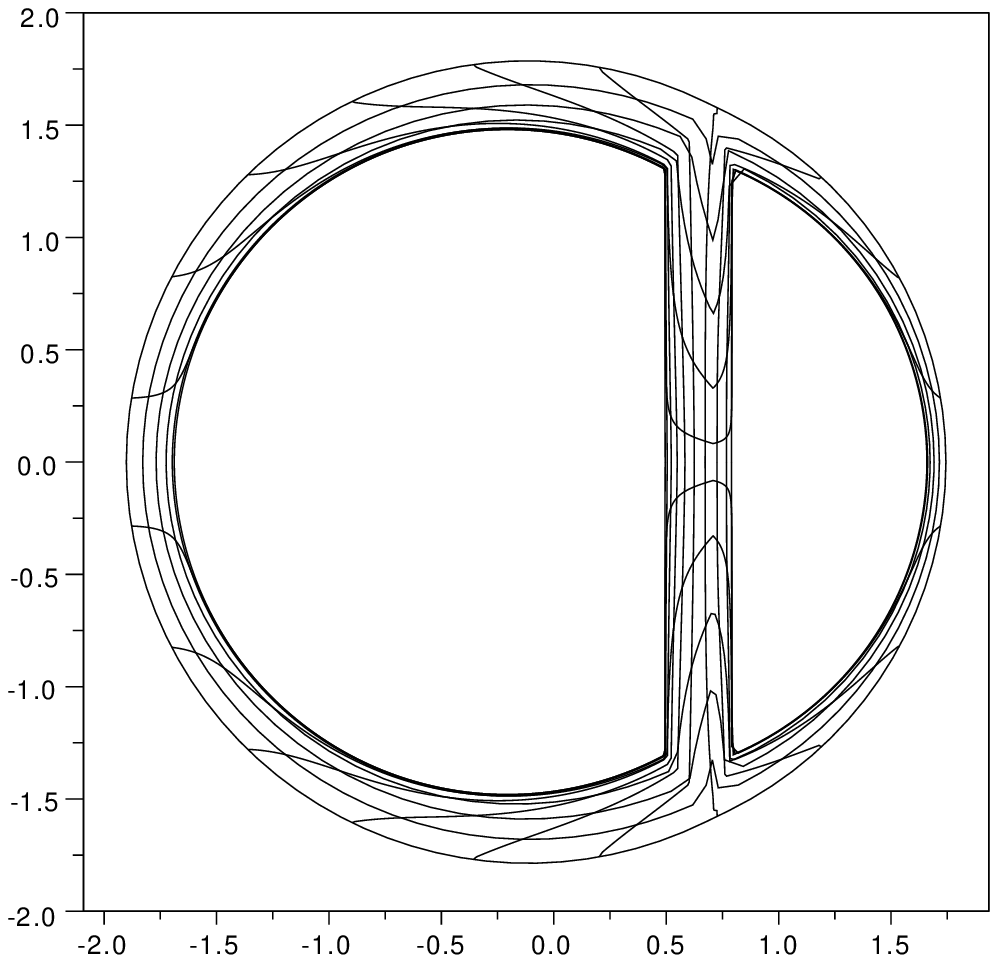}}
  \caption{\label{fig:intdelta}Transition from round to distorted cavities:
$d=1$, $\sqrt{\frac{v_1+v_2}{\pi d^2}}=1.5$, $\frac{v_2}{v_1}=0.3$.}
\end{figure}

Our main result (proved in Section \ref{se:ub1}) is the following.
\begin{theorem} \label{th:ub1} \label{th:UB}
 Let $\vec a_1, \vec a_2 \in \R^n$, $v_1\geq v_2\geq 0$, and suppose that $d:=|\vec a_1-\vec a_2|>\ep_1+\ep_2$.
Then, for every $\delta \in [0,1]$ there exists $\vec{a^*}$ in the line segment joining $\vec a_1$ and $\vec a_2$,
and a piecewise smooth map $\vec u \in C(\R^n \setminus
\{\vec a_1,\vec a_2\}, \R^n)$ satisfying condition INV,
such that
$\Det D\vec u = \mathcal L^n + v_1\delta_{\vec a_1} + v_2 \delta_{\vec a_2}$ in $\R^n$
 and
for all $R>0$
\begin{multline*}
 \underset{B(\vec a^*, R) \setminus (B_{\varepsilon_1}(\vec a_1)\cup  B_{\varepsilon_2}(\vec a_2))}{\int}
 \frac{1}{n} \left | \frac{D\vec u}{\sqrt{n-1}} \right |^n \dd\vec x
 \leq
C_1(v_1+v_2+ \omega_n R^n)
+v_1\left (\log \frac{R}{\ep_1}\right )_+ + v_2\left (\log \frac{R}{\ep_2}\right)_+
\\
+C_2(v_1+v_2)\left ( (1-\delta) \left (\log \frac{R}{d} \right)_+
+ \delta
\left(\sqrt[n]{\frac{v_2}{v_1}}  \log \frac{d}{\ep_1}
+ \sqrt[2n]{\frac{v_2}{v_1}} \log \frac{d}{\ep_2}
\right ) \right)
\end{multline*}
($C_1$ and $C_2$ are universal constants depending only on $n$).
\end{theorem}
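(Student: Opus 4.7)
My plan is to construct $\vec u$ explicitly, parametrized by $\delta\in[0,1]$, so as to interpolate between the ``round cavities'' ansatz of Figure \ref{fig:sphmu} ($\delta=0$) and the ``merged equivalent cavity'' ansatz of Figure \ref{fig:distmu} ($\delta=1$); Figure \ref{fig:intdelta} depicts the intermediate regime. I would choose a point $\vec a^*$ on the segment $[\vec a_1,\vec a_2]$, located so as to balance the two cavity volumes, and paste together three kinds of incompressible pieces: (i) a radial cavitating map $\vec x\mapsto \sqrt[n]{|\vec x-\vec a^*|^n+(v_1+v_2)/\omega_n}\,(\vec x-\vec a^*)/|\vec x-\vec a^*|$ on an outer annulus $\{r^*\le|\vec x-\vec a^*|\le R\}$, opening a single equivalent cavity of volume $v_1+v_2$; (ii) radial cavitating maps centred at $\vec a_1$ and $\vec a_2$ on inner neighbourhoods $\{\ep_i\le|\vec x-\vec a_i|\le\rho_i\}$, producing round cavities of volumes $v_i$; and (iii) in the transition zone between them, the angle-preserving incompressible family (tool (a) above) that continuously splits the centred equivalent cavity into two offset round ones. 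Rivi\`ere--Ye (tool (b)) would be used only as a ``glue'' on thin collar sub-annuli to absorb residual mismatch while preserving incompressibility.

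Next, I would estimate the Dirichlet energy region by region. The outer radial piece contributes $(v_1+v_2)\log(R/r^*)+O(v_1+v_2+\omega_n R^n)$ by a direct computation as in \eqref{cavbehaviorDu}, and each inner radial piece contributes $v_i\log(\rho_i/\ep_i)+O(v_i)$; adding these and telescoping at the scale $d$ already yields the leading $v_1\log(R/\ep_1)+v_2\log(R/\ep_2)$ of the bound. The parameter $\delta$ selects the transition scale $r^*$: for $\delta$ close to $0$ the outer radial map reaches only down to $r^*\sim d$, so the angle-preserving splitting is performed in the outer scales and produces the extra $(1-\delta)(v_1+v_2)\log(R/d)_+$ cost; for $\delta$ close to $1$ the single-cavity picture is maintained down to the inner scales and the splitting is done near the cavitation points, producing the $\delta(v_1+v_2)\bigl[\sqrt[n]{v_2/v_1}\log(d/\ep_1)+\sqrt[2n]{v_2/v_1}\log(d/\ep_2)\bigr]$ cost. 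The two different exponents $1/n$ and $1/(2n)$ should reflect a two-stage process of peeling the smaller cavity out of the larger one, with the volume ratio entering as a weight at different powers in each stage.

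The hard part will be the transition piece: I need a map that is simultaneously (a) explicit enough to yield sharp $L^n$-energy estimates matching the lower bound of Theorem \ref{th:LBn}, (b) strictly incompressible, and (c) compatible with condition INV. Purely radial maps cannot split a single cavity into two, while Rivi\`ere--Ye gives incompressibility without useful energy control; the angle-preserving family is designed precisely to bridge this gap, providing a one-parameter deformation of radial cavitating maps whose Jacobian is kept identically one and whose $L^n$-energy can be computed by integrating an explicit radial ODE, with the angular profile chosen to interpolate between a centred ball and two offset balls. Matching across spherical interfaces should then reduce to adjusting radial profiles and placing $\vec a^*$ correctly on $[\vec a_1,\vec a_2]$; condition INV is built into each piece because each is a homeomorphism of its domain onto its image, and the cavities in the target are disjoint topological balls separated from the image of the ``matter'' region.
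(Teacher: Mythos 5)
There is a genuine gap: the ``transition piece'' that you defer to the end is the entire content of the theorem, and the tools you propose cannot produce it. An angle-preserving map in the sense of Lemma \ref{le:ub1} has a single centre and opens a single cavity whose shape is a copy of the boundary of its domain of definition; it cannot ``continuously split a centred equivalent cavity into two offset round ones''. Moreover, your matching surfaces are spheres: a radial map centred at $\vec a_i$ sends $\partial B(\vec a_i,\rho_i)$ to a sphere centred at $\lambda \vec a_i$, while the outer radial map sends $\partial B(\vec a^*,r^*)$ to a sphere centred at $\lambda\vec a^*$, and no adjustment of radial profiles makes an intermediate incompressible piece match both data for free -- indeed Theorem \ref{th:LBn} shows that in the regime $\omega_n d^n\ll v_1+v_2$ any incompressible map with round inner cavities and round outer images must pay an energy of order $(v_1+v_2)\log$ of one of the scale ratios, so the cost you hope to confine to a ``thin collar'' is unavoidable and must be quantified, not glued away. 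Using Rivi\`ere--Ye (Proposition \ref{pr:RiviereYe}) on thin collars does not help: its constant depends on the reference domain and on $\|g\|_{C^{0,1}}$, the mismatch $g-1$ is of order one in the interesting regime, and an incompressible extension between the prescribed inner and outer traces need not even exist unless the collar contains enough volume (this is exactly the obstruction of Lemma \ref{le:nc}); this is why the paper uses Rivi\`ere--Ye only in Theorem \ref{th:ub2}, on a fixed thick annulus and under the volume condition \eqref{eq:R1R2}, and not in the proof of Theorem \ref{th:UB} at all. Finally, without an explicit construction you have no mechanism producing the precise factors $\sqrt[n]{v_2/v_1}$ and $\sqrt[2n]{v_2/v_1}$; ``two-stage peeling'' is a heuristic, not an estimate.

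For comparison, the paper's proof is a single global, fully explicit construction with no radial pieces and no abstract gluing: two intersecting balls are chosen so that their union has width $2d$ and their intersection has width $2d\delta$, cut into lens-shaped subdomains $\Omega_1,\Omega_2$ with $|\Omega_1|/|\Omega_2|=v_1/v_2$ (Step 1), and $\vec u$ is taken angle-preserving with centre $\vec a_1$ in $\Omega_1$, centre $\vec a_2$ in $\Omega_2$, and centre $\vec a^*$ outside $\Omega_1\cup\Omega_2$, all three pieces agreeing with $\lambda\,\id$ on $\partial\Omega_1\cup\partial\Omega_2$ by Lemma \ref{le:ub1}, so the interfaces are the lens boundaries rather than spheres and incompressibility and condition INV hold globally by construction. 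The cavities are then copies of $\partial\Omega_1,\partial\Omega_2$ (round when $\delta=0$, D-shaped when $\delta=1$), and the two terms in the estimate arise from integrating the explicit bound of Lemma \ref{le:ub1} using Lemma \ref{le:ub2}: the distortion of the outer image spheres costs $C(1-\delta)(v_1+v_2)(\log\frac{R}{d})_+$, while the distortion of the cavities costs $C(v_1+v_2)\sqrt{d_i/d}\,\frac{\rho_i-d_i}{d}\log\frac{d}{\ep_i}$, which the geometric identity $d_1(\rho_1-d_1)=d_2(\rho_2-d_2)$ and Lemma \ref{le:areal} convert into the stated powers of $v_2/v_1$. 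If you want to salvage your outline you would essentially have to reinvent this piecewise-angle-preserving construction in the transition zone, at which point the inner and outer radial pieces become superfluous.
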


If we are not preoccupied with boundary conditions but just wish to build a test configuration with cavities
of prescribed volumes and cavitation points at distance $d$,
then the above result suffices.
This is obtained by our  construction of an explicit family of incompressible maps,
which contains parameters allowing for all possible cavitation points distances $d$ and cavity volumes $v_1, v_2$.
The feature of this construction is that it allows for our almost optimal estimates,
as the shapes of the cavities are automatically adjusted to the optimal scenario according
to the ratio between $d, \ep, \sqrt{v_1}, \sqrt{v_2}$, their logs, etc,
as in the three scenarii  of the end of the previous subsection.
In other words, the construction builds cavities which, when $d$ is comparable to $\ep$, are distorted
and form one equivalent round  cavity while the deformation rapidly becomes radially symmetric
(as in Scenario (iii));
and cavities which are more and more round as $d$ gets large compared to $\ep$
(as in Scenario (i)).
For the extreme cases $\delta=1$ and $\delta=0$,
the maps are those that were presented in Figures \ref{fig:distmu} and \ref{fig:sphmu} respectively.
The result for intermediate values of $\delta$ is shown in Figure \ref{fig:intdelta}.\\

\begin{figure}[htbp!]
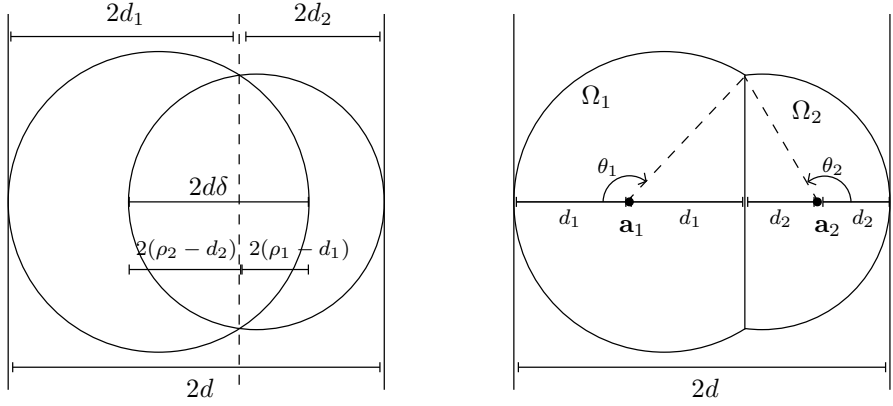

 \centering
  \subfigure{\input{upbnd.pstex_t}} \qquad \qquad
  \subfigure{\input{upbnd2.pstex_t}}
\caption{\label{fig:upbnd}Geometric construction of domains $\Omega_1$, $\Omega_2$ satisfying
$\frac{|\Omega_1|}{|\Omega_2|}= \frac{v_1}{v_2}$.}
\end{figure}

The idea of the construction is the following.
Take two intersecting balls $B(\vec{\tilde a}_1, \rho_1)$
and $B(\vec{\tilde a}_2, \rho_2)$ such that the width of their union is exactly $2d$ and the
width of their intersection is $2d\delta$,
and let $\Omega_1$ and $\Omega_2$ be as in Figure \ref{fig:upbnd} (the precise definition is given in \eqref{eq:Om1Om2}).
As will be proved in Section \ref{se:ubI},
for every $\delta \in [0,1]$ there are unique $\rho_1$ and $\rho_2$
such that $\frac{|\Omega_1|}{|\Omega_2|} = \frac{v_1}{v_2}$.
 The cavitation points $\vec a_1$ and $\vec a_2$ are suitably
placed in $\Omega_1$ and $\Omega_2$, respectively, in such a way that $|\vec a_1-\vec a_2|=d$.
It is always possible to choose $\vec {a^*}$ between $\vec a_1$ and $\vec a_2$
such that $\overline{\Omega_1\cup \Omega_2}$ is \emph{star-shaped} with respect to $\vec a^*$.
In order to define $\vec u$ in $\R^n \setminus \overline{\Omega_1\cup \Omega_2}$
we choose $\vec a^*$ as the origin and look for an \label{p:anglepreserving}\emph{angle-preserving} map
\begin{align*}
 \vec u(\vec x) = \lambda \vec a^* + f(\vec x) \frac{\vec x-\vec a^*}{|\vec x -\vec a^*|},
\qquad \lambda^n-1:= \frac{v_1+v_2}{|\Omega_1\cup \Omega_2|} = \frac{v_1}{|\Omega_1|} = \frac{v_2}{|\Omega_2|}.
\end{align*}
By so doing, we can solve the incompressibility equation $\det D\vec u=1$ explicitly, since
for angle-preserving maps the equation has the same form as in the radial case,
\begin{align*}
 \det D\vec u(\vec x) = \frac{f^{n-1}(\vec x) \frac{\partial f}{\partial r}(\vec x)}{r^{n-1}}\equiv 1,
\quad r=|\vec x-\vec {a^*}|,
\end{align*}
which we will see can be solved as
\begin{align*}
f^n (\vec x)= |\vec x -\vec a^*|^n + A\left (\frac{\vec x -\vec {a^*}}{|\vec x -\vec {a^*}|}\right) ^ n,
\end{align*}
where the function $A: \S^{n-1}\to \R$ is completely determined
if we prescribe $\vec u$ on $\partial \Omega_1 \cup \partial \Omega_2$.
Inside $\Omega_1$ and $\Omega_2$ the deformation $\vec u$ is defined analogously, taking $\vec a_1$ and $\vec a_2$ as the
corresponding origins. The resulting map creates cavities at $\vec a_1$ and $\vec a_2$ with the desired volumes, and with
exactly the same shape as $\partial \Omega_1$ and $\partial \Omega_2$.
For compatibility we impose $\vec u(\vec x) = \lambda \vec x$ on $\partial \Omega_1 \cup \partial \Omega_2$.

In the energy estimate, $(1-\delta)\log \frac{R}{d}$ is the excess energy due to the distortion of the `outer'
curves $\vec u(\partial B(\vec {a^*}, r))$, $r\in (d, R)$, and $\delta \left ( \sqrt[n]{\frac{v_2}{v_1}} \log \frac{d}{\ep_1}
+ \sqrt[2n]{\frac{v_2}{v_1}}\log \frac{d}{\ep_2}\right )$ is that due to the distortion of the curves
$\vec u(\partial B(\vec{a_i}, r))$, $r\in (\ep_i, d)$, $i=1,2$ near the cavities.
When $\delta=0$, $\overline{\Omega_1}$ and $\overline{\Omega_2}$
are tangent balls, the cavities are spherical, and the second term in the estimate vanishes.
The outer curves are distorted because their shape depends on that of $\partial (\Omega_1 \cup \Omega_2)$,
hence a price of the order of $(v_1+v_2)\log \frac{R}{d}$ is felt in the energy.
When $\delta=1$, at the opposite end,
$\Omega_1 \cup \Omega_2$ is a ball of radius $d$, the deformation is radially symmetric outside $\Omega_1 \cup \Omega_2$,
and no extra price for the outer curves is paid. In contrast, the cavities are ``D-shaped''
(they are copies of $\partial \Omega_1$ and $\partial \Omega_2$), and a price of order
$(v_1+v_2) \sqrt[2n]{\frac{v_2}{v_1}} \log \frac{d}{\ep}$ is obtained as a consequence
(in this case the excess energy vanishes as $\frac{v_2}{v_1}\to 0$, in agreement with the prediction of Theorem \ref{th:LB}).

Since the last term of the energy estimate is linear in $\delta$, by
taking\footnote{When considering boundary conditions, not all values of $\delta$ can be chosen,
see the discussion below.}
either $\delta=0$ or
$\delta=1$ (and assuming $R>d$) the estimate becomes
\begin{align*}
 C(v_1+v_2)\min\left \{ \log \frac{R}{d}, \sqrt[n]{\frac{v_2}{v_1}}  \log \frac{d}{\ep_1}
+ \sqrt[2n]{\frac{v_2}{v_1}} \log \frac{d}{\ep_2} \right \}.
\end{align*}
Comparing it against the corresponding term for the lower bound,
namely\footnote{we assume, e.g., that $v_1+v_2 < 4\pi R^2$, in order to illustrate the main point},
\begin{align*}
 C(v_1+v_2) \min \left \{ \left (\frac{v_2}{v_1}\right)^{\frac{n}{n-1}} \log \frac{R}{d},
\left (\frac{v_2}{v_1}\right)^{\frac{n}{n-1}} \log\frac{d}{\max\{\ep_1, \ep_2\}} \right \},
\end{align*}
we observe that there are still some qualitative differences. First of all, in the case when $\ep_1 \ll \ep_2$,
a term of the form $\log \frac{d}{\ep_1} + \log \frac{d}{\ep_2}$ is much larger than $\log \frac{d}{\max\{\ep_1, \ep_2\}}$.
We believe that the expression in the lower bound quantifies more accurately the effect of the distortion of the cavities,
 and that the obstacle for obtaining a comparable expression in the upper bound is
that the domains $\Omega_1$ and $\Omega_2$ in our explicit constructions are required to be star-shaped.
For example, in the case $d\sim \ep_2$, an energy minimizing deformation $\vec u$ would
try to create a spherical cavity at $\vec a_1$ (so as to prevent a term of order $\log \frac{d}{\ep_1}$ from
appearing in the energy due to the distortion of the first cavity),
and, at the same time, to rapidly
become radially symmetric (because of the price of order $\log \frac{R}{d}$ due to the distortion of the `outer' circles).
Therefore, for values of $\pi \ep_2^2\ll v_1+v_2$, the second cavity would be of the form
$B \setminus B_1$ for some balls $B_1$ and $B$ such that $B_1\subset B$, $|B_1|=v_1$, and $|B|=v_1+v_2$.
In other words, $\vec u$ must create  ``moon-shaped'' cavities, which cannot be obtained if $\vec u$ is angle-preserving.

In the second place, the interaction term in the lower bound vanishes as $\frac{v_2}{v_1}\to 0$ regardless of whether
the minimum is achieved at $\log \frac{R}{d}$ or at $\log\frac{d}{\ep}$, whereas in the upper bound this
vanishing effect is obtained only for the case of distorted cavities (when $\log \frac{d}{\ep}$ is the smallest).
This is because when $\delta=0$ and $v_1\gg v_2$, the circular
sector\footnote{we state this in two dimensions for simplicity} $\{\vec a^* + de^{i\theta}, \theta \in (\frac{\pi}{2},
\frac{3\pi}{2})\}$ is mapped to a curve $\lambda \vec a^* + f(\varphi) e^{i\varphi}$ with polar angles $\varphi$ ranging almost from $0$ to
$2\pi$.
This ``angular distortion'' necessarily produces a strict inequality in \eqref{cs1}, so in principle it could
be possible to quantify its effect in the lower bound. It is not clear, however, whether
for a minimizer an interaction term of the form $(v_1+v_2)\log \frac{R}{d}$ will always be present (in the case
when $\frac{v_2}{v_1}\to 0$), or if the fact that such a term appears
in the upper bound is a limitation of the method used for the explicit constructions.

Finally, the factor $\frac{v_2}{v_1}$ in front of $\log \frac{d}{\ep_1}$ and $\log \frac{d}{\ep_2}$
is raised to a different exponent in each term, the reason being that $\Omega_1$ and $\Omega_2$
play different roles in the upper bound construction. Provided $\delta >0$, when $\frac{v_2}{v_1}\to 0$
the first subdomain is becoming more and more like a circle (its height and its width
tend to be equal, and the distortion of the first cavity tends to vanish)
whereas $\Omega_2$ becomes increasingly distorted (the ratio between its height and its width tends to infinity).
The factor $\sqrt[2n]{\frac{v_2}{v_1}}$ in front of $\log \frac{d}{\ep_2}$ is only due to the fact
that the effect in the energy of the distortion of the cavities also depends on the size of the cavity.

\subsubsection*{Dirichlet boundary conditions}

If we want our maps to satisfy specific Dirichlet boundary conditions, then they need to be
``completed" outside of the ball $B(\vec {a^*}, R)$ of the previous theorem.
For that we use the method of Rivi\`ere and Ye, and show how to obtain explicit Dirichlet energy estimates from it.
We consider the radially symmetric loading of a ball,
but other boundary conditions could also be handled.
Let $\vec{a^*}$, $\delta$, $\rho_1$, $\rho_2$, $\Omega_1$, $\Omega_2$ be as before.
We are to find $R_1$, $R_2$,
and an incompressible diffeomorphism $\vec u:\{R_1<|\vec x -\vec {a^*}| < R_2\} \to \R^n$
such that
\begin{enumerate}[i)]
 \item $\Omega_1\cup \Omega_2 \subset B(\vec{a^*}, R_1)$ and
$\vec u|_{\partial B(\vec {a^*}, R_1)}$ coincides with the map of Theorem \ref{th:ub1}
 \item $\vec u|_{\partial B(\vec a^*, R_2)}$ is radially symmetric.
\end{enumerate}

\begin{figure}[hbt!]
\centering
\begin{minipage}{0.31\textwidth}
\begin{center}
\includegraphics[height=0.8\textwidth]{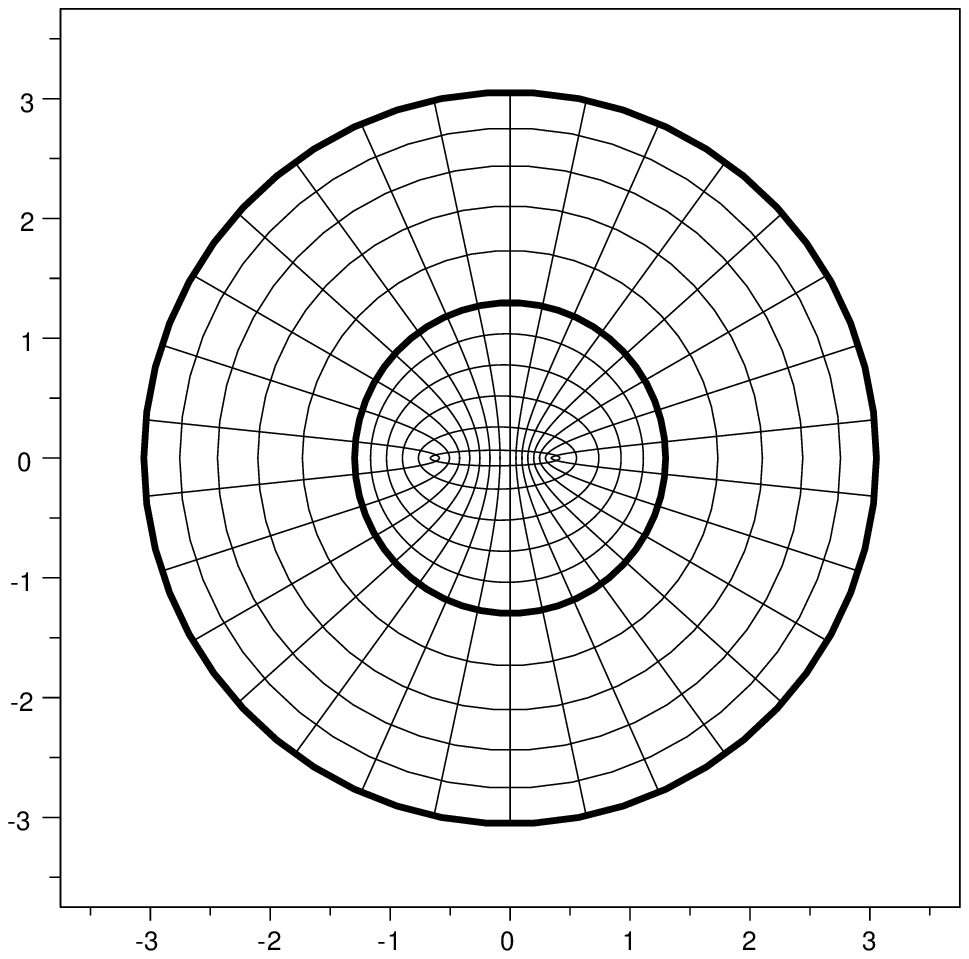}\vspace{0.2cm}\\
{\footnotesize (a) $\delta=0.1$, reference configuration,\\
 $\pi(R_2^2-R_1^2)=3.06(v_1+v_2)(1-\delta)$}\vspace{0.6cm} \\
\includegraphics[height=0.8\textwidth]{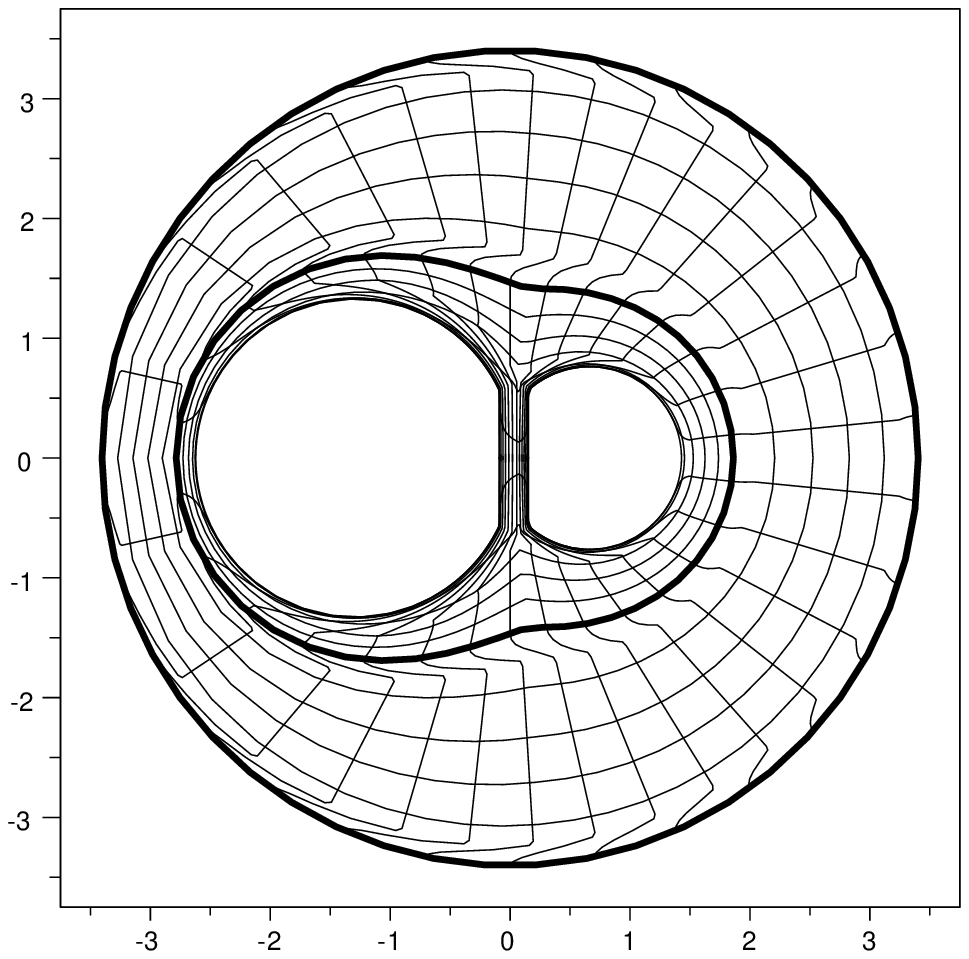}\vspace{0.2cm}\\
{\footnotesize (d) $\delta=0.1$, deformed configuration.\\ Thick line at $\vec u(\partial B(\vec a^*, R_1))$.}
\end{center}
\end{minipage}
\quad
\begin{minipage}{0.31\textwidth}
\begin{center}
\includegraphics[height=0.8\textwidth]{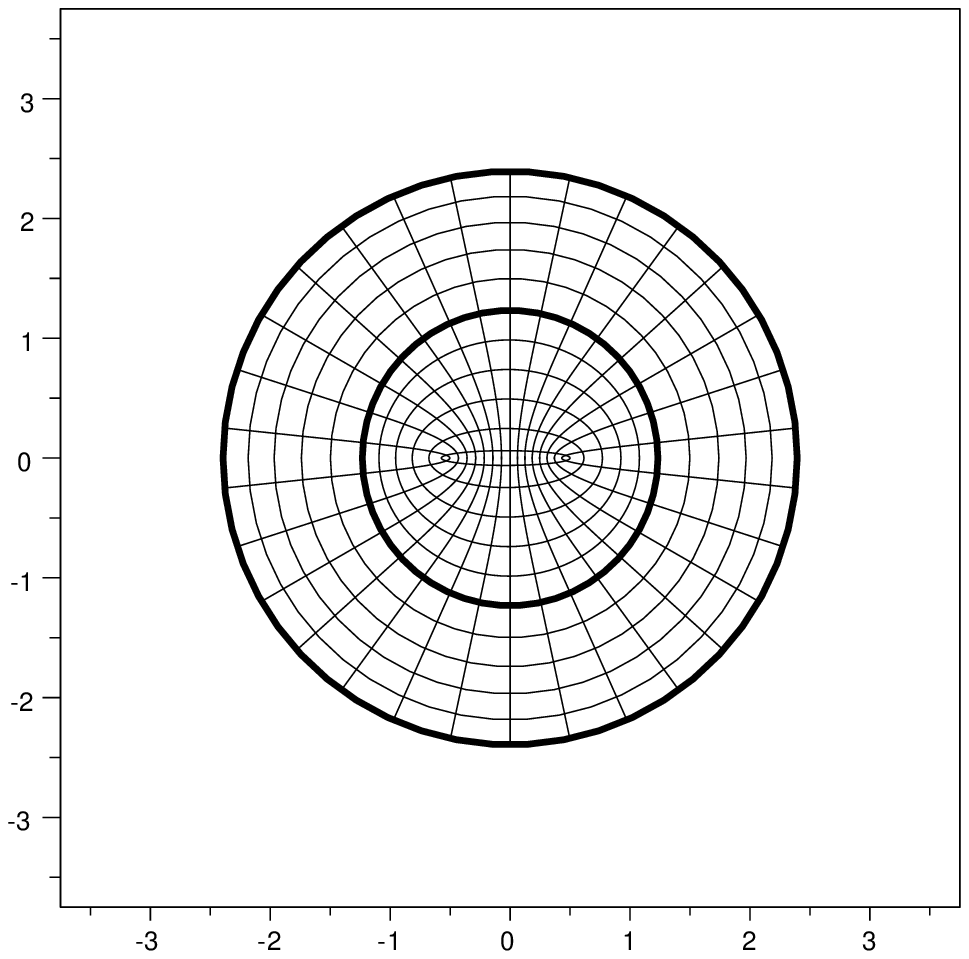}\vspace{0.2cm}\\
{\footnotesize (b) $\delta=0.4$, reference configuration,\\
 $\pi(R_2^2-R_1^2)=3.12(v_1+v_2)(1-\delta)$}\vspace{0.6cm} \\
\includegraphics[height=0.8\textwidth]{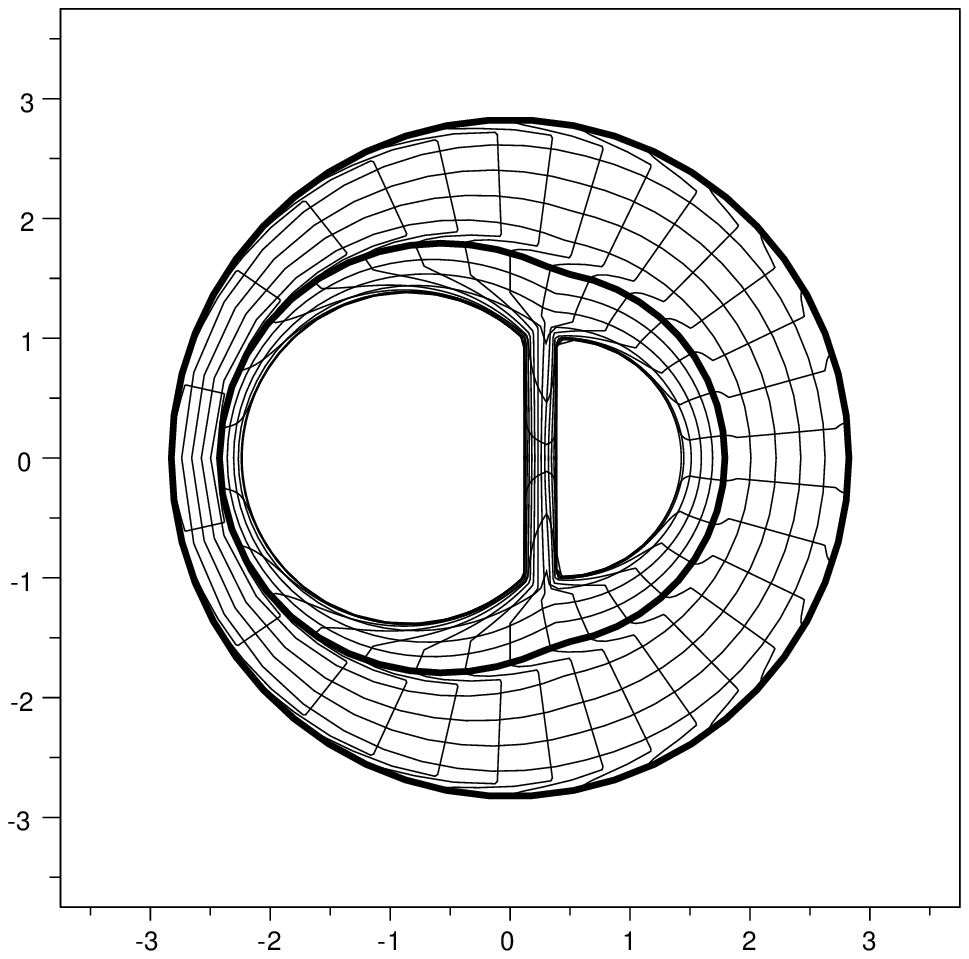}\vspace{0.2cm}\\
{\footnotesize (e) $\delta=0.4$, deformed configuration.\\ Thick line at $\vec u(\partial B(\vec a^*, R_1))$}
\end{center}
\end{minipage}
\quad
\begin{minipage}{0.31\textwidth}
\begin{center}
\quad\includegraphics[height=0.8\textwidth]{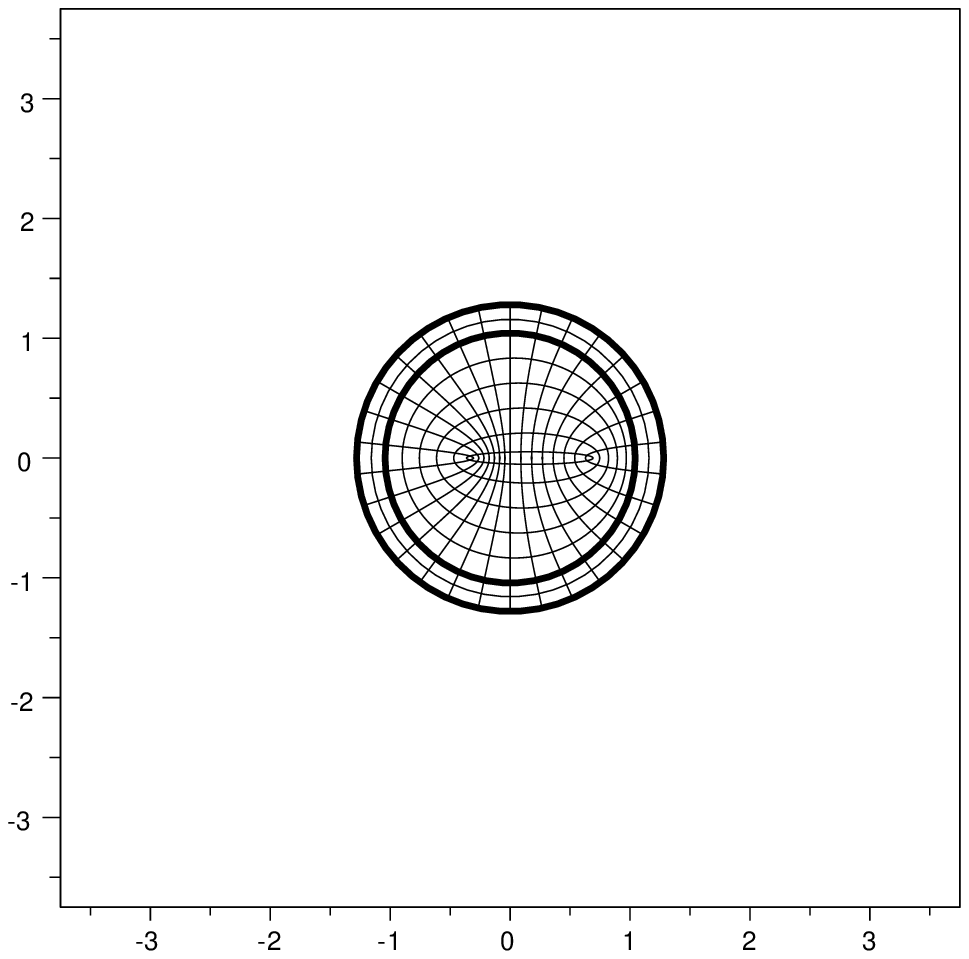}\vspace{0.2cm}\\
{\footnotesize (c) $\delta=0.9$, reference configuration,\\
 $\pi(R_2^2-R_1^2)=2.46(v_1+v_2)(1-\delta)$}\vspace{0.6cm} \\
\includegraphics[height=0.8\textwidth]{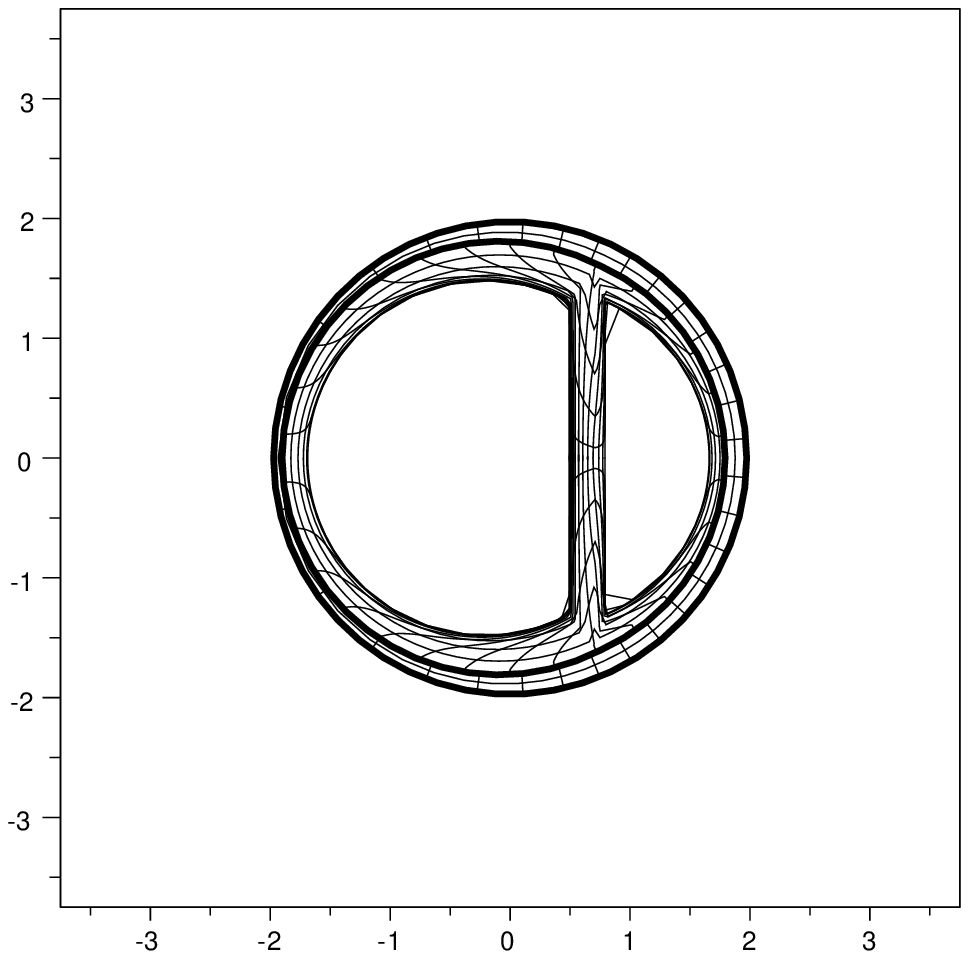}\vspace{0.2cm}\\
{\footnotesize (f) $\delta=0.9$, deformed configuration.\\ Thick line at $\vec u(\partial B(\vec a^*, R_1))$}
\end{center}
\end{minipage}
  \caption{\label{fig:transition}Transition to a radially symmetric map. A larger initial domain is necessary
in order to create spherical cavities.
Parameters: $\Omega=B(\vec 0, R_2)$, $\sqrt{\frac{v_1+v_2}{\pi d^2}}=1.5$, $\frac{v_2}{v_1}=0.3$, $d=1$, $R_1\approx d$.}
\end{figure}

Not all values of $R_1$ and $R_2$ are suitable for the existence of a solution, since
the reference configuration
$\{R_1\leq |\vec x -\vec {a^*}|\leq R_2\}$ must contain enough material to fill the space between
$\vec u(\partial B(\vec {a^*}, R_2))$
(with shape prescribed by the Dirichlet data)
and $\vec u(\partial B(\vec {\vec a^*}, R_1))$ (whose shape is determined by Theorem \ref{th:ub1}, see Figure \ref{fig:transition}).
In the case of a radially symmetric loading,
the farther $\Omega_1\cup \Omega_2$ is from
being a ball, the larger the reference configuration has to be.
If $\delta=1$ nothing has to be imposed;
if $\delta<1$, we must have that
\begin{align*}
\omega_n (R_2^n - R_1^n) \geq C (v_1+v_2)(1-\delta)
\end{align*}
for some constant $C$ (see Lemma \ref{le:nc}).
It turns out that the above necessary condition 
is also sufficient, as we show in the following theorem:

\begin{theorem} \label{th:ub2}
Suppose that $\vec a_1$, $\vec a_2 \in \R^n$ and
$d:=|\vec a_1-\vec a_2|>\ep_1+\ep_2$.
Let $\delta \in [0,1]$, $v_1\geq v_2\geq 0$,
 \begin{align} \label{eq:R1R2}
 V_{\delta}:=2^{2n+1}n(v_1+v_2)(1-\delta), \qquad R_1\geq  \max \left \{ \sqrt[n]{\frac{V_{\delta}}{\omega_n} }, 2d \right \},
\qquad R_2:= \sqrt[n]{R_1^n + \frac{V_{\delta}}{\omega_n} }.
 \end{align}
Then there exists $\vec{a^*}$
in the segment joining $\vec a_1$ and $\vec a_2$
and
a piecewise smooth homeomorphism $\vec u\in W^{1,\infty}(\R^n\setminus \{\vec a_1, \vec a_2\},\R^n)$
 such that
$\Det D\vec u = \mathcal L^n + v_1 \delta_{\vec a_1} + v_2 \delta_{\vec a_2}$ in $\R^n$,
$\vec u|_{\R^n \setminus B(\vec{a^*}, R_2)}$ is radially symmetric,
and for all $R\geq R_1$
\begin{multline*}
 \frac{1}{n}
\int_{B(\vec a^*, R) \setminus (B_{\varepsilon_1}(\vec a_1)\cup B_{\varepsilon_2}(\vec a_2))}
\left | \frac{D\vec u}{\sqrt{n-1}} \right |^n \dd\vec x
 \leq
C_1(v_1+v_2+ \omega_n R^n)
+v_1\log \frac{R}{\ep_1} + v_2\log \frac{R}{\ep_2}
\\
 + C_2(v_1+v_2)\left ( (1-\delta) \left (\log \sqrt[n]{\frac{V_{\delta}}{\omega_n d^n}}\right )_+ +
 \delta
\left(\sqrt[n]{\frac{v_2}{v_1}}  \log \frac{d}{\ep_1}
+ \sqrt[2n]{\frac{v_2}{v_1}} \log \frac{d}{\ep_2}
\right ) \right).
\end{multline*}
\end{theorem}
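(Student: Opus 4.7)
The plan is to build $\vec u$ in three pieces, glued along the spheres $\partial B(\vec a^*, R_1)$ and $\partial B(\vec a^*, R_2)$. On $B(\vec a^*, R_1)\setminus\{\vec a_1,\vec a_2\}$ I take the angle-preserving homeomorphism $\vec u_1$ furnished by Theorem \ref{th:ub1}, which satisfies condition INV, opens cavities of volumes $v_1,v_2$ at $\vec a_1,\vec a_2$, and carries a known Dirichlet energy bound. On $\R^n\setminus B(\vec a^*, R_2)$ I take the explicit radial map
\[
 \vec u(\vec x)=\vec a^*+\sqrt[n]{|\vec x-\vec a^*|^n+\frac{v_1+v_2}{\omega_n}}\;\frac{\vec x-\vec a^*}{|\vec x-\vec a^*|},
\]
which is incompressible, is Lipschitz with a dimensional constant, and automatically has the correct behaviour at infinity. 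The essential step is to interpolate between these two in the annulus $A:=\{R_1\leq |\vec x -\vec a^*|\leq R_2\}$ by a piecewise smooth incompressible homeomorphism with matching boundary values and uniformly bounded derivative.

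On the inner sphere, $\vec u_1$ lands on a topological sphere $S_1:=\vec u_1(\partial B(\vec a^*, R_1))$ whose distortion from the round sphere of radius $r_1:=\sqrt[n]{R_1^n+(v_1+v_2)/\omega_n}$ inherits that of $\partial(\Omega_1\cup\Omega_2)$ and is of order $\sqrt[n]{(v_1+v_2)(1-\delta)/\omega_n}$; on the outer sphere the radial piece lands on the round sphere of radius $r_2:=\sqrt[n]{R_2^n+(v_1+v_2)/\omega_n}$. By incompressibility the target region enclosed between $S_1$ and $|\vec y-\vec a^*|=r_2$ has the same volume as $A$, and the lower bound $R_2^n-R_1^n\geq V_\delta/\omega_n$ in \eqref{eq:R1R2} ensures that its radial width $r_2-r_1$ dominates the angular distortion of $S_1$. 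Consequently this target is bi-Lipschitz equivalent to the reference annulus $A$ with constants independent of $(v_1,v_2,d,\delta)$.

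For the interpolation itself I would apply the Rivière--Ye construction \cite{RiYe94,RiYe96}, which given two simply connected domains of equal volume with prescribed boundary parameterizations produces an incompressible homeomorphism as the time-$1$ map of the flow of a divergence-free vector field. Fed with the inner parameterization inherited from $\vec u_1|_{\partial B(\vec a^*, R_1)}$ and the outer one from the radial map, the bi-Lipschitz equivalence above yields a velocity field with $L^\infty$ norm of order one, hence $\|D\vec u\|_{L^\infty(A)}\leq C$. The Dirichlet energy on $A$ is therefore at most $C\omega_n R_2^n$, which is absorbed into the $C_1(v_1+v_2+\omega_n R^n)$ term of the claimed estimate once one notes $R_2^n\leq R^n+V_\delta/\omega_n$.

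Combining, on $B(\vec a^*, R_1)$ I invoke Theorem \ref{th:ub1} with $R=R_1$; the term $(1-\delta)\log(R/d)$ there becomes $(1-\delta)\log\sqrt[n]{V_\delta/(\omega_n d^n)}$ through the choice $R_1\asymp \max\{\sqrt[n]{V_\delta/\omega_n}, 2d\}$ from \eqref{eq:R1R2}; on $A$ and on $\R^n\setminus B(\vec a^*, R_2)$ I use the uniform Lipschitz bounds. The main obstacle is the quantitative energy control in the Rivière--Ye step: one has to verify that their divergence-free field, specialised to the geometry just described, admits an $L^\infty$ bound independent of all parameters. This reduces to the uniform bi-Lipschitz equivalence of the target annulus with a spherical one—a computation I would carry out using the explicit description of $\Omega_1,\Omega_2$ preceding Theorem \ref{th:ub1} and the fact that $V_\delta$ has been dimensioned exactly to absorb the worst-case distortion of $S_1$.
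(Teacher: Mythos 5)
Your three-layer skeleton (Theorem \ref{th:ub1} inside $B(\vec a^*,R_1)$, an incompressible interpolation on the annulus $A=\{R_1\leq|\vec x-\vec a^*|\leq R_2\}$ via Rivi\`ere--Ye, the explicit radial map outside $B(\vec a^*,R_2)$, with $V_\delta$ sized to tame the distortion of the inner image sphere) is exactly the paper's strategy. The problem is that the one step that carries the content of the theorem --- the quantitative control of the transition layer --- is both asserted rather than proved and, as sketched, incorrect. Your key claim is that the target shell between $S_1=\vec u_1(\partial B(\vec a^*,R_1))$ and the round sphere of radius $r_2$ is bi-Lipschitz equivalent to $A$ \emph{with constants independent of} $(v_1,v_2,d,\delta)$, whence $\|D\vec u\|_{L^\infty(A)}\leq C$ and an energy bound $C\omega_n R_2^n$ on $A$. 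This fails whenever $v_1+v_2\gg \omega_n R_2^n$: incompressibility forces the image shell out to radii of order $\sqrt[n]{(v_1+v_2)/\omega_n}\gg R_2$, so any map with the prescribed boundary values must stretch tangentially by a factor of order $\sqrt[n]{(v_1+v_2)/(\omega_n R_1^n)}$, which is unbounded; correspondingly the minimal energy on $A$ is of order $v_1+v_2$, not $\omega_n R_2^n$. The paper's proof survives this because the pointwise bound \eqref{eq:bndDu1} carries the factor $R_3^n/R_1^n$, and the integral identity \eqref{eq:R3R1} converts it into the admissible bound $C(v_1+v_2+\omega_n R_2^n)\bigl(\min\{R^n,R_2^n\}/R_1^n-1\bigr)$, which is then absorbed using $\omega_n R_1^n\geq V_\delta$. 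Your ``order-one velocity field'' shortcut skips precisely this bookkeeping.

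Second, the appeal to Rivi\`ere--Ye does not yet yield the gradient control you need. Proposition \ref{pr:RiviereYe} produces a diffeomorphism of a \emph{fixed} model annulus with prescribed Jacobian and identity boundary values, with constants governed by $\|g-1\|_{C^{0,1}}$, $\inf_D g$ and a H\"older norm of $g$; it does not take ``two domains with prescribed boundary parametrizations'' as input, and a bi-Lipschitz equivalence of the two annuli controls neither $Dg$ nor the gradient of a flow map (an $L^\infty$ bound on a transporting divergence-free field gives no Lipschitz bound on its time-one map). The reduction that actually works is to compose the explicit parametrizations $\vec v,\vec w$ of \eqref{eq:vyw} with a correction diffeomorphism $\vecg\phi$ of the model annulus, and then to compute $g=\det D\vec v/\det D\vec w$ and $Dg$ as in \eqref{eq:gDg} and bound them through the pointwise estimates on $q$ and $Dq$ of Lemma \ref{le:ub2} together with \eqref{eq:ubqd}; it is exactly here --- keeping $t:=\frac{(v_1+v_2)(1-\delta)}{\omega_n(R_2^n-R_1^n)}$ below the threshold $\frac{1}{4^n n}$ so that the Rivi\`ere--Ye constant is uniform --- that the precise size $V_\delta=2^{2n+1}n(v_1+v_2)(1-\delta)$ enters, not in a soft ``worst-case distortion'' statement. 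Since you explicitly defer this verification (``a computation I would carry out''), and the intermediate claims you would feed into it are not correct as stated, the proof of the transition estimate, and hence of the theorem, is not established in your argument.
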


The main differences with respect to Theorem \ref{th:UB} are
that $\vec u$ is now radially symmetric in $\R^n \setminus B(\vec a^*, R_2)$
and that $\log \frac{R}{d}$ has been replaced with $\log \sqrt[n]{\frac{V_\delta}{\omega_n d^n}}
= C+ \log \sqrt[n]{\frac{(v_1+v_2)(1-\delta)}{\omega_n d^n}}$ in the interaction term.
The proof is presented in Section \ref{se:ubD}.
As a consequence
 we finally obtain
 \begin{corollary} \label{co:Dirich}
 Let $\Omega$ be a ball of radius $R\geq 2d$, with $d>\ep_1+\ep_2>0$.
Then, for every $v_1\geq v_2\geq 0$ there exist
$\vec a_1$, $\vec a_2 \in \Omega$ with
$|\vec a_1 - \vec a_2|= d$, and
 a Lipschitz homeomorphism
$\vec u: \Omega \setminus \{\vec a_1, \vec a_2\} \to \R^n$,
such that
$\Det D\vec u = \mathcal L^n + v_1 \delta_{\vec a_1} + v_2 \delta_{\vec a_2}$ in $\Omega$,
$\vec u|_{\partial \Omega}\equiv \lambda \id$ (with $\lambda^n-1:= \frac{v_1+v_2}{|\Omega|}$),
and
\begin{multline*}
 \frac{1}{n}
\int_{\Omega \setminus (B_{\varepsilon_1}(\vec a_1)\cup B_{\varepsilon_2}(\vec a_2))}
\left | \frac{D\vec u}{\sqrt{n-1}} \right |^n \dd\vec x
 \leq
C_1(v_1+v_2+ \omega_n R^n)
+v_1\log \frac{R}{\ep_1} + v_2\log \frac{R}{\ep_2}
\\
 + C_2(v_1+v_2)
\min_{\delta \in [\delta_0, 1]}\left ( (1-\delta) \left ( \log \frac{(v_1+v_2)(1-\delta)}{\omega_n d^n}\right )_+ +
 \delta
\left(\sqrt[n]{\frac{v_2}{v_1}}  \log \frac{d}{\ep_1}
+ \sqrt[2n]{\frac{v_2}{v_1}} \log \frac{d}{\ep_2}
\right ) \right)
\end{multline*}
with $\delta_0:=
\max \left \{0, 1- \frac{|\Omega|- 2^n \omega_n d^n}{4^{n+1}n\omega_n d^n}\right \}$.
\end{corollary}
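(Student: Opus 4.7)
The plan is to invoke Theorem~\ref{th:ub2} and cut the resulting map down to $\Omega$, using the radial symmetry outside $B(\vec a^*, R_2)$ to realise the Dirichlet condition. Fix $\delta\in[\delta_0,1]$ and take the smallest admissible $R_1$, namely $R_1:=\max\{2d,\sqrt[n]{V_\delta/\omega_n}\}$, so that $R_2^n=R_1^n+V_\delta/\omega_n$. The only constraint that will make the construction fit inside $\Omega$ with room for the radial exterior is $R_2\leq R$; a short algebraic check, splitting into the subcases $V_\delta\leq 2^n\omega_n d^n$ (in which $R_1=2d$) and $V_\delta>2^n\omega_n d^n$ (in which $R_1=\sqrt[n]{V_\delta/\omega_n}$), shows that this is equivalent to $\delta\geq\delta_0$, once $V_\delta$ is written out as $2^{2n+1}n(v_1+v_2)(1-\delta)$ and the hypothesis $R\geq 2d$ is used. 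The bookkeeping of constants here is the step that pins down the precise form of $\delta_0$ stated in the corollary.

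Having fixed $\delta$, I would then apply Theorem~\ref{th:ub2} with arbitrary cavitation points at distance $d$ and translate the whole configuration so that the point $\vec a^*\in[\vec a_1,\vec a_2]$ provided by that theorem coincides with the centre of $\Omega$; both $\vec a_i$ then lie in $\Omega$ since $|\vec a_i-\vec a^*|\leq d<R$. Restricting the map $\vec u$ from Theorem~\ref{th:ub2} to $\Omega\setminus\{\vec a_1,\vec a_2\}$ gives a Lipschitz homeomorphism onto its image. For $\vec x\in\partial\Omega$ one has $|\vec x-\vec a^*|=R\geq R_2$, so $\vec u$ there is of the radial form $\vec a^*+(|\vec x-\vec a^*|^n+C)^{1/n}(\vec x-\vec a^*)/|\vec x-\vec a^*|$ for a single constant $C$; the distributional Jacobian identity $\Det D\vec u=\mathcal L^n+v_1\delta_{\vec a_1}+v_2\delta_{\vec a_2}$, combined with the divergence-theorem computation of the volume enclosed by $\vec u(\partial B(\vec a^*, r))$, forces $C=(v_1+v_2)/\omega_n$, so that $\vec u|_{\partial\Omega}=\lambda\id$ with $\lambda^n=1+(v_1+v_2)/|\Omega|$ as required.

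Finally, I would read off the estimate of Theorem~\ref{th:ub2} with this $R$ and rewrite the interaction factor $(\log\sqrt[n]{V_\delta/(\omega_n d^n)})_+$ as $\frac{1}{n}\bigl(\log\frac{(v_1+v_2)(1-\delta)}{\omega_n d^n}\bigr)_+$ plus the universal constant $\frac{1}{n}\log(2^{2n+1}n)$; after multiplication by $(1-\delta)(v_1+v_2)$ the latter contributes at most $C(v_1+v_2)$, which is absorbed into $C_1(v_1+v_2+\omega_n R^n)$. Since $\delta\in[\delta_0,1]$ was arbitrary, taking the minimum yields the stated inequality. The main obstacle in the proof is not any substantive analysis but the careful tracking of constants in the first step; once the admissible range $[\delta_0,1]$ is identified and $\vec a^*$ is placed at the centre of $\Omega$, everything else is a direct transcription from Theorem~\ref{th:ub2}.
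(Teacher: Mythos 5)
Your route is the paper's intended one: the paper only sketches this corollary in the remark following Theorem \ref{th:ub2} (choose $R_1,R_2$ as admissible for that theorem, require the construction to fit inside $\Omega$, use the radial symmetry outside $B(\vec a^*,R_2)$ to realise $\vec u|_{\partial\Omega}=\lambda\id$, and absorb the constant coming from $V_\delta=2^{2n+1}n(v_1+v_2)(1-\delta)$ into $C_1(v_1+v_2+\omega_nR^n)$), and your second and third paragraphs are a faithful transcription of that. Two harmless remarks there: for the exact boundary condition $\lambda\id$ the deformed radial centre must be $\lambda\vec a^*$ (equivalently, take the centre of $\Omega$ as the origin), otherwise you only get $\lambda\id$ up to the translation $(1-\lambda)\vec a^*$; and you also need $R\geq R_1$ to invoke the energy estimate, which follows from $R\geq R_2\geq R_1$.

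The gap is exactly the step you defer to a ``short algebraic check'': with your minimal choice $R_1=\max\{2d,\sqrt[n]{V_\delta/\omega_n}\}$, the fit condition $R_2\leq R$ is \emph{not} equivalent to $\delta\geq\delta_0$ as printed. Writing it out, your condition is $1-\delta\leq\frac{|\Omega|-2^n\omega_nd^n}{2\cdot4^{n}n(v_1+v_2)}$ when $V_\delta\leq\omega_n(2d)^n$, and $1-\delta\leq\frac{|\Omega|}{4^{n+1}n(v_1+v_2)}$ otherwise. The paper instead takes the larger radius $\omega_nR_1^n:=V_\delta+\omega_n(2d)^n$, for which $R_2\leq R$ reads, in all cases, $1-\delta\leq\frac{|\Omega|-2^n\omega_nd^n}{4^{n+1}n(v_1+v_2)}$; this is the inequality $\delta_0$ is meant to encode, and it shows that the $\omega_nd^n$ in the printed denominator of $\delta_0$ is a slip for $v_1+v_2$ (with the literal printed $\delta_0$, the implication $\delta\geq\delta_0\Rightarrow R_2\leq R$ can fail outright, e.g.\ when $v_1+v_2$ is huge compared to $|\Omega|$, so no choice of $R_1$ can make your claimed equivalence true, and your assertion that the computation ``pins down the precise form of $\delta_0$ stated in the corollary'' cannot be correct). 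What your argument actually needs is only the one-way implication $\delta\geq\delta_0\Rightarrow R_2\leq R$, and you should verify it explicitly: under the $v_1+v_2$ reading of $\delta_0$ it does hold for your choice of $R_1$, since $\frac{|\Omega|-2^n\omega_nd^n}{4^{n+1}n(v_1+v_2)}$ is smaller than both thresholds above. Alternatively, adopt the paper's choice of $R_1$, for which the threshold is the same in both cases and the bookkeeping is exact (the statement of Theorem \ref{th:ub2} as proved allows any $R_1\geq\max\{2d,\sqrt[n]{V_\delta/\omega_n}\}$, so this is legitimate).
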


The value of $\delta_0$ is such that $\delta\geq \delta_0$ if and only if $\omega_n R^n \geq \omega_n R_1^n + V_{\delta}$,
with $\omega_n R_1^n:= V_{\delta} + \omega_n (2d)^n$;
the idea is to be able to use Theorem \ref{th:ub2} and obtain a final energy estimate depending only
on $v_1$, $v_2$, $d$, $\ep_1$, $\ep_2$ and the size $|\Omega|$ of the domain.

\subsection{Convergence results}
Once we have upper and lower bounds, we are able to show that for ``almost-minimizers"
one of the three scenarii described after Theorem \ref{th:LB} holds
in the limit $\ep \to 0$.
\begin{theorem} \label{th:limits}
 Let $\Omega$ be an open and bounded set in $\R^n$, $n\geq 2$. Let $\ep_j \to 0$ be a sequence, that we will denote in the sequel simply by $\ep$.
Let $\{\Omega_\ep\}_{\ep}$ be a corresponding sequence of domains of the form
$\Omega_\ep=\Omega \setminus \bigcup_{i=1}^m \overline B_{\ep}(\vec a_{1,\ep})$,
with $m\in \N$, $\vec a_{1,\ep}, \ldots, \vec a_{m,\ep} \in \Omega$ and $\ep$
such that the balls $B_{\ep}(\vec a_{1,\ep}), \ldots, B_{\ep}(\vec a_{m,\ep})$ are disjoint.
Assume that for each $i=1,\ldots, m$ the sequence $\{\vec a_{i,\ep}\}_{\ep}$ is compactly contained in $\Omega$.
Suppose, further, that
 there exists $\vec u_\ep \in W^{1,n}(\Omega_\ep, \R^n)$ satisfying
condition INV,
$\Det D\vec u_\ep = \mathcal L^n$ in $\Omega_\ep$, $\sup_{\ep} \|\vec u_\ep\|_{L^\infty(\Omega_\ep)} < \infty$
and
\begin{align} \label{eq:hypUB}
 \frac{1}{n}\int_{\Omega_\ep}
   &\left |\frac{D\vec u_\ep(\vec x)}{\sqrt{n-1}}\right |^n\dd \vec x
\leq
  \left ( \sum_{i=1}^m v_{i,\ep}\right ) \log \frac{\diam \Omega}{\ep}
+ C\left ( |\Omega| +  \sum_{i=1}^m v_{i,\ep} \right),
\end{align}
where\footnote{Now we write $E(\vec a_{i, \ep}, \ep; \vec u_{\ep})$, and not just $E(\vec a_{i, \ep}, \ep)$, to
highlight the dependence on $\vec u_\ep$. It corresponds to the cavity opened
by $\vec u_\ep$ at $\vec a_{i, \ep}$ (compare with \eqref{isoper} and \eqref{df:Exr}).}
 $v_{i, \ep}:=|E(\vec a_{i,\ep}, \ep; \vec u_\ep)|-\omega_n \ep^n$
and $C$ is a universal constant.

Then (extracting a subsequence)
the limits $\vec a_i= \lim_{\ep\to0} \vec a_{i, \ep}$
and
$v_i=\lim_{\ep\to0}{v_{i, \ep}}$, $i=1, \ldots, m$
are well defined,
and there exists
$\vec u \in \cap_{1\le p<n} W^{1,p}(\Omega, \R^n)
\cap W^{1,n}_{\loc}\big (\Omega \setminus \{\vec a_1,\ldots, \vec a_{m}\}, \R^n \big)$
 such that
\begin{itemize}
 \item 
$\vec u_\ep \weakc \vec u$ in $W^{1,n}_{\loc}\big (\Omega \setminus \{\vec a_1, \ldots, \vec a_{m}\}, \R^n \big)$
 \item $\Det D\vec u_\ep \weakcs \Det D\vec u$
  in $\Omega \setminus \{\vec a_1, \ldots, \vec a_{m}\}$ locally in the sense of measures
 \item $\Det D\vec u
=
\sum_{i=1}^{m} v_i \delta_{\vec a_i}+ \mathcal{L}^n$
 in $\Omega$.
\end{itemize}

When $m=2$,
 one of the following holds:
\begin{enumerate}[i)]
 \item if $\vec a_1 \neq \vec a_2$ and $v_1,v_2 >0$ (assume without l.o.g.\ $v_1\geq v_2$), then
    \begin{itemize}
     \item the cavities $\imT(\vec u, \vec a_1)$ and $\imT (\vec u, \vec a_2)$
(as defined in \eqref{df:Exr})
are balls of volume $v_1$, $v_2$
      \item $|E(\vec a_{i, \ep}, \ep; \vec u_\ep)\triangle \imT(\vec u, \vec a_i) |\to 0$ as $\ep \to 0$ for $i=1,2$
      \item under the additional assumption that
$v_{1}+v_{2}  < 2^{n}\omega_n (\dist(\frac{\vec a_1+\vec a_2}{2}, \partial \Omega))^n$,
\begin{align*}
\frac{\omega_n |\vec a_2 - \vec a_1|^n}{v_1+v_2} \geq C_1 \exp \left(
-C_2 \left (
  1 + \frac{|\Omega|}{v_1+v_2} + \log \frac{\omega_n (\diam \Omega)^n}{v_1+v_2}
\right ) \left / \left (
 \frac{v_2}{v_1+v_2} \right)^{\frac{n}{n-1}} \right .\right )
\end{align*}
for some positive constants $C_1$ and $C_2$ depending only on $n$;
    \end{itemize}
 \item if \label{it:oneCavity} $\min \{v_1, v_2\}=0$ (say $v_2=0$),
then $im_T(\vec u, \vec a_1)$ (the only cavity opened by $\vec u$) is spherical;
 \item \label{it:coalesce} if $\vec a_1=\vec a_2$ and $v_1,v_2>0$ (assume $v_1\geq v_2$),
then
    \begin{itemize}
     \item $\imT(\vec u, \vec a_1)$ is a ball
of volume $v_1 + v_2$
      \item $|\vec a_{2,\ep}- \vec a_{1, \ep}|=O(\ep)$ as $\ep \to 0$
      \item the cavities must be distorted in the following sense ($C_n$ being as in Proposition \ref{pr:distortions}):
\begin{align} \label{eq:minDist}
 \liminf_{\ep \to 0}
\frac{v_1  D \big ( E(\vec a_{1,\ep}, \ep; \vec u_\ep )\big )^\frac{n}{n-1}
+ v_2 D \big (
  E(\vec a_{2,\ep}, \ep; \vec u_\ep )\big )^\frac{n}{n-1}}{v_1+v_2} 
 > C_n  \left( \frac{v_2}{v_1+v_2} \right)^{\frac{n}{n-1}}.
\end{align}
    \end{itemize}
\end{enumerate}
\end{theorem}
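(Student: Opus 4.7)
The plan is to combine the lower bound of Theorem \ref{th:LBn} (with Proposition \ref{pro1} and its refinement via the improved isoperimetric inequality \eqref{isoperbis}) with the upper bound hypothesis \eqref{eq:hypUB} so that the leading $\sum_i v_i\log(1/\ep)$ terms cancel, leaving a constant-in-$\ep$ control on the excess energy. This residual then feeds into the interaction, asymmetry, and distortion statements.

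\emph{Compactness and identification of the limit.} The ball construction underlying Proposition \ref{pro1} shows that outside small balls around the cavitation points, $|D\vec u_\ep|$ is controlled in an $L^n$-concentration sense by $1/\dist(\cdot,\vec a_{i,\ep})$; together with the uniform $L^\infty$-bound this yields uniform $W^{1,p}$-bounds for every $p<n$ on $\Omega$ and uniform $W^{1,n}$-bounds on compact subsets of $\Omega\setminus\{\vec a_1,\ldots,\vec a_m\}$. The cavity volumes $v_{i,\ep}$ are bounded because $\sum_i v_{i,\ep}\leq |\vec u_\ep(\Omega_\ep)|\lesssim \|\vec u_\ep\|_{L^\infty}^n$. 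After extracting subsequences so that $\vec a_{i,\ep}\to \vec a_i$, $v_{i,\ep}\to v_i$, and $\vec u_\ep\weakc \vec u$ in both the global and local senses, the distributional identity follows from the formula defining $\Det D\vec u$: strong $L^q$-convergence of $\vec u_\ep$ and weak $L^p$-convergence of $\cof D\vec u_\ep$ (a null Lagrangian of degree $n-1$ in $D\vec u_\ep$, so integrable to the appropriate exponent) pass to the limit in $-\frac{1}{n}\int \vec u_\ep\cdot\cof D\vec u_\ep\,D\phi$. For $\phi$ supported away from the $\vec a_i$ this yields $\Det D\vec u=\mathcal L^n$, while for $\phi$ localized near $\vec a_i$ the standard Müller--Spector identification of the Dirac part in terms of $|E(\vec a_{i,\ep},\ep;\vec u_\ep)|-\omega_n\ep^n\to v_i$ produces the masses $v_i\delta_{\vec a_i}$.

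\emph{Geometric dichotomy for $m=2$.} Subtracting the improved lower bound from \eqref{eq:hypUB} gives a uniform constant bound on both the Fraenkel-asymmetry integrals arising in \eqref{isoperbis} and on the interaction term of Theorem \ref{th:LB}; all excess is controlled by $C(1+|\Omega|/(v_1+v_2)+\log(\omega_n(\diam\Omega)^n/(v_1+v_2)))$. In case (i), with $d:=|\vec a_1-\vec a_2|>0$ and $v_1,v_2>0$, the asymmetry integrals force $D(E(\vec a_{i,\ep},\ep;\vec u_\ep))\to 0$ so the cavities are asymptotically balls and convergence of the symmetric differences follows from lower semicontinuity together with $|E(\vec a_{i,\ep},\ep;\vec u_\ep)|\to v_i$; moreover, since $d>0$ the minimum inside the logarithm of Theorem \ref{th:LBn} reduces, for small $\ep$, to $\log\sqrt[n^2]{(v_1+v_2)/(2^n\omega_n d^n)}$, and rearranging the resulting inequality yields the stated exponential lower bound on $\omega_n d^n/(v_1+v_2)$. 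In case (ii), only the asymmetry term at $\vec a_1$ survives in the excess, forcing $\imT(\vec u,\vec a_1)$ to be a ball. In case (iii), first apply the interaction estimate as in case (i) but with $d_\ep:=|\vec a_{1,\ep}-\vec a_{2,\ep}|\to 0$ and use that the right-hand side of the exponential bound blows up unless $d_\ep=O(\ep)$. Then at each radius $r\in(d_\ep,R)$ the set $E:=E(\vec a_\ep,r;\vec u_\ep)$ contains the disjoint images $E_i:=E(\vec a_{i,\ep},\ep;\vec u_\ep)$ by condition INV, so Proposition \ref{pr:distortions} gives a lower bound on the weighted sum of the three asymmetries; integrated against $1/r$ from $\max\ep_i$ to $d_\ep$ through the ball construction with \eqref{isoperbis}, this contribution is compared to the constant excess, yielding in the limit both that $\imT(\vec u,\vec a_1)$ is a ball of volume $v_1+v_2$ (from the vanishing of $D(E)$) and the quantitative inequality \eqref{eq:minDist} (the remaining weighted sum of cavity asymmetries cannot be smaller than $C_n(v_2/(v_1+v_2))^{n/(n-1)}$).

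\emph{Main obstacle.} The delicate point is to execute the ball construction carefully enough that the Fraenkel-asymmetry contribution in \eqref{isoperbis} is retained after aggregating over annuli and tracked as a function of $r$, so that in case (iii) it can be evaluated precisely at radii $r\sim d_\ep$ where Proposition \ref{pr:distortions} applies; this is essential both for the quantitative bound \eqref{eq:minDist} and for identifying which asymmetries must vanish in cases (i) and (ii).
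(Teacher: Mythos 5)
Your proposal follows essentially the same strategy as the paper: Struwe-type dyadic estimates built on the ball construction for the $W^{1,p}$ ($p<n$) and local $W^{1,n}$ compactness, weak continuity of cofactors plus the M\"uller--Spector machinery to identify $\Det D\vec u=\mathcal L^n+\sum_i v_i\delta_{\vec a_i}$, and then a comparison of \eqref{eq:hypUB} with the lower bounds (Proposition \ref{pro1}, the distortion-improved estimate \eqref{isoperbis}, Theorem \ref{th:LBn}, Proposition \ref{pr:distortions}) to obtain the trichotomy. The only genuinely different choice is the order in case iii): you get $d_\ep=O(\ep)$ first by invoking Theorem \ref{th:LB} as a black box (legitimate, since for small $\ep$ the positive-part prefactor is bounded below by $c\,(v_2/(v_1+v_2))^{n/(n-1)}$ and the other two arguments of the min diverge), whereas the paper first proves \eqref{eq:minDist} and then uses it to bound $\log(d_\ep/\ep)$; both routes work.

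One step, as written, would fail and needs to be repaired. In case iii) you claim that the distortion contribution ``integrated against $1/r$ from $\max\ep_i$ to $d_\ep$'' compared with the $O(1)$ excess yields the vanishing of $D(E)$ for the outer sets. But once $d_\ep=O(\ep)$ the weight $\int_{\ep}^{d_\ep}\frac{\dd r}{r}=\log\frac{d_\ep}{\ep}$ is bounded, so that comparison gives no smallness at all --- indeed this is precisely why the \emph{cavity} distortions are allowed (and forced) to stay bounded away from zero. The vanishing of the outer distortion must come from the annuli \emph{above} $d_\ep$: since $d_\ep\to 0$, the range $(d_\ep,d_\ep^{\alpha})$ (or $(d_\ep,R)$) carries a divergent logarithmic weight, and the bounded excess in \eqref{eq:eq1proof} forces the time-averaged outer distortion there to vanish; Lemma \ref{le:mod} then transfers this to the specific radius $r=d_\ep$, after which Proposition \ref{pr:distortions} (applied to the two cavities and $E(\vec a_\ep,d_\ep;\vec u_\ep)$, where the volume ratio in that proposition tends to $1$) delivers both the roundness of $\imT(\vec u,\vec a_1)$ and \eqref{eq:minDist}. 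A smaller omission: in case i), the interaction term of Theorem \ref{th:LBn} only yields the exponential bound when $\omega_n d^n/(v_1+v_2)$ is below $2^{-n}(v_2/(v_1+v_2))^{n/(n-1)}$; in the complementary regime the stated inequality must be checked directly (it holds since $e^{-1/x}\le x$), which your ``rearranging'' glosses over.
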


In the situation of two cavities, the three cases above correspond to the three scenarii of the end of Section  \ref{sec:1.2} in the same order.

The main ingredients for the proof are the comparison of the  upper bound \eqref{eq:hypUB} with  the lower
bounds Proposition \ref{pro1} and Theorem \ref{th:LB}, standard compactness arguments,
and an argument introduced by Struwe \cite{Struwe94}
in the context of Ginzburg-Landau
which allows to deduce from the energy bounds sufficient compactness of $\vec u_{\ep}$.

\subsection{Additional comments and remarks}

We note first that our analysis works provided that the distance of the
cavitation points to the boundary does not get small (thus the domain cannot be too thin either).
It is an interesting question to better understand what happens when they do get close to
the boundary, as well as the effect of the boundary conditions.

Second, it follows from our work that it is always necessary to compare quantities
in the reference configuration with quantities in the deformed configuration,
due to the scale-invariance in elasticity.
For example, we have shown that a large price needs to be paid (in terms of elastic energy)
in order to open spherical cavities
whenever the distance between the cavitation points
is small \emph{compared
to the final size of the cavities} ($\omega_n d^n \ll v_1+v_2$).
If we only know that the cavitation points are becoming closer and closer to each other,
from this alone we cannot conclude that the cavities
will interact and that the total elastic energy will go to infinity,
as the following argument shows.
Suppose that $\vec u$ is an incompressible map defined on the unit cube $Q\subset \R^n$,
opening a cavity,
and satisfying affine boundary conditions of the form $\vec u(\vec x)\equiv \vec A \vec x$ on $\partial Q$,
$\vec A\in \R^{n\times n}$.
Then,
by rescaling $\vec u$ and reproducing it periodically,
it is possible to construct a
sequence of incompressible maps creating an increasingly large number of cavities,
at cavitation points that are closer and closer to each other,
in such a way that all the deformations in the sequence
have exactly the same elastic energy
(cf.\ Ball \& Murat \cite{BaMu84}; see also \cite{Reina10t,LoIdNa11,LoNaId11}).
This is possible because the cavities themselves are also becoming increasingly smaller, with radii decaying at the
same rate as the distance between neighbouring cavitation points.
This example also shows that the strategy of filling the material with an arbitrarily large number
of small cavities is, in a sense, equivalent to forming a single big cavity
(there is no interaction between the singularities).
Here we complement that result by showing
that if it is not possible to create an infinite number of cavities, then the interaction effects
in the energy do become noticeable, and under some circumstances can even be quantified.

Third, we mention that the idea of
partitioning the domain and
using angle-preserving maps
inside the resulting subdomains (as described in Section \ref{sec:upperbd})
can be used to produce test maps that
are incompressible and open any prescribed number of cavities
(for example by dividing the initial domain in angular sectors).

Finally, we discuss the case $p\neq n$.
It is not clear how to extend the analysis to this case,
the main reason being that the energy is no longer conformally invariant while the ``ball-construction method" is
only suited for such cases.
To see this in a simple way, let us consider the case of two cavities, assuming incompressibility,
letting $\varepsilon_1=\varepsilon_2 \to 0$, and  let us try to reproduce the steps  \eqref{csgl} and \eqref{cs2} with   \eqref{isoperbis}.
The $p$-equivalent of \eqref{isoperbis} obtained by H\"older's inequality
(and by relating $|D\vec u|^{n-1}$ to the area element $|(\cof D\vec u)\vecg \nu|$, see Lemma \ref{le:Basic}) is
\begin{align*}
  \int_{\partial B(\vec a, r)} \left |\frac{D\vec u(\vec x)}{\sqrt{n-1}}\right|^p \dd\mathcal H^{n-1}(\vec x)
\geq \frac{\Per(E(\vec a, r))^{\frac{p}{n-1}}}{(n\omega_n r^{n-1})^{\frac{p}{n-1}-1}}
\geq n\omega_n^{\frac{n-p}{n}} \frac{|E(\vec a, r)|^{\frac{p}{n}}}{r^{1-(n-p)}}
\left ( 1 + CD(E(\vec a, r))^\frac{p}{n-1} \right ).
\end{align*}
According to this, when $p\ne n$  we may bound from below the energy in
$B(\vec a_1, \frac{d}{2}) \cup B(\vec a_2, \frac{d}{2}) $
(with $d=|\vec a_2-\vec a_1|$) by
\begin{align*}
  \underset{B(\vec a_1, \frac{d}{2})\cup B(\vec a_2, \frac{d}{2})}{\int}
  \frac{\omega_n^{\frac{p-n}{n}}}{n} \left | \frac{D\vec u(\vec x )}{\sqrt{n-1}}\right |^p
\geq
 \left (v_1^\frac{p}{n} + v_2^\frac{p}{n} \right ) \left (\frac{d}{2} \right )^{n-p}
+ C(v_1+v_2)^\frac{p}{n}\int_0^\frac{d}{2}  \langle D(E(\vec a_i, r))^\frac{p}{n-1} \rangle r^{n-p-1},
\end{align*}
where $\langle D(E(\vec a_i, r)))^\frac{p}{n-1} \rangle$ stands for the average distortion
\begin{align*}
\langle D(E(\vec a_i, r)))^\frac{p}{n-1} \rangle :=
\left ( v_1^\frac{p}{n} D(E(\vec a_1, r))^\frac{p}{n-1}
				+ v_2^\frac{p}{n} D(E(\vec a_2, r))^\frac{p}{n-1}\right ) (v_1+v_2)^{-\frac{p}{n}}.
\end{align*}
Analogously, we can bound the energy in
$B(\vec a, R) \setminus \overline B(\vec a, d)$ (with $\vec a = \frac{\vec a_1+\vec a_2}{2}$) by
\begin{align*}
  \underset{B(\vec a, R)\setminus \overline B(\vec a, d)} {\int}
  \frac{\omega_n^{\frac{p-n}{n}}}{n} \left | \frac{D\vec u(\vec x )}{\sqrt{n-1}}\right |^p
\geq
 (v_1+v_2)^\frac{p}{n} \int_d^R r^{n-p-1}
+ C(v_1+v_2)^\frac{p}{n} \int_d^R D(E(\vec a, r))^\frac{p}{n-1} r^{n-p-1}
\end{align*}
 and obtain:
\begin{multline*}
   \int_{B(\vec a, R)}
  \frac{\omega_n^{\frac{p-n}{n}}}{n} \left | \frac{D\vec u(\vec x )}{\sqrt{n-1}}\right |^p   
\geq
  (v_1+v_2)^\frac{p}{n} \left ( \int_0^\frac{d}{2} + \int_d^R \right) r^{n-p-1}
  + \underbrace{\left (v_1^\frac{p}{n} + v_2^\frac{p}{n} - (v_1+v_2)^\frac{p}{n}\right ) \left (\frac{d}{2} \right )^{n-p}}_{II}
\\  
  + \underbrace{C(v_1+v_2)^\frac{p}{n} \left [
	\int_0^\frac{d}{2}  \langle D(E(\vec a_i, r))^\frac{p}{n-1} \rangle r^{n-p-1}
  + \int_d^R D(E(\vec a, r))^\frac{p}{n-1} r^{n-p-1} \right ]}_{III}.
\end{multline*}

Assume that $v_1+v_2$ is fixed (as is the case in the Dirichlet problem).
Let us first consider the case $p<n$.
Since the limit $\ep\to 0$
is not singular in this case (contrarily to $p=n$),
the problem cannot be analyzed by asymptotic analysis.
If we guide ourselves only by the second and third terms (II and III), when $p<n$ we can say the following.
The factor $v_1^\frac{p}{n} + v_2^\frac{p}{n} - (v_1+v_2)^\frac{p}{n}$ in II
is minimized when $\min \{v_1, v_2\}=0$, hence it
motivates the creation of just one cavity (the same can be said for the problem with $M$ cavities, because
$v_1^\frac{p}{n} + \cdots + v_M^\frac{p}{n}$ is concave and the restriction
$v_1+\ldots + v_M = \text{const.}$\ is linear).
If the above difference has to be positive,
the factor $\left (\frac{d}{2}\right) ^n$ suggests that the two cavitation points would want to be arbitrarily close,
and that the cavities will tend to act as a single cavity.
This is consistent with the prediction for III;
indeed, consider the corresponding estimate for $p=n$:
\begin{multline*}
  \frac{1}{n}\int_{\Omega_{\ep}\cap B(\vec a, R)}
  \left | \frac{D\vec u(\vec x )}{\sqrt{n-1}}\right |^n
 \dd \vec x
 \geq
  (v_1+v_2) \left ( \int_{\varepsilon}^\frac{d}{2} + \int_d^R \right) \frac{\dd r}{r}
\\
  + C(v_1+v_2) \left [
	\int_{\varepsilon}^\frac{d}{2}  \langle D(E(\vec a_i, r))^\frac{n}{n-1} \rangle\frac{\dd r}{r}
  + \int_d^R D(E(\vec a, r))^\frac{p}{n-1} \frac{\dd r}{r} \right ].
\end{multline*}
Under a logarithmic cost, it is much more important to minimize the distortions $D(E(\vec a_i, r))$
of the circles $\vec u(\partial B(\vec a_i, r))$, $i=1,2$, $\ep<r<\frac{d}{2}$
near the cavities, rather than the distortion of the outer circles
$D(E(\vec a, r))$, $r>d$. As was discussed before, this leads either to the case
of well-separated and spherical cavities (scenario (i) in p.~\pageref{it:scn1}),
or to the conclusion that if outer circles are mapped to circles (scenario (iii))
then the distance between cavitation points
must be of order $\ep$ (Theorem \ref{th:limits}\ref{it:coalesce})).
In contrast,
When $p<n$, in the presence of the weight $r^{n-p-1}$,
minimizing the distortions $D(E(\vec a, r))$, $r>d$ gains more relevance compared
to the distortion near the cavities.

For the previous reasons, we believe that the deformations of scenario (i) will not be global minimizers, instead
the body will prefer to open a single cavity. If multiple cavities have to be created, then the cavitation points
will try to be close to each other, and the deformation will try to rapidly become radially symmetric. The cavities
will be distorted and try to act as a single cavity (as in scenario (iii),
which creates a state of strain potentially leading to fracture by coalescence),
at distances between the cavitation points that are of order $1$
(not of order $\ep$). This, in fact, is what has been observed numerically \cite{XuHe11,LiLi11}.

Let us now turn to $p>n$. The lower bound reads
\begin{multline*}
  \int_{\Omega_{\ep}\cap B(\vec a, R)}
  \frac{\omega_n^{\frac{p-n}{n}}}{n}
\left | \frac{D\vec u(\vec x )}{\sqrt{n-1}}\right |^p   \dd \vec x
+ \frac{(v_1+v_2)^\frac{p}{n}}{p-n} R^{n-p}
 \geq
  \underbrace{(v_1^\frac{p}{n} + v_2^\frac{p}{n}) \int_\ep^\frac{d}{2} r^{n-p-1}}_{I}
  + \underbrace{(v_1+v_2)^\frac{p}{n} d^{n-p}}_{II}
\\
  + C(v_1+v_2)^\frac{p}{n} \left [
	\int_\ep^\frac{d}{2}  \langle D(E(\vec a_i, r))^\frac{p}{n-1} \rangle r^{n-p-1}
  + \int_d^R D(E(\vec a, r))^\frac{p}{n-1} r^{n-p-1} \right ].
\end{multline*}
This time the limit $\ep\to 0$ is singular, even more so than for $p=n$.
The factor $v_1^\frac{p}{n} + v_2^\frac{p}{n}$ is now minimized when the cavities have equal volumes. Regarding $d$,
the first term prefers small distances ($d=2\ep$) while the second prefers $d\to \infty$;
since $(v_1+v_2)^\frac{p}{n} > v_1^\frac{p}{n} + v_2^\frac{p}{n}$, it can be said that II
has a stronger influence, hence $d$ large should be
preferred\footnote{although in order to be sure
it would be necessary to compute the energy
in the transition region $B(\vec a, d) \setminus (B(\vec a_1, \frac{d}{2}) \cup B(\vec a_2, \frac{d}{2}))$}.
With respect to the third term, it is now much more vital
to create spherical cavities (so as to minimize the first of the two
integrals) than when $p=n$. This implies that it is scenario (i), rather than (ii) or (iii),
which should be observed.

The case $p<n$, therefore, should favour a single cavity and coalescence, $p>n$ should favour
 many cavities and splitting, and
both situations are possible in the borderline case that we have studied: $p=n$.

\subsection{Plan of the paper}
In Section \ref{se:notation} we describe our notation and recall the notions of perimeter, reduced boundary,
topological image, distributional determinant, and the invertibility condition INV.
In Section \ref{se:LB} we begin by
extending \eqref{isoperbis} to the case of an arbitrary power $p$ and space dimension $n$ (Lemma \ref{le:Basic}).
In Section \ref{ballconstruction} we prove
the lower bound for an arbitrary number of cavities using the ball construction method
(Proposition \ref{pro1}).
In Section \ref{se:mainLB}, we prove the main lower bound (Theorem \ref{th:LB})
 and postpone the proof of our estimate on the distortions (Proposition \ref{pr:distortions}) to Section \ref{se:distortions}.
The energy estimates for the angle-preserving ansatz are presented in Section \ref{se:ub1} and proved in Section \ref{se:auxi}.
In Section \ref{se:ub2} we show how to complete the maps away from the cavitation points so as to fulfil
the boundary conditions, and in Section \ref{se:numerics} we comment briefly on the numerical computations
presented in this paper based on the constructive method of Dacorogna \& Moser \cite{DaMo90}.
Finally, the proof of the main compactness result and of the fact that in the limit
only one of the three scenarii holds
(Theorem \ref{th:limits}) is given in Section \ref{se:limits}.

\subsection{Acknowledgements}
We give special thanks to G.~Francfort for his interest and his involvement in this project.
We also thank J.-F.~Babadjian, J.~Ball, Y.~Brenier, A.~Contreras, R.~Kohn,
R.~Lecaros, G.~Mingione, C.~Mora-Corral, S.~M\"uller, T.~Rivi\`ere, and N.~Rougerie for useful discussions.

\section{Notation and preliminaries} \label{se:notation}

\subsection{General notation}

Let $n$ denote the space dimension.
Vector-valued and matrix-valued quantities will
be written in bold face.
The set of unit vectors in $\R^n$ is denoted by $\S^{n-1}$.
Given a set $E\subset \R^n$, $\lambda\geq 0$ and $\vec h \in \R^n$, we define
$\lambda E :=\{\lambda\vec x:\ \vec x \in E\}$ and $E+\vec h:=\{\vec x +\vec h:\ \vec x \in E\}$.
The interior and the closure of $E$ are denoted by $\Int E$ and $\overline E$,
and the symmetric difference of two sets $E_1$ and $E_2$ by $E_1 \triangle E_2$.
If $E_1$ is compactly contained in $E_2$, we write $E_1 \ssubset E_2$.
The notations $B(\vec x, R)$, $B_R(\vec x)$ are used for the open ball of radius $R$ centred at $\vec x$,
and $\overline B(\vec a, R)$, $\overline B_R(\vec a)$ for the corresponding closed ball.
The distance from a point $\vec x$ to a set $E$ is denoted by $\dist(\vec x, E)$,
the distance between sets by $\dist(E_1, E_2)$, and the diameter of a set by $\diam E$. 

Given $\vec A$ an $n\times n$ matrix, $\vec A^T$ will be its transpose, $\det \vec A$ its determinant,
and $\cof \vec A$ its cofactor matrix (defined by $\vec A^T\cof \vec A=(\det \vec A)\vec 1$,
where $\vec 1$ stands for the $n\times n$ identity matrix).
The adjugate matrix of $\vec A$ is $\adj \vec A=(\cof \vec A)^T$.

The Lebesgue and the $k$-dimensional Hausdorff measure are denoted by $\mathcal L^n$ and $\mathcal H^k$,
respectively. If $E$ is a measurable set, $\mathcal L^n(E)$ is also written $|E|$
(as well as $|I|$ for the length of an interval $I$). The measure of the $k$-dimensional unit ball
is $\omega_k$ (accordingly, $\mathcal H^{n-1}(\partial B(\vec x, r))=n\omega_n r^{n-1}$).
The exterior product of $1\leq k\leq n$ vectors $\vec a_1, \ldots, \vec a_k \in \R^n$
is denoted by $\vec a_1 \wedge \cdots \wedge \vec a_k$ or $\bigwedge_{i=1}^k \vec a_i$.
It is $k$-linear, antisymmetric,
and such that $|\vec a_1 \wedge \cdots \wedge \vec a_k|$ is the $k$-dimensional measure
of the $k$-prism formed by $\vec a_1, \ldots \vec a_k$ (see, e.g., \cite{Federer69,Spivak65,GiMoSo98I,AmFuPa00}).
In particular,
$|\vec x |^2 = |\vec x \cdot \vec e|^2 + |\vec x \wedge \vec e |^2$
 for all $\vec x \in \R^n$ and $\vec e\in \S^{n-1}$.
With a slight abuse of notation, when $k=n$
the expression $\vec a_1 \wedge \cdots \wedge \vec a_n$ is used to denote
the determinant (in the standard basis) of the matrix
with column vectors $\vec a_1, \ldots, \vec a_n \in \R^n$.

The characteristic function of a set $E$ is referred to as $\chi_E$,
and the restriction of $\vec u$ to $E$ as $\vec u|_{E}$.
The sign function
$\sgn:\R\to \{-1,0,1\}$
is given by $\sgn x = x/|x|$ if $x\neq 0$, $\sgn 0=0$.
The notation $\id$ is used for the identity function $\id(\vec x)\equiv\vec x$.
The symbol
$\fint_E f$ stands for the integral average $\frac{1}{|E|}{\int_E f}$.
The support of a function $f$ is represented by $\spt f$.

The space of infinitely differentiable functions with compact support is denoted by
$C_c^\infty(\Omega)$, and the $L^p$ norm of a function $f$ by $\|f\|_{L^p}$.
Sobolev spaces are denoted by $W^{1,p}(\Omega, \R^n)$, as usual.
The Hilbert space $W^{1,2}(\Omega, \R^n)$ is denoted by $H^1(\Omega, \R^n)$.
The weak derivative (the linear transformation) of a map
$\vec u \in W^{1,p}(\Omega, \R^n)$ at a point $\vec x\in \R^n$
is identified with the gradient $D\vec u(\vec x)$ (the matrix of weak partial derivatives).

Use will be made of the coarea formula (see, e.g., \cite{Federer69,EvGa92,AmFuPa00}): if $E\subset \R^n$ is
measurable and $\phi:E \to \R$ is Lipschitz, then for all $f\in L^1(E)$
\begin{align*}
 \int_{E} f(\vec x)|D\phi(\vec x)| \dd\vec x = \int_{-\infty}^\infty
\left ( \int_{\{\vec x\in E: \phi(\vec x)=t\}} f(\vec x) \dd \mathcal H^{n-1}(\vec x )\right ) \dd t .
\end{align*}

\subsection{Perimeter and reduced boundary}

\begin{definition}
The perimeter of a measurable set $E\subset \R^n$ is defined as
\begin{equation*}
 \Per E:= \sup \left \{ \int_E \div \vec g(\vec y)\dd\vec y: \vec g \in C^1_c(\R^n, \R^n), \|\vec g\|_{\infty}\leq 1 \right\}.
\end{equation*}
\end{definition}

\begin{definition}
Given $\vec y_0 \in \R^n$ and a non-zero vector $\vecg\nu\in \R^n$, we define
\begin{equation*}
 H^+(\vec y_0, \vecg\nu):= \{\vec y \in \R^n: (\vec y - \vec y_0)\cdot \vecg\nu \geq 0\}, \quad
 H^-(\vec y_0, \vecg\nu):= \{\vec y \in \R^n: (\vec y - \vec y_0)\cdot \vecg\nu \leq 0\}.
\end{equation*}
The reduced boundary of a measurable set $E\subset \R^n$, denoted by $\partial^* E$, is defined as the set of points $\vec y \in \R^n$ for which there exists a unit vector $\vecg\nu \in \R^n$ such that
\begin{equation*}
 \lim_{r\to 0^+} \frac{|E\cap H^-(\vec y, \vecg\nu) \cap B(\vec y, r)|}{|B(\vec y, r)|}= \frac{1}{2} \quad \text{and} \quad \lim_{r\to 0^+} \frac{|E\cap H^+(\vec y, \vecg\nu)\cap B(\vec y, r)|}{|B(\vec y, r)|} =0.
\end{equation*}
If $\vec y\in \partial^* E$ then $\vecg\nu$ is uniquely determined and is called the unit outward normal to $E$.
\end{definition}

The definition of perimeter
coincides precisely with the $\mathcal H^{n-1}$-measure of the reduced boundary,
as follows from the well-known results of Federer, Fleming and De Giorgi
(see, e.g., \cite{Federer69,Ziemer89,EvGa92,AmFuPa00})\footnote{When
$\Per E=\infty$, the result is true if we consider
the \emph{measure-theoretic} boundary, as defined in \cite[Th.\ 5.11.1]{EvGa92}.
For sets of finite perimeter the two notions of boundary coincide $\mathcal H^{n-1}$-a.e.,
thanks to a result of Federer \cite{Federer69}
(also available in \cite[Th.\ 3.61]{AmFuPa00}, \cite[Lemma 5.8.1]{EvGa92}, or \cite[Sect.\ 5.6]{Ziemer89}).
}.

\subsection{Degree and topological image} \label{sec:degree}
We begin by recalling the notion of topological degree for maps $\vec u$ that are only weakly differentiable
\cite{MuSpTa96,FoGa95,BrNi95,CoDeLe03}.

If $\vec u \in W^{1,p}(\Omega, \R^n)$ and $\vec x \in \R^n$,
then, for a.e.\ $r\in (0,\infty)$ with $\partial B(\vec x, r)\subset \Omega$,
\begin{itemize}
\item[(R1)] $\vec u(\vec z)$ and $D\vec u(\vec z)$ are defined at $\mathcal H^{n-1}$-a.e.\ $\vec z\in \partial B(\vec x, r)$
\item[(R2)] $\vec u|_{\partial B(\vec x, r)} \in W^{1,p}(\partial B(\vec x, r), \R^n)$
\item[(R3)] $D(\vec u|_{\partial B(\vec x, r)})(\vec z)= (D\vec u (\vec z))|_{T_{\vec z}(\partial B(\vec x, r))}$
(the $n$-dimensional and the tangential weak derivatives coincide;
$T_{\vec z}(\partial B(\vec x, r))$ denotes the tangent plane)
for $\mathcal H^{n-1}$-a.e.\ $\vec z \in \partial B(\vec x, r)$
\end{itemize}
(this follows by approximating by $C^\infty$ maps and using the coarea formula).
If, moreover, $p>n-1$, then, by Morrey's inequality,
there exists a unique map $\overline{\vec u} \in C^0(\partial B(\vec x, r))$ that coincides
with $\vec u|_{\partial B(\vec x, r)}$ $\mathcal H^{n-1}$-a.e.
With an abuse of notation we write $\vec u(\partial B(\vec x, r))$ to denote $\overline{\vec u}(\partial B(\vec x, r))$.

If $p>n-1$ and (R2) is satisfied,
for every $\vec y \in \R^n\setminus \vec u(\partial B(\vec x, r))$
we define $\deg(\vec u, \partial B(\vec x, r), \vec y)$
as the classical Brouwer degree \cite{Schwartz69,FoGa95}
of $\vec u|_{\partial B(\vec x, r)}$ with respect to $\vec y$.
The degree $\deg(\vec u, \partial B(\vec x, r), \cdot)$ is the only $L^1(\R^n)$ map \cite{MuSpTa96,BrNi95}
such that
\begin{equation} \label{eq:degree}
 \int_{\R^n} \deg(\vec u, \partial B(\vec x, r), \vec y) \div \vec g (\vec y)\dd\vec y = \int_{\partial B(\vec x, r)} \vec g(\vec u(\vec z))\cdot (\cof D\vec u(\vec z))\vecg \nu(\vec z) \dd\mathcal H^{n-1}(\vec z)
\end{equation}
for every $\vec g \in C^1(\R^n, \R^n)$, $\vecg \nu(\vec z)$ being
the outward unit normal to $\partial B(\vec x, r)$.

For a map $\vec u \in W^{1,p}(\Omega, \R^n)$ that is invertible, orientation-preserving, and regular
except for the creation of a finite number of cavities,
$\deg(\vec u, \partial B(\vec x, r), \vec y)$ is equal to $1$, roughly speaking, only at those points $\vec y$
enclosed by $\vec u(\partial B(\vec x, r))$. Because of this, the degree is useful for the study of cavitation,
since
we can detect a cavity by looking at the set of points where the degree is $1$, but which do not belong
to the image of $\vec u$ (they are not part of the deformed body). This gave rise to
{{\v S}ver{\'a}k's notion of topological image \cite{Sverak88}.
\begin{definition} \label{df:imt}
Let $\vec u \in W^{1,p}(\partial B(\vec x, r), \R^n)$ for some $\vec x \in \R^n$, $r>0$, and $p>n-1$. Then
\begin{equation*}
\imT(\vec u, B(\vec x, r)):=\{\vec y \in \R^n:\ \deg(\vec u, \partial B(\vec x, r), \vec y)\neq 0\}.
\end{equation*}
\end{definition}

It was pointed out by M\"uller-Spector \cite[Sect.\ 11]{MuSp95} that Sobolev maps may create cavities
in some part of the body,
and subsequently fill them with material from somewhere else (even if they are one-to-one a.e.\
\cite{Ball81}). In order to avoid this pathological behaviour,
they defined a stronger invertibility condition, based on the topological
image\footnote{The original
definition of condition INV in \cite[Sect.~3]{MuSp95} required that i) and ii) were satisfied
only for a.e.~$r\in (0, \infty)$ such that $B(\vec x, r)\subset \Omega$.
Here we impose i) and ii) for a.e.~$r\in (0, \infty)$ such that $\partial B(\vec x, r) \subset \Omega$.
As explained in \cite{Henao09}, this modification is necessary when considering perforated domains,
due to Sivaloganathan \& Spector's example of leakage between cavities \cite[Sect.~6]{SiSp06}.}.

\begin{definition} \label{df:INV}
Let $\vec u \in W^{1,p}(\Omega, \R^n)$ with $p>n-1$. We say that $\vec u$ satisfies condition INV if 
\begin{enumerate}[i)]
\item \label{cd:INV1} $\vec u(\vec z) \in \imT(\vec u, B(\vec x, r))$ for a.e.\ $\vec z \in B(\vec x, r)\cap \Omega$
\item \label{cd:INV2} $\vec u(\vec z) \in \R^n \setminus \imT(\vec u, B(\vec x, r))$ for a.e.\ $\vec z \in \Omega \setminus B(\vec x, r)$
\end{enumerate}
for every $\vec x \in \R^n$ and a.e.\ $r\in(0,\infty)$ such that $\vec u|_{\partial B(\vec x, r)} \in W^{1,p}(\partial B(\vec x, r), \R^n)$.
\end{definition}

In the following proposition we summarize some of the main virtues of condition INV.
We add a sketch of the proof to make it easier for the interested reader to compile the different ideas
and conciliate the different notation in
\cite{Sverak88},
\cite[Lemmas 2.5, 3.5 and 7.3]{MuSp95},
\cite[Lemmas 3.8 and 3.10]{CoDeLe03},
\cite[Lemma 2]{HeMo11}, and
\cite[Prop.\ 6 and Lemma 15]{HeMopre}.

\begin{proposition} \label{pr:cof}
 Let $\vec u \in W^{1,p}(\Omega, \R^n)$ with $p>n-1$ satisfy $\det D\vec u>0$ a.e.\ and condition INV.
Then, for every $\vec x \in \R^n$ there exists a
full-$\mathcal L^1$-measure subset $R_{\vec x}$ of
$\{r\in (0, \infty): \partial B(\vec x, r)\subset\Omega\}$
for which (R1)--(R3), conditions \ref{cd:INV1})-\ref{cd:INV2}) of Definition \ref{df:INV},
and the following properties are satisfied:
\begin{enumerate}[i)]
 \item $\deg(\vec u, \partial B(\vec x, r), \vec y) \in \{0,1\}$ for every $\vec y \in \R^n \setminus {\vec u}(\partial B(\vec x, r))$
 \item $\partial^* \imT(\vec u, B(\vec x, r))= {\vec u}(\partial B(\vec x, r))$ up to $\mathcal H^{n-1}$-null sets
 \item \label{cd:perCof} $\displaystyle \Per \big ( \imT(\vec u, B(\vec x, r)) \big )=
  \int_{\partial B(\vec x, r)} |(\cof D\vec u (\vec z))\vecg\nu(\vec z)| \dd\mathcal H^{n-1}(\vec z)$
 \item \label{cd:volImt} $|\imT(\vec u, B(\vec x, r))|= \displaystyle \frac{1}{n} \int_{\partial B(\vec x, r)} \vec u(\vec z) \cdot (\cof D\vec u(\vec z)) \vecg\nu(\vec z) \dd\mathcal H^{n-1}(\vec z)$.
 \end{enumerate}
Moreover, for every $\vec x, \vec x' \in \R^n$ and every $r\in R_{\vec x}$, $r' \in R_{\vec x'}$
 \begin{enumerate}[i)] \setcounter{enumi}{4}
  \item \label{cd:monot} $\imT(\vec u, B(\vec x, r)) \subset \imT(\vec u, B(\vec x', r'))$ if $B(\vec x, r)\subset B(\vec x', r')$
  \item \label{cd:disjoint} $\imT(\vec u, B(\vec x, r)) \cap \imT(\vec u, B(\vec x', r'))=\varnothing$ if $B(\vec x, r)\cap B(\vec x', r')=\varnothing$.
 \end{enumerate}
\end{proposition}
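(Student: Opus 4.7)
I would fix $\vec x \in \R^n$ and let $R_{\vec x}$ be the intersection of the full-measure subsets of $\{r: \partial B(\vec x, r)\subset \Omega\}$ on which (R1)--(R3) hold, on which $\vec u|_{\partial B(\vec x, r)}$ has a continuous representative (Morrey, since $p>n-1$), on which the degree identity \eqref{eq:degree} is valid, and on which the two conditions of Definition \ref{df:INV} hold. Standard arguments (approximation by smooth maps, Fubini, coarea) show that these conditions exclude only a null set of radii, so $R_{\vec x}$ has full measure in $\{r: \partial B(\vec x, r)\subset \Omega\}$. All assertions will be proved for $r\in R_{\vec x}$.

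\textbf{Properties (i) and (ii).} The heart of the argument is to rewrite \eqref{eq:degree} using the Piola identity $\Div \cof D\vec u = 0$ (valid distributionally for $p>n-1$) and the algebraic identity $(\cof D\vec u)^T D\vec u = (\det D\vec u)\vec 1$, to obtain, for every $\vec g \in C^1(\R^n,\R^n)$,
\begin{equation*}
\int_{\R^n} \deg(\vec u, \partial B(\vec x, r), \vec y)\, \div \vec g(\vec y)\dd \vec y
= \int_{B(\vec x, r)} (\div \vec g)(\vec u(\vec z))\, \det D\vec u(\vec z) \dd\vec z.
\end{equation*}
By the area formula, the right-hand side equals $\int_{\R^n} \div \vec g(\vec y)\, N_{B(\vec x,r)}(\vec y) \dd\vec y$, where $N_{B(\vec x,r)}(\vec y)$ counts preimages of $\vec y$ in $B(\vec x, r)$; thus $\deg(\vec u, \partial B(\vec x, r), \cdot) = N_{B(\vec x,r)}$ a.e., which gives $\deg \geq 0$. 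For the upper bound, suppose by contradiction that $\{N_{B(\vec x,r)}\geq 2\}$ has positive measure. Picking two distinct Lebesgue-density preimages $\vec z_1, \vec z_2 \in B(\vec x, r)$ of a common value $\vec y$, and choosing a radius $\rho$ such that $\vec z_2 \in \Omega \setminus B(\vec z_1, \rho)$ while $\vec z_1 \in B(\vec z_1, \rho)$, condition INV i) forces $\vec y = \vec u(\vec z_1) \in \imT(\vec u, B(\vec z_1,\rho))$ while INV ii) forces $\vec y = \vec u(\vec z_2) \notin \imT(\vec u, B(\vec z_1,\rho))$, a contradiction. Hence $\deg \in \{0,1\}$ a.e., so $\imT(\vec u, B(\vec x, r))$ coincides a.e.\ with the set of degree $1$, which is the bounded component's union of $\R^n \setminus \vec u(\partial B(\vec x, r))$; its reduced boundary is therefore $\vec u(\partial B(\vec x,r))$ up to $\mathcal H^{n-1}$-null sets, giving (ii).

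\textbf{Properties (iii) and (iv).} Since $\imT(\vec u, B(\vec x, r))$ is the set where $\deg = 1$, we can insert $\chi_{\imT} = \deg(\vec u, \partial B(\vec x, r), \cdot)$ into the variational definition of perimeter:
\begin{equation*}
\Per \imT(\vec u, B(\vec x, r)) = \sup_{\substack{\vec g \in C^1_c(\R^n,\R^n)\\ \|\vec g\|_\infty \leq 1}}
\int_{\partial B(\vec x, r)} \vec g(\vec u(\vec z))\cdot (\cof D\vec u(\vec z))\vecg\nu(\vec z)\dd\mathcal H^{n-1}(\vec z).
\end{equation*}
The supremum equals $\int_{\partial B(\vec x,r)} |(\cof D\vec u)\vecg\nu| \dd\mathcal H^{n-1}$ by pointwise optimization (applying a density argument to handle that the optimal $\vec g$ is not smooth but can be approximated), proving (iii). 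For (iv), I would apply \eqref{eq:degree} with $\vec g(\vec y) = \vec y/n$ (after a standard truncation/approximation argument since this $\vec g$ is not compactly supported, using the $L^\infty$ bounds on $\vec u$ on $\partial B(\vec x, r)$ granted by Morrey) to obtain the claimed formula, since $\div(\vec y/n) = 1$.

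\textbf{Properties (v) and (vi).} These are immediate consequences of the identification $\deg = N$ a.e. If $B(\vec x, r)\subset B(\vec x', r')$ and $\vec y \in \imT(\vec u, B(\vec x,r))$, then $N_{B(\vec x,r)}(\vec y) = 1$, which forces $N_{B(\vec x',r')}(\vec y)\geq 1$, i.e.\ $\deg(\vec u, \partial B(\vec x', r'), \vec y)\geq 1$; combined with (i) applied at $(\vec x', r')$ this gives degree $1$, proving (v). Similarly, if $B(\vec x, r)\cap B(\vec x', r')=\varnothing$ and $\vec y$ were in both topological images, then $\vec y$ would have a preimage in each ball, contradicting $N_{B(\vec x,r)}(\vec y)\leq 1$ applied to the larger ball containing both (or directly applying INV ii) at $(\vec x', r')$ to any preimage lying in $B(\vec x, r)$).

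The main obstacle is the technical justification that $\deg = N_{B(\vec x, r)}$ a.e.\ (the area formula combined with the degree formula), since this requires Lusin's N condition for $\vec u$ on spheres, integration by parts with the distributional Piola identity at low regularity, and the compatibility between the a.e.\ statements of INV and pointwise statements about preimages; once this identification is in place, all six assertions essentially follow from elementary reasoning.
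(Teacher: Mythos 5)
Your argument hinges on the identification $\deg(\vec u, \partial B(\vec x, r), \cdot) = N_{B(\vec x,r)}$ a.e., which you obtain by converting the surface integral in \eqref{eq:degree} into the volume integral $\int_{B(\vec x,r)} (\div \vec g)(\vec u)\,\det D\vec u\,\dd\vec z$ and then invoking the area formula. That conversion is not a technical obstacle to be patched later: it is false at the regularity considered here. For $n-1<p<n$ the identity $\int_{\partial B}\vec g(\vec u)\cdot(\cof D\vec u)\vecg\nu\,\dd\mathcal H^{n-1}=\int_B(\div\vec g)(\vec u)\det D\vec u\,\dd\vec z$ is exactly the statement that $\Det D\vec u$ has no singular part inside the ball, and the maps this paper is about violate it: Proposition \ref{pr:Det} explicitly allows a singular measure $\mu^s$, and the model map \eqref{cavitybehavior} satisfies $\det D\vec u=1$ a.e., condition INV, and lies in $W^{1,p}$ for all $p<n$, yet $\deg(\vec u,\partial B(\vec 0,r),\vec y)=1$ on the whole cavity $B(\vec 0,A)$ where $N_{B(\vec 0,r)}(\vec y)=0$. (The distributional Piola identity $\Div\cof D\vec u=0$ does not rescue this; what you need is $\div[(\adj D\vec u)(\vec g\circ\vec u)]=(\div\vec g)(\vec u)\det D\vec u$ in the sense of distributions, which is precisely what fails under cavitation.) Since $\deg=N$ is false, your proofs of (i), (v) and (vi) collapse: in (v) a point $\vec y\in\imT(\vec u,B(\vec x,r))$ need not have any preimage, and in (vi) a cavity point common to both images has no preimage to which INV ii) can be applied. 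The equality direction of (iii) is also not established by "pointwise optimization", because the competitor $\vec g$ is a function of $\vec y$, not of $\vec z$; aligning $\vec g(\vec u(\vec z))$ with $(\cof D\vec u(\vec z))\vecg\nu(\vec z)$ at a.e.\ $\vec z$ requires the a.e.\ injectivity of $\vec u$ on the sphere together with the identification of $\vec u(\partial B(\vec x,r))$ with the reduced boundary, i.e.\ the substance of (ii).

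The paper's route is different and avoids this trap: it introduces the set $\Omega_0$ of points where $\vec u$ agrees, together with its gradient, with a $C^1$ map on a set of density one (Federer--Whitney), enlarges the definition of $R_{\vec x}$ by the conditions (R4) $\mathcal H^{n-1}(\partial B(\vec x,r)\setminus\Omega_0)=0$ and (R5) $\det D\vec u>0$ $\mathcal H^{n-1}$-a.e.\ on the sphere, and then uses the a.e.\ injectivity of $\vec u|_{\Omega_0}$ (M\"uller--Spector) and the arguments of \cite[Lemmas 2.5, 3.5, 7.3]{MuSp95} to obtain i)--vi). In particular, $\deg\in\{0,1\}$, the monotonicity and the disjointness are deduced from condition INV and the structure of the topological image, never from a preimage-counting formula, because under cavitation the degree genuinely exceeds the number of preimages on a set of positive measure. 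To repair your proposal you would have to replace the $\deg=N$ step by an argument of this type (or restrict to $p\ge n$, which is not the setting of the proposition).
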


\begin{proof}
 Call $\Omega_0$ the set of $\vec x\in \Omega$ for which there exist $\vec w\in C^1(\R^n, \R^n)$ and a
compact set $K\subset \Omega$ such that
\begin{equation} \label{eq:Omega0}
\lim_{r\to 0^+} \frac{|K\cap B(\vec x, r)|}{|B(\vec x, r)|}=1, \quad \vec u|_K = \vec w|_K,\quad \text{and}\quad D\vec u|_K = D\vec w|_K.
\end{equation}
Since $\vec u\in W^{1,p}(\Omega, \R^n)$, it is possible to
 find (combining Federer's approximation of approximately differentiable maps by Lipschitz functions,
Rademacher's theorem,
and Whitney's extension theorem, see, e.g.,
\cite[Cor.\ 6.6.3.2]{EvGa92}, \cite[Thms.\ 3.1.8 and 3.1.16]{Federer69}, \cite[Prop.\ 2.4]{MuSp95},
 \cite[Lemma 1]{HeMo11}) an increasing sequence of compact sets $\{K_j\}_{j\in \N}$ contained
in $\Omega$, and a sequence $\{\vec w_j\}_{j\in \N}$ of maps in $C^1(\R^n, \R^n)$, such
that $\vec u|_{K_j}=\vec w_j|_{K_j}$, $\nabla \vec u_j|_{K_j}=D\vec w|_{K_j}$,
and $|\Omega \setminus K_j|<\frac{1}{j}$ for each $j\in \N$. By
Lebesgue's differentiation theorem, $|K_j\setminus K_j'|=0$
 where $K_j':=\{\vec x \in K_j: \lim_{r\to 0^+} (r^{-n}|B(\vec x, r)\setminus K|)=0\}$.
 Since $\Omega_0\supset \bigcup_{j\in \N} K_j'$, it follows that $|\Omega \setminus \Omega_0|=0$.

Define $R_{\vec x}$ as the subset of
$\{r\in (0, \infty): \partial B(\vec x, r)\subset\Omega\}$
for which (R1)--(R3), conditions \ref{cd:INV1})-\ref{cd:INV2}) of Definition \ref{df:INV},
and the following properties are satisfied:
\begin{itemize}
\item[(R4)] $\mathcal H^{n-1}(\partial B(\vec x, r) \setminus \Omega_0)=0$
\item[(R5)] $\det D\vec u(\vec z)>0$ for $\mathcal H^{n-1}$-a.e.\ $\vec z\in \partial B(\vec x, r)$.
\end{itemize}
The fact that $|\{r\in (0, \infty): \partial B(\vec x, r)\subset\Omega\}\setminus R_{\vec x}|=0$
is a consequence of the coarea formula and of the discussion before
 Definition \ref{df:imt}.
For this choice of $R_{\vec x}$ we have that the properties listed in the proposition are satisfied
for all (not only for a.e.) $r\in R_{\vec x}$. This follows from \eqref{eq:degree},
the fact that
$\vec u_{|\Omega_0}$ is one to one (by \cite[Lemmas 3.4 and 2.5]{MuSp95}; only minor modifications are required,
see \cite[Lemma 2]{HeMo11} if necessary), and a careful inspection of the proofs of
\cite[Lemmas 2.5, 3.5 and 7.3]{MuSp95}.
\end{proof}

By Proposition \ref{pr:cof}\ref{cd:monot}) the topological image of $B(\vec x, r)$
can be defined for all $\vec x\in \R^n$
and all $r\geq 0$ such that $\{\vec z: r<|\vec z|<r+\delta\} \subset \Omega$ for some $\delta>0$
(not only for radii $r\in R_{\vec x}$).
Indeed, since the sequence $\{\imT(\vec u, B(\vec x, r)): r\in R_{\vec x}\}$ is increasing
for every $\vec x \in \R^n$,
we may define
\begin{equation} \label{df:Exr}
 E(\vec x, r):= \bigcap_{\substack{r'>r\\ r'\in R_{\vec x}}}\imT(\vec u, B(\vec x, r)).
\end{equation}
Whenever explicit mention of $\vec u$ is necessary
(such as in Theorem \ref{th:limits} where sequences of deformations are considered),
we write $E(\vec a, r; \vec u)$.
Finally,
if a point $\vec a \in \R^n$ is such that
$B(\vec a, \delta)\setminus \{\vec a\} \subset \Omega$ for some $\delta>0$,
we define its topological image as $E(\vec a_i, 0)$, and denote it by $\imT(\vec u, \vec a)$.

\subsection{The distributional determinant} \label{se:DetDu}
It is well known that the Jacobian determinant of a $C^2$ vector-valued map
$\vec u:\Omega\subset \R^n \to \R^n$ has a divergence structure. When $n=2$ or $n=3$,
this is
\begin{align*}
 \det D\vec u &= u_{1,1}u_{2,2} - u_{2,1}u_{1,2} = (u_{1}u_{2,2})_{,1} - (u_{1}u_{2,1})_{,2}
\end{align*}
\begin{align*}
 \det D\vec u &=
    u_{1,1}\left | \begin{array}{cc} u_{2,2} & u_{2,3} \\ u_{3,2} & u_{3,3} \end{array} \right |
  + u_{1,2}\left | \begin{array}{cc} u_{2,3} & u_{2,1} \\ u_{3,3} & u_{3,1} \end{array} \right |
  + u_{1,3}\left | \begin{array}{cc} u_{2,1} & u_{2,2} \\ u_{3,1} & u_{3,2} \end{array} \right |
\\ &=
    \left (u_1 \left | \begin{array}{cc} u_{2,2} & u_{2,3} \\ u_{3,2} & u_{3,3} \end{array} \right | \right)_{,1}
  + \left ( u_1\left | \begin{array}{cc} u_{2,3} & u_{2,1} \\ u_{3,3} & u_{3,1} \end{array} \right | \right)_{,2}
  + \left ( u_1\left | \begin{array}{cc} u_{2,1} & u_{2,2} \\ u_{3,1} & u_{3,2} \end{array} \right | \right)_{,3},
\end{align*}
where $u_{i,j}$ denotes the $j$-th partial derivative of the $i$-th component of $\vec u$.
In higher dimensions, we may write $\det D\vec u = \Div ( (\adj D\vec u) \frac{\vec u}{n})$.

One of the main ideas in Ball's theory for nonlinear elasticity \cite{Ball77}
is that if the divergence is taken in the sense of distributions,
the right-hand side of the above expressions is well defined for maps
that are only weakly differentiable. This motivated his definition
of the \emph{distributional determinant} of a map $\vec u\in W^{1,n-1}(\Omega, \R^n) \cap L^\infty_{\loc}(\Omega, \R^n)$
as the distribution $\Det D\vec u \in \mathcal D'(\Omega)$ given by
\begin{equation} \label{eq:defDet}
\langle \Det D\vec u, \phi \rangle :=
  -\frac{1}{n} \int_\Omega \vec u(\vec x)
\cdot
  (\cof D\vec u(\vec x)) D\phi(\vec x) \dd\vec x, \qquad \phi \in C_c^\infty (\Omega)
\end{equation}
(see also \cite{Muller90CRAS,CoLiMeSe93,BrBeHe94,SaSe07,DeLeGh11,BrNg11} and references therein
for subsequent developments and for the role of $\Det D\vec u$
in compensated compactness, homogenization, liquid crystals, and superconductivity).
If a map $\vec u\in W^{1,p}(\Omega, \R^n)$, $p>n-1$, satisfies condition INV,
then $\vec u(\vec z)$ is contained in the region enclosed by $\vec u (\partial B(\vec x, r))$
 for every $\vec x \in \R^n$, a.e.~$\vec z \in \Omega \cap B(\vec x, r)$, and a.e.\ $r>0$ such that $\partial B(\vec x, r)\subset \Omega$. Consequently, $\vec u \in L^\infty_{\loc}(\Omega, \R^n)$, and the distributional determinant is well defined.

\begin{proposition}[cf.~\cite{MuSp95}, Lemma 8.1] \label{pr:Det}
 Let $\vec u \in W^{1,p}(\Omega, \R^n)$, $p>n-1$, satisfy $\det D\vec u > 0$ a.e.\ and condition INV. Then
\begin{enumerate}[i)]
 \item \label{cd:singular} $\Det D\vec u = (\det D\vec u)\mathcal L^n + \mu^s$,
where $\mu^s$ is singular with respect to $\mathcal L^n$
 \item \label{cd:Eimt} $|E(\vec x, r)\setminus \imT(\vec u, B(\vec x, r))|=0$
for every $\vec x\in \R^n$ and $r\in R_{\vec x}$
 \item $|E(\vec x, r_2)\setminus E(\vec x, r_1)| = \Det D\vec u (A_{r_1,r_2})$ for all $r_1\geq 0$ and $r_2>0$ such that the annulus $A_{r_1, r_2}:=\{\vec x\in \R^n: r_1<|\vec x|<r_2\}$ is contained in $\Omega$.
\end{enumerate}
\end{proposition}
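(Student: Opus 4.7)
The plan is to derive all three parts from a single fundamental identity obtained by testing $\Det D\vec u$ against radial cutoffs. For $\vec x\in\R^n$ and $\psi\in C_c^\infty([0,\infty))$ whose support corresponds to spheres contained in $\Omega$, I set $\phi(\vec z):=\psi(|\vec z-\vec x|)$, so that $D\phi(\vec z)=\psi'(|\vec z-\vec x|)\vecg\nu(\vec z)$ with $\vecg\nu$ the unit radial vector. The coarea formula together with Proposition~\ref{pr:cof}\ref{cd:volImt}) (valid for $\mathcal L^1$-a.e.\ $\rho\in R_{\vec x}$) gives
\begin{equation*}
\langle \Det D\vec u,\phi\rangle \;=\; -\int_0^\infty \psi'(\rho)\,|\imT(\vec u,B(\vec x,\rho))|\,\dd\rho.
\end{equation*}
By Proposition~\ref{pr:cof}\ref{cd:monot}) the map $\rho\mapsto|\imT(\vec u,B(\vec x,\rho))|$ is non-decreasing, and it is locally bounded since condition INV forces $\vec u\in L^\infty_{\loc}$. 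Choosing $\psi\geq 0$ non-increasing and integrating by parts against this monotone function therefore yields $\langle \Det D\vec u,\phi\rangle\geq 0$; a slicing argument of M\"uller--Spector type (decomposing an arbitrary $\phi\geq 0$ into super-level sets and approximating each by radial patches) extends positivity to arbitrary non-negative $\phi\in C_c^\infty(\Omega)$, so $\Det D\vec u$ is a positive Radon measure on $\Omega$.

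\textbf{Parts (ii) and (iii).} I approximate $\chi_{A_{r_1,r_2}}$ by $\psi_\delta$ piecewise linear, equal to $1$ on $[r_1+\delta,r_2-\delta]$ and vanishing outside $[r_1,r_2]$. Since $\phi_\delta\nearrow\chi_{A_{r_1,r_2}}$ monotonically, $\langle\Det D\vec u,\phi_\delta\rangle\to \Det D\vec u(A_{r_1,r_2})$ by monotone convergence, while the key identity produces
\begin{equation*}
\langle\Det D\vec u,\phi_\delta\rangle \;=\; \frac{1}{\delta}\int_{r_2-\delta}^{r_2}|\imT(\vec u,B(\vec x,\rho))|\,\dd\rho \;-\;\frac{1}{\delta}\int_{r_1}^{r_1+\delta}|\imT(\vec u,B(\vec x,\rho))|\,\dd\rho,
\end{equation*}
which converges (writing $|\imT(r^{\pm})|$ for the one-sided limits along $R_{\vec x}$) to $|\imT(r_2^{-})|-|\imT(r_1^{+})|$. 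The decreasing-intersection definition $E(\vec x,r)=\bigcap_{r'>r,\,r'\in R_{\vec x}}\imT(\vec u,B(\vec x,r'))$ together with Proposition~\ref{pr:cof}\ref{cd:monot}) identifies $|\imT(r^{+})|=|E(\vec x,r)|$ for every $r$. Coupling this with a complementary approximation of $\chi_{A_{r_1,r_2}}$ from outside, which yields $\Det D\vec u(\overline{A_{r_1,r_2}})=|\imT(r_2^{+})|-|\imT(r_1^{-})|$ and isolates the atomic masses $\Det D\vec u(\partial B(\vec x,r_i))=|\imT(r_i^{+})|-|\imT(r_i^{-})|$, will let me conclude $\Det D\vec u(A_{r_1,r_2})=|E(\vec x,r_2)|-|E(\vec x,r_1)|=|E(\vec x,r_2)\setminus E(\vec x,r_1)|$, proving (iii). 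Statement (ii) is then the special case $r_1=r\in R_{\vec x}$ and $r_2\to r^{+}$ along $R_{\vec x}$: the right-hand side tends to $0$, while the left-hand side becomes $|E(\vec x,r)\setminus\imT(\vec u,B(\vec x,r))|$.

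\textbf{Part (i) and main obstacle.} Once $\Det D\vec u$ is a positive Radon measure, Lebesgue decomposition gives $\Det D\vec u=f\mathcal L^n+\mu^s$ with $f\geq 0$ locally integrable and $\mu^s\perp\mathcal L^n$. To identify $f=\det D\vec u$ a.e., I fix a Lebesgue point $\vec x$ of $\det D\vec u$ inside the full-measure set $\Omega_0$ of good points introduced in the proof of Proposition~\ref{pr:cof}, where $\vec u$ locally agrees with a $C^1$ map. Then (ii)--(iii) give
\begin{equation*}
f(\vec x) \;=\; \lim_{r\to 0^{+}}\frac{\Det D\vec u(B(\vec x,r))}{|B(\vec x,r)|} \;=\; \lim_{r\to 0^{+}}\frac{|\imT(\vec u,B(\vec x,r))|}{|B(\vec x,r)|},
\end{equation*}
and under condition INV the topological image $\imT(\vec u,B(\vec x,r))$ coincides up to Lebesgue-null sets with $\vec u(B(\vec x,r)\cap\Omega_0)$, since any point of the former not in the latter would have to be filled by material from $\Omega\setminus B(\vec x,r)$, contradicting clause \ref{cd:INV2}) of Definition~\ref{df:INV}. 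The classical area formula converts the numerator into $\int_{B(\vec x,r)\cap\Omega_0}\det D\vec u$, and Lebesgue differentiation concludes $f(\vec x)=\det D\vec u(\vec x)$ for a.e.\ $\vec x$. The hardest part of the proof is the measure-theoretic bookkeeping in (iii): the monotone function $\rho\mapsto|\imT(\vec u,B(\vec x,\rho))|$ may have (countably many) jumps corresponding to atoms of $\Det D\vec u$ on spheres, and these must be carefully matched with the right-limit selected by the decreasing-intersection definition of $E(\vec x,r)$, by combining inside and outside approximations of $\chi_{A_{r_1,r_2}}$ so that the two resulting expressions bracket the correct value $|E(\vec x,r_2)\setminus E(\vec x,r_1)|$.
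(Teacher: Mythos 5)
Your starting identity---testing $\Det D\vec u$ against radial cutoffs $\phi(\vec z)=\psi(|\vec z-\vec x|)$ and using the coarea formula with Proposition~\ref{pr:cof}\ref{cd:volImt}) to reduce everything to the monotone function $\omega(r)=|\imT(\vec u,B(\vec x,r))|$---is exactly the computation in the paper, and your handling of (iii) via one-sided limits of $\omega$ matched against the decreasing-intersection definition of $E(\vec x,r)$ is in the same spirit as the paper's appeal to Lebesgue differentiation of $\omega$. The gaps appear where you try to be self-contained at the points where the paper simply invokes \cite[Lemma 8.1]{MuSp95}. Your positivity argument only covers test functions that are radial about some centre; the proposed ``slicing argument \ldots approximating super-level sets by radial patches'' is not an argument, since a general nonnegative $\phi\in C_c^\infty(\Omega)$ is not a positive superposition of radial bumps, and without first knowing that $\Det D\vec u$ is a (locally finite, order-zero) Radon measure you can neither evaluate it on the annulus in (iii) nor perform the Lebesgue decomposition in (i). Establishing that measure property under condition INV is precisely the nontrivial content of M\"uller--Spector's lemma, which the paper cites rather than reproves.

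Second, your identification of the absolutely continuous part rests on the claim that $\imT(\vec u,B(\vec x,r))$ coincides up to null sets with $\vec u(B(\vec x,r)\cap\Omega_0)$ ``since otherwise the point would have to be filled by material from elsewhere, contradicting \ref{cd:INV2}) of Definition~\ref{df:INV}''. That claim is false: points of $\imT(\vec u,B(\vec x,r))\setminus \vec u(B(\vec x,r)\cap\Omega_0)$ are typically cavity points, which lie in nobody's image---condition INV forbids their being \emph{filled}, not their existence, and if your claim were true there would be no singular part and no cavitation at all. What INV~\ref{cd:INV1}) and the area formula give is only the inclusion $\vec u(B(\vec x,r)\cap\Omega_0)\subset\imT(\vec u,B(\vec x,r))$ up to null sets, hence $f\geq\det D\vec u$ a.e.; the reverse inequality (the excess is purely singular) is the delicate half, again supplied by \cite[Lemma 8.1]{MuSp95}. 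Finally, your deduction of (ii) from (iii) is vacuous: with $r_1=r$ the left-hand side of (iii) is $|E(\vec x,r_2)\setminus E(\vec x,r)|$, which tends to $0$ as $r_2\downarrow r$ by monotonicity, so the limit reads $0=0$ and says nothing about $|E(\vec x,r)\setminus\imT(\vec u,B(\vec x,r))|$. To obtain (ii) you need the refined cutoff identity in which the inner boundary term is $\omega(r)$ itself---this is the role of the paper's explicit error terms $\fint_{r}^{r+\delta}\delta\psi_\delta'(\omega(r)-\omega)$, which vanish at Lebesgue points of $\omega$---together with an additional step to pass from a.e.\ $r\in R_{\vec x}$ to every $r\in R_{\vec x}$.
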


\begin{proof}
 Let $\vec x\in \R^n$ and set $S:=\{r\in (0, \infty): \partial B(\vec x, r) \subset \Omega\}$. The map
\begin{equation*}
 \omega(r):= \frac{1}{n} \int_{\partial B(\vec x, r)} \vec u(\vec z) \cdot (\cof D\vec u(\vec z)) \vecg\nu(\vec z) \dd\mathcal H^{n-1}(\vec z), \quad r\in R_{\vec x}
\end{equation*}
belongs to $L^1(S)$. Suppose $[r_1, r_2]\subset S$ for some $r_1, r_2\in R_{\vec x}$. For $\delta>0$ let $\phi_\delta (\vec z):=\psi_\delta(|\vec z-\vec x|)$, where $\psi_\delta\in C_c^\infty ([0,\infty))$ is such that $\psi_\delta=1$ in $(r_1+\delta, r_2 - \delta)$, $\psi_\delta = 0$ in $[0, r_1]\cup [r_2,\infty)$, and $\delta\|\psi_\delta'\|_\infty \leq 2$. It is clear that $\phi_\delta \to \chi_{A_{r_1, r_2}}$ pointwise as $\delta \to 0^+$, and that
\begin{align*}
 \langle \Det D\vec u, \phi_\delta \rangle = \omega(r_2) - \omega(r_1) + \fint_{r_1}^{r_1+\delta} \delta \psi_{\delta}'(r) (\omega(r_1)-\omega(r))  + \fint_{r_2-\delta}^{r_2} \delta \psi_{\delta}'(r) (\omega(r_2)-\omega(r)).	
\end{align*}
The proof follows from \cite[Lemma 8.1]{MuSp95}, Proposition \ref{pr:cof}\ref{cd:volImt})--\ref{cd:monot}), and Lebesgue's differentiation theorem aplied to $\omega$.
\end{proof}

\section{Lower bounds} \label{se:LB}

The following is the basic estimate that allows us to relate the elastic energy to the volume and distortion
of the cavities. It extends \eqref{isoperbis} to an arbitrary exponent $p$ and
dimension $n$.

\begin{lemma}\label{le:Basic}
 Suppose that $\vec u \in W^{1,p}(\Omega, \R^n)$, $p>n-1$, satisfies $\det D\vec u>0$ a.e.\ and condition INV.
Then, for every $\vec x \in \Omega$ and $r\in R_{\vec x}$ (as defined in Proposition \ref{pr:cof}),
\begin{align*}
 \fint_{\partial B(\vec x, r)}
  \left |\frac{D\big (\vec u|_{\partial B(\vec x, r)}\big )(\vec x)}{\sqrt{n-1}}\right|^p\dd\mathcal H^{n-1}(\vec x)
\geq
  \left ( \frac{|E(\vec x, r)|}{|B(\vec x, r)|}\right )^\frac{p}{n}
  (1+CD\big (E(B(\vec x, r))\big )^{\frac{p}{n-1}}.
\end{align*}
Equality is attained only if $\vec u|_{\partial B(\vec x, r)}$ is radially symmetric.
\end{lemma}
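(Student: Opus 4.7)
The plan is to chain together three ingredients in the same spirit as the model case $p=n=2$ in \eqref{cs1}--\eqref{isoperbis}, but replacing Cauchy--Schwarz by Hölder, the elementary identity $\int|D\vec u\cdot\tau|=\Per E$ by Proposition \ref{pr:cof}\ref{cd:perCof}), and the classical isoperimetric inequality by the Fusco--Maggi--Pratelli version (Proposition \ref{pr:iso}). The new feature is that I need a clean pointwise inequality relating the tangential derivative of $\vec u$ on $\partial B(\vec x,r)$ to the $(n-1)$-dimensional Jacobian that represents the area element $|(\cof D\vec u)\vecg\nu|$, and this is where the factor $\sqrt{n-1}$ enters.

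First, at $\mathcal H^{n-1}$-a.e.\ $\vec z\in \partial B(\vec x,r)$, the restriction $D\big(\vec u|_{\partial B(\vec x,r)}\big)(\vec z)$ is, by (R3), the restriction of $D\vec u(\vec z)$ to the tangent hyperplane $T_{\vec z}(\partial B(\vec x,r))$. Viewing it as a linear map from an $(n-1)$-dimensional space to $\R^n$, with singular values $\sigma_1,\dots,\sigma_{n-1}\ge 0$, the arithmetic--geometric mean inequality gives
\begin{equation*}
\left|\frac{D\big(\vec u|_{\partial B(\vec x,r)}\big)(\vec z)}{\sqrt{n-1}}\right|^{n-1}
= \left(\frac{\sigma_1^2+\cdots+\sigma_{n-1}^2}{n-1}\right)^{\!\frac{n-1}{2}}
\ge \sigma_1\cdots\sigma_{n-1} = |(\cof D\vec u(\vec z))\vecg\nu(\vec z)|,
\end{equation*}
since the right-hand side is precisely the $(n-1)$-dimensional Jacobian of $\vec u|_{\partial B(\vec x,r)}$.

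Next, because $p>n-1$, Jensen's inequality applied to the convex function $t\mapsto t^{p/(n-1)}$ gives
\begin{equation*}
\fint_{\partial B(\vec x,r)} \left|\frac{D\big(\vec u|_{\partial B(\vec x,r)}\big)}{\sqrt{n-1}}\right|^{p}\dd\mathcal H^{n-1}
\ge \left(\fint_{\partial B(\vec x,r)} |(\cof D\vec u)\vecg\nu|\,\dd\mathcal H^{n-1}\right)^{\!p/(n-1)}.
\end{equation*}
By Proposition \ref{pr:cof}\ref{cd:perCof}) the inner integral (before averaging) equals $\Per E(\vec x,r)$, so the bracket equals $\Per E(\vec x,r)/(n\omega_n r^{n-1})$.

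Now I invoke Proposition \ref{pr:iso} on $E(\vec x,r)$:
\begin{equation*}
\Per E(\vec x,r) \ge n\omega_n^{1/n}\,|E(\vec x,r)|^{(n-1)/n}\bigl(1+CD(E(\vec x,r))\bigr).
\end{equation*}
Dividing by $n\omega_n r^{n-1}$ and using $|B(\vec x,r)|=\omega_n r^n$, so that $\omega_n^{1/n}/(\omega_n r^{n-1}) = |B(\vec x,r)|^{-(n-1)/n}$, the bracket is bounded below by
\begin{equation*}
\left(\frac{|E(\vec x,r)|}{|B(\vec x,r)|}\right)^{\!(n-1)/n}\bigl(1+CD(E(\vec x,r))\bigr).
\end{equation*}
Raising to the power $p/(n-1)$ yields the claimed inequality.

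For the equality case, equality in AM--GM forces $\sigma_1=\cdots=\sigma_{n-1}$ a.e., i.e.\ $\vec u|_{\partial B(\vec x,r)}$ is conformal on the sphere; equality in Jensen forces this common value to be constant along $\partial B(\vec x,r)$; together with equality in the Fusco--Maggi--Pratelli inequality (which forces $E(\vec x,r)$ to be a ball), this means $\vec u|_{\partial B(\vec x,r)}$ maps the sphere to a concentric sphere by a uniform dilation, i.e.\ it is radially symmetric. The only real subtlety is the pointwise linear-algebra step, which is what fixes the normalization $\sqrt{n-1}$; everything else is a direct transposition of the two-dimensional argument already carried out in \eqref{cs2}--\eqref{isoperbis}.
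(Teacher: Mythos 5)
Your proof is correct and follows essentially the same route as the paper: the pointwise bound (your singular-value/AM--GM step is just the paper's Hadamard-plus-AM--GM inequality in different clothing), the identification of $\fint_{\partial B(\vec x,r)}|(\cof D\vec u)\vecg\nu|$ with $\Per/(n\omega_n r^{n-1})$ via Proposition \ref{pr:cof}\ref{cd:perCof}), the Fusco--Maggi--Pratelli inequality, and Jensen for $p>n-1$, with the same equality discussion. The only detail worth noting is that Proposition \ref{pr:cof}\ref{cd:perCof}) gives the perimeter of $\imT(\vec u, B(\vec x,r))$ rather than of $E(\vec x,r)$, so, as in the paper, you should cite Proposition \ref{pr:Det}\ref{cd:Eimt}) to pass between the two (they differ by a null set, so volume, perimeter and Fraenkel asymmetry coincide).
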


\begin{proof}
Given $\vec x\in \R^n$,
$r>0$ and $\vec z\in \partial B(\vec x, r)$
such that $D\vec u(\vec z)$ is well defined,
we have that
\begin{align*}
 |(\cof D\vec u(\vec z))\vecg\nu(\vec z)| &= | (D\vec u(\vec z))\vec e_1 \wedge \cdots \wedge
(D\vec u(\vec z)) \vec e_{n-1}| \leq |(D\vec u)\vec e_1| \cdots |(D\vec u)\vec e_{n-1}| \\
& \leq (n-1)^{\frac{1-n}{2}}\left (|(D\vec u)\vec e_1|^2 + \cdots + |(D\vec u) \vec e_{n-1}|^2 \right )^\frac{n-1}{2},
\end{align*}
$\{\vec e_1, \ldots, \vec e_{n-1}, \vecg\nu(\vec z)\}$ being
an orthonormal basis of $\R^n$
with $\vecg \nu(\vec z):=(\vec z - \vec x)/r$.
Equality holds only if $|(D\vec u)\vec e_i|=|(D\vec u)\vec e_j|$ and $(D\vec u)\vec e_i \perp (D\vec u)\vec e_j$ for $i\ne j$,
as in Sivaloganathan-Spector \cite{SiSp10a,SiSp10b}.
If $r\in R_{\vec x}$,
by Propositions \ref{pr:cof}\ref{cd:perCof}), \ref{pr:Det}\ref{cd:Eimt}), and \ref{pr:iso}, we obtain
\begin{align*}
 \fint_{\partial B(\vec x, r)} \left | \frac{D\big (\vec u|_{\partial B(\vec x, r)}\big )}{\sqrt{n-1}}\right |^{n-1} \dd\mathcal H^{n-1}
& \geq \left ( \frac{ |E(\vec x, r)|}{\omega_nr^n	}\right)^\frac{n-1}{n} (1+CD\big (E(\vec x, r)\big )).
\end{align*}
The conclusion follows by Jensen's inequality.
\end{proof}

\subsection{Ball constructions, the case of multiple cavities}\label{ballconstruction}

In this Section we prove Proposition \ref{pro1} (our first
lower bound, valid for an arbitrary number of cavities).
We start by introducing the necessary notation, and by
recalling the ball construction method
in Ginzburg-Landau theory, following the presentation in \cite{SaSe07}.

Collections of balls will be denoted by expressions with $\mathcal B$.
If $B$ is a ball, $r(B)$ denotes its radius. If $\mathcal B$ is a collection of balls,
then $r(\mathcal B)=\sum_{B\in \mathcal B} r(B)$.
If $\lambda\geq 0$, $\lambda \mathcal B := \{\lambda B: B\in \mathcal B\}$.
We use $\bigcup \mathcal B$ to denote the union $\bigcup_{B\in \mathcal B} B$
of a collection of balls. Given a measurable set $A$ and a collection of balls $\mathcal B$,
we denote $\{B\cap A: B\in \mathcal B\}$ by $A\cap \mathcal B$.
Given $\mathcal F: \R^n\times (0,\infty) \to \R$, we regard $\mathcal F$ as a function defined on the set of
all balls (cf.\ \cite[Def.~4.1]{SaSe07}),
and write $\mathcal F(B)$ for $\mathcal F(\vec x, r)$ if $B=B(\vec x, r)$ (or $\overline B(\vec x, r)$).
Also, we write $\mathcal F(\mathcal B)$ for $\sum_{B\in \mathcal B} \mathcal F(B)$ if $\mathcal B$ is a collection of balls.

\begin{proposition}[cf.~\cite{SaSe07}, Th.~4.2] \label{pr:ballconstruction}
 Let $\mathcal B_0$ be a finite collection of disjoint closed balls
and let $t_0:=r(\mathcal B_0)$.
There exists a family $\{\mathcal B(t): t\geq t_0\}$ of collections of disjoint closed balls such that
$\mathcal B(t_0)=\mathcal B_0$ and
\begin{enumerate}[i)]
 \item For every $s\geq t\geq t_0$, $\bigcup \mathcal B(t) \subset \bigcup \mathcal B(s)$.
 \item \label{it:hmgn} There exists a finite set $T$ such that if $[t_1, t_2]\subset [t_0,\infty) \setminus T$,
then $\mathcal B(t_2) = \frac{t_2}{t_1} \mathcal B(t_1)$.
 \item \label{it:bcnst3} $r(\mathcal B(t))=t$ for every $t\geq t_0$.
\end{enumerate}
\end{proposition}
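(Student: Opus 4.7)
The plan is to construct $\{\mathcal{B}(t)\}_{t\geq t_0}$ by the now-standard Jerrard--Sandier alternating procedure of \emph{growth phases} and \emph{merging events}, following the presentation in \cite{SaSe07}, set up precisely so that the total radius $r(\mathcal{B}(t))$ equals $t$ at every time. Starting from $\mathcal{B}(t_0):=\mathcal{B}_0$, during a growth phase I would keep the centers of all balls fixed and let every radius evolve proportionally in $t$: if $\mathcal{B}(t_1)=\{B(\vec x_i,r_i)\}_{i=1}^k$ and no tangency occurs on $[t_1,t_2]$, I set $\mathcal{B}(t):=\{B(\vec x_i,(t/t_1)r_i)\}_{i=1}^k$ for $t\in[t_1,t_2]$. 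By construction the union is monotone, the balls remain pairwise disjoint, and $r(\mathcal{B}(t))=(t/t_1)\sum_i r_i=t$. With the understanding that $\tfrac{t_2}{t_1}\mathcal{B}(t_1)$ is shorthand for scaling the radii by $t_2/t_1$ while keeping centers fixed, property (ii) holds trivially on such an interval.

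A growth phase ends at the first time $t^*$ at which two balls in the current collection become externally tangent. At that instant, I would identify each connected component $C$ of $\bigcup\mathcal{B}(t^{*})$ that contains more than one ball, and replace all balls in $C$ by a single closed ball of radius $\rho_C:=\sum_{B\in C}r(B)$ that contains $\bigcup_{B\in C}B$. The existence of such an enclosing ball is the one geometric fact that needs proof, and would be established by induction on $|C|$. In the base case $|C|=2$, two externally tangent closed balls of radii $\rho_1,\rho_2$ have union of diameter $2(\rho_1+\rho_2)$ along the line joining their centers, hence lie inside a closed ball of radius $\rho_1+\rho_2$ centered at the midpoint of the two extremal points of their union. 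For $|C|\geq 3$ I would pick any tangent pair inside $C$, absorb it via the base case into a ball of radius equal to the sum of the two, producing a connected family of $|C|-1$ closed balls with unchanged total radius, and apply the inductive hypothesis. If two freshly produced enclosing balls happen to overlap, I iterate the procedure, which can only further decrease the total count. After the merging, the sum of radii is still $t^*$, the new collection is pairwise disjoint, and the union has strictly grown, so a new growth phase can be launched from $\mathcal{B}(t^*)$.

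Since $\mathcal{B}_0$ is finite and the number of balls strictly decreases at every merging event, only finitely many merging events occur, yielding the finite set $T\subset[t_0,\infty)$ in (ii). Property (iii) holds by construction on every growth phase (linearity of the scaling) and across every merging event (by the choice of radius in the enclosing-ball step); property (i) is clear since radii only increase during growth and each enclosing ball contains the balls it replaces. The main and only nontrivial obstacle is the enclosing-ball lemma together with the verification that, after possibly iterated merging at a single time, the resulting collection of enclosing balls can be made pairwise disjoint; everything else is bookkeeping on the finite sequence of growth and merging phases.
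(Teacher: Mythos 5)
Your construction is correct and is essentially the standard Jerrard--Sandier growth-and-merge argument of \cite[Th.~4.2]{SaSe07}, which the paper invokes by citation (modulo the reparametrization $t\leftrightarrow e^t$) rather than reproving; your reading of $\frac{t_2}{t_1}\mathcal B(t_1)$ as concentric dilation (radii scaled, centers fixed) is indeed the intended one, consistent with how Proposition~\ref{pr:bcEstimate} uses $r\,\partial\mathcal F/\partial r$. One small precision: state the enclosing-ball lemma for \emph{intersecting}, not merely tangent, pairs (the same midpoint construction works, since $|c_1-c_2|\le \rho_1+\rho_2$ gives $|m-c_i|+\rho_i\le\rho_1+\rho_2$), so that the induction on a connected component still applies after intermediate absorptions, where the surviving balls may overlap without being tangent.
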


We point out that we chose a different parametrization from the one in \cite[Th.~4.2]{SaSe07}.
Here $t$ corresponds to $e^t$ there.

\begin{definition}[\cite{SaSe07}, Def.~4.1] \label{df:monotonic}
 We say that a function $\mathcal F: \R^n\times (0,\infty)\to \R$ is monotonic (when regarded as a function defined in the set of balls) if $\mathcal F(\vec x, r)$ is continuous with respect to $r$ and $\mathcal F(\mathcal B)\leq \mathcal F(\mathcal B')$ for any families of disjoint closed balls $\mathcal B, \mathcal B'$ such that $\bigcup \mathcal B \subset \bigcup \mathcal B'$.
\end{definition}

\begin{proposition}[cf.~\cite{SaSe07}, Prop.~4.1] \label{pr:bcEstimate}
 Let $\mathcal F: \R^n \times (0,\infty) \to \R$ be monotonic in the sense of Definition \ref{df:monotonic}.
Let $\mathcal B_0$ and $\{\mathcal B(t): t\geq t_0\}$ satisfy the conditions of Proposition \ref{pr:ballconstruction}.
Then,
\begin{equation}
 \mathcal F(\mathcal B(s)) - \mathcal F(\mathcal B_0) \geq \int_{t_0}^s
\sum_{B(\vec x, r) \in \mathcal B(t)} r\frac{\partial \mathcal F}{\partial r} (\vec x, r) \frac{\dd t}{t}
\end{equation}
for every $s\geq t_0$, and for every $B\in \mathcal B(s)$
\begin{equation}
 \mathcal F(B) - \mathcal F(\mathcal B_0\cap B) \geq \int_{t_0}^s \sum_{B(\vec x, r) \in \mathcal B(t)\cap B} r\frac{\partial \mathcal F}{\partial r} (\vec x, r) \frac{\dd t}{t}.
\end{equation}
\end{proposition}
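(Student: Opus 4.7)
The plan is to exploit the piecewise-homogeneous evolution of the family $\{\mathcal B(t)\}_{t\geq t_0}$ guaranteed by Proposition \ref{pr:ballconstruction}. The interval $[t_0, s]$ decomposes into the finite set of ``merging times'' $T\cap [t_0, s]$ (from property ii)) and the complementary open subintervals on which the balls simply dilate homogeneously. I would analyse each regime separately and sum the contributions.

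First I would fix an interval $[t_1, t_2] \subset [t_0, s] \setminus T$ on which no merging occurs. By property ii) of Proposition \ref{pr:ballconstruction}, one has $\mathcal B(t) = (t/t_1)\mathcal B(t_1)$ for every $t \in [t_1, t_2]$. Writing $\mathcal B(t_1) = \{B(\vec x_j, r_j)\}_j$, a direct computation gives
\[
\frac{\dd}{\dd t}\mathcal F(\mathcal B(t)) = \sum_j \frac{\partial \mathcal F}{\partial r}\!\left(\vec x_j, r_j \tfrac{t}{t_1}\right)\cdot \frac{r_j}{t_1} = \frac{1}{t}\sum_{B(\vec x, r) \in \mathcal B(t)} r\,\frac{\partial \mathcal F}{\partial r}(\vec x, r),
\]
so integrating from $t_1$ to $t_2$ yields the desired expression with equality on each dilation subinterval. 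At each merging time $t^* \in T\cap [t_0, s]$, the union $\bigcup \mathcal B(t)$ only grows (property i)), so the monotonicity of $\mathcal F$ yields $\mathcal F(\lim_{t \downarrow t^*}\mathcal B(t)) \geq \mathcal F(\lim_{t \uparrow t^*}\mathcal B(t))$. Summing the equalities on the continuous pieces and the nonnegative jumps at the (finitely many) merging times produces the first inequality.

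For the localized statement, given a ball $B \in \mathcal B(s)$, I would apply the same reasoning to the sub-family $\mathcal B_B(t) := \{B' \in \mathcal B(t) : B' \subset B\}$. By property i) the balls of $\mathcal B(t)$ are all contained in $\bigcup \mathcal B(s)$, and since the balls of $\mathcal B(s)$ are disjoint, every $B'\in \mathcal B(t)$ is either entirely contained in $B$ or disjoint from $B$; in particular $\mathcal B_B(t_0) = \mathcal B_0 \cap B$ and $\mathcal B_B(s) = \{B\}$, with the same interpretation as in the statement. The dilation identity is preserved by $\mathcal B_B$ since scaling all balls by a common factor preserves inclusion. The crucial point at a merging time $t^* \in T$ is that no ball $B' \in \mathcal B_B(t^*-)$ can merge with a ball outside $B$: otherwise the merged ball would stick out of $B$, and by the nestedness property i) no descendant of it could ever be contained in $B$, contradicting that $B'$ is an ancestor of $B \in \mathcal B(s)$. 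Hence mergings within $\mathcal B_B$ occur only among its own members, and the earlier argument applied verbatim to $\mathcal B_B$ yields the localized inequality.

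The main obstacle I anticipate is precisely this ``genealogy'' bookkeeping in the localized statement: one must be careful that ancestors of a final ball $B$ are not contaminated by balls coming from outside $B$. The argument rests on the fact that ``lying outside of $B$'' is a property propagated forward in $t$ by mergings (since a merged ball contains the union of its constituents) together with the nestedness $\bigcup \mathcal B(t) \subset \bigcup \mathcal B(s)$.
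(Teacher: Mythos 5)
Your proposal is correct and follows essentially the argument of \cite{SaSe07}, Prop.~4.1, which the paper cites rather than reproving: the derivative identity on the homogeneous-dilation subintervals plus monotonicity of $\mathcal F$ across the finitely many merging times, and, for the localized bound, the dichotomy that every ball of $\mathcal B(t)$ is either contained in or disjoint from the chosen $B\in \mathcal B(s)$ (by connectedness of balls and disjointness of the final family), so that the sub-family $\mathcal B_B(t)$ inherits both the dilation and the monotonicity structure. The only small imprecision is the claim of exact \emph{equality} on the dilation subintervals: since $\mathcal F(\vec x,\cdot)$ is only assumed continuous (and is nondecreasing by monotonicity), one gets in general just $\mathcal F(\vec x, r_j t_2/t_1)-\mathcal F(\vec x, r_j)\geq \int$ of the a.e.\ derivative unless absolute continuity is known, but this one-sided bound is precisely what the proposition asserts, so the argument stands.
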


Lemma \ref{le:Basic} applied to $\mathcal F(\vec x,r)= \int_{B(\vec x, r)} 
  \left ( \left |\frac{D\vec u(\vec x)}{\sqrt{n-1}}\right |^p-1\right )\dd\vec x$
    and Proposition \ref{pr:bcEstimate} immediately imply the following result
(stated without proof).
\begin{proposition} \label{pr:bc}
Suppose that $\vec u \in W^{1,p}(\Omega, \R^n)$ with $p>n-1$ satisfies $\det D\vec u>0$ a.e.\ and condition INV.
Suppose, further, that $\mathcal B_0$ and $\{\mathcal B(t): t\geq t_0\}$
satisfy the conditions of Proposition \ref{pr:ballconstruction}.
Then, for every $s>t_0$ such that $\Omega_s:=\bigcup \mathcal B(s)\setminus \bigcup \mathcal B_0 \subset \Omega$,
\begin{align*}
 \frac{1}{n}\int_{\Omega_s} \left ( \left |\frac{D\vec u(\vec x)}{\sqrt{n-1}}\right |^p-1\right )\dd\vec x
\geq \int_{t_0}^s \sum_{B\in \mathcal B(t)} |B|\left ( \frac{|E_B|^\frac{p}{n}}{|B|^\frac{p}{n}}(1+CD(E_B))^\frac{p}{n-1} -1\right ) \frac{\dd t}{t},
\end{align*}
where $E_B$ denotes $E(\vec x, r)$ for $B=\overline B(\vec x, r)$. Analogously, for every $B\in \mathcal B(s)$
\begin{align*}
 \frac{1}{n}\underset{B\setminus \bigcup \mathcal B_1}{\int} \left (\left |\frac{D\vec u(\vec x)}{\sqrt{n-1}}\right |^p
-1\right )\dd\vec x
\geq \int_{t_0}^s \sum_{B'\in \mathcal B(t)\cap B}
|B'|\left ( \frac{|E_{B'}|^\frac{p}{n}}{|B'|^\frac{p}{n}}(1+CD(E_{B'}))^\frac{p}{n-1} -1\right )
 \frac{\dd t}{t}.
\end{align*}
\end{proposition}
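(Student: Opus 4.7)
The proof is a direct combination of Lemma \ref{le:Basic}, a pointwise-in-$r$ energy lower bound on spherical shells, with the ball construction device of Proposition \ref{pr:bcEstimate}. My plan is to apply Proposition \ref{pr:bcEstimate} to the functional
\begin{equation*}
\mathcal F(\vec x, r) := \int_{B(\vec x, r) \cap \Omega} \left( \left| \frac{D\vec u(\vec y)}{\sqrt{n-1}} \right|^p - 1 \right) \dd\vec y,
\end{equation*}
regarded as a function on balls as in Definition \ref{df:monotonic}, and to use Lemma \ref{le:Basic} to estimate $r\,\partial_r \mathcal F$ from below.

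First I would verify that $\mathcal F$ is monotonic. Continuity in $r$ follows from absolute continuity of the integral. For monotonicity under inclusion, Hadamard's inequality gives $|D\vec u|^n \geq n^{n/2} \det D\vec u$, so in the incompressible setting in which this proposition will be applied one has $|D\vec u|/\sqrt{n-1} \geq \sqrt{n/(n-1)} > 1$ a.e., making the integrand of $\mathcal F$ nonnegative. Since the balls in any collection $\mathcal B$ are disjoint, $\mathcal F(\mathcal B) = \int_{\bigcup \mathcal B} (|D\vec u/\sqrt{n-1}|^p - 1)\dd\vec y$, and this is monotonic under $\bigcup \mathcal B \subset \bigcup \mathcal B'$.

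Next, for a.e.\ $r > 0$ with $\partial B(\vec x, r) \subset \Omega$,
\begin{equation*}
r \frac{\partial \mathcal F}{\partial r}(\vec x, r) = r \int_{\partial B(\vec x, r)} \left( \left| \frac{D\vec u}{\sqrt{n-1}}\right|^p - 1 \right) \dd\mathcal H^{n-1}.
\end{equation*}
The tangential weak derivative of $\vec u|_{\partial B(\vec x, r)}$ is the restriction of $D\vec u$ (property (R3) of Section \ref{sec:degree}), and therefore has pointwise norm at most $|D\vec u|$. Consequently, for $r \in R_{\vec x}$, Lemma \ref{le:Basic} together with $|\partial B(\vec x, r)| = n\omega_n r^{n-1}$ and $|B|=\omega_n r^n$ yields
\begin{equation*}
r \frac{\partial \mathcal F}{\partial r}(\vec x, r) \geq n |B| \left[ \left( \frac{|E_B|}{|B|}\right)^{\frac{p}{n}} (1+C D(E_B))^{\frac{p}{n-1}} - 1 \right].
\end{equation*}

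Finally, applying Proposition \ref{pr:bcEstimate}, noting that $\mathcal F(\mathcal B(s)) - \mathcal F(\mathcal B_0) = \int_{\Omega_s} (|D\vec u/\sqrt{n-1}|^p - 1)\,\dd\vec x$ by disjointness of the balls in each collection, and dividing by $n$ gives the global inequality. The localized version for a single $B \in \mathcal B(s)$ follows identically from the second half of Proposition \ref{pr:bcEstimate}. The only point deserving care is the verification of monotonicity of $\mathcal F$, which is clean under incompressibility thanks to Hadamard but would require a separate argument if one wanted to dispense with that hypothesis.
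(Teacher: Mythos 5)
Your proposal is correct and coincides with the paper's own argument: the paper states this proposition without proof, noting that it follows immediately from Lemma \ref{le:Basic} applied to $\mathcal F(\vec x,r)=\int_{B(\vec x,r)}\bigl(|D\vec u/\sqrt{n-1}|^p-1\bigr)\dd\vec x$ together with Proposition \ref{pr:bcEstimate}, which is precisely your route. Your added remarks (the coarea/shell differentiation of $\mathcal F$, the tangential-derivative comparison, and the monotonicity check via Hadamard in the incompressible setting where the result is actually used) are exactly the details the paper leaves implicit.
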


Proposition \ref{pro1} finally follows from Proposition \ref{pr:bc} and the
incompressibility constraint:

\begin{proof}[Proof of Proposition \ref{pro1}]
Let $A:=\{i: B(\vec a_i, R) \subset \Omega \}$,
$t_0:= r(\mathcal B_0)= \sum_{i\in A} \ep_i$, and
$\mathcal B_0:= \bigcup_{i\in A} \overline B_{\ep_i}(\vec a_i)$.
Let
$\{\mathcal B(t): t\geq t_0\}$ be the family obtained by applying Proposition \ref{pr:ballconstruction} to $\mathcal B_0$.
Then, applying Proposition \ref{pr:bc},
if $\bigcup \mathcal B(s) \subset \Omega$,
\begin{align} \label{eq:bcp=n}
\frac{1}{n}\int_{\Omega_\ep \cap \bigcup \mathcal B(s)} \left (\left |\frac{D\vec u(\vec x)}{\sqrt{n-1}}\right |^n-1\right ) \dd\vec x
\geq  & \int_{t_0}^s \sum_{B\in \mathcal B(t)} \left ( (|E_B|-|B|)
+ C |E_B|D(E_B)^\frac{n}{n-1}\right ) \frac{\dd t}{t}.
\end{align}
Proceeding as in the proof of Proposition \ref{pr:Det} and using incompressibility we obtain
\begin{align*}
 \left | E_B \setminus \bigcup_{\vec a_i \in B} E(\vec a_i, \ep_i) \right |
&=\Det D\vec u \left (B\setminus \bigcup_{\vec a_i \in B} {\overline B}_{\ep_i}(\vec a_i) \right )
= |B|-\sum_{\vec a_i\in B} \omega_n \ep_i^n,
\end{align*}
hence, by the definition of $v_i$ in the statement of the proposition,
\begin{align} \label{eq:Eb-B}
 |E_B|-|B| = \left | \bigcup_{\vec a_i \in B} E(\vec a_i, \ep_i) \right | -\sum_{\vec a_i\in B} \omega_n \ep_i^n
= \sum_{\vec a_i  \in B} v_i.
\end{align}
Combining \eqref{eq:bcp=n} and \eqref{eq:Eb-B} we obtain
\begin{align*}
\frac{1}{n}\int_{\Omega_\ep\cap \bigcup \mathcal B(s)} \left (\left |\frac{D\vec u(\vec x)}{\sqrt{n-1}}\right |^n-1\right ) \dd\vec x
\geq  \left (\sum_{i, B(\vec a_i, R)\subset \Omega_\ep} v_i \right ) \log \frac{s}{t_0}
+
 C \int_{t_0}^s \left (\sum_{B\in \mathcal B(t)} |E_B|D(E_B)^\frac{n}{n-1}\right ) \frac{\dd t}{t}.
\end{align*}
Let $s_0:= \sup \{s \in [t_0, R): \bigcup \mathcal B(s) \subset \Omega\}$.
If $s_0=R$, the claim is proved.
Otherwise,
from Proposition \ref{pr:ballconstruction} we deduce that there
exists a ball $B(\vec a, r)\in \mathcal B(s_0)$, of radius $r\leq s_0$,
containing at least one $\vec a_i$, $i\in A$,
  such that
$\overline B(\vec a, r) \cap \partial \Omega \ne \vacio$.
The proof is completed
by observing that
\begin{align*}
 R < \dist (\vec a_i, \partial \Omega)
\leq
|\vec a_i - \vec {a}|
+ \dist(\vec{a}, \partial \Omega)
< 2s_0.
\end{align*}
\end{proof}

\subsection{The case of two cavities: proof of Theorem \ref{th:LB}} \label{se:mainLB}
In this section, we prove Theorem \ref{th:LB} assuming Proposition \ref{pr:distortions}, whose proof is postponed to Section \ref{se:distortions}.

We will need the following lemma.

\begin{lemma}[Modulus of continuity of the distortion] \label{le:mod}
 Let $E, E'\subset \R^n$ be measurable. Then
\begin{enumerate}[i)]
 \item \label{cd:mod1} $||E|D(E)-|E'|D(E')|\leq 2|E\triangle E'|$
 \item \label{cd:mod2} $\left | |E|D(E)^\frac{n}{n-1} - |E'|D(E')^\frac{n}{n-1} \right| \leq 2^\frac{n}{n-1}\frac{n+1}{n-1}|E\triangle E'|$.
\end{enumerate}
\end{lemma}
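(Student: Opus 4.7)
The plan is to derive both parts from one common symmetric-difference inclusion, and to deduce (ii) from (i) via a scale-invariant parametrization whose gradient can be controlled on a convex domain.

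\textbf{Common building block.} I assume $|E|,|E'|>0$, the degenerate case being immediate since $|E|D(E)$ and $|E|D(E)^{n/(n-1)}$ vanish when $|E|=0$ and $D(E')\le2$ gives the claim. Let $B_E$ realize the minimum in the definition of $D(E)$, so $|B_E|=|E|$ and $|E\triangle B_E|=|E|D(E)$, and let $B''$ be the ball \emph{concentric} with $B_E$ of volume $|E'|$. Since concentric balls are nested, $|B_E\triangle B''|=\big||E|-|E'|\big|\le |E\triangle E'|$. The pointwise set inclusion
\[
 E'\triangle B''\subset(E\triangle B_E)\cup(E\triangle E')\cup(B_E\triangle B'')
\]
together with the admissibility of $B''$ as a competitor in the definition of $D(E')$ yields
\[
 |E'|D(E')\leq |E'\triangle B''|\leq |E|D(E)+2|E\triangle E'|.
\]
Swapping the roles of $E$ and $E'$ proves (i).

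\textbf{A scale-invariant parametrization for (ii).} Set $\alpha:=\frac{n}{n-1}\in(1,2]$ and
\[
 f(c,s):=c^{1-\alpha}s^\alpha\qquad\text{on }\ \mathcal K:=\{(c,s):\ c>0,\ 0\le s\le 2c\},
\]
so that $|E|D(E)^\alpha=f(|E|,|E|D(E))$ and similarly for $E'$. Because $D\le 2$ on every set, both points $(|E|,|E|D(E))$ and $(|E'|,|E'|D(E'))$ lie in $\mathcal K$, and $\mathcal K$ is convex (if $s_i\le 2c_i$ for $i=0,1$, the same inequality holds on any convex combination). A direct computation gives
\[
 |\partial_c f(c,s)|=(\alpha-1)\!\left(\tfrac{s}{c}\right)^{\alpha}\le (\alpha-1)2^\alpha,\qquad |\partial_s f(c,s)|=\alpha\!\left(\tfrac{s}{c}\right)^{\alpha-1}\le\alpha\,2^{\alpha-1}.
\]

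\textbf{Conclusion.} The segment joining the two points stays in $\mathcal K$ by convexity, so the mean-value inequality together with part (i) and the trivial bound $\big||E|-|E'|\big|\le|E\triangle E'|$ gives
\[
 \big|\,|E|D(E)^\alpha-|E'|D(E')^\alpha\big|\le (\alpha-1)2^\alpha|E\triangle E'|+\alpha\,2^{\alpha-1}\cdot 2|E\triangle E'|=2^\alpha(2\alpha-1)|E\triangle E'|.
\]
The elementary identity $2\alpha-1=\frac{2n-(n-1)}{n-1}=\frac{n+1}{n-1}$ produces the constant in (ii).

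\textbf{Main obstacle.} The temptation is to expand $(|E|D(E)+2|E\triangle E'|)^\alpha/|E'|^{\alpha-1}-|E|D(E)^\alpha$ directly, but this leads to constants of order $\alpha\,6^{\alpha-1}$ because one needs to control the ratios $s/c$, $2t/c$ and $c'/c$ separately. The point of the parametrization $(c,s)\mapsto c^{1-\alpha}s^\alpha$ is that on the scale-invariant sector $s\le 2c$ its partials depend only on the bounded quantity $s/c$, so part (i) can be inserted \emph{linearly} into the estimate and the sharp constant $2^{n/(n-1)}\tfrac{n+1}{n-1}$ emerges.
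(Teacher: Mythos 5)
Your proof is correct and follows essentially the same route as the paper: part i) is obtained, as in the paper, by comparing with the optimal ball for one set and a concentric ball of adjusted volume for the other, and part ii) is again "i) plus the mean value theorem plus $D\leq 2$". The only cosmetic difference is that you package the mean value step as a gradient bound for $f(c,s)=c^{1-\alpha}s^{\alpha}$ on the convex cone $\{0\leq s\leq 2c\}$, whereas the paper splits $|E|^{-\frac{1}{n-1}}(|E|D(E))^{\frac{n}{n-1}}-|E'|^{-\frac{1}{n-1}}(|E'|D(E'))^{\frac{n}{n-1}}$ into two one-variable mean value estimates; both yield the same constant $2^{\frac{n}{n-1}}\frac{n+1}{n-1}$.
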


\begin{proof}
Let $B'$ be a ball such that $|B'|=|E'|$ and $|E'|D(E')=|E'\triangle B'|$. For all measurable sets $B$
\begin{align*}
  |E\triangle B|-|E'|D(E') &= \|\chi_{E}-\chi_{B}\|_{L^1} - \|\chi_{E'}-\chi_{B'}\|_{L^1}
\leq
  \|\chi_{E}-\chi_{E'}\|_{L^1} + \|\chi_{B}-\chi_{B'}\|_{L^1}.
\end{align*}
Testing with concentric balls, and taking the minimum over all balls $B$ with $|B|=|E|$, yields
\begin{align*}
 |E|D(E)-|E'|D(E')  \leq \|\chi_{E}-\chi_{E'}\|_{L^1} + ||E|-|E'||
\end{align*}
($\|\chi_{B}-\chi_{B'}\|_{L^1}=||E|-|E'||$ since $B$ and $B'$ are concentric).
Combining this with the fact that $||E|-|E'||=|\|\chi_{E}\|_{L^1} - \|\chi_{E'}\|_{L^1}|\leq \|\chi_{E} - \chi_{E'}\|_{L^1}$, we obtain \ref{cd:mod1}).

Property \ref{cd:mod2}) follows from \ref{cd:mod1}), the mean value theorem,
and the fact that $D(E)\leq 2$ for all $E$ (a direct consequence of its definition).
To be more precise, suppose that $|E|>|E'|$, then
\begin{align*}
& \left | |E|D(E)^\frac{n}{n-1} - |E'|D(E')^\frac{n}{n-1} \right| \\
&\ =
 \left | |E|^{-\frac{1}{n-1}}(|E|D(E))^\frac{n}{n-1} - |E'|^{-\frac{1}{n-1}}(|E'|D(E'))^\frac{n}{n-1} \right| \\
&\ \leq
 |E|^{-\frac{1}{n-1}} \left | (|E|D(E))^\frac{n}{n-1} - (|E'|D(E'))^\frac{n}{n-1} \right|
   + (|E'|D(E'))^\frac{n}{n-1} \left | |E|^{-\frac{1}{n-1}} - |E'|^{-\frac{1}{n-1}} \right | \\
&\ \leq \frac{2n}{n-1}|E|^{-\frac{1}{n-1}}(\max\{|E|D(E), |E'|D(E')\})^\frac{1}{n-1}|E\triangle E'|
     +\frac{2^\frac{n}{n-1}}{n-1} ||E|-|E'||,
\end{align*}
completing the proof.
\end{proof}
We now proceed to the proof of  Theorem \ref{th:LB}.
As in \eqref{eq:Eb-B},
by Proposition \ref{pr:Det} we have that
 $|E(B)|=|B| + \sum_{i:\, \vec a_i\in B} v_i$
for all balls $B$ with $\partial B \subset \Omega_{\ep}$.
Hence, Lemma \ref{le:Basic} implies that
\begin{align} \label{eq:eq0proof}
  \frac{1}{n} \int_{\partial B(\vec x, r)} \left ( \left |\frac{D\vec u(\vec x)}{\sqrt{n-1}}\right |^n - 1 \right )
\dd \mathcal H^{n-1}(\vec x)
\geq
  \left ( \sum_{i:\, \vec a_i\in B(\vec x, r)} v_i + C|E(\vec x, r)| D\big ( E(\vec x, r) \big )^\frac{n}{n-1} \right ) \frac{1}{r}
\end{align}
for all $\vec x \in \R^n$ and all $r\in R_{\vec x}$.
Given $R>d$ such that $B(\vec a, R)\subset \Omega$, let
\begin{align*}
A_1:=B_{d/2}(\vec a_1) \setminus \overline{B_{\ep_1}(\vec a_1)},
\quad
A_2:= B_{d/2}(\vec a_2) \setminus \overline{B_{\ep_2}(\vec a_2)},
\quad
A_3:= B_{R}(\vec a) \setminus \overline{B_{d}(\vec a)}.
\end{align*}
By considering that
$\Omega_{\ep} \cap B(\vec a, R) \supset A_1\cup A_2\cup A_3$ and integrating successively in each annulus, 
 we obtain
\begin{align} \label{eq:eq1proof}
  \frac{1}{n} \int_{\Omega_{\ep}\cap B(\vec a, R)}
  & \left ( \left |\frac{D\vec u(\vec x)}{\sqrt{n-1}}\right |^n - 1 \right )\dd \vec x
\geq v_1 \log \frac{d}{2\ep_1} + v_2 \log \frac{d}{2\ep_2} + (v_1+v_2) \log \frac{R}{d}
\\ & \nonumber
  + C\int_{\ep_1}^{d/2} |E(\vec a_1, r)| D\big ( E(\vec a_1, r) \big )^\frac{n}{n-1}  \frac{\dd r}{r}
  + C\int_{\ep_2}^{d/2} |E(\vec a_2, r)| D\big ( E(\vec a_2, r) \big )^\frac{n}{n-1} \frac{\dd r}{r}
\\ & \nonumber
  + C\int_d^R |E(\vec a, r)| D\big ( E(\vec a, r) \big )^\frac{n}{n-1} \frac{\dd r}{r}.
\end{align}

Proposition \ref{pr:distortions}
applied to $E_1=E(\vec a_1, \frac{d}{2})$, $E_2=E(\vec a_2, \frac{d}{2})$,
and $E=E(\vec a, r)$, $r\in (d, R)$ gives
\begin{align}
& \nonumber |E(\vec a, r)| D\big ( E(\vec a, r) \big )^\frac{n}{n-1}
\\ & \geq  \label{eq:lbd}
    C(v_1+v_2)
  \left (
    \frac{(|E_1|^\frac{1}{n}+|E_2|^\frac{1}{n})^n-|E(\vec a, r)|}{(|E_1|^\frac{1}{n}+|E_2|^\frac{1}{n})^n - |E_1\cup E_2|}
  \right )^\frac{n(n+1)}{2(n-1)} \left ( \frac{\min\{|E_1|,|E_2|\}}{|E_1|+|E_2|} \right)^\frac{n}{n-1}
\\ & \quad \nonumber
  - |E(\vec a_1, d/2)|D\big (E(\vec a_1, d/2)\big )^\frac{n}{n-1}
  - |E(\vec a_2, d/2)|D\big (E(\vec a_2, d/2)\big )^\frac{n}{n-1}.
\end{align}

Define
$g(\beta_1, \beta_2):= (\beta_1^\frac{1}{n} + \beta_2^\frac{1}{n})^n - (\beta_1+\beta_2)$
(when $n=2$, $g(\beta_1,\beta_2)=2\sqrt{\beta_1 \beta_2}$).
Using that $|E_i|= v_i + \frac{\omega_n d^n}{2^n}$, $i=1,2$ we may write
\begin{align} \label{eq:E1E2g}
 (|E_1|^\frac{1}{n}+|E_2|^\frac{1}{n})^n = g(|E_1|, |E_2|) + (|E_1| + |E_2|) = g(|E_1|,|E_2|) + 2\cdot \frac{\omega_n d^n}{2^n} + v_1+v_2.
\end{align}
Estimate \eqref{eq:lbd} is meaningful if
$|E(\vec a, r)| \leq (|E_1|^\frac{1}{n}+|E_2|^\frac{1}{n})^n$, i.e.\ if
\begin{align} \label{eq:condr}
  \omega_n d^n \leq \omega_n r^n
\leq
  g\left (v_1+\frac{\omega_n d^n}{2^n}, v_2+ \frac{\omega_n d^n}{2^n}\right) + \frac{\omega_n d^n}{2^{n-1}}
\end{align}
(since $g$ is increasing in $\beta_1$ and $\beta_2$
and $g(\beta, \beta)=(2^n-2)\beta$, the inequality holds at least for $r=d$).
Define $\rho$ as the radius
for which $\omega_n r^n$
is in the middle of the two extremes in \eqref{eq:condr},
\begin{align} \label{eq:defrho}
 \omega_n \rho^n := \big (2^{n-1} + 1 \big)\frac{\omega_n d^n}{2^n}
+
  \frac{1}{2}g\left ( v_1+ \frac{\omega_n d^n}{2^n}, v_2+ \frac{\omega_n d^n}{2^n} \right).
\end{align}
For all $r\in (d, \min\{\rho, R\})$ we have that
$E(\vec a, r) \subset E(\vec a, \rho)$, hence
\begin{align} \label{eq:Earg}
 |E(\vec a, r)| < \omega_n \rho^n + v_1+v_2 = \frac{1}{2} g(|E_1|, |E_2|) + (2^{n-1}+1) \frac{\omega_n d^n}{2^n} + v_1+v_2.
\end{align}
Noticing that $g$ is $1$-homogeneous, combining \eqref{eq:E1E2g} and \eqref{eq:Earg} we obtain
\begin{align*}
  \frac{(|E_1|^\frac{1}{n}+|E_2|^\frac{1}{n})^n-|E(\vec a, r)|}{(|E_1|^\frac{1}{n}+|E_2|^\frac{1}{n})^n - |E_1\cup E_2|}
& \geq
\frac{\frac{1}{2}g(|E_1|,|E_2|) - (2^{n-1} + 1 - 2) \frac{\omega_n d^n}{2^n}}{g(|E_1|, |E_2|)}
 & =
  \frac{1}{2} - \frac{2^{n-1} - 1}{g \left ( \frac{2^n|E_1|}{\omega_n d^n}, \frac{2^n|E_2|}{\omega_n d^n} \right )}.
\end{align*}

Without loss of generality, assume that $\omega_n d^n < v_1+v_2$.
Estimate $g\left (\frac{2^n|E_1|}{\omega_n d^n}, \frac{2^n|E_2|}{\omega_n d^n}\right )$ by
\begin{align} \nonumber
  g(1+x, 1+y)
&=
    \sum_{k=1}^{n-1} {n\choose k} \left ((1+x)^k(1+y)^{n-k}\right)^\frac{1}{n}
\geq
  \sum_{k=1}^{n-1} {n\choose k} (1+kx)^\frac{1}{n}(1+(n-k)y)^\frac{1}{n}
\\ &\geq  \sum_{k=1}^{n-1} {n\choose k} (1+x)^\frac{1}{n}(1+y)^\frac{1}{n}
\geq  (2^n -2) (1+x+y)^\frac{1}{n}
\label{eq:estg}
\end{align}
(with $x=\frac{2^n |E_1|}{\omega_n d^n}-1= \frac{2^n v_1}{\omega_n d^n}$ and $y=\frac{2^n v_2}{\omega_n d^n})$ to obtain
\begin{align*}
  \left (
\frac{(|E_1|^\frac{1}{n}+|E_2|^\frac{1}{n})^n-|E(\vec a, r)|}{(|E_1|^\frac{1}{n}+|E_2|^\frac{1}{n})^n - |E_1\cup E_2|}
  \right )^\frac{n(n+1)}{2(n-1)}
& \geq
  \left (
    \frac{1}{2} - \frac{2^{n-1}-1}{(2^n -2) \left ( 1+2^n \frac{v_1+v_2}{\omega_n d^n} \right )^\frac{1}{n}}
  \right )^\frac{n(n+1)}{2(n-1)}
\geq
  4^{-\frac{n(n+1)}{2(n-1)}}.
\end{align*}
On the other hand, $|E_1\cup E_2| < 2(v_1+v_2)$
(because $\omega_n d^n < v_1+v_2$),
and since $|E_1|\geq v_1$ and $|E_2|\geq v_2$,
we can substitute
$\frac{\min\{|E_1|, |E_2|\}}{|E_1|+|E_2|} $
with
$\frac{\min \{v_1,v_2\}}{v_1+v_2}$
in \eqref{eq:lbd}.
Hence, for all $r\in (d, \min\{\rho, R\})$, all $s_1\in (\ep_1, d/2)$ and all $s_2\in (\ep_2, d/2)$,
\begin{align} \label{eq:estDist}
&  |E(\vec a, r)| D\big ( E(\vec a, r) \big )^\frac{n}{n-1}
  +
  |E(\vec a_1, s_1)| D\big ( E(\vec a_1, s_1) \big )^\frac{n}{n-1}
  +
  |E(\vec a_2, s_2)| D\big ( E(\vec a_1, s_1) \big )^\frac{n}{n-1}
\\ & \geq \nonumber
  C(n)(v_1+v_2) \left ( \frac{\min\{v_1,v_2\}}{v_1+v_2}\right )^\frac{n}{n-1}
  -
  \sum_{i=1}^2 \left | |E(\vec a_i, s_i)| D\big ( E(\vec a_i, s_i) \big )^\frac{n}{n-1}
    -
    |E(\vec a_i, {\textstyle \frac{d}{2}})| D\big ( E(\vec a_i, {\textstyle \frac{d}{2}}) \big )^\frac{n}{n-1} \right |.
\end{align}

Denoting $E(\vec a_1, s_1)$, $E(\vec a_2, s_2)$, and $E(\vec a, r)$
by $E_{s_1}$, $E_{s_2}$, and $E_r$, from \eqref{eq:eq1proof} we obtain
\begin{align}
 \label{eq:prevLB}
&  \frac{1}{n} \int_{\Omega_{\ep}\cap B(\vec a, R)}
   \left ( \left |\frac{D\vec u(\vec x)}{\sqrt{n-1}}\right |^n - 1 \right )\dd \vec x
\geq
  v_1 \log \frac{R}{2\ep_1} + v_2 \log \frac{R}{2\ep_2}
\\ \nonumber
&	  + C \inf_{\substack{r\in (d, \min\{\rho,R\})\\ s_i\in (\ep_i, d/2)}}
\left (
  |E_r| D(E_r )^\frac{n}{n-1}
  +
  |E_{s_1}| D( E_{s_1})^\frac{n}{n-1}
  +
  |E_{s_2}| D( E_{s_2})^\frac{n}{n-1}
  \right )
  \log \min \left \{ \frac{\rho}{d}, \frac{R}{d}, \frac{d}{\ep} \right \},
\end{align}
with $\ep=\max\{\ep_1, \ep_2\}$. In order to estimate $\log \frac{\rho}{d}$, from \eqref{eq:defrho} and \eqref{eq:estg} we find that
\begin{eqnarray*}
  \frac{\rho^n}{d^n}
& \geq
  2^{-(n+1)} g \left ( 1 + \frac{2^n v_1}{\omega_n d^n} , 1 + \frac{2^n v_2}{\omega_n d^n} \right )
 \geq
    (2^{-1} - 2^{-n} ) \left ( 1 + 2^n\frac{v_1+v_2}{\omega_n d^n} \right )^\frac{1}{n}
  \geq
      (1-2^{1-n}) \left ( \frac{v_1+v_2}{\omega_n d^n} \right )^\frac{1}{n}.
\end{eqnarray*}
The proof is completed by combining \eqref{eq:estDist} and \eqref{eq:prevLB} with Lemma \ref{le:mod}.

\subsection{Estimate on the distortions} \label{se:distortions}

This section is devoted to the proof of Proposition \ref{pr:distortions}.

\begin{lemma} \label{le:dist1} Let $q>1$ and suppose that $E$, $E_1$, and $E_2$ are sets of positive measure such that $E\supset E_1 \cup E_2$ and $E_1\cap E_2 =\varnothing$. Then
\begin{multline*}
\frac{|E|D(E)^q + |E_1|D(E_1)^q + |E_2|D(E_2)^q}{|E|+|E_1\cup E_2|}
\geq \min_{B, B_1, B_2} \left (\frac{\|\chi_B  - \chi_{B_1}  - \chi_{B_2}\|_{L^1}  - (|B|-|B_1|-|B_2|) }{ |E| + |E_1\cup E_2| } \right)^q,
\end{multline*}
where the minimum is taken over all balls $B$, $B_1$, $B_2$ with $|B|=|E|$, $|B_1|=|E_1|$, $|B_2|=|E_2|$.
\end{lemma}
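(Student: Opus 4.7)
Proof plan for Lemma \ref{le:dist1}.

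The plan is to split the argument into a linear (pointwise) estimate plus one application of Jensen's inequality. First, let $B^*$, $B_1^*$, $B_2^*$ be balls achieving the infima in the definition of the Fraenkel asymmetries of $E$, $E_1$, $E_2$, so that $|B^*|=|E|$, $|B_i^*|=|E_i|$, and $|E\triangle B^*| = |E|D(E)$, $|E_i\triangle B_i^*|=|E_i|D(E_i)$ for $i=1,2$. I will use the algebraic identity
\begin{equation*}
\chi_{B^*} - \chi_{B_1^*} - \chi_{B_2^*}
= (\chi_{B^*}-\chi_E) - (\chi_{B_1^*}-\chi_{E_1}) - (\chi_{B_2^*}-\chi_{E_2}) + (\chi_E - \chi_{E_1} - \chi_{E_2}),
\end{equation*}
together with the triangle inequality in $L^1$. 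Since $E\supset E_1\cup E_2$ and $E_1\cap E_2=\varnothing$, the last summand $\chi_E-\chi_{E_1}-\chi_{E_2}=\chi_{E\setminus(E_1\cup E_2)}$ is non-negative, with $L^1$ norm equal to $|E|-|E_1\cup E_2|$. Hence
\begin{equation*}
\|\chi_{B^*}-\chi_{B_1^*}-\chi_{B_2^*}\|_{L^1}
\leq |E|D(E) + |E_1|D(E_1) + |E_2|D(E_2) + (|E|-|E_1\cup E_2|).
\end{equation*}

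Since $|B^*|-|B_1^*|-|B_2^*|=|E|-|E_1\cup E_2|$, subtracting this quantity on both sides gives
\begin{equation*}
\|\chi_{B^*}-\chi_{B_1^*}-\chi_{B_2^*}\|_{L^1} - (|B^*|-|B_1^*|-|B_2^*|)
\leq |E|D(E) + |E_1|D(E_1) + |E_2|D(E_2),
\end{equation*}
and taking the infimum over admissible ball triples on the left only makes the left-hand side smaller. Note that the quantity inside the infimum on the right-hand side of the lemma is always non-negative, since for any balls with $|B|\geq |B_1|+|B_2|$ one has $\|\chi_B-\chi_{B_1}-\chi_{B_2}\|_{L^1} \geq \int(\chi_B-\chi_{B_1}-\chi_{B_2}) = |B|-|B_1|-|B_2|$.

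The second step upgrades this linear estimate to the $q$-power statement. By convexity of $t\mapsto t^q$, Jensen's inequality applied to the probability measure with weights $|E|,|E_1|,|E_2|$ (normalized by $|E|+|E_1\cup E_2|$) and values $D(E),D(E_1),D(E_2)$ yields
\begin{equation*}
\frac{|E|D(E)^q + |E_1|D(E_1)^q + |E_2|D(E_2)^q}{|E|+|E_1\cup E_2|}
\geq \left(\frac{|E|D(E) + |E_1|D(E_1) + |E_2|D(E_2)}{|E|+|E_1\cup E_2|}\right)^q.
\end{equation*}
Combining with the previous linear bound and using that $t\mapsto t^q$ is monotone on $[0,\infty)$, so that minimum and $q$-th power commute, yields the stated inequality.

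I do not expect any serious obstacle here: the only subtlety is identifying the correct linear quantity to compare with, which must (i) be subadditive under the above algebraic decomposition and (ii) use the hypothesis $E\supset E_1\cup E_2$ with $E_1\cap E_2=\varnothing$ precisely through the observation that $\|\chi_E-\chi_{E_1}-\chi_{E_2}\|_{L^1}=|E|-|E_1\cup E_2|$. Subtracting the deterministic volume gap $|B|-|B_1|-|B_2|$ is what makes this term disappear from the final bound, leaving an expression purely in the asymmetries on the right.
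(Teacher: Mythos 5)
Your proof is correct and follows essentially the same route as the paper's: pick the optimal balls from the definitions of the asymmetries, use the same characteristic-function decomposition together with $\|\chi_E-\chi_{E_1}-\chi_{E_2}\|_{L^1}=|E|-|E_1|-|E_2|=|B|-|B_1|-|B_2|$ (from $E_1\cap E_2=\varnothing$, $E_1\cup E_2\subset E$), and finish with Jensen's inequality for $t\mapsto t^q$. The only difference is cosmetic: you make explicit the non-negativity of the minimized quantity and the normalization of the weights, which the paper leaves implicit.
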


\begin{proof}
 Let $B$, $B_1$, $B_2$ attain the minimum in the definition of $D(E)$, $D(E_1)$, $D(E_2)$, that is, suppose that $|B|=|E|$, $|B_1|=|E_1|$, $|B_2|=|E_2|$ and
\begin{equation*}
|E|D(E)=|E\triangle B|,\quad  |E_1|D(E_1) = |E_1\triangle B_1|, \quad  |E_2|D(E_2) = |E_2\triangle B_2|.
\end{equation*}
Since $\chi_B - \chi_{B_1} - \chi_{B_2} = (\chi_B-\chi_E) +
(\chi_E-\chi_{E_1}-\chi_{E_2}) + (\chi_{E_1}-\chi_{B_1}) + (\chi_{E_2}-\chi_{B_2})$,
then
\begin{align*}
 \|\chi_B  - \chi_{B_1}  - \chi_{B_2}\|_{L^1}  - \|\chi_E-\chi_{E_1}-\chi_{E_2}\|_{L^1}
 \leq |E|D(E) + |E_1|D(E_1) + |E_2|D(E_2).
\end{align*}
Also, note that $\|\chi_E-\chi_{E_1}-\chi_{E_2}\|_{L^1}=|E|-|E_1|-|E_2|=|B|-|B_1|-|B_2|$ because
$E_1\cap E_2=\varnothing$ and $E_1\cup E_2 \subset E$.
The result follows by Jensen's inequality applied to the map $t \mapsto t^q$.
\end{proof}

\begin{lemma} \label{le:twoways}
 Let $B, B_1, B_2$ be measurable subsets of $\R^n$. Then
\begin{align}
\|\chi_B  - \chi_{B_1}  - \chi_{B_2}\|_{L_1}
 - ( |B|-|B_1|-|B_2|)
\label{eq:max_int} & = 2(|B_1|+|B_2| - |B\cap (B_1\cup B_2)|) \\
\label{eq:min_excess} & = 2( |B_1\setminus B| + |B_2\setminus B| + |B \cap B_1\cap B_2|).
\end{align}
\end{lemma}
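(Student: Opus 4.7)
The plan is to reduce the identity to a pointwise computation. Set $f := \chi_B - \chi_{B_1} - \chi_{B_2}$, so that $\int f = |B|-|B_1|-|B_2|$ and
\[
\|\chi_B  - \chi_{B_1}  - \chi_{B_2}\|_{L^1} - (|B|-|B_1|-|B_2|)
= \int (|f|-f)
= 2\int f_-,
\]
where $f_- := \max\{-f, 0\}$. Hence both equalities of the lemma will follow from a single computation of $\int f_-$ together with elementary set algebra.

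To compute $\int f_-$, I would partition $\R^n$ into the eight cells determined by membership in $B$, $B_1$, and $B_2$, and tabulate $f_-$ on each. The function $f$ takes only the values $\{1,0,-1,-2\}$, and $f_-$ is nonzero on exactly four cells: it equals $1$ on each of $B_1\setminus(B\cup B_2)$, $B_2\setminus(B\cup B_1)$, and $B\cap B_1\cap B_2$, and equals $2$ on $(B_1\cap B_2)\setminus B$. Therefore
\[
\int f_- = |B_1\setminus(B\cup B_2)| + |B_2\setminus(B\cup B_1)| + |B\cap B_1\cap B_2| + 2\,|(B_1\cap B_2)\setminus B|.
\]

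To obtain \eqref{eq:max_int}, I would expand $|B_1|$, $|B_2|$, and $|B\cap(B_1\cup B_2)|$ using the same cell decomposition: each is a sum of cell measures, and direct bookkeeping shows that $|B_1|+|B_2|-|B\cap(B_1\cup B_2)|$ equals the right-hand side of the display above (the cell $(B_1\cap B_2)\setminus B$ appears once in each of $|B_1|$ and $|B_2|$ but not in $|B\cap(B_1\cup B_2)|$, which accounts for its coefficient $2$). For \eqref{eq:min_excess}, the analogous decompositions $|B_1\setminus B| = |B_1\setminus(B\cup B_2)| + |(B_1\cap B_2)\setminus B|$ and $|B_2\setminus B| = |B_2\setminus(B\cup B_1)| + |(B_1\cap B_2)\setminus B|$ yield the same total. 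I expect no genuine obstacle here: the argument is purely combinatorial set algebra, the only point worth watching being the double counting of $(B_1\cap B_2)\setminus B$, which produces the coefficient $2$ naturally in both formulations.
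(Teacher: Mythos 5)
Your proof is correct and takes essentially the same route as the paper's: both arguments rest on decomposing $\R^n$ into the eight cells determined by membership in $B$, $B_1$, $B_2$ and doing the cellwise bookkeeping (with the cell $(B_1\cap B_2)\setminus B$ counted twice). Your identity $\|f\|_{L^1}-\int f = 2\int f_-$ is only a mild streamlining of the paper's version, which computes $\|\chi_B-\chi_{B_1}-\chi_{B_2}\|_{L^1}$ cellwise and then rewrites it using separately derived set-measure relations; both, like the paper, implicitly use that the measures are finite so that $|B|-|B_1|-|B_2|$ and $\int f$ make sense.
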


\begin{proof}
 Consider, first, the elementary relations
\begin{eqnarray}
 \label{eq:R1} |B_i\setminus B| &=& |B_i| - |B\cap B_i|, \quad i=1,2. \\
 \label{eq:R2} |B\cap(B_1\cup B_2)| &=& |B\cap B_1| + |B\cap B_2| - |B\cap B_1 \cap B_2|. \\
 \label{eq:R3} |B\setminus(B_1\cup B_2)| &=& |B| - |B\cap (B_1 \cup B_2)|.
\end{eqnarray}
From \eqref{eq:R1} and \eqref{eq:R2} we obtain
\begin{equation}
 \label{eq:R4} |B_1\setminus B| + |B_2\setminus B| + |B\cap B_1\cap B_2| = |B_1| + |B_2| - |B\cap (B_1\cup B_2)|.
\end{equation}
From \eqref{eq:R3} and \eqref{eq:R4} we obtain
\begin{equation}
 \label{eq:R5} |B\setminus (B_1\cup B_2)| = |B| - (|B_1|+|B_2|) + (|B_1\setminus B| + |B_2\setminus B| + |B\cap B_1\cap B_2|).
\end{equation}

Decomposing $\R^n$ as $\bigcup_{\alpha, \alpha_1, \alpha_2 \in \{0,1\}} \{\vec y:  (\chi_B,\chi_{B_1},\chi_{B_2})=(\alpha,\alpha_1,\alpha_2)\}$
we find that
\begin{align*}
  \|\chi_B - \chi_{B_1}  - \chi_{B_2}\|_{L_1} = & |B\cap B_1\cap B_2| + |B\setminus (B_1\cup B_2)| \\
& {} + 2|(B_1\cap B_2)\setminus B| + |(B_1\setminus B) \setminus B_2| + |(B_2\setminus B)\setminus B_1|.
\end{align*}
Since $|(B_1\cap B_2)\setminus B|$ can be seen either as $|(B_1\setminus B) \cap B_2|$ or as $|(B_2\setminus B)\cap B_1|$,
\begin{align*}
  \|\chi_B - \chi_{B_1}  - \chi_{B_2}\|_{L_1} & = |B\cap B_1\cap B_2| + |B\setminus (B_1\cup B_2)| + |B_1\setminus B| + |B_2\setminus B|.
\end{align*}
Using \eqref{eq:R4} and \eqref{eq:R3} we obtain \eqref{eq:max_int}; from \eqref{eq:R5} we obtain \eqref{eq:min_excess}.
\end{proof}

From \eqref{eq:max_int} we see that the minimization problem in the conclusion of Lemma \ref{le:dist1} is equivalent to
\begin{equation} \label{eq:max_int2}
 \max\{ |B\cap(B_1\cup B_2)|:\ B, B_1, B_2\ \hbox{balls of radii}\ R, R_1, R_2\},
\end{equation}
where $R$, $R_1$, $R_2$ are such that $|E|=\omega_n R^n$, $|E_1|=\omega_n R_1^n$, $|E_2|=\omega_n R_2^n$.

\begin{lemma} \label{le:auxprblm}
 Suppose $0<R_1,R_2<R<R_1+R_2$. Then \eqref{eq:max_int2} admits a solution, unique up to isometries of the plane,
 characterized by the facts that:
\begin{enumerate}[i)]
\item \label{cd:aligned} the centres of $B$, $B_1$, $B_2$ are aligned
\item \label{cd:nEmptyInt} $\varnothing \neq B_1\cap B_2 \subset B$,
$B_1 \not \subset B$, and $B_2 \not \subset B$
\item \label{cd:heights} $\partial B\cap \partial B_1$, $\partial B_1\cap \partial B_2$, and $\partial B_2\cap \partial B$
are (($n-2$)-dimensional) circles  having the same radius
 (or, if $n=2$, the common chords between $B$ and $B_1$, $B_1$ and $B_2$, and $B_2$ and $B$
all three have the same length, see Figure \ref{fg:circlesLB}a).
\end{enumerate}
In addition, the solution to \eqref{eq:max_int2} is such that
\begin{equation}
 \label{eq:optInt}
|B\cap B_1\cap B_2| \geq \frac{2^{n-1}}{n!}(R_1+R_2-R)^\frac{n+1}{2} \left (\frac{R_1R_2}{R_1+R_2}\right )^\frac{n-1}{2}.
\end{equation}
\end{lemma}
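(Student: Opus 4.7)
The plan is to establish existence by compactness, reduce to a one-parameter family via symmetrization, extract the first-order optimality conditions, and finally estimate the common intersection volume explicitly.

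\textbf{Existence and alignment.}
Since the map $|B\cap(B_1\cup B_2)|$ is continuous in the positions of the three centers, translation-invariant, and positive only on a bounded set of configurations (modulo translation), existence of a maximizer follows by a standard compactness argument. For the alignment property~\ref{cd:aligned}, let $\ell$ denote the line through the centers of $B_1$ and $B_2$. Since $B_1\cup B_2$ is invariant under reflection about any hyperplane containing $\ell$, Steiner symmetrization of $B$ about such a hyperplane produces a ball $B^s$ of the same radius whose center lies in the hyperplane, with $|B\cap(B_1\cup B_2)|\leq |B^s\cap(B_1\cup B_2)|$ by Riesz's rearrangement inequality. Iterating over $n-1$ hyperplanes containing $\ell$ whose intersection is $\ell$ itself forces the center of the optimal $B$ onto $\ell$, so the three centers are collinear.

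\textbf{Structural properties and first-order conditions.}
Reducing to the collinear case, place $B$ at the origin with $B_1$ and $B_2$ centered at $-a_1\vec e$ and $a_2\vec e$ for a unit vector $\vec e$, and write $F(a_1,a_2)=|B\cap B_1|+|B\cap B_2|-|B\cap B_1\cap B_2|$. I would establish each part of~\ref{cd:nEmptyInt} by perturbation: if $B_1\subset B$, translating $B_1$ along $\ell$ away from $B_2$ keeps $|B\cap B_1|=|B_1|$ unchanged and strictly decreases $|B_1\cap B_2|$, contradicting optimality; if $B_1\cap B_2\not\subset B$, a suitable rigid translation of the pair $(B_1,B_2)$ along $\ell$ pushes the missing mass back into $B$; if $B_1\cap B_2=\varnothing$, the hypothesis $R<R_1+R_2$ is used to show that sliding the two balls through the first-contact configuration yields a strict increase of $F$. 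For~\ref{cd:heights}, differentiating along $\ell$ via the divergence theorem gives $\partial_{a_1}|B\cap B_1|=-\omega_{n-1}\rho_{01}^{n-1}$, where $\rho_{01}$ is the radius of the $(n-2)$-sphere $\partial B\cap\partial B_1$, and similarly for the other pairs; the stationarity conditions $\partial F/\partial a_i=0$ then force these three radii to coincide.

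\textbf{Explicit volume bound and main difficulty.}
For the estimate~\eqref{eq:optInt}, the optimal configuration has the three balls meeting along $(n-2)$-spheres of a common radius $\rho$. The sagitta relation $h_j\simeq\rho^2/(2R_j)$ for each cap height, combined with the geometric constraint that the three cap heights accommodate the deficit $R_1+R_2-R$, produces $\rho^2\sim (R_1+R_2-R)\cdot R_1 R_2/(R_1+R_2)$. The common intersection $B\cap B_1\cap B_2$ then contains an inscribed $n$-pyramid (or bi-cone) with $(n-1)$-dimensional base of radius $\sim\rho$ and total height $\sim R_1+R_2-R$, whose volume is bounded below by $C_n\,\rho^{n-1}(R_1+R_2-R)$; substituting for $\rho$ yields the exponents $(n+1)/2$ and $(n-1)/2$ in the claimed bound. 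The main difficulty I expect is the dimensional bookkeeping in this last step: tracking the sagitta formula, the definition of $\rho$, and the inscribed-pyramid estimate carefully enough to produce the precise constant $2^{n-1}/n!$ and the sharp exponents uniformly in $n\geq 2$, as well as handling the degenerate configurations at the boundary of parameter space in the perturbation arguments of the second step.
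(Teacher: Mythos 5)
Your outline follows the paper's proof in its two main steps: the structural conditions ii)--iii) come, exactly as in the paper, from differentiating $|B\cap B_1|$, $|B\cap B_2|$ and $|B\cap B_1\cap B_2|$ with respect to translations of the centres along the axis (this derivative-of-volume formula is the paper's Lemma \ref{le:areadiff}), and the bound \eqref{eq:optInt} comes from an inscribed polytope of base radius $h$ (the common radius of the three circles) and axial width $\gamma=(q_1+R_1)-(q_2-R_2)$, combined with the sagitta-type estimates $\gamma>R_1+R_2-R$ and $h^2>\gamma R_1R_2/(R_1+R_2)$ (equations \eqref{eq:gamma1}--\eqref{eq:gamma2}); the exact constant $2^{n-1}/n!$ is simply the volume of the convex hull of $(q_1+R_1)\vec e$, $(q_2-R_2)\vec e$ and $a\vec e\pm h\vec e_i$, so your ``$\sim$'' bookkeeping does tighten to the stated inequality once you use this explicit polytope. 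Where you genuinely differ is the alignment step i): you use Steiner symmetrization about hyperplanes containing the axis $\ell$. This is valid here, but for a slightly stronger reason than the one you give: reflection symmetry of $B_1\cup B_2$ alone would not suffice (a reflection-symmetric set whose slices are not centred intervals gives counterexamples to $|A\cap E|\le|A^s\cap E|$); what saves you is that both centres lie on $\ell$, so every slice of $B_1\cup B_2$ perpendicular to such a hyperplane is a centred interval, i.e.\ the set is Steiner symmetric and the slicewise inequality is elementary. This route is arguably cleaner than the paper's, which must first establish the trichotomy \eqref{eq:B1intB2} by a first-variation argument for $B$ precisely in order to know that $|B\cap B_1\cap B_2|$ is unchanged under small translations of $B$ before moving its centre towards $\ell$.

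The genuine shortfall is the uniqueness assertion, which your proposal never addresses. Symmetrization gives only a non-strict inequality, so it yields the existence of an aligned maximizer, not that every maximizer is aligned; and even granting alignment, you stop at ``the stationarity conditions force the three radii to coincide'' without showing that these conditions determine the configuration. The paper closes this by deriving from i)--iii) the relations \eqref{eq:q1q2} and \eqref{eq:distance}, $q_2-q_1=\sqrt{R^2-h^2}=\sqrt{R_1^2-h^2}+\sqrt{R_2^2-h^2}$, and observing that the resulting equation for $h$ has a strictly monotone left-hand side, so $h$, and hence $q_1,q_2$, are uniquely determined by $R,R_1,R_2$; you would need to add this (and the sign bookkeeping that rules out $h=h_1=h_2=0$, which is where $R<R_1+R_2$ enters). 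Note the omission is confined to the lemma as stated: for the application in Proposition \ref{pr:distortions} only the optimal value matters, and via \eqref{eq:min_excess} the bound \eqref{eq:optInt} at a single maximizer already suffices.
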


\begin{figure}
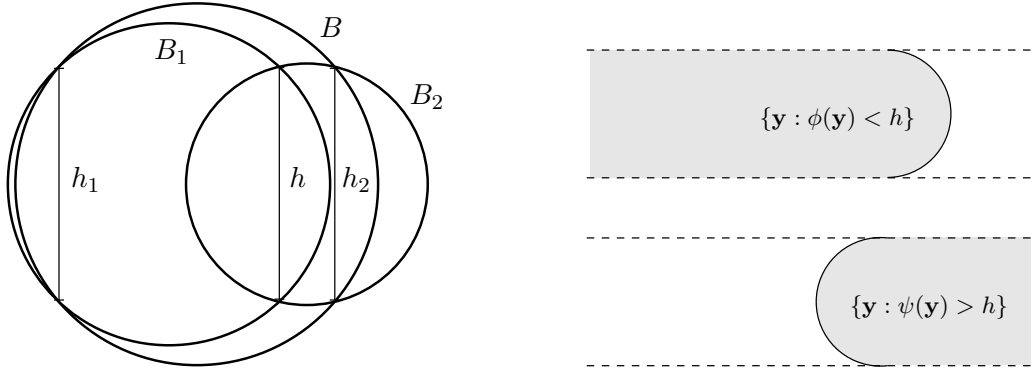

 \centering
  \subfigure{\input{optimal.pstex_t}}
\qquad \qquad \quad
  \subfigure{\input{levelsets.pstex_t}}
 \caption{\label{fg:circlesLB} On the left: optimal choice of $B$, $B_1$ and $B_2$
in \eqref{eq:optInt}, with $h=h_1=h_2$.
On the right: sublevel sets $\{\phi<h\}$ and $\{\psi>h\}$ in the proof of Lemma \ref{le:areadiff}
(as $h$ increases the level sets move along the slab $S$, in the direction of $\vec e$).}
\end{figure}

The proof of Lemma \ref{le:auxprblm} uses the auxiliary Lemmas \ref{le:phiandpsi} and \ref{le:areadiff}.
As mentioned in Section \ref{se:notation}, we write $\vec a \wedge \vec b$ to denote
the exterior product of $\vec a, \vec b \in  \R^n$.
In particular, we use that $|\vec a \wedge \vec b| = |\vec b| \dist (\vec a, \langle \vec b \rangle)$.
The purpose of Lemma \ref{le:phiandpsi} is to show that $B(\vec p + h\vec e, R)$ can
be written as the intersection of the two sets in Figure \ref{fg:circlesLB}b), for all $h\in \R$.
We then write the derivative of the area of the sublevel sets with respect to $h$ as a surface integral
on $\partial B(\vec p+h\vec e, R)$, using the coarea formula (Lemma \ref{le:areadiff}).

\begin{lemma} \label{le:phiandpsi}
Let $R>0$, $\vec p\in \R^n$, $\vec e\in \S^{n-1}$. Define
 \begin{eqnarray*}
  \phi(\vec y) &:=& (\vec y - \vec p)\cdot \vec e - \sqrt{R^2 - |(\vec y - \vec p) \wedge \vec e|^2} \\
  \psi (\vec y) &:=& (\vec y - \vec p)\cdot \vec e + \sqrt{R^2 - |(\vec y - \vec p) \wedge \vec e|^2}\,
 \end{eqnarray*}
in the infinite slab $S:=\{\vec y \in \R^n:\ |(\vec y - \vec p) \wedge \vec e|<R\}$.
Then, for all $h\in \R$,
\begin{equation*}
 B(\vec p+h\vec e, R)=\{\vec y \in S: \phi(\vec y) <h\} \cap \{\vec y \in S: \psi(\vec y)>h\}.
\end{equation*}
\end{lemma}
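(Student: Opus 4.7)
The plan is to reduce everything to the orthogonal decomposition identity $|\vec x|^2 = |\vec x \cdot \vec e|^2 + |\vec x \wedge \vec e|^2$ (recalled in the notation section), applied to $\vec x := \vec y - \vec p - h\vec e$.

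First I would observe that since $\vec e \wedge \vec e = \vec 0$ and the exterior product is bilinear and antisymmetric,
\begin{equation*}
 (\vec y - \vec p - h\vec e)\wedge \vec e = (\vec y - \vec p)\wedge \vec e,
 \qquad
 (\vec y - \vec p - h\vec e)\cdot \vec e = (\vec y - \vec p)\cdot \vec e - h.
\end{equation*}
Plugging these into the decomposition identity gives
\begin{equation*}
|\vec y - \vec p - h\vec e|^2 = \big((\vec y - \vec p)\cdot \vec e - h\big)^2 + |(\vec y - \vec p)\wedge \vec e|^2.
\end{equation*}

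Next, I would rewrite the condition $\vec y \in B(\vec p + h\vec e, R)$, i.e.\ $|\vec y - \vec p - h\vec e|^2 < R^2$, as
\begin{equation*}
 \big((\vec y - \vec p)\cdot \vec e - h\big)^2 < R^2 - |(\vec y - \vec p)\wedge \vec e|^2.
\end{equation*}
Since the left-hand side is nonnegative, this forces the right-hand side to be positive, i.e.\ $\vec y \in S$. Conversely, once $\vec y \in S$ the square root $\sqrt{R^2 - |(\vec y - \vec p)\wedge \vec e|^2}$ is a well-defined positive real number, and taking square roots turns the displayed inequality into
\begin{equation*}
 -\sqrt{R^2 - |(\vec y - \vec p)\wedge \vec e|^2} < (\vec y - \vec p)\cdot \vec e - h < \sqrt{R^2 - |(\vec y - \vec p)\wedge \vec e|^2},
\end{equation*}
which rearranges to $\phi(\vec y) < h$ and $\psi(\vec y) > h$ simultaneously. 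Combining the two implications yields the claimed set equality.

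There is no real obstacle here: the lemma is a direct consequence of the orthogonal decomposition of $\R^n$ along $\vec e$ and $\vec e^\perp$. The only minor point to be careful about is the implication $\vec y \in B(\vec p + h\vec e, R) \Rightarrow \vec y \in S$, which is why the definitions of $\phi$ and $\psi$ are restricted to $S$ in the first place; this is handled by noting that the strict inequality on the squared norm forces $|(\vec y - \vec p)\wedge \vec e|^2 < R^2$.
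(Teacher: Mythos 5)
Your proof is correct and uses essentially the same argument as the paper: the Pythagorean identity $|\vec y-(\vec p+h\vec e)|^2=|(\vec y-\vec p)\cdot\vec e-h|^2+|(\vec y-\vec p)\wedge\vec e|^2$, followed by the observation that membership in the ball is equivalent to $\vec y\in S$ together with the two-sided inequality $\phi(\vec y)<h<\psi(\vec y)$. Your phrasing of the equivalence as a single two-sided inequality is a slightly more streamlined bookkeeping than the paper's case split, but it is the same underlying argument.
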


\begin{proof}
 By Pithagoras's theorem
$
| \vec y -(\vec p + h\vec e) |^2 = |(\vec y - \vec p)\cdot \vec e - h|^2 + |(\vec y -\vec p)\wedge \vec e|^2
$.
Then $\vec y \in B(\vec p +  h \vec e, R)$ if and only if $\vec y \in S$ and $|(\vec y -\vec p)\cdot \vec e-h| < \sqrt{R^2- |(\vec y - \vec p) \wedge \vec e|^2}$, that is, if and only if
\begin{align*}
& \vec y \in S,\quad (\vec y - \vec p) \cdot \vec e \geq h \quad \hbox{and}\quad \phi(\vec y) < h, \\
\hbox{or} \quad & \vec y \in S,\quad (\vec y - \vec p) \cdot \vec e \leq h \quad \hbox{and}\quad \psi(\vec y) > h.
\end{align*}
This proves that $B(\vec p + h\vec e, R) \subset \{ \phi<h\} \cap \{\psi>h\}$,
\begin{eqnarray*}
 &\{\phi<h\} \setminus B(\vec p + h \vec e, R) \subset \{ \vec y \in \R^n:\ (\vec y - \vec p)\cdot \vec e < h \} \\
\text{and} &
\{\psi>h\} \setminus B(\vec p + h\vec e, R) \subset \{ \vec y \in \R^n:\ (\vec y - \vec p) \cdot \vec e > h \}.
\end{eqnarray*}
From this we see that $\{\phi<h\} \cap \{\psi > h \} \subset B(\vec p + h \vec e, R)$, so the conclusion follows.
\end{proof}

\begin{lemma} \label{le:areadiff}
 Let $\vec p\in \R^n$, $R>0$, $E\subset \R^n$ measurable, and suppose that
\begin{equation} \label{eq:diffcondition}
\mathcal H^{n-1} (\partial B(\vec p, R) \cap \partial E)=0.
\end{equation}
Then the map $\vec y \mapsto |B(\vec y, R) \cap E|$ is differentiable at $\vec y=\vec p$ with gradient
\begin{equation*}
 \left . D_{\vec y} \big ( |B(\vec y, R) \cap E| \big ) \right |_{\vec y=\vec p} = \int_{\partial B(\vec p, R) \cap E} \frac{\vec z-\vec p}{R} \dd\mathcal H^{n-1}(\vec z) \,.
\end{equation*}
\end{lemma}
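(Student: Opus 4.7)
The plan is to compute $f(\vec y) := |B(\vec y, R) \cap E|$ directly in polar coordinates centered at $\vec p$, and to read off its gradient by Taylor-expanding in $\vec h := \vec y - \vec p$.

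First I would observe that $\vec p + r\vecg\omega \in B(\vec p + \vec h, R)$ if and only if $|r\vecg\omega - \vec h|^2 < R^2$, which, for $|\vec h| < R$, is equivalent to $0 \leq r < r_+(\vecg\omega, \vec h)$, where
\[
  r_+(\vecg\omega, \vec h) := \vecg\omega \cdot \vec h + \sqrt{R^2 - |\vecg\omega \wedge \vec h|^2}.
\]
(Specialising to $\vec h = t\vec e$ recovers the formulas of Lemma \ref{le:phiandpsi}.) Expanding the square root yields $r_+(\vecg\omega, \vec h) = R + \vecg\omega \cdot \vec h + O(|\vec h|^2/R)$, uniformly in $\vecg\omega \in \S^{n-1}$. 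By Fubini,
\[
  f(\vec p + \vec h) - f(\vec p)
   = \int_{\S^{n-1}} \int_R^{r_+(\vecg\omega, \vec h)}
      \chi_E(\vec p + r\vecg\omega)\, r^{n-1}\, \dd r\, \dd\mathcal H^{n-1}(\vecg\omega),
\]
with the inner integral interpreted with the obvious sign convention when $r_+ < R$.

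Next, for $\mathcal H^{n-1}$-a.e.\ $\vecg\omega \in \S^{n-1}$ the point $\vec z := \vec p + R\vecg\omega$ belongs to $\Int E$ or to $\Int(\R^n \setminus E)$. This is precisely where the hypothesis $\mathcal H^{n-1}(\partial B(\vec p, R) \cap \partial E) = 0$ enters, since $\partial B(\vec p, R) \setminus \partial E$ consists of such interior points. Accordingly, there is $\delta_{\vecg\omega} > 0$ such that $\chi_E(\vec p + r\vecg\omega) = \chi_E(\vec z)$ for all $|r - R| < \delta_{\vecg\omega}$. Since $|r_+ - R| \leq 2|\vec h|$ uniformly in $\vecg\omega$, for $|\vec h|$ small enough (depending on $\vecg\omega$) the inner integral can be evaluated exactly as
\[
  \chi_E(\vec z)\, \frac{r_+^n - R^n}{n}
  = R^{n-1} \chi_E(\vec z)\, \vecg\omega \cdot \vec h + O(R^{n-2}|\vec h|^2).
\]

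Finally, since the inner integral is uniformly bounded in absolute value by $2R^{n-1}|\vec h|$, I would apply the dominated convergence theorem to the quantity
\[
  \frac{1}{|\vec h|}\left[ \int_R^{r_+(\vecg\omega, \vec h)} \chi_E(\vec p + r\vecg\omega)\, r^{n-1}\, \dd r
  - R^{n-1} \chi_E(\vec z)\, \vecg\omega \cdot \vec h \right],
\]
which tends to $0$ pointwise $\mathcal H^{n-1}$-a.e.\ on $\S^{n-1}$ by the previous step, to conclude that
\[
  f(\vec p + \vec h) - f(\vec p)
   = \vec h \cdot \int_{\partial B(\vec p, R) \cap E} \frac{\vec z - \vec p}{R}\, \dd\mathcal H^{n-1}(\vec z)
    + o(|\vec h|),
\]
after changing variables via $\vec z = \vec p + R\vecg\omega$, $\dd\mathcal H^{n-1}(\vec z) = R^{n-1}\dd\mathcal H^{n-1}(\vecg\omega)$. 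The main subtlety is the inner expansion for a merely \emph{measurable} $E$: the assumption $\mathcal H^{n-1}(\partial B(\vec p, R) \cap \partial E) = 0$ circumvents this because on $\partial B(\vec p, R) \setminus \partial E$ the function $\chi_E$ is automatically locally constant, so no Lebesgue-point argument or further regularity of $\partial E$ is needed.
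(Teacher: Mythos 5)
Your proof is correct, and it takes a genuinely different route from the paper's. You work in polar coordinates centred at $\vec p$, writing $|B(\vec p+\vec h,R)\cap E|-|B(\vec p,R)\cap E|$ as an integral over $\S^{n-1}$ of radial integrals between $R$ and $r_+(\vecg\omega,\vec h)$, Taylor-expanding $r_+$, and using that, thanks to \eqref{eq:diffcondition}, $\mathcal H^{n-1}$-a.e.\ point of $\partial B(\vec p,R)$ lies in $\Int E$ or in $\Int(\R^n\setminus E)$, so $\chi_E$ is constant along each ray near $r=R$; dominated convergence then yields the gradient. The paper instead fixes a direction $\vec e$, uses Lemma \ref{le:phiandpsi} to represent the translated ball as an intersection of sublevel sets of $\phi$ and $\psi$ in a slab, and applies the coarea formula to get the exact identity
$|E\cap B(\vec p+\vec h,R)|-|E\cap B(\vec p,R)|=\vec h\cdot\int_0^1\int_{\partial B(\vec p,R)}\frac{\vec z-\vec p}{R}\,\chi_{E-\tau\vec h}(\vec z)\,\dd\mathcal H^{n-1}(\vec z)\,\dd\tau$,
then bounds the discrepancy with the limiting integral by $\mathcal H^{n-1}(\{\vec z\in\partial B(\vec p,R):\dist(\vec z,\partial E)<|\vec h|\})$, which tends to $\mathcal H^{n-1}(\partial B(\vec p,R)\cap\partial E)=0$. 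Your argument is more elementary and self-contained (no slab lemma, no coarea formula, all directions $\vec h$ treated at once), while the paper's argument produces, as a by-product, an exact representation formula valid for finite displacements with an error quantified by the $|\vec h|$-neighbourhood of $\partial E$ on the sphere, whereas your expansion is only first-order and its $o(|\vec h|)$ rate depends on $E$ through the non-uniform radii $\delta_{\vecg\omega}$. One cosmetic point: the uniform bound on the inner radial integral should be $2|\vec h|(R+|\vec h|)^{n-1}$ rather than $2R^{n-1}|\vec h|$; this is all dominated convergence requires, so nothing in the argument is affected.
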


\begin{proof}
Given $\vec e\in \S^{n-1}$ arbitrary, let $\phi$, $\psi$, and $S$ be as in Lemma \ref{le:phiandpsi}. By definition of $\phi$ and $\psi$, we have that $\phi(\vec y) < (\vec y - \vec p) \cdot \vec e < \psi(\vec y)$ for all $\vec y \in S$, hence
\begin{equation*}
 (\vec y - \vec p)\cdot \vec e \leq h \ \Rightarrow\ \phi(\vec y) < h
\quad \hbox{and} \quad
 (\vec y - \vec p)\cdot \vec e \geq h \ \Rightarrow\ \psi(\vec y) > h
\end{equation*}
for all $h\in \R$.
 Thus, $\{\phi < h\} \cup \{\psi > h\} = S$
and is independent of $h$. From the elementary relation $|E \cap S_1\cap S_2| + |E\cap (S_1 \cup S_2)| = |E\cap S_1| + |E\cap S_2|$ we obtain (first for the case $|E\cap S|<\infty$, then for all measurable sets)
\begin{align*}
  &|E\cap B(\vec p + h\vec e, R)| - |E\cap B(\vec p, R)| \\
  &= (|E\cap \{\phi < h \}| + |E\cap\{\psi>h\}| - |E\cap S|) - (|E\cap \{\phi < 0 \}| + |E\cap\{\psi>0\}| - |E\cap S|) \\
&= |E\cap \{0\leq \phi < h\}| - |E\cap \{0< \psi \leq h\} |.
\end{align*}
Writing $\vec y\in S$ as $\vec p + \lambda \vec e + \mu \vec e'$, with $|\vec e'|=1$ and $\vec e \perp \vec e'$,
a direct computation shows that
\begin{align*}
 D\phi(\vec y) = \vec e - \frac{\mu \vec e'}{\sqrt{R^2 - \mu^2}}
\quad \text{and} \quad D\psi(\vec y )= \vec e + \frac{\mu \vec e'}{\sqrt{R^2 -\mu^2}}.
\end{align*}
Hence, by the coarea formula and Pithagoras's theorem,
\begin{align*}
 |E\cap  B(\vec p + h\vec e, R)| - |E\cap B(\vec p, R)|
&= \int_0^h \left ( \int_{\{\phi=\tau\}\cap E} \frac{\dd \mathcal H^{n-1}(\vec y)}{|D\phi(\vec y)|} - \int_{\{\psi=\tau\}\cap E} \frac{\dd \mathcal H^{n-1}(\vec y)}{|D\psi(\vec y)|} \right ) \dd\tau \\
&= \int_0^h \int_{\partial B(\vec p+\tau \vec e, R)\cap E} \sgn(\lambda - \tau) \frac{\sqrt{R^2 - \mu^2}}{R} \dd\mathcal H^{n-1}(\vec y) \dd \tau \\
&= \vec e \cdot \int_0^h \int_{\partial B(\vec p+\tau \vec e, R)\cap E} \frac{\vec y - \vec p-\tau \vec e}{R}\dd\mathcal H^{n-1}(\vec y) \dd\tau.
\end{align*}
Since $h$ and $\vec e$ are arbitrary, the above equation expresses that for all $\vec h \in \R^n$
\begin{equation*}
 |E\cap B(\vec p + \vec h, R)| - |E\cap B(\vec p, R)|
= \vec h \cdot \int_0^{1} \int_{\partial B(\vec p, R)} \frac{\vec z - \vec p}{R} \chi_{E-\tau \vec h} (\vec z)\dd\mathcal H^{n-1}(\vec z) \dd\tau.
\end{equation*}
Denoting $|\{\tau\in(0,1): \vec z + \tau \vec h \in E\}|$ by $\alpha(\vec z, \vec h, E)$, Fubini's theorem gives
\begin{multline*}
\left | |E\cap B(\vec p + \vec h, R)| - |E\cap B(\vec p, R)| - \vec h \cdot  \int_{\partial B(\vec p, R)\cap E} \frac{\vec z - \vec p}{R}\dd\mathcal H^{n-1}(\vec z) \right | \\
\leq |\vec h| \int_{\partial B(\vec p, R)} |\chi_E(\vec z) - \alpha(\vec z, h, E)|\dd\mathcal H^{n-1}(\vec z).
\end{multline*}
Due to the connexity of the line segment joining $\vec z$ and $\vec z+\vec h$,
if $\dist(\vec z, \partial E) \geq |\vec h|$ then either
$\vec z\in \Int E$ and $\alpha(\vec z,\vec h,E)=\chi_E(\vec z)=1$,
or $\vec z\in \R^n\setminus \overline E$ and $\alpha(\vec z,\vec h,E)=\chi_E(\vec z)=0$. Therefore,
\begin{multline*}
 \limsup_{\vec h \to 0} |\vec h|^{-1} \left | |E\cap B(\vec p + \vec h, R)| - |E\cap B(\vec p, R)| - \vec h \cdot  \int_{\partial B(\vec p, R)\cap E} \frac{\vec z - \vec p}{R}\dd\mathcal H^{n-1}(\vec z) \right | \\
 \leq \lim_{\vec h \to 0} \mathcal H^{n-1}(
\{\vec z\in \partial B(\vec p, R): \dist(\vec z, \partial E) < |\vec h| \})
= \mathcal H^{n-1}(\partial B(\vec p, R) \cap \partial E),
\end{multline*}
completing the proof.
\end{proof}

\begin{remark} \label{rk:diffcd}
 The example $\vec p=\vec 0$, $R=1$, $E=(-1,1)^n\setminus B(\vec 0,1)$ shows that $|B(\vec y, R) \cap E|$ is not always differentiable with respect to $\vec y $ if \eqref{eq:diffcondition} is not satisfied. However, this condition holds in the situations to be considered in the sequel, namely, when $E$ is a ball, the union of balls, or the intersection of balls of radii different from $R$.
\end{remark}

\begin{proof}[Proof of Lemma \ref{le:auxprblm}]
The existence of solutions to \eqref{eq:max_int2} can be easily deduced from the continuity of $|B\cap(B_1\cup B_2)|$ with respect to the centres of $B$, $B_1$, and $B_2$. Let $(B, B_1, B_2)$ be one such solution. We divide the proof of \ref{cd:aligned})-\ref{cd:heights}) in the following steps.

\emph{Step 1: one of the following possibilities occur
\begin{equation} \label{eq:B1intB2}
 \dist(B_1 \cap B_2, B)>0,\quad \dist(B_1\cap B_2, \R^n\setminus B) >0, \quad  \textit{or}\quad  B_1\cap B_2=\emptyset.
\end{equation}}
  Suppose, looking for a contradiction, that neither $\overline{B_1\cap B_2} \cap \overline B = \emptyset$ nor $\overline{B_1\cap B_2} \subset B$.
Then, by the connexity of $\overline{B_1\cap B_2}$, there exists $\vec x_0 \in \overline{B_1 \cap B_2} \cap \partial B$.
Let $B=B(\vec p, R)$, $\vec e:= \frac{\vec x_0 - \vec p}{|\vec x_0 - \vec p|}$,
and consider the following parametrization of $\partial B(\vec p, R)$ using spherical coordinates
\begin{equation*}
 \vec f(\theta, \vecg \xi):= \vec p + (R\cos \theta)\vec e + (R\sin \theta)\vecg \xi, \quad \theta \in  [0, \pi], \quad
\vecg \xi \in \mathbb S^{n-2}_{\vec e} := \mathbb S^{n-1} \cap \langle \vec e \rangle ^\perp.
\end{equation*}
Applying Lemma \ref{le:areadiff} to $E=\overline{B_1}\cup \overline{B_2}$ (see Remark \ref{rk:diffcd})
\begin{align*}
 \left .\frac{\dd}{\dd h} \left (|B(\vec p+h\vec e, R) \cap (B_1\cup B_2)| \right )\right |_{h=0}
&= \int_{\partial B \cap (B_1\cup B_2)} \vec e \cdot \frac{\vec z - \vec p}{R} \dd\mathcal H^1(\vec z) \\
 &=
R^{n-1} \int_{\mathbb S^{n-2}_\vec e} \int_{\theta\in (0, \pi):\, \vec f (\vec \theta, \vecg \xi) \in E} \cos \theta  (\sin \theta)^{n-2} \dd\theta \dd\mathcal H^{n-2}(\vecg \xi)
\end{align*}	
We can write the integral with respect to $\theta$ as
\begin{align*}
\int_0^{\pi/2} \cos \theta  (\sin \theta)^{n-2}
\big ( \chi_E(\vec f (\vec \theta, \vecg \xi)) - \chi_E(\vec f (\pi-\theta, \vecg \xi)) \big )\dd\theta.
\end{align*}
If we prove that
\begin{align} \label{eq:reflection}
 \vec f (\pi - \theta, \vecg \xi) \in \overline{B_1}\cup \overline {B_2}
\  \Rightarrow \
\vec f (\theta, \vecg \xi) \in \overline{B_1} \cup \overline{B_2}
\quad\text{for every $\theta \in [0, \pi/2]$}
\end{align}
and that
\begin{align} \label{eq:reflection2}
 \chi_E(\vec f (\vec \theta, \vecg \xi)) - \chi_E(\vec f (\pi-\theta, \vecg \xi)) = 1
\quad
\text{for all $(\theta, \vecg \xi)$ in a set of positive measure,}
\end{align}
we will obtain that $\frac{\dd}{\dd h} \left (|B(\vec p+h\vec e, R) \cap (B_1\cup B_2)| \right )>0$ at $h=0$.
The contradiction will follow
by noting that if $(B,B_1,B_2)$ solves \eqref{eq:max_int2},
 then $D_{\vec x}|B(\vec x, R) \cap (B_1 \cup B_2)|$ must be zero at $\vec x=\vec p$.

Suppose that $\vec f(\pi - \theta_0, \vecg \xi)\in \overline{B_i}$ for some $i=1,2$ and some $\theta_0\in [0, \frac{\pi}{2}]$.
Since $\overline{B_i}\cap \partial B$ is connected and contains $f(0, \vecg \xi)=\vec x_0$,
its projection to the plane $\vec p + \langle \vec e, \vecg \xi\rangle$ must contain
the whole of the arc $\vec f (\theta, \vecg  \xi)$, $\theta\in [0, \pi-\theta_0)$.
This proves \eqref{eq:reflection}.
In order to prove \eqref{eq:reflection2}, define $\theta_1(\vecg \xi):= \sup \{\theta \in [0, \pi]: f(\theta, \vecg \xi) \in \overline{B_1} \cup \overline{B_2}\}$.
Arguing as before, we see that
\begin{align} \label{eq:reflection3}
 |\{\theta\in [0, \pi]: \chi_E(\vec f (\vec \theta, \vecg \xi)) - \chi_E(\vec f (\pi-\theta, \vecg \xi)) = 1\}|>0
\end{align}
unless $\theta_1(\vecg \xi) =0$ or $\theta_1(\vecg \xi) =\pi$ (by continuity, if \eqref{eq:reflection3} holds for at least one $\vecg \xi \in \mathbb S^{n-2}_{\vec e}$,
then \eqref{eq:reflection2} follows).
Since $R_1, R_2 < R$, in fact $\theta_1=\pi$ is not possible (in that case $\vec x_0$ and $\vec x_0-2R\vec e$ would belong to some $\overline{B_i}$,
but $\diam \overline{B_i}=2R_i < 2R$).
It remains to rule out the possibility that $\theta_1(\vecg \xi)=0$ for all $\vecg \xi$, that is, that $\overline B \cap (\overline{B_1}\cup \overline{B_2})=\{\vec x_0\}$.
If that were the case then $B$ and $B_1$ would be tangent, so for all $h<R_1$ we would have that
\begin{align*}
 |B(\vec p + h\vec e, R) \cap (B_1\cup B_2)|\geq |B(\vec p + h\vec e, R) \cap B_1| >0 = |B\cap (B_1\cup B_2)|
\end{align*}
and $(B,B_1,B_2)$ would not be a solution to \eqref{eq:max_int2}. This completes the proof.

\emph{Step 2: the centres of $B$, $B_1$, $B_2$ lie on a same line.} In all the three cases considered in \eqref{eq:B1intB2}, $|B\cap B_1 \cap B_2|=|(B+\vec h) \cap B_1 \cap B_2|$ for every $\vec h$ sufficiently small.
Also, for given $R$, $R_1$, $R_2$, the expression $|B(\vec y_i, R_i) \cap B(\vec y, R)|$ is a decreasing function of $|\vec y - \vec y_i|$, $i=1,2$. If $\vec y$ were not in the line  containing $\vec y_1$ and $\vec y_2$, both $|\vec y-\vec y_1|$ and $|\vec y - \vec y_2|$ could be reduced by displacing  $\vec y$ towards that line. By \eqref{eq:R2}, this would increase $|B\cap (B_1\cup B_2)|$, contradicting the choice of $(B, B_1, B_2)$ as a solution to \eqref{eq:max_int2}.

\emph{Step 3: $(B, B_1, B_2)$ satisfies \ref{cd:nEmptyInt})-\ref{cd:heights}). Moreover, these conditions uniquely determine the distances and relative positions between the centres (that is, the solution to \eqref{eq:max_int2} is unique up to isometries).}

Let $h$, $h_1$, and $h_2$ denote, respectively, the radii of $\partial B_1\cap \partial B_2$, $\partial B\cap \partial B_1$, and $\partial B\cap \partial B_2$ (or the semi-lengths of the common chords between $B_1$ and $B_2$, $B$ and $B_1$, and $B$ and $B_2$ if $n=2$) defining these radii (or lengths) as zero in case of empty intersection. By virtue of \ref{cd:aligned}), both $\vec p_1-\vec p$ and $\vec p_2-\vec p$ are parallel to $\vec e:=\frac{\vec p_2-\vec p_1}{|\vec p_2-\vec p_1|}$, where $\vec p$, $\vec p_1$, $\vec p_2$ are the centres of $B$, $B_1$, $B_2$, respectively. Setting $q_i:= (\vec p_i- \vec p)\cdot \vec e$, $i=1,2$, and using Cartesian coordinates $(y_1,\ldots, y_n)$ with $\vec p$ as the origin and $\vec e$ in the direction of the $y_1$-axis, we have that $B=B\big ( (0,0, \ldots, 0), R \big )$, $B_1 = B( (q_1, 0, \ldots, 0), R_1 \big )$, $B_2 = B\big ( (q_2, 0, \ldots, 0), R_2 \big )$. By \eqref{eq:R2} and\footnote{There is exactly one situation not covered by Lemma \ref{le:areadiff}, namely when $R_1=R_2$ and $B_1=B_2 \ssubset B$, but it is easy to see that this does not give a maximum of $|B\cap(B_1\cup B_2)|$.} Lemma \ref{le:areadiff},
\begin{multline*}
 \frac{\partial}{\partial q_1} |B\cap (B_1 \cup B_2)|
= \frac{\partial}{\partial q_1} |B\cap B_1| - \frac{\partial}{\partial q_1} |(B\cap B_2) \cap B_1| \\
= \int_{\partial B_1 \cap B} \frac{z_1-q_1}{R_1} \dd\mathcal{H}^{n-1}(z_1,\ldots, z_n) - \int_{\partial B_1 \cap (B\cap B_2)} \frac{z_1-q_1}{R_1} \dd\mathcal{H}^{n-1}(z_1,\ldots, z_n).
\end{multline*}
In the first of the possibilities considered in \eqref{eq:B1intB2},
$B$ cannot intersect both $B_1$ and $B_2$, hence $(B, B_1, B_2)$ is not optimal (for example,
it would be better if B contained completely either $B_1$ or $B_2$).
 In the other two cases we have $\partial B_1 \cap (B\cap B_2) = \partial B_1 \cap B_2$.
Parametrize $\partial B_1$ by
\begin{equation*}
 \vec z\in \partial B_1\quad \Leftrightarrow \quad \vec z - \vec p_1 = (R_1\cos \theta )\vec e + (R_1\sin \theta) \vecg\xi,
 \quad \theta\in [0,\pi],\quad \vecg\xi \in \S^{n-2}_{\vec e}:= \S^{n-1}\cap \langle \vec e \rangle^\perp.
\end{equation*}
By definition of $\vec e$, $q_1 < q_2$. Therefore, $\vec z\in \partial B_1\cap B_2$ if and only if $\theta \in [0, \theta_2)$, where $\theta_2$
is one of the two angles in $[0, \pi]$
such that by $h=R_1\sin \theta_2$ (when $h=0$, we choose $\theta_2=0$ or $\theta_2=\pi$ according to whether $B_2\cap B_1=\emptyset$ or $B_2\subset B_1$). Thus,
\begin{equation*}
 \frac{\partial}{\partial q_1} |(B\cap B_2) \cap B_1| =\mathcal H^{n-2} (\S^{n-2}_{\vec e})\int_0^{\theta_2} R^{n-1}  \cos \theta (\sin \theta)^{n-2} \dd\theta = \omega_{n-1} h^{n-1}.
\end{equation*}
As for the integral on $\partial B_1\cap B$, the same argument shows that it equals $-(\sgn q_1)\omega_{n-1} h_1^{n-1}$. After obtaining the corresponding expression for $\frac{\partial}{\partial q_2} |B\cap B_2|$, and by virtue of the optimality of $(B, B_1, B_2)$, we obtain
\begin{equation*}
\sgn(q_1)h_1^{n-1} + h^{n-1} = h^{n-1} - \sgn(q_2)h_2^{n-1}=0.
\end{equation*}
The case $h=h_1=h_2=0$ is not optimal (due to the assumption $R<R_1+R_2$), hence $q_1<0<q_2$ and $h=h_1=h_2>0$. This proves \ref{cd:nEmptyInt})-\ref{cd:heights}).
It remains to show that $q_1$, $q_2$ and $h$ are uniquely determined by these conditions. Denoting the hyperplane containing the intersection of the boundaries of
two (intersecting) balls $B', B''$ by $\Pi(B', B'')$, we have that the hyperplanes $\Pi(B_1, B)$, $\Pi(B_1, B_2)$, and $\Pi(B_2, B)$ are given by $\{y_1=a_1\}$,
$\{y_1=a\}$, and $\{y_1=a_2\}$, for some $a_1$, $a$, $a_2\in \R$. Clearly, the following must be satisfied
\begin{align*}
 (a_1-q_1)^2+ h^2 &= R_1^2  & (a-q_1)^2 + h^2 & = R_1^2 & a_2^2 + h^2 &= R^2\\
 a_1^2 + h^2 & =R^2  & (a-q_2)^2 + h^2 & = R_2^2 & (a_2-q_2)^2 + h^2 &= R_2^2.
\end{align*}
In particular, $|a_1|=|a_2|=\sqrt{R^2-h^2}$, $|a_1-q_1|=|a-q_1|=\sqrt{R_1^2-h^2}$, and $|a-q_2|=|a_2-q_2|=\sqrt{R_2^2-h^2}$.
Conditions \ref{cd:nEmptyInt})-\ref{cd:heights}) imply
 that $a_1<q_1<a<q_2<a_2$ and $a_1<0<a_2$. Therefore
\begin{equation} \label{eq:q1q2}
 q_1 = \sqrt{R_1^2-h^2} - \sqrt{R^2-h^2}, \qquad q_2 = \sqrt{R^2-h^2} - \sqrt{R_2^2-h^2},
\end{equation}
which shows that $q_1$ and $q_2$ are determined by $h$. We also find that
\begin{equation} \label{eq:q1q2a}
 a-q_1=\sqrt{R_1^2-h^2}, \qquad q_2-a =\sqrt{R_2^2-h^2}.
\end{equation}
Adding the equations in \eqref{eq:q1q2a} and subtracting the equations in \eqref{eq:q1q2} yields (see Figure \ref{fg:distance})
\begin{equation}
\label{eq:distance} q_2-q_1=\sqrt{R^2-h^2} = \sqrt{R_1^2-h^2} + \sqrt{R_2^2-h^2}.
\end{equation}
\begin{figure}
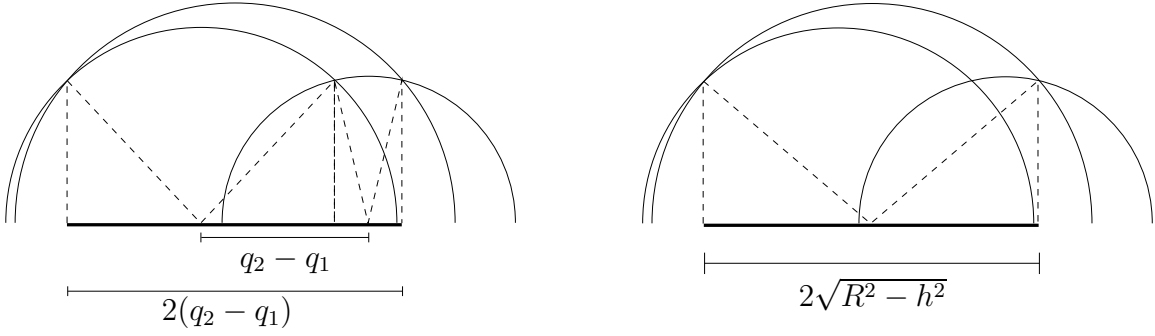

 \centering
 \subfigure{\input{optimal4.pstex_t}} \qquad \qquad
 \subfigure{\input{optimal2.pstex_t}}
 \caption{\label{fg:distance}Relationship between $h$ and the distance between the centres.}
\end{figure}
We may assume, without loss of generality, that $R_2<R_1$. Rewrite \eqref{eq:distance} as
\begin{equation*} \label{eq:h}
 \frac{R^2-R_1^2}{\sqrt{R^2-h^2} + \sqrt{R_1^2-h^2}} - \sqrt{R_2^2-h^2} =0.
\end{equation*}
The expression at the left-hand side is increasing in $h$, and equals $R-(R_1+R_2)<0$ at $h=0$, and $\frac{R^2-R_1^2}{\sqrt{R^2-R_2^2}+\sqrt{R_1^2-R_2^2}}>0$ at $h=R_2$. This shows that $h$ is uniquely determined by  $R, R_1, R_2$, and hence the balls $B_1, B_2$ too.

\emph{Step 4: proof of \eqref{eq:optInt}.} For each $k\in\{2,\ldots, n\}$ denote by $P_k$ the $k$-dimensional polyhedron with vertices (the convex hull of)
\begin{equation*}
 \{(q_2-R_2)\vec e, (q_1+R_1)\vec e\} \cup \{a\vec e \pm h \vec e_i: i=2,\ldots, k\}, \quad \vec e_i := \underbrace{(0, \ldots, 1, \ldots, 0)}_{i\text{-th position}}.
\end{equation*}
It is easy to see that $\mathcal H^2(P_2)=h\gamma$, where $\gamma:= |(q_1+R_1)-(q_2-R_2)|$, and that $\mathcal H^{k}(P_k)=2h\mathcal H^{k-1}(P_{k-1})/k$, for $k\in \{3,\ldots, n\}$. Thus, $|P_n| = (2^{n-1} h^{n-1}\gamma)/n!$.

From the previous analysis, we have that $B_1 \cap B_2$ contains $P_n$. From this we obtain \eqref{eq:optInt}, since, by virtue of  \eqref{eq:distance},
\begin{eqnarray}
 \label{eq:gamma1} &\displaystyle \gamma = R_1+R_2-\sqrt{R^2-h^2} > R_1+R_2-R,\\
 \text{and} \quad &\displaystyle \label{eq:gamma2} \gamma = \frac{h^2}{R_1+ \sqrt{R_1^2-h^2}} + \frac{h^2}{R_2+ \sqrt{R_2^2-h^2}} < \frac{(R_1+R_2) h^2}{R_1R_2}.
\end{eqnarray}
\end{proof}

We finally prove the main result.
\begin{proof}[Proof of Proposition \ref{pr:distortions}]
 We can assume that $|E_1|^\frac{1}{n} + |E_2|^\frac{1}{n} > |E|^\frac{1}{n}$ (otherwise the estimate is trivially true).
 By
\eqref{eq:min_excess} and \eqref{eq:optInt}
we have that
\begin{equation*}
\min ( \|\chi_B - \chi_{B_1} - \chi_{B_2}\|_{L^1} - (|B|-|B_1|-|B_2|)) \geq 
\frac{2^n}{n!}(R_1+R_2-R)^\frac{n+1}{2} \left(\frac{R_1R_2}{R_1+R_2}\right )^\frac{n-1}{2},
\end{equation*}
where the minimum is taken over all balls $B$, $B_1$, $B_2$ with $|B|=|E|$, $|B_1|=|E_1|$, $|B_2|=|E_2|$, and $R, R_1, R_2$
are such that $|E|=\omega_n R^n$, $|E_1|=\omega_n R_1^n$, $|E_2|=\omega_n R_2^n$. Thus, by Lemma \ref{le:dist1},
\begin{align*}
\frac{|E|D(E)^\frac{n}{n-1} + |E_1|D(E_1)^\frac{n}{n-1} + |E_2|D(E_2)^\frac{n}{n-1}}{|E|+|E_1\cup E_2|}
\geq C\frac{(R_1+R_2-R)^{\frac{n+1}{2}\frac{n}{n-1}}}{ (R^n + R_1^n + R_2^n)^\frac{n}{n-1} }  
\left(\frac{R_1R_2}{R_1+R_2}\right )^\frac{n}{2}
\end{align*}
The quantities $R^n + R_1^n + R_2^n$, $R_1^n + R_2^n$, and $(R_1 + R_2)^n$ are comparable,
since we are assuming that $R < R_1+ R_2$ and
by virtue of the identity $a^n + b^n \leq (a+b)^n\leq 2^{n-1}(a^n+b^n)$. Hence
\begin{align*}
(R^n + R_1^n + R_2^n)^\frac{n}{n-1} \leq C(R_1+R_2)^\frac{n^2}{n-1} = C (R_1+R_2)^\frac{n(n+1)}{2(n-1)} (R_1+R_2)^\frac{n}{2},
\end{align*}
which implies that
\begin{eqnarray}
\label{eq:step1Dist}
\frac{|E|D(E)^\frac{n}{n-1} + |E_1|D(E_1)^\frac{n}{n-1} + |E_2|D(E_2)^\frac{n}{n-1}}{|E|+|E_1\cup E_2|}
\geq C \left (\frac{R_1+R_2-R}{R_1 + R_2} \right )^\frac{n(n+1)}{2(n-1)}
\frac{R_1^\frac{n}{2} R_2^\frac{n}{2}}{(R_1+R_2)^n}.
\end{eqnarray}
By the mean value theorem, there exists $\xi$ between $R$ and $R_1+R_2$ such that
\begin{align*}
 R_1+R_2-R = \frac{(R_1+R_2)^n - R_1^n-R_2^n}{n \xi^{n-1}}
\left (\frac{(R_1+R_2)^n - R^n}{(R_1+R_2)^n - R_1^n-R_2^n}\right )
\end{align*}
Since we are assuming that $R < R_1 + R_2$, 
then $\xi \leq R_1+R_2$ and
\begin{align}
 \label{eq:step2Dist}
\frac{R_1+R_2-R}{R_1+R_2} \geq \frac{1}{n} 
\frac{(R_1+R_2)^n - R_1^n-R_2^n}{(R_1+R_2)^n}
\left (\frac{(|E_1|^\frac{1}{n} + |E_2|^\frac{1}{n})^n - |E|}{(|E_1|^\frac{1}{n} + |E_2|^\frac{1}{n})^n - |E_1\cup E_2|}\right ).
\end{align}
Suppose now that $|E_1| \geq |E_2|$, so that $\frac{R_1}{R_1+R_2}\geq \frac{1}{2}$.
By the binomial theorem,
\begin{align}
\label{eq:step3Dist}
\frac{(R_1+R_2)^n - R_1^n-R_2^n}{(R_1+R_2)^n}
= \sum_{k=1}^{n-1} {n\choose k}  
\left ( \frac{R_1}{R_1+R_2}\right )^{n-k}\left ( \frac{R_2}{R_1+R_2}\right )^{k}
\geq \frac{n}{2^{n-1}} \frac{R_2}{R_1+R_2}
\end{align}
(we have considered only the term corresponding to $k=1$).
Combining \eqref{eq:step2Dist} with \eqref{eq:step3Dist} we obtain
\begin{align*}
 \left ( \frac{R_1+R_2-R}{R_1+R_2} \right ) ^\frac{n(n+1)}{2(n-1)}
\geq C \left ( \frac{R_2}{R_1+R_2} \right ) ^\frac{n(n+1)}{2(n-1)} 
\left (\frac{(|E_1|^\frac{1}{n} + |E_2|^\frac{1}{n})^n - |E|}{(|E_1|^\frac{1}{n} + |E_2|^\frac{1}{n})^n - 
|E_1\cup E_2|}\right )^\frac{n(n+1)}{2(n-1)}.
\end{align*}
The conclusion follows from \eqref{eq:step1Dist} and the above equation, considering that $\frac{R_1}{R_1+R_2}\geq \frac{1}{2}$.
\end{proof}

\section{Upper bounds}

As explained in the Introduction, we obtain the upper bounds of Theorem \ref{th:UB} and Corollary \ref{co:Dirich}
by finding suitable test functions opening cavities of different shapes and sizes, the main difficulties being
to satisfy the
incompressibility constraint and the Dirichlet condition at the boundary. We split the problem into two:
in Section \ref{se:ub1} we define a family of incompressible, angle-preserving maps whose energy has the
right singular behaviour as $\ep\to 0$, with leading order $(v_1+v_2)|\log \varepsilon|$, and serves 
to define the test maps close to the singularities. In Section \ref{se:ub2} we extend those maps, using the existence
results of Rivi\`ere \& Ye \cite{RiYe96}, in order to match the boundary conditions.

\subsection{Proof of Theorem \ref{th:UB} } \label{se:ub1} \label{se:ubI}

In order to compute the energy of the test functions, we will need the following auxiliary lemmas, whose proof is postponed to Section \ref{se:auxi}.
\begin{lemma} \label{le:ub1}
 Let $\Omega$ be a domain in $\R^n$, star-shaped with respect to a point $\vec a \in \R^n$, with Lipschitz boundary
parametrized by $\vecg \zeta \mapsto \vec a + q(\vecg \zeta) \vecg \zeta$, $\vecg \zeta \in \S^{n-1}$.
Let $v\geq 0$ and define $\vec u : \R^n \setminus \{\vec a\} \to \R^n$ by
\begin{align} \label{eq:anglepreserving}
 \vec u ( \vec a + r\vecg \zeta ):=  \lambda \vec a + f(r, \vecg \zeta) \vecg \zeta,
\quad
f(r, \vecg \zeta)^n := r^n + (\lambda^n -1) q(\vecg \zeta)^n,
\quad
r\in (0, \infty), \quad \vecg \zeta \in \S^{n-1},
\end{align}
with $\lambda^n := 1 + \frac{v}{|\Omega|}$.
Then $\vec u$
 is a Lipschitz homeomorphism, $\det D\vec u\equiv 1$,
$\vec u(\vec x)=\lambda \vec x$ for all
$\vec x \in \partial \Omega$,
$\vec u(\overline \Omega \setminus \{\vec a\}) = \lambda\overline \Omega\setminus \imT(\vec u, \vec a)$,
$\vec u(\R^n \setminus \Omega)= \R^n \setminus \lambda \Omega$,
$|\imT(\vec u, \vec a)|= v$,  and for all $r$, $\vecg \zeta$,
\begin{align*}
r^{n-1} \left |\frac{D\vec u(\vec a + r\vecg\zeta)}{\sqrt{n-1}}\right |^n \leq
 C\left ( r+ |v|^\frac{1}{n}\frac{\max\{q,|Dq|\}}{|\Omega|^\frac{1}{n}} \right )^{n-1}
+ \left ( \frac{q(\vecg\zeta)^n}{|\Omega|} + C\frac{\max\{q, |Dq|\}^{n-1}|Dq|}{|\Omega|}
\right ) \frac{v}{r},
\end{align*}
$C$ being a constant depending only on $n$.
\end{lemma}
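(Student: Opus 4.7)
The proof is by direct computation in spherical coordinates $(r,\vecg\zeta)\in(0,\infty)\times\S^{n-1}$ centred at $\vec a$. First I would verify the boundary and image assertions. On $\partial\Omega$ one has $r=q(\vecg\zeta)$, so $f^n=q^n+(\lambda^n-1)q^n=\lambda^n q^n$ and $\vec u(\vec x)=\lambda(\vec a+q\vecg\zeta)=\lambda\vec x$. For each fixed $\vecg\zeta$, $r\mapsto f(r,\vecg\zeta)$ is smooth and strictly increasing on $(0,\infty)$, with $\lim_{r\to 0^+}f=(\lambda^n-1)^{1/n}q(\vecg\zeta)$ and $f\to\infty$ as $r\to\infty$; combined with angle preservation, this yields bijectivity onto $\R^n\setminus \imT(\vec u,\vec a)$ together with the two stated image formulas. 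The cavity is the star-shaped set $\imT(\vec u,\vec a)=\{\lambda\vec a+s\vecg\zeta:0\le s<(\lambda^n-1)^{1/n}q(\vecg\zeta),\ \vecg\zeta\in\S^{n-1}\}$, and integrating in spherical coordinates gives $|\imT(\vec u,\vec a)|=\frac{\lambda^n-1}{n}\int_{\S^{n-1}} q^n\,\dd\mathcal H^{n-1}=(\lambda^n-1)|\Omega|=v$.

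Next I would compute $D\vec u$ in the moving orthonormal frame $\{\vecg\zeta,\vec e_1,\ldots,\vec e_{n-1}\}$ with the $\vec e_i$ tangent to the unit sphere at $\vecg\zeta$. Since $\partial_{\vec e_i}(f\vecg\zeta)=f_{\vec e_i}\vecg\zeta+f\vec e_i$, the matrix $D\vec u$ is upper triangular with diagonal $(f_r,f/r,\ldots,f/r)$ and top-row off-diagonal entries $f_{\vec e_i}/r$. Hence $\det D\vec u=f_r(f/r)^{n-1}$, and differentiating $f^n=r^n+(\lambda^n-1)q^n$ in $r$ yields $f_r=r^{n-1}/f^{n-1}$, so $\det D\vec u\equiv 1$. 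The Lipschitz homeomorphism claim then follows from the smoothness of $(r,\vecg\zeta)\mapsto (f(r,\vecg\zeta),\vecg\zeta)$ and of its inverse on any region bounded away from $\vec a$.

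For the energy bound, a direct calculation gives
\begin{equation*}
  |D\vec u|^2=f_r^2+\frac{|D_{\vecg\zeta} f|^2}{r^2}+(n-1)\frac{f^2}{r^2},\qquad f_{\vec e_i}=\frac{v}{|\Omega|}\frac{q^{n-1}q_{\vec e_i}}{f^{n-1}}.
\end{equation*}
My plan is to split $|D\vec u|^2/(n-1)=A+B$ with $A=f^2/r^2$ and $B=(f_r^2+|D_{\vecg\zeta} f|^2/r^2)/(n-1)$, and to apply the inequality $(A+B)^{n/2}\le A^{n/2}+C_n A^{(n-2)/2}B+C_n B^{n/2}$. The leading term yields the identity $r^{n-1}A^{n/2}=f^n/r=r^{n-1}+(q^n/|\Omega|)(v/r)$, producing the coefficient $q^n/|\Omega|$ in front of $v/r$ with no extra constant. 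For the remaining contributions I would use the two elementary inequalities $f^n\ge r^n$ and $(v/|\Omega|)q^n/f^n\le 1$: the first gives $r^{n-1}f_r^n\le r^{n-1}$, and together they yield $|D_{\vecg\zeta} f|^n\le (v/|\Omega|)|Dq|^n$; then the bound $|Dq|^n\le \max\{q,|Dq|\}^{n-1}|Dq|$ and analogous manipulation of the cross term $A^{(n-2)/2}B$ (where $rf^{n-2}f_r^2\le r^{n-1}$ and $f^{n-2}|D_{\vecg\zeta}f|^2/r\le (v/|\Omega|)q^{n-2}|Dq|^2/r$) produce precisely the two residual expressions $C(r+v^{1/n}\max\{q,|Dq|\}/|\Omega|^{1/n})^{n-1}$ and $C(\max\{q,|Dq|\}^{n-1}|Dq|/|\Omega|)(v/r)$. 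The main delicate point, which forces this careful split rather than a crude $(a+b+c)^{n/2}\le C(a^{n/2}+b^{n/2}+c^{n/2})$, is keeping the coefficient of $(q^n/|\Omega|)(v/r)$ equal to $1$: that term will later get integrated in $r$ against $\dd r/r$ (in the proof of the full upper bound) and thus controls the leading-order $v\log(1/\ep)$ behaviour, whose constant must match the lower bound exactly.
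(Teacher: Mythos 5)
Your proposal is correct and follows essentially the same route as the paper: the same spherical-coordinate computation giving $\det D\vec u=f_r(f/r)^{n-1}\equiv 1$ and $|D\vec u|^2=(n-1)\frac{f^2}{r^2}+f_r^2+\frac{|D_{\vecg\zeta}f|^2}{r^2}$, the same elementary bounds $f\ge r$, $f\ge (v/|\Omega|)^{1/n}q$, $|D_{\vecg\zeta}f|\le (v/|\Omega|)^{1/n}|Dq|$, and the same key principle of isolating $f^n/r^n$ with coefficient exactly $1$ so that $\frac{q^n}{|\Omega|}\frac{v}{r}$ carries no constant. The only cosmetic difference is that you expand $(A+B)^{n/2}\le A^{n/2}+C_nA^{(n-2)/2}B+C_nB^{n/2}$ in a single step, whereas the paper performs two successive mean-value comparisons ($a^n-b^n$ and then $b^n-c^n$); both deliver the stated remainder.
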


\begin{lemma} \label{le:ub2}
Suppose that $\vec{\tilde a}\in \R^n$, $0\leq d\leq \rho$, and $\vec a= \vec{\tilde a} + d\vec e$
for some $\vec e \in \S^{n-1}$.
Let
$\vecg\zeta \mapsto \vec a + q(\vecg \zeta) \vecg \zeta$, $\vecg \zeta \in \S^{n-1}$
be the polar parametrization of $\partial B(\vec{\tilde a}, \rho)$
taking $\vec a$ as the origin. Then
\begin{enumerate}[i)]
 \item \label{it:b1} for all $\vecg \zeta \in \S^{n-1}$,
$|q(\vecg\zeta)|\leq 2\rho$, $|Dq(\vecg \zeta)|\leq 2d|\vecg\zeta \wedge \vec e|$, and
$\displaystyle |Dq(\vecg\zeta)|
\leq 2d \left |\frac{q(\vecg \zeta)}{\sqrt{\rho (\rho - d)}}\right |^2 |\vecg\zeta \wedge \vec e|$
  \item \label{it:b2}
if $\vecg\zeta \cdot (\vec a - \vec{\tilde a}) <0$ then
$q(\vecg\zeta) \geq \rho \left |\vecg \zeta \cdot \vec e\right |$ and $\displaystyle 1\leq \frac{q(\vecg \zeta)}{d|\vecg \zeta \cdot \vec e|
+ \sqrt{\rho(\rho-d)}}\leq 2$
  \item \label{it:b3}
if $\vecg \zeta \cdot (\vec a -\vec {\tilde a}) >0$ then
$\displaystyle
 \frac{q(\vecg \zeta)}{\sqrt{\rho(\rho - d)}}
\leq \frac{2\sqrt{2}}{1+ \frac{d\vecg \zeta \cdot \vec e}{\sqrt{\rho(\rho - d)}}}$.
\end{enumerate}
\end{lemma}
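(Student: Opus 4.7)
The plan is to reduce the statement to a handful of elementary algebraic manipulations of a single explicit formula for $q$. The point $\vec a + q(\vecg\zeta)\vecg\zeta$ lies on $\partial B(\vec{\tilde a}, \rho)$ precisely when $|q\vecg\zeta + d\vec e|^2 = \rho^2$, which is the quadratic $q^2 + 2qd(\vecg\zeta\cdot\vec e) + (d^2-\rho^2)=0$. Taking the positive root and using $(\vecg\zeta\cdot\vec e)^2+|\vecg\zeta\wedge\vec e|^2=1$ gives, in two equivalent forms,
\begin{equation*}
q(\vecg\zeta) = -d(\vecg\zeta\cdot\vec e) + \sqrt{\rho^2 - d^2|\vecg\zeta\wedge\vec e|^2}
=\frac{\rho^2-d^2}{\sqrt{\rho^2-d^2|\vecg\zeta\wedge\vec e|^2}+d(\vecg\zeta\cdot\vec e)},
\end{equation*}
the second by rationalization. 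The identity powering the whole proof is then
\begin{equation*}
q + d(\vecg\zeta\cdot\vec e) = \sqrt{\rho^2 - d^2|\vecg\zeta\wedge\vec e|^2}.
\end{equation*}

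Next I would compute $Dq$ (the tangential gradient on $\S^{n-1}$) by implicit differentiation of $|q(\vecg\zeta)\vecg\zeta + d\vec e|^2 = \rho^2$ along a curve $\vecg\zeta(t)$ on the sphere with $\dot{\vecg\zeta}(0)=\vec v \perp \vecg\zeta$. Using $\vec v\cdot\vecg\zeta=0$ one obtains
\begin{equation*}
Dq(\vecg\zeta)\cdot\vec v = -\frac{qd(\vec e\cdot \vec v)}{q + d(\vecg\zeta\cdot\vec e)},
\qquad |Dq(\vecg\zeta)| = \frac{qd\,|\vecg\zeta\wedge\vec e|}{\sqrt{\rho^2 - d^2|\vecg\zeta\wedge\vec e|^2}}.
\end{equation*}

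With these two explicit expressions, each estimate reduces to a short inequality. For \ref{it:b1}, the bound $|q|\leq 2\rho$ is just $q\le d+\rho\le 2\rho$; the estimate $|Dq|\le 2d|\vecg\zeta\wedge\vec e|$ is equivalent to $q\le 2(q+d(\vecg\zeta\cdot\vec e))$, i.e., $d(\vecg\zeta\cdot\vec e) + \sqrt{\rho^2-d^2|\vecg\zeta\wedge\vec e|^2}\ge 0$, which holds because $d\leq\rho$; and the refined bound, after clearing denominators and replacing $2q(q+d(\vecg\zeta\cdot\vec e))$ by $q^2+\rho^2-d^2$ (a direct consequence of the defining quadratic), becomes $d(d-\rho)\leq q^2$, trivial since $d\leq\rho$. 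For \ref{it:b2}, in the case $\vecg\zeta\cdot\vec e<0$ I would write $q=d|\vecg\zeta\cdot\vec e|+\sqrt{\rho^2-d^2|\vecg\zeta\wedge\vec e|^2}$ and use $d\leq\rho$ to obtain both $\sqrt{\rho^2-d^2|\vecg\zeta\wedge\vec e|^2}\ge \rho|\vecg\zeta\cdot\vec e|$ (giving the first inequality) and $\sqrt{\rho^2-d^2|\vecg\zeta\wedge\vec e|^2}\ge \sqrt{\rho(\rho-d)}$ (the lower half of the second, since this reduces to $d|\vecg\zeta\wedge\vec e|^2\leq\rho$), while the upper half follows from the subadditivity $\sqrt{(\rho^2-d^2)+d^2(\vecg\zeta\cdot\vec e)^2}\leq \sqrt{\rho^2-d^2}+d|\vecg\zeta\cdot\vec e|$ together with $\sqrt{\rho^2-d^2}=\sqrt{(\rho-d)(\rho+d)}\leq\sqrt{2}\sqrt{\rho(\rho-d)}$. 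For \ref{it:b3}, with $\vecg\zeta\cdot\vec e>0$, I would use the rationalized form together with $\rho^2-d^2\leq 2\rho(\rho-d)$ and $\sqrt{\rho^2-d^2|\vecg\zeta\wedge\vec e|^2}\ge \sqrt{\rho(\rho-d)}$ to get
\begin{equation*}
q \leq \frac{2\rho(\rho-d)}{\sqrt{\rho(\rho-d)}+d(\vecg\zeta\cdot\vec e)} = \frac{2\sqrt{\rho(\rho-d)}}{1 + d(\vecg\zeta\cdot\vec e)/\sqrt{\rho(\rho-d)}},
\end{equation*}
which is a fortiori the claimed bound with constant $2\sqrt{2}$.

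I do not anticipate any real obstacle: everything is one-line algebra once the explicit formula for $q$ and the key identity are recorded. The only mild care is in choosing the appropriate form of $q$ (direct or rationalized) for each estimate so that the hypothesis $d\leq\rho$ enters in the cleanest way, and in identifying $2q(q+d(\vecg\zeta\cdot\vec e))=q^2+\rho^2-d^2$ as the algebraic shortcut that collapses the sharp second bound in \ref{it:b1}.
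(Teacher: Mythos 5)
Your proposal is correct and follows essentially the same route as the paper: solve the quadratic $q^2+2qd(\vecg\zeta\cdot\vec e)=\rho^2-d^2$ for the explicit root, obtain $|Dq|=qd|\vecg\zeta\wedge\vec e|/(q+d(\vecg\zeta\cdot\vec e))$ by implicit differentiation, and finish each part with elementary inequalities exploiting $d\leq\rho$ (the paper differentiates a homogeneous extension of $q$ in $\R^n$ rather than along spherical curves, which is the same computation). Your argument for part iii) is in fact marginally sharper, yielding the constant $2$ directly where the paper passes through a normalized form and gets $2\sqrt{2}$.
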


\begin{lemma}\label{le:areal}
 Let $0\leq d\leq \rho$, $\vec{\tilde a} \in \R^n$, $\vec e \in \S^{n-1}$, and $\Omega:=\{\vec x \in B(\vec{\tilde a}, \rho):
(\vec x -\vec{\tilde a}) \cdot \vec e > \rho-2d\}$. Then
\begin{align*}
 n|\Omega| > \omega_{n-1} d^\frac{n+1}{2} (2\rho-d)^\frac{n-1}{2}.
\end{align*}
\end{lemma}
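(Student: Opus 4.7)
\textbf{Plan of proof for Lemma \ref{le:areal}.} The plan is to write $|\Omega|$ as a one-dimensional integral by slicing perpendicular to $\vec e$, and then produce the desired lower bound through an elementary pointwise estimate on the integrand. Up to an isometry we may assume $\vec{\tilde a} = \vec 0$ and $\vec e$ is the $n$-th coordinate vector. Writing points as $(\vec y, t)$ with $\vec y \in \R^{n-1}$ and $t\in \R$, the definition of $\Omega$ gives
\begin{equation*}
\Omega=\{(\vec y, t):\, \rho - 2d < t \leq \rho,\ |\vec y|^2 \leq \rho^2-t^2\},
\end{equation*}
so by Fubini and the substitution $s=\rho-t$,
\begin{equation*}
|\Omega|
= \omega_{n-1}\int_{\rho-2d}^{\rho} (\rho^2-t^2)^{\frac{n-1}{2}}\dd t
= \omega_{n-1}\int_0^{2d} \bigl(s(2\rho-s)\bigr)^{\frac{n-1}{2}}\dd s.
\end{equation*}

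The key observation is that on the sub-interval $s\in[0,d]$ one has $2\rho-s\geq 2\rho-d$, so the integrand is bounded below by $s^{\frac{n-1}{2}}(2\rho-d)^{\frac{n-1}{2}}$. Restricting the integral to this sub-interval yields
\begin{equation*}
|\Omega| \geq \omega_{n-1} (2\rho-d)^{\frac{n-1}{2}} \int_0^d s^{\frac{n-1}{2}}\dd s
= \frac{2\,\omega_{n-1}}{n+1}\, d^{\frac{n+1}{2}}(2\rho-d)^{\frac{n-1}{2}}.
\end{equation*}
Multiplying by $n$ gives
\begin{equation*}
n|\Omega| \geq \frac{2n}{n+1}\,\omega_{n-1}\, d^{\frac{n+1}{2}}(2\rho-d)^{\frac{n-1}{2}},
\end{equation*}
and since $\frac{2n}{n+1}>1$ for $n\geq 2$, the strict inequality claimed in the lemma follows (assuming $d>0$; the case $d=0$ is trivial as both sides vanish).

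There is essentially no obstacle here: the only point of care is choosing the right truncation of the $s$-integral. One might be tempted to inscribe a cone of height $2d$ and base radius $2\sqrt{d(\rho-d)}$ inside the cap, but this bound degenerates as $d\to\rho$, so it does not suffice. The clean fix is to keep the full factor $(2\rho-s)^{(n-1)/2}$ in place and simply bound it from below on a half-length sub-interval, where the polynomial loss sits in a harmless constant $\tfrac{2n}{n+1}$ independent of $\rho$ and $d$.
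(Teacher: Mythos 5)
Your proof is correct, and it is a sliced-integral variant of what the paper does geometrically: the paper inscribes in $\Omega$ the cone with apex $\vec{\tilde a}+\rho\vec e$ and base the $(n-2)$-sphere $\partial B(\vec{\tilde a},\rho)\cap\{(\vec x-\vec{\tilde a})\cdot\vec e=\rho-d\}$, i.e.\ a cone of height $d$ and base radius $\sqrt{d(2\rho-d)}$, whose volume is exactly $\tfrac{\omega_{n-1}}{n}d^{\frac{n+1}{2}}(2\rho-d)^{\frac{n-1}{2}}$; strict containment then yields the lemma. Note this is not the cone you dismiss in your closing remark: the paper's cone has height $d$, not $2d$, with base at the slice $t=\rho-d$ — precisely the half of the cap that you keep — so its base radius $\sqrt{d(2\rho-d)}$ does not degenerate as $d\to\rho$. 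In effect both proofs discard the slab $\{\rho-2d<(\vec x-\vec{\tilde a})\cdot\vec e\le\rho-d\}$ and lower-bound the remaining cap of height $d$; on that cap your pointwise bound $\sqrt{s(2\rho-s)}\ge\sqrt{s(2\rho-d)}$ dominates the cone's linear profile $s\sqrt{(2\rho-d)/d}$ for $s\le d$, which is why you obtain the slightly better constant $\tfrac{2n}{n+1}$ where the inscribed cone gives exactly $1$ (strictness there coming from strict containment, in your argument from $\tfrac{2n}{n+1}>1$ together with $2\rho-d\ge d>0$). The only cosmetic point is $d=0$: there both sides vanish and the stated strict inequality is vacuous in either proof, which is harmless since the lemma is applied with $d>0$.
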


\begin{proof}[Proof of Theorem \ref{th:ub1}]

{\it - Step 1:}  Construction of the domain.\\
Let $\vec a_1$, $\vec a_2\in\R^n$ and $d:=|\vec a_2 - \vec a_1|>0$, as in the statement of the theorem.
Call $\vec e:= \frac{\vec a_2 - \vec a_1}{|\vec a_2-\vec a_1|}$.
Given $d_1$, $d_2$, $\rho_1$, and $\rho_2$ such that $0< d_1 \leq \rho_1\leq d$, 
$0< d_2 \leq \rho_2\leq d$, and $d_1+d_2=d$, define
\begin{eqnarray}
 & \displaystyle \nonumber
\vec {\tilde a}_1:= \vec a_1 + (\rho_1-d_1)\vec e, \quad
\vec {\tilde a}_2 := \vec a_2 - (\rho_2-d_2)\vec e, \quad
B_1:= B(\vec {\tilde a}_1, \rho_1), \quad
B_2:= B(\vec {\tilde a}_2, \rho_2)
\end{eqnarray}
($\vec {\tilde a}_1$, $\vec {\tilde a}_2$ are chosen such that $B_1\cup B_2$
fits in an infinite slab of width $2d$, as in Figure \ref{fig:upbnd}).
As stated in the Introduction, 
our aim is to show that for every $\delta \in [0,1]$ there are unique $d_1$, $d_2$, $\rho_1$, and $\rho_2$
such that 
the ratio between the width of $B_1\cap B_2$ and that of $B_1\cup B_2$ is exactly $\delta$
(i.e., $\delta := \frac{\rho_1+\rho_2-d}{d}$),
and such that 
$\frac{|\Omega_2|}{|\Omega_1|} = \frac{v_2}{v_1}$, with
\begin{eqnarray}
\label{eq:Om1Om2} & \displaystyle
 \Omega_1:=\{\vec x \in B_1: (\vec x -\vec a_1)\cdot \vec e <d_1\},
\qquad
\Omega_2 := \{\vec x \in B_2: (\vec x - \vec a_2) \cdot \vec e > -d_2\}.
\end{eqnarray}
To this end, we will first consider a simplified but equivalent problem. 
Fix $d>0$ and $\vec e\in \S^{n-1}$, and let $S:= \{ \vec x \in \R^n: |\vec x \cdot \vec e |< d \}$.
Given $\rho_1$ and $\rho_2$ in $(0,d)$ define
\begin{align} \label{eq:B1B2}
 B_1 = B\big ( (-d+\rho_1) \vec e , \rho_1 \big )
\quad \text{and} \quad 
B_2 = B\big ( (d-\rho_2)\vec e , \rho_2)
\end{align}
(the balls of radii $\rho_1$, $\rho_2$ contained in $S$ and tangent to $\partial S$ from the right and from the left).
If the balls intersect, let $\hat a \in (-d,d)$ be such that $\vec x \cdot \vec e = \hat a$ 
for $\vec x \in \overline B_1 \cap \overline B_2$ and define 
\begin{align} \label{eq:defDelta}
  \Omega_1 := \{\vec x \in B_1: \vec x \cdot \vec e < \hat a\}, \qquad 
  \Omega_2 := \{\vec x \in B_2: \vec x \cdot \vec e > \hat a\}, \qquad
  \rho_{\min}:= \frac{v_1^\frac{1}{n}d}{v_1^\frac{1}{n}+v_2^\frac{1}{n}}.
\end{align}
We want to show that
\begin{enumerate}[i)]
  \item \label{it:CD1i} if
$\frac{|\Omega_2|}{|\Omega_1|} = \frac{v_2}{v_1}$
then $\rho_1 \geq \rho_{\min}$
  \item \label{it:CD1ii} for every $\rho_1 \in [\rho_{\min}, d)$ there exists a unique $\rho_2 \in [0, d]$
such that $\overline B_1 \cap \overline B_2 \ne \vacio$ and $\frac{|\Omega_2|}{|\Omega_1|} = \frac{v_2}{v_1}$
  \item \label{it:CD1iii} $\rho_2=\rho_2(\rho_1)$ is such that $\rho_2 \leq \rho_1$ and such that the ratio $\frac{\rho_1+\rho_2-d}{d}$
increases from $0$ to $1$ as $\rho_1$ increases from $\rho_{\min}$ to $d$.
\end{enumerate}
This will imply that for every $\delta \in [0,1]$ there are unique $\rho_1$ and $\rho_2$ such that 
$\frac{\rho_1+\rho_2-d}{d}= \delta$ and $\frac{|\Omega_2|}{|\Omega_1|} = \frac{v_2}{v_1}$.
Let $d_1:= \frac{\hat a+ d}{2}$ and $d_2:= \frac{d-\hat a}{2}$, with $\hat a$ as in 
\eqref{eq:defDelta}
(they are the 
semi-distances from the plane containing $\overline B_1\cap \overline B_2$
to the walls of the slab $S$ containing $\overline B_1\cup \overline B_2$).
Based on the previous reasoning, it can be seen 
that these values of $d_1$, $d_2$, $\rho_1$, and $\rho_2$
constitute a solution to the original problem, and that no other choice is possible.

In order to prove \ref{it:CD1i}), define
$ B_1':= \big ( (-d+\rho_{\min}) \vec e , \rho_{\min} \big )$,
$ B_2':= \big ( \rho_{\min} \vec e, d-\rho_{\min} \big )$
($\rho_{min}$ is such that $B_1'$ and $B_2'$ are tangent
and $\frac{|B_2'|}{|B_1'|} = \frac{v_2}{v_1}$).
If $0<\rho_1<\rho_{\min}$ and $\overline B_1\cap \overline B_2 \ne \vacio$,
then $\frac{|\Omega_2|}{|\Omega_1|} > \frac{v_2}{v_1}$, since
$\Omega_1 \subset B_1'$ and $\Omega_2 \supset B_2'$.
Hence $\frac{|\Omega_2|}{|\Omega_1|}=\frac{v_2}{v_1}\ \Rightarrow\ \rho_1 \geq \rho_{\min}$, as claimed.

Fix $\rho_1 \in [\rho_{\min}, d)$. In order for $B_2$ to intersect $B_1$ we must have that $\rho_2 \geq d-\rho_1$. 
When $\rho_2= d-\rho_1$, $\Omega_1$ and $\Omega_2$ are tangent balls 
with $\frac{|\Omega_2|}{|\Omega_1|}=\frac{(d-\rho_1)^n}{\rho_1^n} \leq \frac{(d-\rho_{\min})^n}{\rho_{\min}^n}=\frac{v_2}{v_1}\leq 1$.
It is clear that $|\Omega_1|$ decreases and $|\Omega_2|$ increases as $\rho_2$ increases
(the intersection plane moves to the left), therefore $\frac{|\Omega_2|}{|\Omega_1|}$ is increasing
in $\rho_2$. When $\rho_2=\rho_1$, the ratio is $1$. This proves \ref{it:CD1ii})
and the first part of \ref{it:CD1iii}).
A similar argument shows that $\frac{\rho_1+\rho_2-d}{d}$
is increasing in $\rho_1$ (it follows 
from the fact that if we fix $\rho_2$ and increase $\rho_1$ then
the intersection plane moves to the right
and $\frac{|\Omega_2|}{|\Omega_1|}$ decreases).

It is clear that if $\rho_1=\rho_{\min}$ then $\rho_2=d-\rho_{\min}$ and $\frac{\rho_1+\rho_2-d}{d}=0$. 
It only remains to prove that as $\rho_1\to d$ also $\rho_2 \to d$.
By \eqref{eq:B1B2}, $|B_2\setminus B_1| \leq |B(\vec 0, d) \Delta B_1| \to 0$ as $\rho_1\to d$, hence
\begin{align*}
\lim_{\rho_1\to d}
\frac{|\Omega_1|}{|B_1|}
= \lim_{\rho_1\to d} \frac{|\Omega_1|}{|\Omega_1 \cup \Omega_2|} \left ( 1 + 
\frac{|(\Omega_1\cup \Omega_2)\setminus B_1|}{|B_1|} \right )= \frac{v_1}{v_1+v_2} \left ( 1 +
\frac{\lim_{\rho_1 \to d} |B_2\setminus B_1|}{\omega_n d^n} \right ) = \frac{v_1}{v_1+v_2}.
\end{align*}
For $\rho_1 < d$, $\partial B_1\cap \partial B_2$ is of the form 
$A(\rho_1):=\{\hat a(\rho_1)\vec e + \sqrt{\rho_1^2- \hat a (\rho_1)^2}\vec e': \vec e'\in \S^{n-1}, \vec e' \perp \vec e\}$.
Since $\hat a(\rho_1)$ is determined by $\frac{|\Omega_1|}{|B_1|}$, it has a well-defined limit as $\rho_1 \to d$.
The sphere $\partial B_2$ can be characterized as the one containing $A(\rho_1)$ and the point $d\vec e$.
Thus, in the limit, $\partial B_2$ will be the sphere containing $d\vec e$ and $A(d)$,
which is none other than $\partial B(\vec 0, d)$. In particular, $\rho_2 \to d$, as desired.
\\

{\it - Step 2:} Definition of the map.\\
We define $\vec u: \R^n \setminus \{\vec a_1, \vec a_2\}$ piecewise, based on Lemma \ref{le:ub1},
in the following manner. Inside $\Omega_1$ we apply Lemma \ref{le:ub1} to $\Omega=\Omega_1$ and $\vec a=\vec a_1$;
inside $\Omega_2$ we apply Lemma \ref{le:ub1} to $\Omega=\Omega_2$ and $\vec a=\vec a_2$. Finally, in order to define $\vec u$
in
$\R^n \setminus \Omega_1\cup \Omega_2$ we define
\begin{align*}
 \vec a^*= \frac{(\vec{\tilde a}_1+\rho_1 \vec e ) + (\vec{\tilde a}_2 - \rho_2 \vec e )}{2}
= \vec{\tilde a}_1 + (d-\rho_2) \vec e = \vec{\tilde a}_2 - (d-\rho_1) \vec e
\end{align*}
(when $\delta=0$, $\vec a^*$ is the intersection point; when $\delta=1$, $\vec a^*$ is the center of the ball)
and use Lemma \ref{le:ub1} with $\Omega=\Omega_1 \cup \Omega_2$,
$\vec a=\vec a^*$.
Let $\vecg \zeta \mapsto \vec a_1 + q_1(\vecg \zeta) \vecg \zeta$,
$\vecg \zeta \mapsto \vec a_2 + q_2(\vecg \zeta) \vecg \zeta$, and
$\vecg \zeta \mapsto \vec a^* + q(\vecg \zeta) \vecg \zeta$ be, respectively, the polar parametrizations of
$\partial \Omega_1$, $\partial \Omega_2$, and $\partial (\overline \Omega_1 \cup \overline \Omega_2)$ (with $\vecg \zeta \in \S^{n-1}$
in all cases).
To be precise,
\begin{align*}
 \vec u (\vec x) :=
\begin{cases}
 \lambda \vec a_1 + \left ( |\vec x - \vec a_1|^n + \frac{v_1}{|\Omega_1|} q_1\left (
\frac{\vec x - \vec a_1}{|\vec x - \vec a_1|} \right ) ^n \right )^\frac{1}{n}
\frac{\vec x - \vec a_1}{|\vec x - \vec a_1|}
&
\vec x \in \overline \Omega_1 \setminus \{\vec a_1\} \\
\lambda \vec a_2 + \left ( |\vec x - \vec a_2|^n + \frac{v_2}{|\Omega_2|} q_2\left (
\frac{\vec x - \vec a_2}{|\vec x - \vec a_2|} \right ) ^n \right )^\frac{1}{n}
\frac{\vec x - \vec a_2}{|\vec x - \vec a_2|}
&
\vec x \in \overline \Omega_2 \setminus \{\vec a_2\} \\
\lambda \vec a^* + \left ( |\vec x - \vec a^*|^n + \frac{v_1+v_2}{|\Omega_1+\Omega_2|} q\left (
\frac{\vec x - \vec a^*}{|\vec x - \vec a^*|} \right ) ^n \right )^\frac{1}{n}
\frac{\vec x - \vec a^*}{|\vec x - \vec a^*|}
&
\vec x \in \R^n \setminus \overline{\Omega_1 \cup \Omega_2},
\end{cases}
\end{align*}
with
\begin{align*}
\lambda^n -1:=\frac{v_1}{|\Omega_1|} = \frac{v_2}{|\Omega_2|} = \frac{v_1+v_2}{|\Omega_1\cup \Omega_2|}.
\end{align*}
Since $\frac{|\Omega_1|}{|\Omega_2|}= \frac{v_1}{v_2}$, the construction is well defined and $\vec u(\vec x)= \lambda \vec x$
for all $\vec x \in \partial \Omega_1 \cup \partial \Omega_2$.
The resulting map is an incompressible homeomorphism, creates cavities at the desired locations with the desired volumes
and is smooth except across $\partial \Omega_1 \cup \partial \Omega_2$ (where it is still continuous). 
It only remains to estimate its elastic energy.
\\

{\it - Step 3 :} Evaluation of the energy in $\R^n \setminus  (\Omega_1 \cup \Omega_2)$.\\
By Lemma \ref{le:ub2}\ref{it:b1}), $\max\{q, |Dq|\}\leq 2d$,
then, by Lemma \ref{le:ub1}
\begin{align} \label{eq:Est3Step1}
 r^{n-1} \left | \frac{D\vec u (r\vecg \zeta)}{\sqrt{n-1}} \right |^n
&\leq C \left ( r + \frac{d(v_1+v_2)^\frac{1}{n}}{|\Omega_1 \cup \Omega_2|^\frac{1}{n}}\right )^{n-1}
+ \left ( \frac{q^n}{|\Omega_1 \cup \Omega_2|} + \frac{Cd^{n-1}|Dq|}{|\Omega_1 \cup \Omega_2|} \right ) \frac{v_1+v_2}{r}.
\end{align}
Since $\rho_i$, $i=1,2$ increases with $\delta$ and assumes the value
$\frac{v_i^\frac{1}{n} d}{v_1^\frac{1}{n} + v_2^\frac{1}{n}}$ when $\delta=0$, it follows that
\begin{align} \label{eq:bdOm}
 2\omega_n d^n > \omega_n (\rho_1^n +\rho_2^n ) > |\Omega_1 \cup \Omega_2| 
> \frac{1}{2}\omega_n (\rho_1^n+\rho_2^n) > 2^{-n} \omega_n d^n
\end{align}
(since $\Omega_1\cup \Omega_2 \supset B_i$ for each $i=1,2$).
Consequently, for any $R>0$ (using that $\mathcal H^{n-1}(\S^{n-1})=n\omega_n$)
\begin{align*}
  &
\frac{1}{n} \int_{B(\vec a^*, R) \setminus \overline{\Omega_1\cup \Omega_2}} 
\left | \frac{D\vec u (r\vecg \zeta)}{\sqrt{n-1}} \right |^n
\dd\vec x 
= \frac{1}{n} \int_{\S^{n-1}}\!\! \int_{q(\vecg \zeta)}^{\max\{q, R\}} 
r^{n-1} \left | \frac{D\vec u (r\vecg \zeta)}{\sqrt{n-1}} \right |^n \dd r \dd \mathcal H^{n-1}(\vecg \zeta) \\
&\leq \fint_{\S^{n-1}}\!\!
\left [\frac{2^{n-1}C}{n} \left ( \omega_n R^n + 2^n(v_1+v_2)
\right ) 
  + (v_1+v_2)\left ( \frac{\omega_n q^n}{|\Omega_1\cup \Omega_2|} + 2^nC\frac{|Dq|}{d}\right )
 \left ( \log \frac{R}{q} \right )_+
\right ] \dd\mathcal H^{n-1},
\end{align*}
where $(\log x)_+:=\max \{0, \log x\}$.
Note that $\left ( \log \frac{R}{q} \right )_+ \leq \left ( \log \frac{R}{d} \right )_+ + \left ( \log \frac{d}{q} \right )_+$. 
Also,
\begin{align} \label{eq:areaQ}
 |\Omega_1\cup \Omega_2| = \int_{\S^{n-1}}\!\! \int_0^{q(\vecg \zeta)} r^{n-1} \dd r \dd\mathcal H^{n-1}(\vecg \zeta) 
= \fint_{\S^{n-1}} \omega_n q(\vecg \zeta)^n \dd\mathcal H^{n-1}(\vecg \zeta).
\end{align}
Finally, 
since $|\vec a^* - \vec{\tilde a}_1| + |\vec a^*-\vec{\tilde a}_2| = d(1-\delta)$,
Lemma \ref{le:ub2}\ref{it:b1})
implies that $|Dq|\leq 2d(1-\delta)$. Hence,
\begin{align*}
\frac{1}{n} \int_{B(\vec a^*, R) \setminus \overline{\Omega_1\cup \Omega_2}} \left | \frac{D\vec u}{\sqrt{n-1}} \right |^n \dd\vec x
\leq & C(v_1+v_2+ \omega_n R^n)
+(v_1+v_2)\Big ( 1 + C(1-\delta) \Big ) \left ( \log \frac{R}{d} \right )_+ \\
& \quad + C (v_1+v_2)  \fint_{\S^{n-1}} \left ( \frac{q^n}{d^n} + \frac{|Dq|}{d} \right )
\left (\log  \frac{d}{q(\vecg\zeta)} \right )_+ \dd\mathcal H^{n-1}(\vecg \zeta).
\end{align*}

\begin{figure}[bth!p]
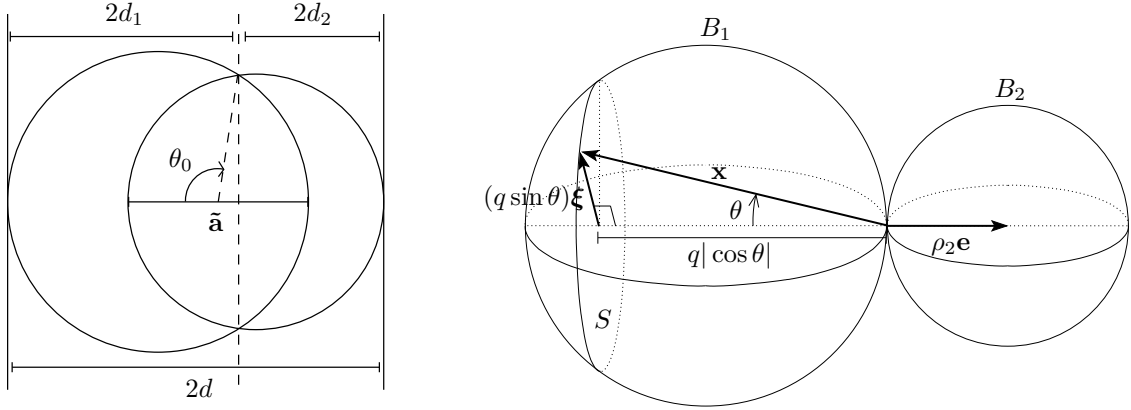

  \centering
  \subfigure{\input{theta0.pstex_t}} \quad \qquad
  \subfigure{\input{spherical2.pstex_t} }
  \caption{\label{fig:sph2}Angle $\theta_0>\frac{\pi}{2}$ and choice of spherical coordinates for $\delta=0$.}
\end{figure}
The main problems at this point are that if $\delta\to 0$
then $\rho_2$ is of the order of $\frac{v_2^\frac{1}{n}d}{v_1^\frac{1}{n}+v_2^\frac{1}{n}}$
(so $\frac{d}{q}\to \infty$ on $\partial B_2 \cap \partial \Omega_2$
if $\frac{v_2}{v_1}\to 0$)
and $q(\vecg \zeta)$ tends to vanish on $\partial B_1 \cap \partial B_2$ (see Figure \ref{fig:sph2}).
Parametrize $\S^{n-1}$ by $\vecg\zeta = -\cos \theta\, \vec e + \sin \theta\, \vecg \xi$ with $\theta \in (0, \pi)$
and $\vecg \xi \in S:= \S^{n-1} \cap \langle \vec e \rangle^\perp$. Since $\frac{q^n}{d^n} |\log \frac{d}{q}|$ is bounded we
only study the term with $|Dq|$, that is, we are to prove that
\begin{align*}
 \mathcal H^{n-2}(S) \left (\int_0^\frac{\pi}{2} + \int_{\frac{\pi}{2}}^\pi \right ) (\sin \theta)^{n-2}\frac{|Dq(\vecg\zeta(\theta, \vecg\xi)|}{d}
\left ( \log \frac{d}{q(\vecg\zeta(\theta, \vecg \xi))} \right )_+ \dd\theta
\end{align*}
is bounded independently of $d$, $\delta$, $v_1$, and $v_2$.
It can be shown that $\vec a^* + q(\theta, \vecg \xi) \vecg \zeta(\theta, \vecg \xi)\in \partial B_1$ for all $\theta \in (0, \frac{\pi}{2})$ (due to the
fact that $\rho_1 \geq \rho_2$, see Figure \ref{fig:sph2}), and clearly $\vecg\zeta \cdot (\vec a^* - \vec{\tilde a}_1)=-\cos \theta (d-\rho_2) <0$.
Lemma \ref{le:ub2}\ref{it:b2}) can thus be used to estimate the first integral by
\begin{align*}
 2\int_0^{\frac{\pi}{2}} \frac{\rho_1}{d} \log \frac{d}{\rho_1 \cos \theta} \dd\theta
\leq 2\left ( \max_{t\in [0,1]} |t\log t|\right )
\int_0^{\frac{\pi}{2}} \left |\log \frac{1}{2}\left (\frac{\pi}{2} - \theta\right)\right |\dd\theta
=  \frac{\pi}{e} \left ( 1 + \log \frac{4}{\pi} \right ).
\end{align*}
As for the second integral we divide $(\frac{\pi}{2}, \pi)$ into $(\frac{\pi}{2}, \theta_0] \cup [\theta_0, \pi)$,
according to whether
$\vec a^* + q(\theta, \vecg \xi) \vecg \zeta(\theta, \vecg \xi)$ belongs to $\partial B_1$ or to $\partial B_2$.
For $\theta > \theta_0$ we can still use Lemma \ref{le:ub2}\ref{it:b2}) (this time with $\vec{\tilde a}=\vec{\tilde a}_2$ and
$\rho=\rho_2$) to obtain exactly the same upper bound as before. For $\theta \in (\frac{\pi}{2}, \theta_0)$,
use parts \ref{it:b1}) and \ref{it:b3}) of Lemma \ref{le:ub2} together with $\rho_1 - |\vec a^*-\vec{\tilde a}_1|=d\delta$ to obtain
\begin{align*}
 \frac{|Dq|}{d} \leq \frac{2(d-\rho_2)}{\delta \rho_1} \frac{q^2}{d^2} \sin \theta
\quad \text{and} \quad
  |Dq| \leq 16(d-\rho_2) \left( 1 + \frac{(d-\rho_2)|\cos \theta|}{\sqrt{\delta \rho_1 d}} \right ) ^{-2}\sin \theta.
\end{align*}
Then, for any $\alpha \in (0,\frac{1}{2})$, using that $t^{2\alpha}|\log t|\leq (2\alpha e)^{-1}$ for every $t\in (0,1)$,
\begin{align*}
 \int_{\frac{\pi}{2}}^{\theta_0} \frac{|Dq|}{d} \left ( \log \frac{d}{q} \right)_+ \dd\theta
&\leq
\int_{\frac{\pi}{2}}^{\theta_0}  \left|\frac{Dq}{d}\right|^{1-\alpha} \left|\frac{Dq}{d}\right|^{\alpha}
\left ( \log \frac{d}{q} \right)_+ \dd\theta \\
&\leq \frac{2^\alpha 16^{1-\alpha}}{2\alpha e} \left (\frac{d-\rho_2}{d}\right)^{1-\alpha}
\left ( \frac{d-\rho_2}{\delta \rho_1} \right )^\alpha
\int_{\frac{\pi}{2}}^\pi
\left ( 1 + \frac{(d-\rho_2)|\cos \theta|}{\sqrt{\delta \rho_1 d}} \right ) ^{2(\alpha-1)} \sin \theta \dd\theta \\
&\leq \frac{8^{1-\alpha}}{\alpha e} \left (\frac{\delta \rho_1}{d} \right )^{\frac{1}{2}-\alpha}
\fint_0^{\frac{d-\rho_2}{\sqrt{\delta \rho_1 d}}}
(1+t)^{2\alpha-2} \dd t
\end{align*}
The last integral can be bounded by means of the relation
\begin{align*}
(1-2\alpha)\int_0^x (1+t)^{2\alpha-2} \dd t =  1- \frac{1}{(1+x)^{1-2\alpha}} < 1-\frac{1}{1+x}=\frac{x}{1+x}.
\end{align*}
Using that $\gamma + \sqrt{1-\gamma}>1$ for all $\gamma\in (0,1)$ (applied to
$\gamma=\frac{d-\rho_2}{\rho_1}=\frac{|\vec a^*-\vec a_1|}{\rho_1}$),
\begin{align*}
 \int_{\frac{\pi}{2}}^{\theta_0} \frac{|Dq|}{d} \left ( \log \frac{d}{q} \right)_+ \dd\theta
&\leq
\frac{8^{1-\alpha}}{\alpha(1-2\alpha)e}
\left (\frac{\delta \rho_1}{d} \right )^{\frac{1}{2}-\alpha}
\frac{d-\rho_2}{\rho_1} \frac{1}{\gamma + \sqrt{1-\gamma}}\\
&\leq
\frac{8^{1-\alpha}}{\alpha(1-2\alpha)e} \delta^{\frac{1}{2}-\alpha} \left (
\frac{d-\rho_2}{d}\right )^{\frac{1}{2}-\alpha} \left (\frac{d-\rho_2}{\rho_1} \right )^{\frac{1}{2}-\alpha}\\
&
\leq \frac{8^{1-\alpha}}{\alpha(1-2\alpha)e}
\delta^{\frac{1}{2}-\alpha}(1-\delta)^{\frac{1}{2}-\alpha}.
\end{align*}
We conclude that for all $R>0$
\begin{multline*}
 \frac{1}{n} \int_{B(\vec a^*, R) \setminus \overline{\Omega_1\cup \Omega_2}} \left | \frac{D\vec u}{\sqrt{n-1}} \right |^n \dd\vec x
 \leq
C(v_1+v_2+ \omega_n R^n)
+(v_1+v_2)\Big ( 1 + C(1-\delta) \Big ) \left ( \log \frac{R}{d} \right )_+.
\end{multline*}
\\

{\it - Step 4: } Estimating the energy in $\Omega_i$.\\ 
Near the cavitation points we still have that
$\fint \omega_n q_i^n \dd\mathcal H^{n-1} = |\Omega_i|$, $i=1,2$, so by Lemma \ref{le:ub1}
\begin{align*}
 \frac{1}{n} \int_{\Omega_i\setminus B_{\ep_i}(\vec a_i)} \left | \frac{D\vec u}{\sqrt{n-1}} \right |^n \dd\vec x
 &\leq
C(v_i+\omega_n \rho_i^n)
+ v_i \left ( \log \frac{2\rho_i}{\ep_i} \right)_+ \\
& \quad +C\frac{v_1+v_2}{|\Omega_1 \cup \Omega_2|} \left ( \int_{\S^{n-1}} \max \{q_i, |Dq_i|\}^{n-1} |Dq_i| \dd\mathcal H^{n-1} \right)
\log \frac{2d}{\ep_i} \\
& \leq C(v_i+\omega_n \rho_i^n)
+ v_i \log \frac{2d}{\ep_i}
+ C(v_1+v_2)\frac{\rho_i^{n-1}}{d^{n-1}} \left (\int_{\S^{n-1}} \frac{|Dq_i|}{d} \right)
\log \frac{2d}{\ep_i}.
\end{align*}
For $\Omega_1$ set $\vecg\zeta =-\cos \theta \vec e +\sin \theta \vecg \xi$.
 If $\theta\in (0, \frac{\pi}{2})$ then, by Lemma \ref{le:ub2},
using that $|\vec a_1-\vec{\tilde a}_1|=\rho_1-d_1$,
\begin{align*}
 \int_0^{\frac{\pi}{2}}|Dq_1|\sin^{n-2}\theta\dd\theta
&\leq 16(\rho_1-d_1)\int_0^{\frac{\pi}{2}}\left (1 + \frac{\rho_1-d_1}{\sqrt{d_1\rho_1}}\cos \theta \right )^{-2} \sin \theta \dd\theta
\\
&=16\sqrt{d_1\rho_1} \int_0^{\frac{\rho_1-d_1}{\sqrt{d_1\rho_1}}} (1+t)^{-2}\dd t
= \sqrt{\frac{d_1}{\rho_1}} \frac{\rho_1-d_1}{\gamma + \sqrt{1-\gamma}},
\end{align*}
with $\gamma=1-\frac{d_1}{\rho_1}$. Since $\gamma +\sqrt{1-\gamma}\leq 1$ for $\gamma\in [0,1]$,
\begin{align*}
 \rho_1^{n-1}\int_0^{\frac{\pi}{2}}|Dq_1|\sin^{n-2}\theta\dd\theta
\leq \rho_1^{n-2} \sqrt{d_1\rho_1} (\rho_1-d_1) .
\end{align*}

Define $\theta_1$ as in Figure \ref{fig:upbnd}. By Lemma \ref{le:ub2},
$|Dq_1|\leq 2(\rho_1-d_1)\sin \theta$ and $q_1\geq \sqrt{d_1\rho_1}$, hence
\begin{align*}
 \rho_1\int_{\frac{\pi}{2}}^{\theta_1}|Dq_1|\sin^{n-2}\theta\dd\theta
&\leq \rho_1 (\rho_1-d_1) |\cos \theta_1|
\leq (\rho_1-d_1) \frac{d_1\rho_1}{q(\theta_1)}
\leq \sqrt{d_1\rho_1} (\rho_1-d_1).
\end{align*}
For $\theta \in (\theta_1, \pi)$, $q_1(\vecg\zeta)$ is given by $q_1\vecg\zeta \cdot \vec e=d_1$ hence
\begin{align*}
q_1(\theta) = \frac{d_1}{\cos(\pi-\theta)}
\quad \text{and} \quad
|Dq_1(\vecg\zeta(\theta, \vecg\xi))| =\left|\frac{q_1(1-\vecg\zeta\otimes\vecg\zeta)\vec e}{-\vecg\zeta\cdot \vec e}\right| =\frac{d_1\sin\theta}{\cos^2(\pi-\theta)}.
\end{align*}
Using that $1-|\cos \theta_1|= \frac{\sin^2 \theta_1}{1+ |\cos \theta_1|}\leq \sin^2 \theta_1$ and that $q(\theta_1)\geq (\rho_1-d_1)\cos \theta +\sqrt{d_1\rho_1}\geq \sqrt{d_1\rho_1}$,
\begin{align*}
\rho_1\int_{\theta_1}^{\pi} |Dq_1|\dd\theta
 \leq d_1\rho_1 \int_{|\cos \theta_1|}^1 \frac{\dd t}{t^2}
 \leq \rho_1\frac{d_1\sin^2 \theta_1}{\cos (\pi - \theta_1)}
= \frac{\rho_1(q_1(\theta_1)\sin \theta_1)^2}{q_1(\theta_1)}
\leq 4\sqrt{d_1\rho_1}(\rho_1-d_1),
\end{align*}
the last equality being due to the fact that $q(\theta_1)\cos\theta_1=d_1$ and $\vec a_1+q(\theta_1)\vecg\zeta(\theta_1, \vecg\xi) \in \partial B(\vec{\tilde a}_1, \rho_1)$.
Now we show that $\max\{q_1,|Dq_1|\}\leq 8 \rho_1$.
The fact that $q(\theta_1)\geq \sqrt{d_1\rho_1}$ implies that
$\rho_1|\cos\theta_1| \leq \sqrt{d_1\rho_1}$. Clearly
$q(\theta)$ is decreasing, therefore
\begin{align*}
q(\theta)\leq q(\theta_1) \leq 2((\rho_1-d_1)|\cos \theta_1| + \sqrt{d_1\rho_1})
\leq 4\sqrt{d_1\rho_1}\leq 4\rho_1.
\end{align*}
As for $|Dq_1|$, we have that $q_1(\theta)\sin \theta$ is decreasing and
$q(\theta_1)\sin\theta_1=2\sqrt{d_1(\rho_1-d_1)}$, then
\begin{align*}
|Dq_1| = \frac{q_1(q_1\sin \theta)}{q_1\cos(\pi-\theta)}
\leq \frac{2q_1(\theta_1)\sqrt{d_1(\rho_1-d_1)}}{d_1}
\leq 8\sqrt{\rho_1(\rho_1-d_1)}|\leq 8\rho_1.
\end{align*}

The study of $\vec u$ in $\Omega_2$ being completely analogous, the conclusion is that  for all $R>0$
\begin{multline*}
 \frac{1}{n} \int_{B(\vec a^*, R) \setminus (B_{\varepsilon_1}(\vec a_1)\cup B_{\varepsilon_2}(\vec a_2))} \left | \frac{D\vec u}{\sqrt{n-1}} \right |^n \dd\vec x
 \leq
C(v_1+v_2+ \omega_n R^n)
+v_1\log \frac{R}{\ep_1} + v_2\log \frac{R}{\ep_2}
\\
+C(v_1+v_2)\left ( (1-\delta) \left ( \log \frac{R}{d} \right )_+
+ \sqrt{\frac{d_1}{d}} \frac{\rho_1-d_1}{d} \log \frac{d}{\ep_1}
+ \sqrt{\frac{d_2}{d}} \frac{\rho_2-d_2}{d} \log \frac{d}{\ep_2}
\right )
\end{multline*}
In the case of $\vec a_1$ it is $\rho_1-d_1$ that has an interesting behaviour, whereas for $\vec a_2$ it is $d_2$. This follows from our final ingredient:
the `height' of $B(\vec a_1, \rho_1)\cap B(\vec a_2, \rho_2)$, whether we measure it from the first ball or from the second, is the same. The corresponding expression is $d_1(\rho_1-d_1)= d_2(\rho_2-d_2)$.
As a consequence,
\begin{align*}
\frac{\rho_1-d_1}{d} = \frac{\delta(\rho_1-d_1)}{(\rho_1-d_1) + (\rho_2-d_2)}
=\frac{\delta d_2}{d_1+d_2}=\delta \frac{d_2}{d}.
\end{align*}
The theorem is thus proved since, by Lemma \ref{le:areal},
\begin{align*}
\left(\frac{d_2}{d}\right)^{\frac{n+1}{2}} \leq C\frac{|\Omega_2|}{\rho_2^{\frac{n-1}{2}}d^{\frac{n+1}{2}}}
\leq C\frac{\frac{v_2|\Omega_1\cup \Omega_2|}{v_1+v_2}}{
	\left(\frac{v_2^\frac{1}{n}}{v_1^\frac{1}{n}+v_2^\frac{1}{n}} d\right)^{\frac{n-1}{2}}d^{\frac{n+1}{2}}}
\leq C\left (\left(\frac{v_2}{v_1+v_2}\right)^\frac{1}{n} \right)^{\frac{n+1}{2}}.
\end{align*}
\end{proof}

\subsection{Transition to radial symmetry} \label{se:ub2} \label{se:ubD}

Our proof of Theorem \ref{th:ub2} is based on the following result
(see \cite{Moser65,DaMo90,Ye94,McMullen98,CuDaKn09,BaDa09} for related work):

\begin{proposition}[Rivi\`ere-Ye, \cite{RiYe96}, Thm. 8] \label{pr:RiviereYe}
 Let $D$ be a smooth domain, $k=0, 1, 2, \ldots$ and suppose that
$g\in C^{k,1}(\overline D)=W^{k+1, \infty}(D)$
with $\inf_D g>0$ and $\fint_D g=1$.
Then, there exists a diffeomorphism $\vecg \phi:\overline D\to \overline D$,
satisfying $\det D\vecg \phi = g$ in $D$ and $\vecg \phi=\id$ on $\partial D$,
such that, for any $\alpha<1$, $\vecg \phi$ is in $C^{k+1,\alpha}(\overline D)$ and
\begin{align*} 
 \|\vecg \phi - \id\|_{C^{k+1, \alpha}(\overline D)} \leq C\|\vec g -1\|_{C^{k,1}(\overline D)}
\end{align*}
for any $0<\delta<1$, where $C$ depends only on $\alpha$, $k$, $D$, $\inf_D g$, $\delta$, and $\|g\|_{0,\delta}$.
\end{proposition}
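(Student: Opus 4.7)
The plan is to construct $\vecg\phi$ by a fixed-point iteration built on a right inverse for the divergence operator, combined with a Moser-type continuation argument that removes any smallness assumption on $g-1$. Writing $\vecg\phi = \id + \vec v$ with $\vec v$ vanishing on $\partial D$, the equation $\det D\vecg\phi = g$ becomes
\begin{align*}
\div \vec v = (g-1) - Q(D\vec v), \qquad Q(\vec M):= \det(\vec 1 + \vec M) - 1 - \tr \vec M,
\end{align*}
where $Q$ is a polynomial in the entries of $\vec M$ with no constant or linear term, hence quadratic to leading order. This turns the original problem into a fixed-point equation in a Banach space of vector fields.

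The first main ingredient is the Bogovski\u{\i} operator: on a smooth bounded domain $D$, for each $\alpha\in(0,1)$ and each integer $k\geq 0$, there is a bounded linear map $\mathcal{B}:\{f \in C^{k,\alpha}(\overline D): \fint_D f = 0\} \to C^{k+1,\alpha}(\overline D,\R^n)$ with $\div\mathcal{B}(f)=f$ in $D$ and $\mathcal{B}(f)|_{\partial D}\equiv \vec 0$, whose operator norm depends only on $\alpha$, $k$, and $D$. The mean-zero condition on $g-1$ is automatic here because $\fint_D g = 1$. Classical constructions realize $\mathcal B$ either as an explicit singular integral (the Bogovski\u{\i} formula) or via the regularity theory of the Stokes system; the constraint $\alpha<1$ reflects the standard endpoint failure of Calder\'on--Zygmund estimates, and is the structural reason the target regularity in the statement is $C^{k+1,\alpha}$ rather than $C^{k+1,1}$.

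With $\mathcal B$ at hand, the Picard iteration $\vec v_0:=\vec 0$, $\vec v_{j+1}:=\mathcal B\bigl((g-1) - Q(D\vec v_j)\bigr)$ is a contraction in a small ball of $C^{k+1,\alpha}(\overline D,\R^n)$ whenever $\|g-1\|_{C^{k,\alpha}}$ is small enough, since $\vec v\mapsto Q(D\vec v)$ is locally Lipschitz with constant proportional to $\|D\vec v\|_{C^{k,\alpha}}$. The fixed point yields $\vecg\phi = \id + \vec v \in C^{k+1,\alpha}$ solving $\det D\vecg\phi = g$, together with the desired linear estimate. To handle $g$ that is not close to $1$, apply a Moser-type continuation: set $g_t:= 1+t(g-1)$, $t\in[0,1]$, choose a partition $0=t_0<t_1<\cdots<t_N=1$ fine enough that the ratio $g_{t_{j+1}}/(g_{t_j}\circ\vecg\phi_{t_j}^{-1})-1$ is small in $C^{k,\alpha}$ at every step, and build $\vecg\phi_{t_{j+1}}$ as the composition of $\vecg\phi_{t_j}$ with a small-data correction produced by the previous step. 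The number $N$ of steps, and hence the final constant, depends only on $\inf_D g$ and on the H\"older norm $\|g\|_{0,\delta}$, which is exactly the dependence claimed in the proposition. The diffeomorphism property follows from $\det D\vecg\phi = g>0$ together with the boundary condition $\vecg\phi|_{\partial D}=\id$ and a standard degree argument.

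The main obstacles are twofold. First, establishing the sharp $C^{k,\alpha}\to C^{k+1,\alpha}$ mapping property of $\mathcal B$ up to the boundary of a general smooth domain: this is what forces the strict inequality $\alpha<1$ and is why the conclusion cannot be stated in $C^{k+1,1}$ (a Lipschitz-gradient statement would require a logarithmic loss that one cannot afford here). Second, maintaining uniform control of $\inf_D g_{t_j}$ and of the relevant H\"older seminorms along the continuation, so that each incremental correction sees a datum whose ratio to $1$ remains close to $1$ in $C^{k,\alpha}$; this is delicate because pulling back by $\vecg\phi_{t_j}^{-1}$ distorts H\"older norms by factors that must be reabsorbed, via interpolation, at every step.
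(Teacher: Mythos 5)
This proposition is not proved in the paper at all: it is quoted from Rivi\`ere--Ye \cite{RiYe96}, so the only comparison available is with their proof. On its own terms, your small-data step is sound and is essentially the elliptic route of Dacorogna--Moser: writing $\vecg\phi=\id+\vec v$, inverting the divergence by a Bogovskii-type operator bounded from mean-zero $C^{k,\alpha}(\overline D)$ into $C^{k+1,\alpha}(\overline D,\R^n)$ with zero boundary values, and contracting against the quadratic remainder $Q(D\vec v)$ does give $\|\vecg\phi-\id\|_{C^{k+1,\alpha}}\leq C\|g-1\|_{C^{k,\alpha}}$ when $\|g-1\|$ is small, and your degree argument for bijectivity is fine.

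The genuine gap is in the large-data stage, which is exactly where the content of the Rivi\`ere--Ye theorem lies. In the continuation along $g_t=1+t(g-1)$ you must take the partition fine enough that each incremental ratio is small in $C^{k,\alpha}$; the number of steps $N$ therefore depends on the \emph{top} norm $\|g-1\|_{C^{k,\alpha}}$, not only on $\inf_D g$ and $\|g\|_{0,\delta}$ as you assert. Moreover each step produces a composition $\vecg\phi_{t_{j+1}}=\vecg\psi_j\circ\vecg\phi_{t_j}$, and composing $N$ maps multiplies $C^{k+1,\alpha}$ bounds (chain rule, Fa\`a di Bruno), so what this scheme yields is a bound of the form $\|\vecg\phi-\id\|_{C^{k+1,\alpha}}\leq C\big(\|g-1\|_{C^{k,1}}\big)$ with a nonlinear (typically exponential) dependence, not the asserted \emph{linear} estimate $C\,\|g-1\|_{C^{k,1}}$ with $C$ depending only on $\alpha$, $k$, $D$, $\inf_D g$, $\delta$, and $\|g\|_{0,\delta}$. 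That tame, linear dependence on the highest norm with constants controlled by low-order data is precisely what Rivi\`ere and Ye prove, and their mechanism is different: they approximate $g$ by mollifications at dyadic scales and realize $\vecg\phi$ as an infinite composition of corrections whose sizes decay geometrically, the geometric gain being what keeps the composed estimate linear in the top norm. Without an ingredient of this type your argument establishes existence and a qualitative bound, but not the quantitative statement of the proposition; note that it is the quantitative form (constant controlled by $\inf g$ and a low H\"older norm of $g$ alone) that is invoked in the proof of Theorem \ref{th:ub2}, where these quantities are controlled by the single small parameter $\frac{(v_1+v_2)(1-\delta)}{\omega_n(R_2^n-R_1^n)}$ --- although, to be fair, in that application $\|g-1\|_{L^\infty}$ is itself small, so your contraction step alone would almost suffice there.
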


\begin{lemma} \label{le:nc}
Let $\vecg \zeta \in \S^{n-1} \mapsto \vec a^* + q(\vecg \zeta) \vecg \zeta$ be
the polar parametrization of $\partial (\overline {\Omega_1 \cup \Omega_2})$ and define
\begin{align} \label{eq:fandR3}
\rho(\vecg \zeta)^n &:= R_1^n + (v_1+v_2) \frac{q(\vecg \zeta)^n }{|\Omega_1\cup \Omega_2|},
\qquad
\vecg \zeta\in \S^{n-1},
\end{align}
$R_1$ being fixed and such that $\Omega_1\cup \Omega_2 \subset B(\vec{a^*}, R_1)$.
Suppose that $\vec u$ is a one-to-one incompressible map
from $\{R_1< |\vec x - \vec {a^*}|< R_2\}$ onto $\{r\vecg \zeta: \rho(\vecg \zeta)<r < R_3\}$, for some
$R_2$, $R_3\geq 0$.
Then
\begin{align*}
 \omega_n (R_2^n -R_1^n) > \frac{\frac{\pi}{3} - \frac{1}{2}}{2^{n-2} 3 \pi} (v_1+v_2) (1-\delta).
\end{align*}
\end{lemma}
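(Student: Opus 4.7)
The plan is to combine the incompressibility of $\vec u$ with the surjectivity onto the star-shaped image set, and then reduce the lemma to a purely geometric excess-volume estimate for the ball $B(\vec{a^*},\max q)$ relative to $\Omega_1\cup \Omega_2$, where $\max q:=\max_{\vecg\zeta\in\S^{n-1}} q(\vecg\zeta)$.

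First, I would compute the image volume in polar coordinates centered at $\vec{a^*}$ and use \eqref{eq:fandR3} together with the polar representation $|\Omega_1\cup\Omega_2|=\tfrac{1}{n}\int_{\S^{n-1}} q(\vecg\zeta)^n\dd\mathcal H^{n-1}(\vecg\zeta)$ to get
\begin{equation*}
\frac{1}{n}\int_{\S^{n-1}}\rho(\vecg\zeta)^n\dd\mathcal H^{n-1}(\vecg\zeta)=\omega_n R_1^n+(v_1+v_2).
\end{equation*}
Since $\vec u$ is one-to-one and incompressible, the volume of the reference annulus must equal that of the image, which yields the exact identity $\omega_n(R_3^n-R_2^n)=v_1+v_2$. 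Moreover, because the image contains a point of every ray from $\vec{a^*}$ (the reference annulus does), we must have $R_3\ge \max_{\vecg\zeta}\rho(\vecg\zeta)$; substituting and recalling \eqref{eq:fandR3},
\begin{equation*}
\omega_n(R_2^n-R_1^n)\ge (v_1+v_2)\,\frac{\omega_n(\max q)^n-|\Omega_1\cup\Omega_2|}{|\Omega_1\cup\Omega_2|}.
\end{equation*}

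The lemma is therefore reduced to the purely geometric inequality
\begin{equation*}
\frac{\omega_n(\max q)^n-|\Omega_1\cup\Omega_2|}{|\Omega_1\cup\Omega_2|}>\frac{\pi/3-1/2}{2^{n-2}\cdot 3\pi}(1-\delta).
\end{equation*}
For this step, one first identifies $\max q$ explicitly: assuming WLOG $\rho_1\ge\rho_2$, the point of $\overline{\Omega_1\cup \Omega_2}$ farthest from $\vec{a^*}$ is $\vec a_1-d_1\vec e$, giving $\max q=d+(\rho_1-\rho_2)$. I would then lower bound the excess $|B(\vec{a^*},\max q)\setminus(\Omega_1\cup\Omega_2)|$ by combining two geometrically distinct contributions: (i) the spherical cap of $B(\vec{a^*},\max q)$ of height $2(\rho_1-\rho_2)$ that protrudes out of the slab of width $2d$ containing $\Omega_1\cup\Omega_2$, whose volume is bounded below by Lemma \ref{le:areal} and is substantial when $\rho_1-\rho_2$ is sizeable; and (ii) the ``corner'' regions inside the slab but outside the lens $\Omega_1\cup\Omega_2$, which must be estimated when $\rho_1\approx\rho_2<d$. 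The upper bound $|\Omega_1\cup\Omega_2|\le \omega_n(\rho_1^n+\rho_2^n)$ from \eqref{eq:bdOm} and the relation $(1-\delta)d=2d-\rho_1-\rho_2$ are then used to convert the absolute excess into the required ratio with the correct $(1-\delta)$ scaling.

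The main obstacle is this last geometric step: producing a single clean lower bound on the excess that captures the $(1-\delta)$ scaling uniformly across $\delta\in[0,1]$ (the two contributions above vanish in different limits: (i) as $\rho_1-\rho_2\to 0$, (ii) as $\rho_1,\rho_2\to d$) and is sharp enough to yield the stated constant. The numerator $\pi/3-1/2$ has the form ``sector area minus inscribed triangle area'' for a unit-radius configuration, which strongly suggests the argument localizes the missing volume to a cone of aperture $\pi/3$ emanating from $\vec{a^*}$ in a direction transverse to $\vec e$, where one can explicitly quantify the fraction of $B(\vec{a^*},\max q)$ left uncovered by the lens-shaped $\Omega_1\cup\Omega_2$.
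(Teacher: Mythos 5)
Your first half is exactly the paper's reduction: the polar-coordinate volume identity, together with \eqref{eq:areaQ}, gives $\omega_n R_3^n=\omega_n R_2^n+(v_1+v_2)$, and the requirement $R_3\ge\max_{\vecg\zeta}\rho(\vecg\zeta)$ then becomes
$\omega_n(R_2^n-R_1^n)\ge (v_1+v_2)\bigl(\omega_n q_{\max}^n-|\Omega_1\cup\Omega_2|\bigr)/|\Omega_1\cup\Omega_2|$, which is \eqref{eq:R3ub} and the sentence following it (with $q_{\max}=2\rho_1-\delta d=d+\rho_1-\rho_2$, as you correctly identify). A minor caveat: your justification of $R_3\ge\max\rho$ (``the image contains a point of every ray because the reference annulus does'') is not an argument — directional coverage of the image does not follow from that of the reference domain; this is really a property of the prescribed target shell and is treated by the paper as part of the requirement. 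The genuine gap is the second half: the purely geometric inequality
\begin{align*}
\frac{\omega_n q_{\max}^n-|\Omega_1\cup\Omega_2|}{|\Omega_1\cup\Omega_2|}>\frac{\frac\pi3-\frac12}{2^{n-2}\,3\pi}\,(1-\delta)
\end{align*}
is the entire content of the lemma, and your proposal does not prove it; you yourself flag it as the ``main obstacle.''

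Your two-contribution decomposition degenerates exactly where you say it does: when $\rho_1=\rho_2$ the cap of $B(\vec a^*,q_{\max})$ protruding from the slab is empty even though $1-\delta$ may be of order one, so the whole bound would have to come from the ``corner'' regions, for which you offer no estimate; and Lemma \ref{le:areal} applied to the protruding cap yields a bound in terms of $\rho_1-\rho_2$, not of $1-\delta$. The paper needs no such case analysis: writing $\vecg\zeta=-\cos\theta\,\vec e+\sin\theta\,\vecg\xi$, for every $\theta\in(0,\tfrac\pi2)$ the boundary point $\vec a^*+q\vecg\zeta$ lies on $\partial B_1$, and the explicit off-centre polar formula for a sphere gives the \emph{pointwise} deficit $q_{\max}-q(\theta)>\tfrac{2d}{3}(1-\delta)(1-\cos\theta)$ (using $\rho_1-\delta d=d-\rho_2$ and $\rho_2\le d$); then $q_{\max}^n-q^n\ge(q_{\max}-q)q_{\max}^{n-1}$, $q_{\max}\ge d$, $\sin\theta\ge\tfrac12$ on $(\tfrac\pi6,\tfrac\pi2)$, the bound $|\Omega_1\cup\Omega_2|<2\omega_n d^n$ from \eqref{eq:bdOm}, and $\int_{\pi/6}^{\pi/2}(1-\cos\theta)\dd\theta=\tfrac\pi3-\tfrac12$ produce \eqref{eq:lbqd}. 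In particular your reading of the constant (sector minus inscribed triangle for a cone of aperture $\tfrac\pi3$ transverse to $\vec e$) is not the mechanism; $\tfrac\pi3-\tfrac12$ is simply the integral of the angular deficit factor $1-\cos\theta$ over $(\tfrac\pi6,\tfrac\pi2)$. Without a deficit estimate of this kind, valid uniformly in $\delta$ and carrying the factor $1-\delta$, your outline cannot deliver the stated inequality.
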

\begin{proof}
Denote $\max_{\S^{n-1}} q = 2\rho_1- \delta d$ by $q_{\max}$. By incompressibility (using that
$\mathcal H^{n-1}(\S^{n-1})=n\omega_n$),
\begin{align}
  \omega_n R_2^n - \omega_n R_1^n 
&= |\{\vec x: R_1< |\vec x - \vec {a^*}|< R_2\}| 
= \label{eq:R3ub}
 |\{r\vecg \zeta: \rho(\vecg \zeta)<r < R_3\}| 
\\&= \int_{\S^{n-1}}\!\!\int_{\rho(\vecg\zeta)}^{R_3} r^{n-1} \dd r \dd\mathcal H^{n-1}(\vecg \zeta)
= \omega_n R_3^n - \omega_n R_1^n -(v_1+v_2) \frac{\fint_{\S^{n-1}} \omega_n q^n \dd\mathcal H^{n-1}
}{|\Omega_1 \cup \Omega_2|}. \nonumber
\end{align}
Hence,
the requirement that $R_3\geq \rho(\vecg \zeta)$ for all $\vecg \zeta \in \S^{n-1}$
is equivalent to
\begin{align*}
 \omega_n (R_2^n-R_1^n) > (v_1+v_2) \frac{\omega_n \fint_{\S^{n-1}} (q_{\max}^n -q^n ) \dd\mathcal H^{n-1}}{|\Omega_1\cup
\Omega_2|}.
\end{align*}
 Write $\vecg \zeta := -\cos \theta \vec e + \sin \theta \vecg \xi$
with $\theta \in [0, \pi]$, $\vecg \xi \in S:=\S^{n-1}\cap \langle \vec e \rangle ^\perp$.
For all $\theta \in (0, \frac{\pi}{2})$
\begin{align*}
 q_{\max} - q(\theta)
&= 2\rho_1 - \delta d - (\rho_1-\delta d) \cos \theta - \sqrt{\delta d (2\rho_1 - \delta d)
+ (\rho_1 - \delta d)^2 \cos ^2 \theta} \\
&= \frac{ \big (\rho_1 + (\rho_1 - \delta d)(1-\cos \theta)\big )^2 - \big ( \delta d (2\rho_1-
\delta d ) + (\rho_1 - \delta d)^2 \cos^2\theta \big)}{\rho_1 +
(\rho_1-\delta d)(1-\cos \theta) +
\sqrt{\delta d (2\rho_1 - \delta d ) + (\rho_1-\delta d )^2 \cos^2 \theta}} \\
&> \frac{(\rho_1-\delta d)^2 (\sin^2\theta + (1-\cos \theta)^2) + 2\rho_1(\rho_1-\delta d)(1-\cos \theta)}{
(2\rho_1 - \delta d) + (2\rho_1 - \delta d) + \rho_1 - \delta d} \\
&= \frac{2(\rho_1-\delta d)(2\rho_1 - \delta d)(1-\cos \theta)}{5\rho_1 - 3\delta d} > \frac{2}{3}(d-\rho_2)(1-\cos \theta)
> \frac{2d}{3} (1-\delta) (1-\cos \theta),
\end{align*}
where we have used that $\rho_1-d\delta= d -\rho_2$ and $\rho_2\leq d$.
Therefore,
\begin{align}
 \frac{\omega_n \fint_{\S^{n-1}} (q_{\max}^n -q^n ) \dd\mathcal H^{n-1}}{|\Omega_1\cup
\Omega_2|}
&> \frac{\mathcal H^{n-2}(S)}{n\omega_n}
\frac{\int_\frac{\pi}{6}^\frac{\pi}{2} (q_{\max} - q) q_{\max}^{n-1} (\sin \theta)^{n-2} \dd\theta}{2d^n}
> \frac{\frac{\pi}{3} - \frac{1}{2}}{2^{n-2} 3 \pi} (1-\delta).
\label{eq:lbqd}\end{align}
\end{proof}

\begin{proof}[Proof of Theorem \ref{th:ub2}]
We prove the theorem in the following stronger version
(see the remark after the proof of Corollary \ref{co:Dirich}):
``\emph{Let $R_1$, $R_2$ be such that
\begin{align} \label{eq:R1R2b}
  R_1\geq 2d
\qquad \text{and}\qquad
  \omega_n (R_2^n -R_1^n) > 4^nn (v_1+v_2)(1-\delta)
\end{align}
($\delta$, $v_1$, $v_2$, $\vec a_1$, $\vec a_2$, $d$, $\ep_1$, and $\ep_2$ being
as in the original statement).
Then there exists $\vec{a^*}$, $C_1$, $C_2$, and $\vec u: \R^n\setminus \{\vec a_1, \vec a_2\}\to \R^n$
 such that
$\vec u|_{\R^n \setminus B(\vec{a^*}, R_2)}$ is radially symmetric
and for all $R\geq R_1$
\begin{align*}
 \frac{1}{n}
\int_{B(\vec a^*, R) \setminus (B_{\varepsilon_1}(\vec a_1)\cup B_{\varepsilon_2}(\vec a_2))}
&
\left | \frac{D\vec u}{\sqrt{n-1}} \right |^n \dd\vec x
 \leq
C_1(v_1+v_2+ \omega_n R^n)
+v_1\log \frac{R}{\ep_1} + v_2\log \frac{R}{\ep_2}
\\
& + C_2(v_1+v_2)\left ( (1-\delta) \log \frac{R_1}{d}
+ \delta
\left(\sqrt[n]{\frac{v_2}{v_1}}  \log \frac{d}{\ep_1}
+ \sqrt[2n]{\frac{v_2}{v_1}} \log \frac{d}{\ep_2}
\right ) \right)
\\
& + (v_1+v_2+\omega_n R_2^n) \Sigma\left (\frac{(v_1+v_2)(1-\delta)}{\omega_n (R_2^n-R_1^n)}\right) 
\left ( \frac{\min\{R^n, R_2^n\}}{R_1^n} -1\right ) ,
\end{align*}
 the function $\Sigma$ being such that $\Sigma(t)< \infty$ for $t\in [0, \frac{1}{4^nn})$
and $\Sigma(t)=O(t^{n(n-1)})$ as $t\to 0$}''.
The Theorem follows by choosing $R_1$ and $R_2$ as in \eqref{eq:R1R2}.

Since the constant in Proposition \ref{pr:RiviereYe} depends on the reference domain, we
work on the annulus $D:=\{\vec z\in \R^n : 1\leq |\vec z| \leq \sqrt[n]{2}\}$
(we choose $\sqrt[n]{2}$ so that $|D|=\omega_n$).
Our strategy is to define $\vec u: B(\vec{a^*}, R_1) \setminus \{\vec a_1, \vec a_2\} \to \R^n$ as in
Theorem \ref{th:ub1} and to look for a map
\begin{align*}
\vec u:\{\vec x \in \R^n : R_1\leq |\vec x - \vec{a^*}| \leq R_2\}\to \{\vec y =
\lambda \vec{a^*} + r\vecg \zeta:
\rho(\vecg \zeta) \leq r \leq R_3, \vecg \zeta \in \S^{n-1}\}
\end{align*}
(where $\rho$ is defined in \eqref{eq:fandR3}) of the form $\vec u=\vec v \circ \vecg\phi^{-1} \circ \vec w^{-1}$, with
$\vecg \phi:\overline D \to \overline D$ a diffeomorphism and
\begin{align}
\begin{gathered}
\vec w(r\vecg \zeta)
:= \vec a^* + \left ((2-r^n)R_1^n +(r^n -1) R_2^n \right )^\frac{1}{n}\vecg \zeta,
\\
\vec v(r \vecg \zeta)
:= \lambda \vec a^* + \left ( (2-r^n) \rho(\vecg \zeta)^n + (r^n -1) R_3^n \right )^\frac{1}{n} \vecg \zeta.
\end{gathered} \label{eq:vyw}
\end{align}
The maps $\vec w$ and $\vec v$ are
parametrizations of the reference and target domains, and are
defined so that $\det D\vec w$ is constant and
$\vec v\circ \vec w^{-1}$ sends $\partial B(\vec {a^*}, R)$, $R_1\leq R\leq R_2$
onto a curve enclosing
a volume of exactly $v_1+v_2 + \omega_n R^n$
(as can be seen by writing
\begin{align} \label{eq:vw-1}
 \vec v \circ \vec w^{-1} (\vec {a^*}+ R\vecg \zeta) = \lambda \vec{\vec a ^*}
+ \left ( R^n + \frac{v_1+v_2}{\omega_n} \left ( 1 + \frac{R_2^n -R^n}{R_2^n - R_1^n }
\frac{\omega_n \left ( q^n - \fint q^n \right)}{|\Omega_1\cup \Omega_2|} \right ) \right ) ^\frac{1}{n}
\vecg \zeta ,
\end{align}
and by considering that $|\{\lambda \vec a^* + r\vecg\zeta: \vecg \zeta \in \S^{n-1},\ 0<r<\rho(\vecg \zeta)\}|
= \fint_{\S^{n-1}} \omega_n \rho^n\dd\mathcal H^{n-1}$).
The problem for $\vecg\phi$ is $\vecg \phi = \id$ on $\partial D$,
$\det D\vecg\phi= g := \frac{\det D\vec v}{\det D\vec w}$ in $D$.
To use Proposition \ref{pr:RiviereYe} we need to bound
\begin{align} \label{eq:gDg}
 g(r\vecg \zeta) - 1 = \frac{v_1+v_2}{\omega_n (R_2^n -R_1^n)} \left ( 1 - \frac{\omega_nq(\vecg\zeta)^n}{|\Omega_1 \cup \Omega_2|} \right)
\quad \text{and} \quad
Dg(r\vecg \zeta) = -\frac{v_1+v_2}{R_2^n -R_1^n} \frac{nq^{n-1} Dq(\vecg \zeta)}{r|\Omega_1\cup \Omega_2|}
\end{align}
for all $\vecg \zeta \in \S^{n-1}$, $r\in [1, \sqrt[n]{2}]$
(the constant in Proposition \ref{pr:RiviereYe} depends on $\|g\|_{0,\delta}$, so
it is not sufficient to control only $\|g-1\|_{L^\infty}$).
Using \eqref{eq:bdOm} and the fact that $\rho_1(\delta) \leq d$ and
$q(\vecg \zeta) \geq \delta d$ for all $\delta$, $\vecg \zeta$,
\begin{align}
 \frac{\omega_n \fint_{\S^{n-1}} (q_{\max}^n -q^n ) \dd\mathcal H^{n-1}}{|\Omega_1\cup
\Omega_2|}
&\leq n(2d)^{n-1}\frac{(2\rho_1-\delta d) - \delta d}{2^{-n} d^n} \leq 4^n n(1-\delta).
\label{eq:ubqd}
\end{align}
By Lemma \ref{le:ub2}\ref{it:b1}),
\begin{align*}
 \sup |Dg| \leq \frac{(v_1+v_2)}{R_2^n -R_1^n} \frac{2n(2d)^{n-1}(1-\delta)d}{2^{-n}\omega_n d^n}
\leq 4^nn \frac{(v_1+v_2)(1-\delta)}{\omega_n (R_2^n-R_1^n)}.
\end{align*}
This and Proposition \ref{pr:RiviereYe} imply the existence of a (piecewise smooth) solution $\vecg\phi$ such that
\begin{align} \label{eq:RYPHI}
\|\vecg\phi - \id\|_{C^1(\overline D)}
\leq \Sigma\left (  \frac{(v_1+v_2)(1-\delta)}{\omega_n (R_2^n-R_1^n)} \right )
\end{align}
for some function $\Sigma$ satisfying $\Sigma(t)< \infty$ for $t\in [0, \frac{1}{4^nn})$ and $\Sigma(t)=O(t)$ as $t\to 0$.

Define $\vec u=\vec v\circ \vecg \phi^{-1} \circ \vec w$. Writing $\vec x=\vec w(\vecg \phi(\vec z))$
and using \eqref{eq:vyw} and $\det D\vecg \phi = g$ we obtain
\begin{align} \label{eq:bndDu1}
 |D\vec u (\vec x)|^n
&= \left |\frac{D\vec v (\vec z) \adj D\vecg\phi(\vec z) D\vec w^{-1}(\vec x)}{\det D\vecg \phi(\vec z)}\right |^n
\leq C_n \frac{R_3^n}{R_1^n} \left ( \frac{\|D\vecg \phi\|_{L^\infty}^{n-1}}{1-\|g-1\|_{L^\infty}} \right ) ^n.
\end{align}
Combining \eqref{eq:R3ub} and \eqref{eq:areaQ} we obtain that for all $R\leq R_2$
\begin{align} \label{eq:R3R1}
 \int_{B(\vec a^*, R)\setminus B(\vec a^*, R_1)} \frac{R_3^n}{R_1^n} \dd\vec x 
\leq (v_1+v_2+\omega_n R_2^n)\frac{\omega_n R^n - \omega_n R_1^n}{\omega_n R_1^n}.
\end{align}
By \eqref{eq:gDg} and \eqref{eq:ubqd}, $\|g-1\|_{L^\infty} \leq 4^n n t$, with 
$t:=\frac{(v_1+v_2)(1-\delta)}{\omega_n (R_2^n - R_1^n)}$.
Hence, by \eqref{eq:bndDu1}, \eqref{eq:R3R1}, and \eqref{eq:RYPHI},
\begin{align*}
 \int_{B(\vec a^*, R)\setminus B(\vec a^*, R_1)} |D\vec u (\vec x)|^n \dd\vec x 
\leq C (v_1+v_2+\omega_n R_2^n) \tilde \Sigma\left (\frac{(v_1+v_2)(1-\delta)}{\omega_n (R_2^n-R_1^n)}\right) 
\left ( \frac{R^n}{R_1^n} - 1 \right)
\end{align*}
for $R_1 \leq R\leq R_2$,
where $\tilde \Sigma(t):= \frac{\Sigma(t)^{n(n-1)}}{(1- 4^n n t)^n}$, $t\in [0, \frac{1}{4^nn})$
and $\tilde \Sigma(t)=O(t^{n(n-1)})$ as $t\to 0$.

The map $\vec u$ can be extended to $\R^n \setminus B(\vec a^*, R_2)$ by
$\vec u(r\vecg \zeta) := \vec a^* + (r^n +v_1+v_2)^\frac{1}{n}\vecg \zeta$. It satisfies
\begin{align*}
 \frac{1}{n} \int_{B(\vec a^*, R) \setminus B(\vec a^*, R_2)} \left |\frac{D\vec u}{\sqrt{n-1}}\right |^{n} \dd\vec x 
\leq C(v_1+v_2+\omega_n R^n) + (v_1+v_2) \log \frac{R}{R_2}.
\end{align*}
The energy inside $B(\vec a^*, R_1)$ has been estimated in Theorem \ref{th:ub1}. This completes the proof.

\end{proof}

\begin{remark}
 For Dirichlet boundary conditions that are not necessarily radially symmetric,
the above method can still be used provided there is an initial diffeomorphism
$\vec v$, from the reference domain $D=\{\vec z: 1 < |\vec z| < \sqrt[n]{2}\}$ 
onto the desired target domain,
for which $g:= \frac{\det D\vec v}{\det D\vec w}$ is bounded away from zero.
The energy estimate will depend on $\inf_D g$, $\|D\vec v\|_{\infty}\|D\vec w^{-1}\|_{\infty}$,
and $\|g\|_{\infty} + \|Dg\|_{\infty}$.
\end{remark}

\subsection{Proof of the preliminary lemmas}\label{se:auxi}
In this Section, we give the proofs of Lemmas  \ref{le:ub1},  \ref{le:ub2} and \ref{le:areal}.

\begin{proof}[Proof of Lemma \ref{le:ub1}]
First we show that for any
map of the form $\vec u(\vec x):= \lambda \vec a + f(\vec x) \frac{\vec x-\vec a }{|\vec x-\vec a |}$
the incompressibility equation reduces to an ODE of the form
$f^{n-1}(r, \vecg \zeta) \frac{\partial f}{\partial r} (r, \vecg \zeta) = r^{n-1}$.
In order to see this, consider a local parametrization
$(s_1, \ldots, s_{n-1}) \mapsto \vecg \zeta (s_1, \ldots, s_{n-1})$
of $\S^{n-1}$
and
introduce polar coordinates of the form
\begin{align*}
  \vec x = \vec x (r, s_1, \ldots, s_{n-1}) = \vec a + r \vecg \zeta (s_1, \ldots, s_{n-1}), \quad r>0,\quad (s_1, \ldots, s_{n-1})\in D\subset \R^{n-1},
\end{align*}
$D$ being some parameter space. The claim follows by observing that
\begin{align*}
 & \frac{\partial \vec u}{\partial r} \wedge \frac{\partial \vec u}{\partial s_1} \wedge \cdots \wedge
\frac{\partial \vec u}{\partial s_{n-1}}
= \det D\vec u(\vec x) \left (\frac{\partial \vec x}{\partial r} \wedge \frac{\partial \vec x}{\partial s_{1}}
\wedge \cdots \wedge \frac{\partial \vec x}{\partial s_{n-1}}\right )
= \det D\vec u(\vec x) \left ( \vecg \zeta \wedge \bigwedge_{k=1}^{n-1} r\frac{\partial \vecg \zeta}{\partial s_k} \right )
\end{align*}
and
\begin{align*}
\frac{\partial \vec u}{\partial r} \wedge \frac{\partial \vec u}{\partial s_1} \wedge \cdots \wedge
\frac{\partial \vec u}{\partial s_{n-1}}
=\frac{\partial f}{\partial r} \vecg \zeta
\wedge
\bigwedge_{k=1}^{n-1} \left (
    \frac{\partial f}{\partial s_k} \vecg \zeta + f \frac{\partial \vecg \zeta}{\partial s_k} \right )
= f^{n-1} \frac{\partial f}{\partial r}
\left ( \vecg \zeta \wedge \bigwedge_{k=1}^{n-1} \frac{\partial \vecg \zeta}{\partial s_k} \right ).
\end{align*}

From the above we find that $\vec u(\vec x):= \lambda \vec a + f(\vec x) \vecg \zeta$
is incompressible provided $f(r, \vecg \zeta)^n  \equiv r^n + A\left ( \vecg \zeta\right )^n$,
for some $A: \S^{n-1} \to \R$. The definition in \eqref{eq:anglepreserving},
namely, $f^n=r^n + \frac{v}{|\Omega|}q^n$,
 is obtained
by imposing the boundary condition $\vec u(\vec x )=\lambda \vec x$ on $\partial \Omega$.
Differentiating \eqref{eq:anglepreserving} with respect to $\vecg \zeta$ yields
\begin{align*}
 f^{n-1}(r,\vecg \zeta)D_{\vecg \zeta} f(r,\vecg \zeta) = \frac{v}{|\Omega|}q^{n-1}(\vecg \zeta) Dq(\vecg \zeta),
\quad D_{\vecg \zeta} f(r, \vecg \zeta),\ Dq(\vecg \zeta): T_{\vecg \zeta}(\S^{n-1})\to \R,
\end{align*}
$T_{\vecg \zeta}(\S^{n-1})$ being the tangent plane to $\S^{n-1}$ at $\vecg \zeta$.
Identifying, in the usual manner,
\begin{align} \label{eq:DzetaF}
D_{\vecg\zeta} f(r,\vecg\zeta)= \frac{v}{|\Omega|}\frac{q^{n-1}(\vecg \zeta)}{f^{n-1}(r, \vecg \zeta)}
Dq(\vecg\zeta)\in (T_{\vecg\zeta}(\S^{n-1}))^* 
\end{align}
with a vector in $\langle \vecg \zeta \rangle^\perp \subset \R^n$,
 from $f(x)=f(r(\vec x), \vecg \zeta(\vec x))$, $r(\vec x)=|\vec x -\vec a|$, 
$\vecg \zeta(\vec x)= \frac{\vec x-\vec a}{|\vec x -\vec a|}$ we obtain 
\begin{align}\label{starr}
Df(\vec x)=\frac{\partial f}{\partial r} Dr + (D\vecg \zeta)^T D_{\vecg \zeta} f
= \frac{\partial f}{\partial r} \vecg \zeta + \frac{D_{\vecg\zeta} f}{r},
\quad |Df|^2 = \left|\frac{\partial f}{\partial r}\right |^2 + \left (
\frac{v}{|\Omega|}\frac{q^{n-1}}{f^{n-1}}\frac{|Dq|}{r}\right )^2 ,
\end{align}
with
$Dr=\vecg\zeta$ and $D\vecg\zeta = \frac{1-\vecg\zeta \otimes \vecg\zeta}{r}$. 
Since $D\vec u = \vecg \zeta \otimes Df + fD\vecg \zeta$
and $(D\vecg \zeta) \cdot (\vecg\zeta \otimes Df) = \vecg \zeta \cdot ((D\vecg \zeta) Df)=0$,
using that $|D\vecg\zeta|^2= \frac{n-1}{r^2}$ 
and $\frac{\partial f}{\partial r} = \frac{r^{n-1}}{f^{n-1}}<1$ we find 
\begin{align} \label{eq:grad1ub}
|D\vec u|^2= |Df|^2 + f^2 |D\vecg\zeta|^2
= (n-1) \frac{f^2}{r^2} + \left|\frac{\partial f}{\partial r}\right |^2
+ \left|\frac{D_{\vecg\zeta} f}{r}\right |^2
\leq (n-1) \frac{f^2}{r^2} + 1
+ \left|\frac{D_{\vecg\zeta} f}{r}\right |^2.
\end{align}

The leading order term $(v_1+v_2)|\log \ep|$ in the energy estimates will come from $(n-1)\frac{f^2}{r^2}$, hence we need to write
$\left | \frac{D\vec u}{\sqrt{n-1}}\right | ^n$ as $\frac{f^n}{r^n}$ plus a remainder 
(for which we do not require an exact expression, only an upper bound).
To this end we bound $a^n - b^n$, with $a=\left |\frac{D\vec u}{\sqrt{n-1}}\right |$ and $b=\sqrt{\frac{1}{n-1} + \frac{f^2}{r^2}}$, 
by
\begin{eqnarray*}
\lefteqn{\left |\frac{D\vec u}{\sqrt{n-1}}\right|^n -\left ( \frac{1}{n-1} + \frac{f^2}{r^2} \right)^\frac{n}{2}
 \leq (a-b)\left |a^{n-1} + \cdots + b^{n-1}\right |
\leq n \frac{|a^2-b^2|}{a+b} \max\{a,b\}^{n-1} }\\
&& \leq \frac{n}{n-1}\frac{\left |{(D_{\vecg \zeta} f)/r}\right |^2}{a+b} \max\{a,b\}^{n-1}
\leq \frac{n}{n-1}\frac{\left |{(D_{\vecg \zeta} f)/r}\right |^2}{a} \max\{a,b\}^{n-1}.
\end{eqnarray*}
From $f^n = r^n + \frac{v}{|\Omega|}q^n$ and \eqref{eq:DzetaF} we find that
\begin{align} \label{eq:boundDzF}
  \frac{f^n}{r^n} = 1 + \frac{v}{|\Omega|} \frac{q^n}{r^n}, \qquad
  f \geq \frac{v^\frac{1}{n}}{|\Omega|^\frac{1}{n}} q, \qquad \text{and} \qquad
  |D_{\vecg \zeta} f| \leq \frac{v^\frac{1}{n}}{|\Omega|^\frac{1}{n}} |Dq|.
\end{align}
As a consequence of \eqref{eq:grad1ub}, $\sqrt{n-1}a\geq \frac{|D_{\vecg\zeta} f|}{r}$, hence
$ \frac{\left |{(D_{\vecg \zeta} f)/r}\right |^2}{\sqrt{n-1}a} \leq \frac{|D_{\vecg \zeta} f|}{r}$
and
\begin{align} \label{eq:grad1ub2}
\left |\frac{D\vec u}{\sqrt{n-1}}\right|^n -\left ( \frac{1}{n-1} + \frac{f^2}{r^2} \right)^\frac{n}{2}
\leq C(n)\frac{v^\frac{1}{n}}{|\Omega|^\frac{1}{n}} \frac{|Dq|}{r} 
\left ( 1 + \frac{f^n}{r^n} + \frac{|D_{\vecg \zeta} f|^n}{r^n} \right )^\frac{n-1}{n}
\end{align}
(we have used \eqref{eq:boundDzF} to bound $\frac{|D_{\vecg \zeta} f|}{r}$ and \eqref{eq:grad1ub} to bound $\max\{a, b\}$).
Proceeding analogously, writing $c=\frac{f}{r}\geq 1$ and $b^n -c^n \leq n \frac{b^2-c^2}{b+c} b^{n-1}
\leq n (b^2-c^2) b^{n-1}$, we obtain
\begin{align} \label{eq:byc}
 \left ( \frac{1}{n-1} + \frac{f^2}{r^2} \right)^\frac{n}{2} \leq \underbrace{1 + \frac{v}{|\Omega|}\frac{q^n}{r^n}}_{f^n /r^n}
+ C \left ( \frac{1}{(n-1)^\frac{n}{2}}+ \frac{f^n}{r^n}\right )^\frac{n-1}{n}
\leq \frac{v}{|\Omega|}\frac{q^n}{r^n} + C\left ( 1+ \frac{v^\frac{1}{n}}{|\Omega|^\frac{1}{n}} \frac{q}{r} 
\right )^{n-1} .
\end{align}

Writing $a^n = b^n + (a^n -b^n)$, equations \eqref{eq:byc}, \eqref{eq:grad1ub2}, and \eqref{eq:boundDzF} yield
\begin{align*}
r^{n-1} \left | \frac{D\vec u}{\sqrt{n-1}}\right | ^n
&
\leq r^{n-1}\left ( \frac{v}{|\Omega|}\frac{q^n}{r^n} + C\left ( 1+ \frac{|v|^\frac{1}{n}}{|\Omega|^\frac{1}{n}} \frac{q}{r}
\right )^{n-1}\right )
\\ & \hspace{1em} 
+ C\frac{v^\frac{1}{n}}{|\Omega|^\frac{1}{n}} |Dq|r^{n-2}
\left(1 + \frac{v^\frac{n-1}{n}}{|\Omega|^\frac{n-1}{n}} \frac{\max\{q^{n-1}, |Dq|^{n-1}\}}{r^{n-1}}\right )
\\
&
\leq 
C\left ( r+ \frac{|v|^\frac{1}{n}}{|\Omega|^\frac{1}{n}} q\right )^{n-1} 
+ \frac{Cv^\frac{1}{n}}{|\Omega|^\frac{1}{n}} |Dq|r^{n-2}
+
\left(\frac{q^n}{|\Omega|}+ C\frac{\max\{q, |Dq|\}^{n-1}|Dq|}{|\Omega|} \right )\frac{v}{r}.
\end{align*}
To finish the proof substitute both $r$ and $\frac{|v|^\frac{1}{n}}{|\Omega|^\frac{1}{n}}|Dq|$
in $\frac{v^\frac{1}{n}}{|\Omega|^\frac{1}{n}} |Dq|r^{n-2}$
with $r+ |v|^\frac{1}{n}\frac{|Dq|}{|\Omega|^\frac{1}{n}}$.
\end{proof}

\begin{proof}[Proof of Lemma \ref{le:ub2}]
Write $\vecg \zeta = \cos \theta\, \vec e  + \sin \theta\, \vecg \xi$, $\theta \in (0,\pi)$, $\vecg \xi \in \S^{n-1} \cap \langle \vec e \rangle ^\perp$.
 By virtue of $|(\vec a + q(\vecg \zeta) \vecg \zeta) - \vec{\tilde a}|^2 \equiv \rho^2$,
\begin{align} \label{eq:Q}
 q^2 + 2q \vecg\zeta \cdot ( \vec a - \vec{\tilde a} ) = \rho^2 -d^2, \qquad
q(\theta, \vecg\xi) = -d\cos \theta + \sqrt{ (\rho^2-d^2) + d^2 \cos ^2 \theta }.
\end{align}
Extending $q$ to $\R^n$ by $\tilde q(\vec x)= q(\vecg \zeta(\vec x))$, $\vecg \zeta(\vec x):=\frac{\vec x}{|\vec x|}$,
and differentiating with respect to $\vec x$, we obtain
\begin{align*}
 \left (2\tilde q  + 2 \vecg\zeta \cdot ( \vec a - \vec{\tilde a} ) \right )D \tilde q 
=- 2\tilde q(D\vecg \zeta)^T(\vec a -\vec {\tilde a}) = - 2\tilde q\frac{\vec 1 - \vecg \zeta \otimes \vecg \zeta}{|\vec x|}
(d\vec e).
\end{align*}
Our aim is to obtain bounds for $q:\S^{n-1}\to \S^{n-1}$ and $Dq(\vecg \zeta) \in (T_{\vecg \zeta} \S^{n-1})^*$.
We can identify $Dq$ with a vector in $\langle \vecg \zeta\rangle^\perp$ in the usual manner.
From the relation $D\tilde q\cdot \vec e = Dq \cdot ((D\vecg \zeta) \vec e)$ we know that $D\tilde q \perp \vecg \zeta$
and $|Dq(\vec x)|=|D\tilde q(\vec x)|$ for all $\vec x \in \S^{n-1}$.
Thus, using that $q^2 + 2q \vecg\zeta \cdot ( \vec a - \vec{\tilde a} ) = \rho^2 -d^2$,
\begin{align*}
 |Dq(\theta, \vecg \xi)| = \left |\frac{-2d q^2 (1-\vecg\zeta \otimes \vecg\zeta)\vec e }{
q^2 + (q^2 + 2q \vecg\zeta \cdot ( \vec a - \vec{\tilde a} ))}\right | \leq  \frac{ 2 d q^2\sin\theta }{
\max\{q^2, (\rho-d)(\rho+d)\}}
\leq \frac{2 d q^2 \sin\theta }{\max\{q^2, \rho(\rho - d)\}};
\end{align*}
this yields the bounds for $|Dq|$ in \ref{it:b1}).
The fact that $|q(\vecg \zeta)|\leq 2\rho$ for all $\vecg \zeta \in \S^{n-1}$ 
follows from
$q(\vecg\zeta) = \dist (\vec a + q(\vecg \zeta) \vecg \zeta, \vec a) \leq
\diam B(\vec{\tilde a}, \rho)$.
Part \ref{it:b2}) is proved directly from the second equation in \eqref{eq:Q}, considering that
$\sqrt{a+b}\leq \sqrt{a} + \sqrt{b}$, that
$\rho(\rho-d)\leq \rho^2-d^2\leq 2\rho(\rho-d)$ and that $\sqrt{\gamma}\geq \gamma$ for all $\gamma\in (0,1)$.
Indeed, if $\cos \theta <0$ then 
\begin{align*}
  2d|\cos \theta| + \sqrt{2\rho(\rho-d)}
& \geq d|\cos \theta| + \sqrt{\rho^2-d^2} + \sqrt{d^2\cos^2\theta}
\geq q(\theta, \vecg \xi) \\
& \geq d|\cos \theta| + \sqrt{\rho^2-d^2}
 \geq d|\cos \theta| + \sqrt{\rho(\rho-d)}
\geq \rho |\cos \theta| \left ( \frac{d}{\rho} + \sqrt{1 - \frac{d}{\rho}} \right ).
\end{align*}
To prove \ref{it:b3}), suppose that $\vecg \zeta\cdot \vec e =\cos \theta > 0$ and rewrite \eqref{eq:Q} as 
\begin{align*}
\frac{q(\theta, \vecg \xi)}{\sqrt{\rho(\rho-d)}} =
\frac{1 + \frac{d}{\rho}}{
  \sqrt{\left ( 1  + \frac{d}{\rho}\right) + \frac{d^2 \cos^2 \theta}{\rho(\rho-d)}} + \frac{d\cos \theta}{\sqrt{\rho(\rho-d)}}}
\leq
\frac{2}{
  \sqrt{\left ( 1  + \frac{d}{\rho}\right) + \frac{d^2 \cos^2 \theta}{\rho(\rho-d)}}}
\leq
 \frac{2\sqrt{2}}{ \sqrt{ 1  + \frac{d}{\rho}} + \frac{d\cos \theta}{\sqrt{\rho(\rho-d)}}} .
\end{align*}
\end{proof}

\begin{figure}[thbp!]
  \centering
  \input{area.pstex_t} \qquad \qquad \input{ratios.pstex_t}
  \caption{\label{fig:rootd}Cone generated by $S$ and $\vec{\tilde a}+\rho \vec e$ (Lemma \ref{le:areal})}
\end{figure}

\begin{proof}[Proof of Lemma \ref{le:areal}]
 Call $\vec a := \vec{\tilde a} + (\rho - d)\vec e$. Consider the $(n-2)$-sphere
$S:= \{\vec x \in \partial B(\vec{\tilde a}, \rho): (\vec x-\vec a)\cdot \vec e=0\}$.
It is clear that $\Omega$ contains the cone generated by $\vec{\tilde a} + \rho \vec e$ (the `right-most' point on
$\partial B(\vec{\tilde a}, \rho)$) and $S$. Since the radius of $S$ (the `height') is given by
$h=\sqrt{d(2\rho-d)}$ (see Figure \ref{fig:rootd}) and the base measures $d$, the volume of the cone is
a constant times
$dh^{n-1}=d^{\frac{n+1}{2}}(2\rho-d)^\frac{n-1}{2}$. The value of the constant is obtained from
\begin{align*}
 |\Omega|\geq \frac{\mathcal H^{n-2}(\S^{n-2})}{n-1} \int_{\rho-d}^\rho \left (\frac{\rho-x_1}{d} \sqrt{\rho^2-(\rho-d)^2}\right )^{n-1}
\dd x_1
= \omega_{n-1} \sqrt{\rho(2\rho - d)}^{n-1} \frac{d}{n}.
\end{align*}
\end{proof}

\subsection{Numerical computations} \label{se:numerics}

The deformations depicted in Figure \ref{fig:transition} are obtained by the alternative method
of Dacorogna-Moser (constructive in nature and easier to implement, \cite[Sect.\ 4]{DaMo90}).
Following the notation in Theorem \ref{th:ub2} (and restricting now to the case $n=2$), let
$\rho(\theta):=\sqrt{R_1^2 + (v_1+v_2)\frac{q(\theta)^2}{|\Omega_1 \cup \Omega_2|}}$,
where $q(\theta)$ denotes the parametrization of $\partial (\overline{\Omega_1 \cup \Omega_2})$
using polar coordinates, taking $\vec a^*$ to be the origin.
Let also $0<R_1<R_2<R_3$ be such that $B(\vec a^*, R_1) \supset \Omega_1 \cup \Omega_2$ and $\pi R_3^2 = v_1+v_2 + \pi R_2^2$.
Given parametrizations $\vec w(s,t)$ and $\vec v(s,t)$, $(s,t)\in D:=[1, \sqrt{2}]\times[0,2\pi]$
 of $\{\vec x : R_1<|\vec x-\vec a^*|< R_2\}$ and of
$\left \{\vec y=\lambda \vec a^* + re^{i\theta}: \rho(\theta) < r < R_3\right \}$, respectively,
the strategy is to find an incompressible homeomorphism
 $\vec u: \vec w(Q)\to \vec v(Q)$ of the form
\begin{align*}
\vec u = \vec v\circ \vecg \phi_2 \circ \vecg \phi_1 \circ \vec w,
\quad \text{with} \quad
 \vecg \phi_1(s,t)=(h(s,t),t), \qquad
\vecg \phi_2(s,t)=(s, t+\eta(s)\beta(t)).
\end{align*}
 Here $\eta:[1, \sqrt{2}] \to \R$ is any function satisfying
\begin{align*}
 \fint_1^{\sqrt{2}} \eta(s) \dd s =1, \quad \eta(0)=\eta(1)=0, \quad 0\leq \eta \leq 1+\varepsilon, \quad
\fint_1^{\sqrt{2}} |1-\eta(s)|\dd s
\leq \varepsilon
\end{align*}
for some $\varepsilon\leq \min \left \{\frac{\min f}{2\max g}, \frac{\min g}{\max g}\right \}$, where
$f(s,t)=\det D\vec w(s,t)$
and $g(s,t)=\det D\vec v(s,t)$. The functions $\beta$ and $h$ are found
by defining $g_1(s_1, t_1):= g(\vecg\phi_2(s_1, t_1)) \det D \vecg \phi_2(s_1, t_1)$
and solving
\begin{align*}
 \int_1^{\sqrt{2}}\!\!\!\int_0^{t+\eta(\sigma)\beta(t)}\!g(\sigma, \tau)\dd\tau \dd\sigma =
 \int_1^{\sqrt{2}}\!\!\!\int_0^t\!f(s, \overline t) \dd\overline t  \dd s ,  \quad
\int_1^{h(s, t)}\!\!g_1(s_1, t) \dd s_1 = \int_1^s\!f(\overline s, t) \dd \overline s
\end{align*}
for every fixed $t\in [0,2\pi]$.
The solution is unique, and
for $\vec v$ and $\vec w$ as in \eqref{eq:vyw}, it is such that
$\int_{R_1<|\vec x-\vec a^*|< R_2} |D\vec u |^2 \leq C$, where
$C$ is an expression
that might possibly go to infinity only if
the target domain is too narrow, more precisely, if
$\displaystyle \frac{v_1+v_2}{\pi(R_2^2-R_1^2)}\left ( \frac{\pi q_{\max}^2}{|\Omega_1 \cup \Omega_2|} -1 \right )
\nearrow 1$,
(recall that $\frac{\pi q_{\max}^2}{|\Omega_1\cup \Omega_2|} -1$ is of the order of $1-\delta$,
equations \eqref{eq:lbqd} and \eqref{eq:ubqd}).
In our computations we choose $R_1=q_{\max}=2\rho_1-d\delta$
and $R_2$ such that $\pi(R_2^2-R_1^2)= 2(v_1+v_2) \left ( \frac{\pi q_{\max}^2}{|\Omega_1 \cup \Omega_2|} -1 \right )$.

\section{Proof of the convergence result, Theorem \ref{th:limits}} \label{se:limits}

We follow the strategy of Struwe \cite{Struwe94}
to prove that $\sup_\ep \|\vec u_\ep\|_{W^{1,p}(\Omega_{\ep})} < \infty$
for all $p<n$.
Fix $\ep>0$, call $\mathcal B_{0}:= \bigcup_{i=1}^m \overline B_{\ep}(\vec a_{i,\ep})$,
$t_{0}:= r(\mathcal B_{0})= m \ep$, and let
$\{\mathcal B(t): t\geq t_{0}\}$
be the family obtained by applying Proposition \ref{pr:ballconstruction} to $\mathcal B_{0}$.
Define $\rho= \sup \{t\geq t_0: \bigcup \mathcal B(t) \subset \Omega \}$
and write
$\mathcal C_{k}:= \bigcup \mathcal B(r_k) \setminus \bigcup \mathcal B(r_{k+1})$, $r_k:=2^{-k} \rho$.
By using H\"older's inequality, then comparing the lower bound of Proposition \ref{pr:bc}, to the upper bound,
 we find that for every $p< n$
\begin{align*}
 \int_{\mathcal C_{k}} |D\vec u_\ep|^p \dd\vec x
&
  \leq
 C(n,p) \rho^{n-p} 2^{-(n-p)k}
    \left ( \frac{1}{n}\int_{\Omega_{\ep}} \left |\frac{D\vec u_\ep }{\sqrt{n-1}}\right |^n  \dd\vec x
      - \sum_{i=1}^m v_{1, \ep} \log \frac{r_{k+1}}{t_0}\right )^\frac{p}{n}
\\ & \leq
  C\rho ^{n-p} 2^{-(n-p)k} \left (|\Omega| + \sum_{i=1}^m v_{i,\ep}\right)^\frac{p}{n}
    \left ( C + \log \frac{\diam \Omega}{\rho/m} + (k+1)\log 2 \right )^\frac{p}{n}.
\end{align*}
Adding over $k$ we find that
\begin{align*}
 \int_{\Omega_\ep}  |D\vec u_\ep|^p \dd\vec x
&\leq C \rho^{n-p}
  \left (|\Omega| + \sum_{i=1}^m v_{i,\ep}\right)^\frac{p}{n}
\left ( \sum_{k=1}^\infty \frac{(C+k\log 2)^\frac{p}{n}}{2^{(n-p)k}}
    + \frac{\left (\log \frac{\diam \Omega}{\rho/m}\right )^\frac{p}{n}}{2^{n-p}-1} \right ) \\
&\quad + n^\frac{p}{n} (n-1)^\frac{p}{2} |\Omega|^{1-\frac{p}{n}} \left (
\frac{1}{n}\int_{\Omega_{\ep}} \left |\frac{D\vec u_\ep }{\sqrt{n-1}}\right |^n  \dd\vec x
      - \sum_{i=1}^m v_{i, \ep} \log \frac{\rho}{m \ep}\right )^\frac{p}{n}
\\ & \leq
C \left (\rho^{n-p} + |\Omega|^{\frac{n-p}{n}}\right )
\left (|\Omega| + \sum_{i=1}^m v_{i,\ep}\right)^\frac{p}{n}\left ( C+ \log \frac{\diam \Omega}{\rho/m} \right )^{\frac{p}{n}}.
\end{align*}
It can be seen (as in the proof of Proposition \ref{pro1})
that $\rho\geq \frac{1}{2} \dist (\{\vec a_{1,\ep}, \ldots, \vec a_{m,\ep}\}, \partial \Omega)$.
Hence, in order to prove that $\sup_\ep \|D\vec u_\ep\|_{L^p}< \infty$
it only remains to show that $\sum_{i=1}^m v_{i, \ep}$ is uniformly bounded.
Choose $r>\ep$ such that
the balls $\overline B(\vec a_{i, \ep},r)$ are disjoint and
$r \in R_{\vec a_{i, \ep}}$ for all $i=1, \ldots, m$. By
Proposition \ref{pr:cof},
the topological images
$E(\vec a_{i,\ep}, r; \vec u_\ep)$
are disjoint,
contained in $B(\vec 0, \|\vec u_\ep\|_{L^\infty(\Omega_{\ep})})$
(because $E(\vec a_{i,\ep}, r; \vec u_\ep)$ is the region enclosed by $\vec u(\partial B(\vec a_{i, \ep}, r))$),
 and
such that
$E(\vec a_{i,\ep}, \ep; \vec u_\ep) \subset E(\vec a_{i,\ep}, r; \vec u_\ep)$.
Therefore
\begin{align*}
 \sum_{i=1}^m (v_{i,\ep} + \omega_n \ep^n ) =
\sum_{i=1}^m |E(\vec a_{i,\ep}, \ep; \vec u_\ep)|
\leq
\left |\bigcup_{i=1}^m E(\vec a_{i,\ep}, r; \vec u_\ep)\right |
\leq \omega_n \|\vec u_\ep\|_{L^\infty(\Omega_\ep)}^n.
\end{align*}
Since we are assuming that $\sup_\ep \|\vec u_\ep\|_{L^\infty(\Omega_\ep)} < \infty$,
we obtain that
$\sup_\ep \|\vec u_\ep\|_{W^{1,p}(\Omega_{\ep})} < \infty$, as desired.

For the existence of a limit map and for the convergence in
$W^{1,n}_{\loc} (\Omega \setminus \{\vec a_1, \ldots \vec a_m\}, \R^n)$,
let $\delta>0$ be small,
assume that $|\vec a_{i,\ep} - \vec a_i| < \delta/2$ for all $i=1, \ldots, m$,
 and consider the following energy bound,
obtained again by comparing \eqref{eq:hypUB}
with the lower bound of Proposition \ref{pr:bc} (applied to $s=\delta/2$)
\begin{align*}
 \frac{1}{n}\int_{\Omega \setminus \bigcup \mathcal B(\delta/2)}
\left |\frac{D\vec u}{\sqrt{n-1}} \right|^n \dd\vec x
\leq \sum_{i=1}^m v_{i, \ep} \log \frac{\diam \Omega}{\delta/2m} + C\left ( |\Omega| +
\sum_{i=1}^m v_{i, \ep} \right ).
\end{align*}
Since $r(\mathcal B(\delta/2))=\delta/2$, it follows that $\{\vec u_{\ep_j}\}_{j\in \N}$
is bounded in $W^{1,n}(\Omega \setminus \bigcup_{i=1}^m \overline B_{\delta}(\vec a_i), \R^n)$.
From this, and since $\delta>0$ is arbitrary,
the existence of $\vec u$ and of a convergent subsequence
follows by standard arguments
(see, e.g., \cite{SiSp06} or \cite{Henao09}):
inductively take succesive subsequences of $\{\vec u_{\ep_j}\}_{j\in \N}$
(for some sequence $\delta_k\to 0$) converging weakly in
$W^{1,n}(\Omega \setminus \bigcup_{i=1}^m \overline B_{\delta_k}(\vec a_i), \R^n)$.
Choose then a diagonal sequence $\{\vec u_{\ep_k}\}_{k\in \N}$
converging weakly in $W^{1,n}(\Omega \setminus \bigcup_{i=1}^m \overline B_{\delta}(\vec a_i), \R^n)$
for every $\delta>0$, to some $\vec u \in W^{1,n}_{\loc} (\Omega \setminus \{\vec a_1, \ldots \vec a_m\}, \R^n)$.

Since $\sup_\ep \|\vec u_{\ep}\|_{W^{1,p}(\Omega_{\ep})} < \infty$
for all $p<n$,
the maps $\vec u_{\ep}$ can be extended, by multiplying them by suitable cut-off
functions $\psi_\ep$, inside the holes $\overline B(\vec a_{i, \ep}, \ep)$,
in such a way that $\sup_\ep \|\psi_\ep \vec u_{\ep} \|_{W^{1,p}(\Omega)} < \infty$.
It is easy to see that any weakly convergent subsequence of $\{\psi_{\ep_k} \vec u_{\ep_k}\}_{k\in \N}$
must converge
to the limit map $\vec u$ defined above; this proves that $\vec u \in W^{1,p}(\Omega, \R^n)$
for all $p<n$.

By the classical result of Reshetnyak \cite[Thm.~4]{Reshetnyak67a} and
Ball \cite[Cor.~6.2.2]{Ball77}, $\cof D\vec u_{\ep_k} \weakc \cof D\vec u$
in $L^{\frac{n}{n-1}}_{\loc} (\Omega \setminus \{\vec a_1, \ldots, \vec a_m\}, \R^{n\times n})$.
By the definition of $\Det D\vec u$ in \eqref{eq:defDet}, and since $\{\Det D\vec u_{\ep}\}_{\ep>0}$
is bounded as a sequence in the space of measures ($\Det D\vec u_{\ep} = \mathcal L^n \res \Omega_{\ep}$, by hypothesis),
it follows that $\Det D\vec u$ coincides with $\mathcal L^n$
in
$\Omega\setminus \{\vec a_1, \ldots, \vec a_m\}$,
and that
$\Det D\vec u_{\ep} \weakcs \Det D\vec u$
in
$\Omega\setminus \{\vec a_1, \ldots, \vec a_m\}$ in the sense of measures.
Moreover, by \cite[Lemma 3.2]{SiSp00}
(applied to $\Omega \setminus \bigcup_{i=1}^m \overline B(\vec a_i, \delta)$
instead of $\Omega$), we obtain that $\det D\vec u(\vec x)=1$ for a.e. $\vec x \in \Omega \setminus \{\vec a_1, \ldots, \vec a_m\}$.

From Definition \ref{df:INV} and from the proof of \cite[Lemma 4.2]{Henao09}
it follows that the limit map $\vec u$ satisfies condition INV.
Proposition \ref{pr:Det}
 then implies that $\Det D\vec u = \mathcal L^n + \sum_{i=1}^m c_i \delta_{\vec a_i}$
for some coefficients $c_i\in \R$, and the proof of the same proposition also shows that
\begin{align*}
\frac{1}{n} \int_{\partial B(\vec a_i, r)} \vec u_\ep \cdot (\cof D\vec u_\ep)\vecg \nu\dd\mathcal H^{n-1}
&=
\omega_n r^n + \sum_{j: \vec a_{j,\ep} \in B(\vec a_i, r)} v_{j, \ep}
\\
 \frac{1}{n} \int_{\partial B(\vec a_i, r)} \vec u \cdot (\cof D\vec u)\vecg \nu\dd\mathcal H^{n-1}
&=
\omega_n r^n + \sum_{j: \vec a_j \in B(\vec a_i, r)} c_j
\end{align*}
for a.e. $r>0$ such that $\partial B(\vec a_i, r)\subset \Omega$
(note that if $\vec a_i= \vec a_j$ for some $i\ne j$, then the choice of the coefficients $c_i$ is not unique).
By standard arguments, for every $\delta>0$ there exists $r< \delta$ such that
$\vec u_{\ep_k}\to \vec u$ uniformly on $\partial B(\vec a_i, r)$
and $\cof D\vec u_{\ep_k} \weakc \cof D\vec u$ in $L^{\frac{n}{n-1}}(\partial B(\vec a_i, r))$
(passing, if necessary, to a subsequence that may depend on $r$). Taking, first, the limit as
$\ep\to 0$, then the limit as $r\to 0$, we obtain that
$\Det D\vec u = \mathcal L^n + \sum_{i=1}^m v_i \delta_{\vec a_i}$.

Consider now the case of two cavities.
Set $\vec a_\ep:= \frac{\vec a_{1,\ep} + \vec a_{2,\ep}}{2}$, $d_\ep:=|\vec a_{2,\ep} - \vec a_{1,\ep}|$.
\begin{enumerate}[{\it i)}]
  \item \emph{Suppose that $v_1\geq v_2>0$ and $d=|\vec a_2 - \vec a_1|>0$.}
By Lemma \ref{le:mod} we have that for all $r> \ep$
\begin{align*}
\left  | |E(\vec a_{i, \ep}, r; \vec u_{\ep})| D(E(\vec a_{i, \ep}, r; \vec u_{\ep}))^\frac{n}{n-1}
- |E(\vec a_{i, \ep}, \ep; \vec u_{\ep})| D(E(\vec a_{i, \ep}, \ep; \vec u_{\ep}))^\frac{n}{n-1} \right |
\leq 2^\frac{n}{n-1} {\textstyle \frac{n+1}{n-1}} \omega_n r^n,
\end{align*}
hence, by \eqref{eq:eq0proof}, for all $\alpha \in (0, 1)$
and all $R<\min\{\frac{d}{2}, \dist(\{\vec a_1, \vec a_2\}, \partial \Omega)\}$
we have that
\begin{align*}
\frac{ \int_{\Omega_{\ep}} \frac{1}{n}\left |\frac{D\vec u(\vec x)}{\sqrt{n-1}} \right |^n}
{|\log \ep|}
& \geq \frac{ \sum_{i=1}^2 \left ( \int_\ep^{\ep^\alpha} + \int_{\ep^\alpha}^R \right )
\int_{\partial B(\vec a_{i, \ep}, r)}
\frac{1}{n}\left |\frac{D\vec u(\vec x)}{\sqrt{n-1}} \right |^n \dd\mathcal H^{n-1}\dd r }{|\log \ep|}
\\
&\geq
\sum_{i=1}^2\left (v_{i, \ep} \frac{\log (R/\ep)}{|\log \ep|}
+ (1-\alpha)C \left ( |E(\vec a_{i, \ep}, \ep; \vec u_{\ep})| D(E(\vec a_{i, \ep}, \ep; \vec u_{\ep}))^\frac{n}{n-1}
-\ep^{\alpha n} \right ) \right).
\end{align*}
Combining this with \eqref{eq:hypUB} we obtain
\begin{align*}
 \sum_{i=1}^2 v_{i,\ep} D\big ( E(\vec a_{i,\ep}, \ep; \vec u_{\ep} ) \big )^\frac{n}{n-1}
 \leq
\frac{(|\Omega| + v_{1,j} + v_{2,j} )\left
(C_2 + \log \frac {\diam \Omega}{R}\right )}{C_1|\log \ep^{1-\alpha}|} +
C\ep^{\alpha n}.
\end{align*}
Therefore, as $\ep\to 0$,  $D\big(E(\vec a_{i,\ep}, \ep; \vec u_{\ep}) \big )\to 0$
(i.e.,~$\vec u_\ep$ tends to create spherical cavities).

As mentioned before, for every $\delta>0$ there exists $r< \delta$ such that $\vec u_{\ep}|_{\partial B(\vec a_i, r)}$
converges uniformly, for each $i=1,2$, to $\vec u |_{\partial B(\vec a_i, r)}$ (passing to a subsequence, if necessary).
By continuity of the degree, this implies that $\imT(\vec u, \vec a_i)$ is contained in $E(\vec a_i, r; \vec u_{\ep})$
for sufficienty small $\ep$.
In particular, by definition of $v_{i,\ep}$ and Proposition \ref{pr:Det},
\begin{align*}
 |E(\vec a_i, r; \vec u_\ep ) \triangle \imT(\vec u, \vec a_i)|
= |E(\vec a_i, r; \vec u_\ep)| - |\imT(\vec u, \vec a_i)|
= (v_{i, \ep} + \omega_n r^n) - v_i.
\end{align*}
On the other hand,
$B(\vec a_{i, \ep}, \ep) \subset B(\vec a_i, r)$ for sufficiently small $\ep$.
By Proposition \ref{pr:cof} this implies that
$E(\vec a_{i,\ep},  \ep; \vec u_{\ep}) \subset E(\vec a_i, r; \vec u_\ep)$,
so,
proceeding as in the proof of Proposition \ref{pr:Det},
we obtain
\begin{align*}
|E(\vec a_{i,\ep},  \ep; \vec u_{\ep}) \triangle E(\vec a_i, r; \vec u_\ep)|
&= \Det D\vec u ( B(\vec a_i, r) \setminus B(\vec a_{i,\ep}, \ep))
=|B(\vec a_i, r) \setminus B(\vec a_{i,\ep}, \ep)| < \omega_n \delta^n.
\end{align*}
Thus,
\begin{align} \label{eq:convImT}
 &\limsup_{\ep\to 0} |E(\vec a_{i, \ep}, \ep; \vec u_{\ep}) \triangle \imT(\vec u, \vec a_i)|
\\ \nonumber
 &\leq \limsup_{\ep\to 0} ( |E(\vec a_{i,\ep},  \ep; \vec u_{\ep})
\triangle E(\vec a_i, r; \vec u_\ep)| + |E(\vec a_i, r; \vec u_\ep ) \triangle \imT(\vec u, \vec a_i)|)
\leq 2\omega_n \delta^n
\end{align}
for all $\delta>0$, that is, the cavities formed by $\vec u_{\ep}$ converge to the cavities formed by $\vec u$.

It remains to prove the estimate for $|\vec a_2-\vec a_1|$ in terms of $|\Omega|$, $\diam \Omega$ and the cavity volumes,
assuming that $v_1+v_2 < 2^n  \omega_n (\dist(\frac{\vec a_1+\vec a_2}{2}, \partial \Omega ))^n$.
Let $R>0$ be such that 
$v_{1, \ep} + v_{2, \ep} < \omega_n (2R)^n$ and $B(\vec a_\ep, R) \subset \Omega$
for every sufficiently small $\ep$.
Suppose first that 
\begin{align} \label{eq:assumptionsT3i}
\frac{\omega_n d^n}{v_1+v_2} < \frac{1}{2^n} \left ( \frac{v_2}{v_1+v_2} \right )^\frac{n}{n-1}.
\end{align}
Since $\frac{v_2}{v_1+v_2}<1$, this implies, in particular, that
$v_{1, \ep} + v_{2, \ep} > \omega_n (2d_\ep)^n$  
 for every small $\ep$.
As a consequence, $\frac{R}{d_{\ep}}>1$ and 
$\left (
\frac{v_{1, \ep} + v_{2, \ep}}{2^n \omega_n d_{\ep}^n}
\right )^{\frac{1}{n^2}} <
\left (\frac{R}{d_{\ep}} \right )^\frac{1}{n} < \frac{R}{d_{\ep}}$, that is,
the minimum at the end of Theorem \ref{th:LB} is attained at 
$\left (\frac{v_{1, \ep} + v_{2, \ep}}{2^n \omega_n d_{\ep}^n}\right ) ^\frac{1}{n^2}$
(it cannot be attained at $\frac{d_{\ep}}{\ep}$ since $d_{\ep}\to d>0$).
By Theorem  \ref{th:LBn} and \eqref{eq:hypUB},
\begin{align*}
& C_1\left (\left ( \frac{v_{2,\ep}}{v_{1,\ep}+v_{2,\ep}} \right )^\frac{n}{n-1}
- \frac{\omega_n d_{\ep}^n}{v_{1,\ep}+v_{2,\ep}} \right )_+
\log \frac{v_{1,\ep}+v_{2,\ep}}{2^n \omega_n d_\ep^n} 
\leq
\frac{\frac{1}{n} \int_{\Omega_{\ep}} \left |\frac{D\vec u}{\sqrt{n-1}} \right | \dd \vec x - 
(v_{1, \ep} + v_{2, \ep}) \log \frac{R}{2\ep}}{v_{1, \ep} + v_{2, \ep}}\\
&\leq 
C_2 \left ( 1 + \frac{|\Omega|}{v_{1,\ep} + v_{2, \ep}}
+ \log \frac{\omega_n (\diam \Omega)^n}{\omega_n R^n } \right ) 
\leq C_2 \left ( 1 + \frac{|\Omega|}{v_{1,\ep} + v_{2, \ep}}
+ \log \frac{\omega_n (\diam \Omega)^n}{v_{1, \ep} + v_{2, \ep}} \right )
\end{align*}
(in the last step we use that $\omega_n R^n > \frac{v_{1, \ep} + v_{2, \ep}}{2^n}$, by the choice of $R$).
If \eqref{eq:assumptionsT3i} holds then 
the factor in front of the logarithm is positive
for $\ep>0$ small; taking the limit 
we obtain that $\frac{\omega_n d^n}{v_1+v_2} \geq 2^{-n} F(\Omega, v_1, v_2)$, with 
\begin{align}
\label{eq:defF}
F(\Omega, v_1,v_2):= \exp \left (
      - C
\left . \left ( {\displaystyle
1 + \frac{|\Omega|}{v_1 + v_2}
+ \log \frac{\omega_n (\diam \Omega)^n}{v_1+v_2}}
\right ) \right /
      {\displaystyle \left ( \frac{v_{2}}{v_{1}+v_{2}} \right )^\frac{n}{n-1}}\right ).
\end{align}

If \eqref{eq:assumptionsT3i} does not hold, we still have that 
$\frac{\omega_n d^n}{v_1+v_2}\geq C F(\Omega, v_1, v_2)$ for some constant $C(n)$. To see this, 
recall that $v_1+v_2 < 2^n \omega_n \dist(\frac{\vec a_1 + \vec a_2}{2}, \partial \Omega))^n
< \omega_n (2\diam \Omega)^n$ (by hypothesis),
hence
\begin{align*}
 F(\Omega, v_1, v_2) \leq \exp \left (- 
{ 
C(1 + n|\log 2|)}
 \left /
{ \left ( \frac{v_2}{v_1+v_2} \right )^\frac{n}{n-1}} \right .
\right )
\leq \left . \left ( \frac{v_2}{v_1+v_2} \right )^\frac{n}{n-1} \right / C(1+n|\log 2|)
\end{align*}
(we have used that $e^\frac{1}{x}\geq \frac{1}{x}$ for all $x\geq 0$). The proof is completed since 
the above implies that
\begin{align*}
 \frac{\omega_n d^n}{v_1+v_2} \geq 2^{-n} \left ( \frac{v_2}{v_1+v_2} \right )^\frac{n}{n-1}
\quad \Rightarrow \quad 
 \frac{\omega_n d^n}{v_1+v_2} 
\geq 2^{-n} C(1+n|\log 2|) F(\Omega, v_1, v_2).
\end{align*}

  \item \emph{Suppose that $v_1>v_2=0$.}
Applying Proposition \ref{pr:ballconstruction} to $\mathcal B_0:=\{\overline B_\ep(\vec a_{1, \ep}),
\overline B_{\ep}(\vec a_{2, \ep})\}$
we obtain $\mathcal B(t)= \{B(\vec a_{1, \ep}, t/2), B(\vec a_{2, \ep}, t/2)\}$ for $t\in (2\ep, d_{\ep})$, and
$\mathcal B(t)=\{B(\vec a_\ep, t)\}$ for $t\geq d_\ep$.
We claim that if $R < \frac{2}{3}\dist(\{\vec a_{1,\ep}, \vec a_{2, \ep}\}, \partial \Omega)$
then $\bigcup \mathcal B(R)\subset \Omega$. Indeed,
if $R < d_{\ep}$, this holds automatically. If $R\geq d_{\ep}$, then
\begin{align*}
 \frac{3R}{2} < \dist (\vec a_{1, \ep}, \partial \Omega) \leq \frac{d_\ep}{2} + \dist(\vec a_\ep, \partial \Omega)
\leq \frac{R}{2} + \dist(\vec a_\ep, \partial \Omega)\ \Rightarrow\ B(\vec a_\ep, R) \subset \Omega.
\end{align*}
Therefore, by Proposition \ref{pr:bc} and Lemma \ref{le:mod}, for every $\alpha\in (0,1)$
\begin{align*}
& |E(\vec a_{1,\ep}, \ep; \vec u_\ep)| D\big (E(\vec a_{1,\ep}, \ep; \vec u_\ep)\big )^\frac{n}{n-1}
 \log \frac{\ep^\alpha}{2\ep}
\\ & \leq
  \int_{\Omega_\ep}\frac{1}{n}\left|\frac{D\vec u_\ep}{\sqrt{n-1}}\right|^n \dd \vec x -
    (v_{1,\ep} + v_{2,\ep}) \log \frac{R}{2\ep}
  + 2^\frac{n}{n-1} {\textstyle \frac{n+1}{n-1}} (v_{2,\ep} + \omega_n \ep^{\alpha n})
      \log \frac{\ep^\alpha}{2\ep}.
\end{align*}
By virtue of \eqref{eq:hypUB} and again Lemma \ref{le:mod},
\begin{align*}
   v_1 D\big (\imT(\vec u, \vec a_1)\big )^\frac{n}{n-1}
 \leq
   2^\frac{n}{n-1}{\textstyle  \frac{n+1}{n-1}}
\lim_{\ep\to 0} (v_{2,\ep} + \omega_n \ep^{\alpha n}
      + |E(\vec a_{1,\ep}, \ep; \vec u_j ) \triangle \imT(\vec u, \vec a_1)|).
\end{align*}
Proceeding as in \eqref{eq:convImT} we find that
\begin{align*}
\limsup_{\ep\to 0} |E(\vec a_{1,\ep}, \ep; \vec u_\ep ) \triangle \imT(\vec u, \vec a_1)|
\leq 2(v_2 + \omega_n r^n)
\end{align*}
for arbitrarily small values of $r>0$, proving that $\imT(\vec u, \vec a_1)$ is a ball.

  \item \emph{Suppose that $v_1\geq v_2>0$ and $\vec a_1=\vec a_2$.}
Let $R>0$ be such that $B(\vec a_{\ep}, R)\subset \Omega$ for all $j\in \N$.
Since $\lim d_{\ep} = |\vec a_2-\vec a_1|=0$,
\eqref{eq:eq1proof} and \eqref{eq:hypUB} imply that
\begin{align*}
 \limsup_{\ep\to 0} \frac{
\int_{d_\ep}^{R} |E(\vec a_\ep, r; \vec u_\ep)| D\big (E(\vec a_\ep, r; \vec u_\ep)\big )^\frac{n}{n-1} \frac{\dd r}{r}}
{\log d_\ep}
\leq
  C\frac{(|\Omega| + v_{1} + v_{2})\left (1+\log \frac{\diam \Omega}{R/2}\right)}{
\displaystyle \lim_{\ep\to 0} \log d_\ep}=0.
\end{align*}
For $\alpha \in (0,1)$ fixed and $\ep$ small
$B(\vec a_\ep, d_\ep)\subset B(\vec a_\ep, d_\ep^\alpha) \subset \Omega$.
By Lemma \ref{le:mod}, for all $r\in (d_\ep, d_\ep^\alpha)$
\begin{align*}
 \Big | |E(\vec a_\ep, r; \vec u_\ep)| D\big (E(\vec a_\ep, r; \vec u_\ep)\big )^\frac{n}{n-1}
- |E(\vec a_\ep, d_\ep; \vec u_\ep)| D\big (E(\vec a_\ep, d_\ep; \vec u_\ep)\big )^\frac{n}{n-1} \Big | \leq
2^\frac{n}{n-1} \frac{n+1}{n-1} \omega_n d_\ep^{\alpha n}.
\end{align*}
Dividing $\int_{d_\ep}^{d_\ep^\alpha} |E(\vec a_\ep, d_\ep; \vec u_\ep)|
D\big (E(\vec a_\ep, d_\ep; \vec u_\ep)\big )^\frac{n}{n-1}
\frac{\dd r}{r}$ by $\log d_\ep^{\alpha-1}$ we obtain
\begin{align} \label{eq:D=0out}
 \limsup_{\ep\to 0} |E(\vec a_\ep, d_\ep; \vec u_\ep)|
D\big (E(\vec a_\ep, d_\ep; \vec u_\ep)\big )^\frac{n}{n-1}
\leq
 \limsup_{\ep\to 0}
    2^\frac{n}{n-1} \frac{n+1}{n-1} \omega_n d_\ep^{\alpha n} =0.
\end{align}
Proceeding as in \eqref{eq:convImT}, it can be proved that
\begin{align} \label{eq:c-c3iii}
 \limsup_{\ep \to 0} \big |\imT(\vec u,\vec a_1)\triangle E(\vec a_{\ep}, d_\ep; \vec u_\ep)|
\leq \limsup_{\ep \to 0} (v_{1, \ep} + v_{2, \ep}) - |\imT(\vec u, \vec a_1)|.
\end{align}
Because of the continuity of the distributional determinant,
$|\imT(\vec u,\vec a_1)|=v_1+v_2$, hence
$D\big (\imT(\vec u,\vec a_1)\big )=0$
(by \eqref{eq:c-c3iii}, Lemma \ref{le:mod}\ref{cd:mod2}), and \eqref{eq:D=0out}).

In order to prove that at least one of the limit cavities must be distorted, we proceed as in the proof of
Theorem \ref{th:LBn} by applying Proposition \ref{pr:distortions} to $E_{1}= E(\vec a_{1,\ep}, \ep; \vec u_\ep)$,
$E_{2} = E(\vec a_{2,\ep}, \ep; \vec u_\ep)$, and $E=E(\vec a_\ep, d_\ep; \vec u_\ep)$. Again we define
$g(\beta_1, \beta_2):= (\beta_1^\frac{1}{n} + \beta_2^\frac{1}{n})^n - (\beta_1 + \beta_2)$ and note that it is
increasing in its two variables. It is easy to see that
\begin{align*}
 \frac{(|E_1|^\frac{1}{n}+|E_2|^\frac{1}{n})^n -|E|}{(|E_1|^\frac{1}{n}+|E_2|^\frac{1}{n})^n - |E_1\cup E_2|}
\geq 1 - \frac{\omega_n d_\ep^n}{g(v_{1,\ep}, v_{2,\ep})} \overset{\ep\to0}{\longrightarrow} 1.
\end{align*}
Therefore,
\begin{align*}
 \liminf_{\ep\to 0}
\frac{|E|D(E)^\frac{n}{n-1} + |E_{1}|D(E_{1})^\frac{n}{n-1} + |E_{2}|D(E_{2})^\frac{n}{n-1}}
{|E| + |E_{1} \cup E_{2}|} \geq C \left (\frac{v_2}{v_1+v_2}\right )^\frac{n}{n-1}.
\end{align*}
Property \eqref{eq:minDist} follows from \eqref{eq:D=0out}.
On the other hand, \eqref{eq:eq1proof}, \eqref{eq:hypUB}, and Lemma \ref{le:mod} imply that
\begin{multline*}
 \sum_{i=1}^2 \int_{\ep}^{\min\{\frac{d_\ep}{2}, \ep^\alpha\}}
    C\left ( v_{i,\ep} D\big (E_{i}\big )^\frac{n}{n-1} - 2^\frac{n}{n-1}
{\textstyle\frac{n+1}{n-1}} \omega_n
	      \min\{\frac{d_\ep^n}{2^n}, \ep^{\alpha n}\} \right ) \frac{\dd r}{r}
\\ \leq	
  (v_{1,\ep} + v_{2,\ep})\log \frac{\diam \Omega}{R/2} + C(v_{1,\ep} + v_{2,\ep}+|\Omega|) .
\end{multline*}
for every fixed $\alpha \in (0,1)$. Hence,
\begin{multline*}
  \limsup_{\ep\to 0} \left ( \min \left \{ \log \frac{d_\ep}{2\ep},
 \log \ep^{\alpha-1}\right \} \right )
  \leq \frac{ C\displaystyle \left ( \log \frac{\diam \Omega}{R/2} + 1 + \frac{|\Omega|}{v_1+v_2}\right )}
{ \displaystyle \liminf_{\ep\to 0}
\left ( \frac{v_{1,\ep} D(E_{1})^\frac{n}{n-1}+ v_{2,\ep} D(E_{2})^\frac{n}{n-1}}{v_{1,\ep}+v_{2,\ep}}
    - \ep^{\alpha n} \right )
}.
\end{multline*}
By virtue of \eqref{eq:minDist}, and since $|\log \ep|\to \infty$, we conclude that
$\displaystyle \limsup_{\ep\to 0} d_\ep/\ep$ is finite.

\end{enumerate}

\bibliography{Biblio} \bibliographystyle{acm}

\noindent
{\sc Sylvia Serfaty}\\
UPMC Univ. Paris 06, UMR 7598 Laboratoire Jacques-Louis Lions,\\
 Paris, F-75005 France;\\
 CNRS, UMR 7598 LJLL, Paris, F-75005 France \\
 \&  Courant Institute, New York University\\
251 Mercer St., New York, NY  10012, USA\\
{\tt serfaty@ann.jussieu.fr}
\vskip 1cm

\noindent
{\sc Duvan Henao}\\
UPMC Univ. Paris 06, UMR 7598 Laboratoire Jacques-Louis Lions,\\
 Paris, F-75005 France;\\
 CNRS, UMR 7598 LJLL, Paris, F-75005 France \\
{\tt henao@ann.jussieu.fr}

\end{document}